\documentclass[11pt, reqno]{amsart}

\usepackage[T1]{fontenc}
\usepackage{amsmath}
\usepackage{amssymb}
\usepackage{amsthm}
\usepackage[left=3.5cm, right=3.5cm, paperheight=11.8in]{geometry}
\usepackage{hyperref}
\usepackage{fancyhdr}
\usepackage[textsize=small]{todonotes}
\usepackage{enumitem}
\usepackage{comment}
\usepackage{nicefrac}
\usepackage{bm}
\usepackage{mathrsfs}
\usepackage{graphicx}
\usepackage[utf8]{inputenc}
\usepackage{cancel}
\usepackage{mathtools}
\usepackage{tikz}
\usetikzlibrary{cd,decorations.pathreplacing,matrix,arrows,positioning,automata,shapes,shadows,calc,fadings,decorations,snakes,through,intersections}
\usepackage{float}

\theoremstyle{plain}
\newtheorem{theorem}{Theorem}[section]
\newtheorem{lemma}[theorem]{Lemma}
\newtheorem{corollary}[theorem]{Corollary}
\newtheorem{proposition}[theorem]{Proposition}

\theoremstyle{definition}
\newtheorem{definition}[theorem]{Definition}

\newtheorem{example}[theorem]{Example}
\newtheorem{remark}[theorem]{Remark}

\newtheorem{question}[theorem]{Question}

\newtheorem{claim}{Claim}

\newcommand{\cI}{\mathcal{I}}
\newcommand{\I}{\cI}
\newcommand{\cJ}{\mathcal{J}}
\newcommand{\J}{\cJ}

\hypersetup{
    pdftitle={Complexity and Polishability of characterized subgroups},
    pdfauthor={Paolo Leonetti},
    pdfmenubar=false,
    pdffitwindow=true,
    pdfstartview=FitH,
    colorlinks=true,
    linkcolor=blue,
    citecolor=green,
    urlcolor=cyan
}

\providecommand{\MR}[1]{}

\providecommand{\MR}{\relax\ifhmode\unskip\space\fi MR }

\providecommand{\href}[2]{#2}

\subjclass[2020]{
Primary: 54H11, 
03E15;          
Secondary: 22A05, 
40A35,          
54H05,          
43A40,          
11K06           
}

\keywords{characterized subgroups; 
ideal convergence; 
analytic $P$-ideals;
Polishable subgroups;
generalized density ideals;
Borel complexity;
Wadge reducibility}

\begin{document}


\title{Complexity and Polishability of characterized subgroups on the unit circle}


\author[P.~Leonetti]{Paolo Leonetti}
\address{
Universit\`a degli Studi dell'Insubria, via Monte Generoso 71, 21100 Varese, Italy 
}
\email{leonetti.paolo@gmail.com}

\begin{abstract}
\noindent 
Given an ideal $\mathcal I$ on $\omega$, a subgroup $H$ of the unit circle $\mathbb T$ is said to be $\mathcal I$-characterized if there exists a sequence of integers $\bm a=(a_n:n\in\omega)$ such that
$$
H=
\left\{
x\in\mathbb T:
\mathcal I\text{-}\lim_{n\to\infty}a_nx=0
\right\}.
$$
We investigate the descriptive complexity and Polishability of these subgroups in terms of the structural and topological properties of the ideal $\mathcal I$.

Our main structural result shows that $\mathcal{I}$ is an analytic $P$-ideal if and only if all $\mathcal I$-characterized subgroups are Polishable. In such case, we explicitly describe a compatible finer Polish group topology. Using results on Polishable subgroups, we obtain a trichotomy for their possible Borel complexities.

If $\mathcal{I}$ is a generalized density ideal, we show the sharper dichotomy that every proper $\mathcal I$-characterized subgroup is either countable or $F_{\sigma\delta}$-complete. We also prove that this fails for general analytic $P$-ideals by constructing a subgroup characterized by a summable ideal which is neither $F_\sigma$ nor $F_{\sigma\delta}$-complete. Finally, we give explicit descriptions of the subgroups associated with the sequences of powers, the Fibonacci sequence, and the sequence of factorials. We conclude with several open questions. 
\end{abstract}


\maketitle



\thispagestyle{empty}



\section{Introduction}\label{sec:intro}

In this work, we are going to study certain subgroups of the unit circle $\mathbb{T}:=\mathbb{R}/\mathbb{Z}$. To be more precise, let $\mathcal{I}$ be an ideal on the nonnegative integers $\omega$, that is, a family of subsets of $\omega$ which is stable under finite unions and subsets. Unless otherwise stated, it is always assumed that the family of finite sets $\mathrm{Fin}:=[\omega]^{<\omega}$ is contained in $\mathcal{I}$, and that $\mathcal{I}$ is proper, that is, $\omega\notin \mathcal{I}$. In place of $\omega$, we will consider also ideals on countably infinite sets. 
An ideal $\mathcal{I}$ is said to be a $P$-ideal if for every sequence $(S_n: n \in \omega)$ in $\mathcal{I}$ there exists $S \in \mathcal{I}$ such that $S_n\setminus S$ is finite for all $n \in \omega$. 
Identifying $\mathcal{P}(\omega)$ with the Cantor space, we can speak about the topological complexity of ideals. 
Among important examples of ideals are the family $\mathcal{Z}:=\{S\subseteq \omega: \lim_n |S\cap n|/n=0\}$ of asymptotic density zero sets, the summable ideal $\I_{1/n}:=\{S\subseteq \omega: \sum_{n \in S}1/(n+1)<\infty\}$, and maximal ideals (that is, the complements of free ultrafilters on $\omega$). 
For instance, $\mathcal{Z}$ is an $F_{\sigma\delta}$ $P$-ideal which is not $F_\sigma$, while both $\mathrm{Fin}$ and $\I_{1/n}$ are $F_\sigma$ $P$-ideals. In addition, it is well known that 
\begin{equation}\label{eq:inclusionideals}
\mathrm{Fin}\subseteq \I_{1/n} \subseteq \mathcal{Z}.
\end{equation}
Let us also recall that a sequence $\bm{x}=(x_n: n \in \omega) \in \mathbb{T}^\omega$ is $\mathcal{I}$-convergent to $\eta \in \mathbb{T}$, shortened as $\mathcal{I}\text{-}\lim \bm{x}=\eta$ or $\mathcal{I}\text{-}\lim_n x_n=\eta$, if $\{n \in \omega: \|x_n-\eta\|\ge \varepsilon\} \in \mathcal{I}$ for all $\varepsilon>0$. Of course, $\mathrm{Fin}$-convergence coincides with ordinary convergence. Finally, we identify each real number $x$ with its equivalence class $x+\mathbb{Z} \in \mathbb{T}$, and write $\|x\|:=\min\{|x-z|:z \in \mathbb{Z}\}$. 

With the above premises, we recall the notion of $\I$-characterized subgroup. 
\begin{definition}\label{def:mainHaI}
    Let $\mathcal{I}$ be an ideal on $\omega$. A subgroup $H$ of $\mathbb{T}$ is said to be $\mathcal{I}$\emph{-characterized} if there exists a sequence of integers $\bm{a}=(a_n: n \in \omega)\in \mathbb{Z}^\omega$ such that 
    \begin{equation}\label{eq:defHaI}
    H=\mathsf{H}_{\bm{a}}(\mathcal{I}):=\left\{x \in \mathbb{T}: \mathcal{I}\text{-}\lim_{n\to \infty} a_nx=0\right\}.
    \end{equation}
    In the case $\mathcal{I}=\mathrm{Fin}$, the subgroup $H$ is said to be \emph{characterized}. We denote the family of $\mathcal{I}$-characterized subgroups by 
    $$
    \mathscr{H}(\mathcal{I}):=\{\mathsf{H}_{\bm{a}}(\mathcal{I}): \bm{a} \in \mathbb{Z}^\omega\}.
    $$ 
\end{definition}

\subsection{Literature} It is readily seen that the characterized subgroups $\mathsf{H}_{\bm a}(\mathrm{Fin})$ can be regarded as family of points admitting a sufficiently good rational approximation along the prescribed sequence of denominators indexed by $\bm{a}$. In fact, if $\bm{a}$ is a strictly increasing sequence of positive integers then $x \in \mathsf{H}_{\bm a}(\mathrm{Fin})$ if and only if there are rationals $(b_n/a_n: n \in \omega)$ such that $|x-b_n/a_n|=o(1/a_n)$ as $n\to \infty$. 
This connects characterized subgroups with the classical study of
convergence phenomena arising from continued fractions and Diophantine
approximation \cite{MR1062392, MR947645}, cf. also \cite{Matomaki2009}. 
It is worth noting that this point of view is complementary to the classical metric theory of uniform distribution: indeed, by a classical theorem of Weyl, if $\bm{a}$ is a strictly increasing sequence of positive integers, then $(a_nx: n \in \omega)$ is uniformly distributed for Lebesgue-almost every \(x\in\mathbb T\), see e.g. \cite[Chapter 4]{MR419394} (in particular, in such case, $\mathsf{H}_{\bm a}(\mathrm{Fin})$ has Lebesgue measure zero). 

The study of characterized subgroups is, however, rooted in an older and
broader problem: understanding how the algebraic structure of a topological
group is reflected by convergence phenomena. Already in the classical theory
of topological groups, topologically torsion elements were introduced
independently by Braconnier \cite{MR13158} and Vilenkin \cite{MR14104}, and
they played an important role in the structure theory of locally compact
abelian groups and profinite groups. From this perspective, a characterized
subgroup of \(\mathbb T\) is not an ad hoc object: it is the subgroup of
elements which are ``topologically torsion'' with respect to a prescribed
sequence of characters, cf. \cite{MR346087} and \cite[Section 14.4]{MR4510389}. This makes the class of
characterized subgroups rigid enough to have strong structural and
regularity properties, but also flexible enough to contain many nontrivial
examples. For instance, Erd\"os and Taylor \cite{MR92032} proved that if a
positive sequence \(\bm a\) satisfies
\(\sup_n a_{n+1}/a_n<\infty\), then
\(\mathsf{H}_{\bm a}(\mathrm{Fin})\) is countable. Conversely, B\'ir{\'o},
Deshouillers and S{\'o}s \cite{MR1877772} proved that every countable
subgroup of \(\mathbb T\) is characterized.


Lastly, we recall that alternative constructions of subgroups which involve ideals on $\omega$ in the same spirit of \eqref{eq:defHaI} are known to be fruitful, see e.g. Farah and Solecki \cite[p. 516]{MR2189217}, Solecki \cite[Section 3]{MR2197115}, and Hu and Solecki \cite[Section 2]{SoleckiHu}. 
We refer to \cite{
MR2032835, MR4932230, 
MR3145818, MR3288121}
and references therein 
for detailed surveys on the history and additional motivations for the study of characterized subgroups and their ideal version. 

\subsection{Objectives and Plan} 
It is remarkable that \emph{every} subgroup of $\mathbb{T}$ is $\I$-characterized for some ideal $\I$ on $\omega$, see \cite[Theorem 2.1]{MR2227021} and \cite[Theorem 2.11]{FKLT26}. 
Accordingly, our point is not merely to decide whether a subgroup can be represented in the form \(\mathsf H_{\bm a}(\mathcal I)\), but rather to measure how complicated the ideal \(\mathcal I\) must be in order to obtain a given subgroup. Thus the assignment 
$
\mathcal I\mapsto \mathscr H(\mathcal I)
$ 
may be viewed as a way of transferring structural and descriptive-set-theoretic information from ideals on \(\omega\) to subgroups of the circle. In this direction, the present work studies to what extent the Borel or projective complexity of \(\mathcal I\) is reflected by the complexity of the subgroups \(\mathsf H_{\bm a}(\mathcal I)\). 

Our work is also motivated by the study of Polishable subgroups of
the compact Polish group $\mathbb{T}$, that is, Borel subgroups which admit a finer Polish group topology, see \cite{
MR2189217, MR2267154, 
MR1396895, MR5045103}. 
It is known that characterized subgroups $\mathsf{H}_{\bm{a}}(\mathrm{Fin})$ are Polishable, see e.g. \cite[Corollary~1]{MR2729343} or \cite[Theorem 1.21]{MR3461178}. In our setting, our question is to characterize the family of ideals $\I$ for which every $\I$-characterized subgroup is Polishable. 
We show that 
this happens precisely for analytic $P$-ideals. 
In particular, we fix a gap in the proofs of \cite[Proposition 2.4]{MR4078214} and \cite[Remark 1.12]{MR4780094}. 
Finally, we provide explicit characterizations of subgroups $\mathsf{H}_{\bm{a}}(\I)$ in the special cases where $\bm{a}$ is the sequence of powers, the Fibonacci sequence, and the sequence of factorials. 
These descriptions suggest a natural connection with automata theory and symbolic restrictions on expansions \cite{AlloucheShallit2003}. 

More precisely, some instances of our main results follow below: 
\begin{enumerate}[label={\rm (\arabic*)}]

\item [(a)] If $\mathcal{I}$ is a $\mathbf{\Pi}^0_\xi$ ideal for some countable ordinal $3\le \xi<\omega_1$, then every $\mathcal{I}$-characterized subgroup is $\mathbf{\Pi}^0_\xi$; conversely, for every ideal $\mathcal{I}$ there exists a sequence $\bm{a}$ such that 
the complexity of $\I$ is not smaller than the one of $\mathsf{H}_{\bm{a}}(\mathcal{I})$ 
(see Theorems \ref{thm:upperbound} and \ref{thm:wadge}, respectively). In particular, if $\mathcal{I}$ is Borel, then the maximal Borel rank attained by an $\mathcal{I}$-characterized subgroup coincides with the Borel rank of $\mathcal{I}$ (see Corollary \ref{cor:borelrank}). 
In addition, for every $F_{\sigma\delta}$ ideal $\mathcal{I}$ there exists an $F_{\sigma\delta}$-complete $\mathcal{I}$-characterized subgroup of $\mathbb{T}$ (see Corollary \ref{cor:Fsigmadeltacomplete});

\item [(b)] Every $\mathcal{I}$-characterized subgroup of $\mathbb{T}$ is Polishable if and only if $\mathcal{I}$ is an analytic $P$-ideal (see Theorem \ref{thm:analyticPidealscharacterization});


\item [(c)] If $\mathcal{I}$ is a generalized density ideal, every proper $\mathcal{I}$-characterized subgroup of $\mathbb{T}$ is either countable or $F_{\sigma\delta}$-complete (see Theorem \ref{thm:Fsigmadelcomplete}). In particular, every proper uncountable characterized subgroup of $\mathbb{T}$ is $F_{\sigma\delta}$-complete. 
This dichotomy cannot be extended to all analytic $P$-ideals: there exists an $\mathcal{I}_{1/n}$-characterized subgroup which is neither $F_\sigma$ nor $F_{\sigma\delta}$-complete (see Theorem \ref{thm:strangecounterexample});


\item [(d)] For the sequences of powers $(b^n:n\in\omega)$, the Fibonacci numbers $(f_n:n\in\omega)$, and the factorials $(n!:n\ge 1)$, the corresponding $\mathcal{I}$-characterized subgroups admit explicit descriptions (see Theorems \ref{thm:armacost}, \ref{thm:fibonacci}, and \ref{thm:factorialcharacterization}, respectively);


\end{enumerate}

The proofs of the results from Sections \ref{sec:mainresults} and \ref{sec:exampleapplications} are given in Sections \ref{sec:proofsmain} and \ref{sec:proofexamples}, respectively. 
Lastly, in Section \ref{sec:openquestions} we conclude with some open questions.

%
%
%
%

\section{Main results}\label{sec:mainresults}

\subsection{Strictly increasing positive representations} 
Since $0 \in \mathbb{T}$ has a local base of symmetric open neighborhoods, it is immediate that 
$$
\mathsf{H}_{\bm{a}}(\I)=\mathsf{H}_{(|a_n|: n \in \omega)}(\I),
$$
hence it can be assumed without loss of generality that $\bm{a} \in \omega^\omega$. In addition, in the case $\I=\mathrm{Fin}$, it is well known that $\mathsf{H}_{\bm{a}}(\mathrm{Fin})=\mathbb{T}$ if and only if $\bm{a}$ is finitely supported, see \cite{Schoen}; cf. \cite[Theorem D]{MR4932230} and also \cite[Theorem 7.8]{MR419394} for a textbook exposition.

However, a recurrent convention in the literature on characterized subgroups of the circle is to work not with arbitrary integer sequences $\bm{a}$, but with strictly increasing sequences of positive integers. As anticipated in Section \ref{sec:intro}, this is natural from the Diophantine approximation point of view. 
This convention appears already in the work of B{\'i}r{\'o}, Deshouillers and S{\'o}s \cite{MR1877772}, where characterizing sequences are sequences
of positive integers, and also in Beiglb{\"o}ck's strengthening for countable subgroups \cite{MR2362430}. 
It is also implicit in several concrete classes of examples, such as continued fraction denominators \cite{MR947645} and lacunary sequences \cite{MR92032}, cf. also \cite{MR4594809, MR48504}. 

For this reason, it is useful to know that the positivity and strict monotonicity of the characterizing sequence are not genuine restrictions. Our first main result (Theorem \ref{thm:increasingSequenceSameFin} below) shows something more: every proper characterized subgroup of \(\mathbb T\) admits a strictly increasing positive characterizing sequence, and this sequence can be chosen to dominate any prescribed sequence. 


\begin{theorem}\label{thm:increasingSequenceSameFin}
Let $H$ be a proper characterized subgroup of $\mathbb{T}$ and pick a sequence of integers $(r_n: n\in \omega)$. Then there exists a strictly increasing sequence of positive integers $\bm{a}=(a_n: n \in \omega)$ such that 
$$
H=\mathsf{H}_{\bm{a}}(\mathrm{Fin})
\quad \text{ and }\quad 
a_n\ge r_n \text{ for all }n \in \omega.
$$
\end{theorem}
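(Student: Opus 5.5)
The plan is to keep a characterizing sequence $\bm b$ of $H$ as ``differences'' of consecutive terms of a new, rapidly increasing sequence. Fix $\bm b\in\omega^\omega$ with $H=\mathsf H_{\bm b}(\mathrm{Fin})$; by the remark at the start of this subsection we may take $b_n\ge 0$. Since $H$ is proper, $\bm b$ is not finitely supported, so infinitely many $b_n$ are nonzero. Deleting the terms $b_n=0$ does not change $\mathsf H_{\bm b}(\mathrm{Fin})$, because $0\cdot x=0$. So we may assume $b_n\ge 1$ for all $n$.

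First I would find auxiliary integers $c_k$, with $c_k\to\infty$ as fast as we like, such that $c_kx\to 0$ for every $x\in H$. There are two cases.
\begin{itemize}
\item If $\bm b$ is unbounded, take $c_k:=b_{m_k}$ for a strictly increasing sequence of indices $m_k$ along which $b_{m_k}$ is as large as needed. Then $c_kx\to0$ for $x\in H$, since $(c_kx)$ is a subsequence of $(b_nx)$.
\item If $\bm b$ is bounded, let $C$ be the finite set of values taken infinitely often and $g:=\gcd C\ge1$. Then $H=\{x: cx=0 \text{ for all } c\in C\}=\{x: gx=0\}$. Take $c_k:=gN_k$ with $N_k$ as large as needed; then $c_kx=0$ for all $x\in H$.
\end{itemize}

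Next define the interleaved sequence
$$
a_{2k}:=c_k,\qquad a_{2k+1}:=c_k+b_k .
$$
Choose the $c_k$ recursively so that $c_k>c_{k-1}+b_{k-1}$, $c_k\ge r_{2k}$ and $c_k+b_k\ge r_{2k+1}$. This is possible because in both cases the $c_k$ can be taken arbitrarily large at each step. Since $b_k\ge1$, the sequence $\bm a$ is strictly increasing, positive, and satisfies $a_n\ge r_n$ for all $n$.

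The two inclusions are then immediate.
\begin{itemize}
\item If $x\in H$, then $b_kx\to0$ and $c_kx\to0$, so $a_nx\to0$ along both parities; hence $x\in\mathsf H_{\bm a}(\mathrm{Fin})$.
\item If $a_nx\to0$, then $b_kx=a_{2k+1}x-a_{2k}x\to0$, so $x\in H$.
\end{itemize}
Therefore $H=\mathsf H_{\bm a}(\mathrm{Fin})$.

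The only delicate step is the existence of the $c_k$, namely elements tending to infinity that are ``annihilated in the limit'' by all of $H$. That is why the bounded and unbounded cases are separated above. In the bounded case one must check that $H=\{x:gx=0\}$. Since $\bm b$ takes finitely many values, the norms $\|b_nx\|$ take only finitely many values; hence $b_nx\to0$ forces $b_nx=0$ eventually, which is equivalent to $cx=0$ for every $c\in C$. The set $\{x: cx=0 \text{ for all } c\in C\}$ is the finite subgroup $\tfrac1g\mathbb Z/\mathbb Z$ by B\'ezout.
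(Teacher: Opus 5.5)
Your proof is correct. It is a streamlined variant of the paper's argument rather than a new idea: the final step is the same differencing trick, and you drop the paper's preliminary reduction.

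In both proofs, the even terms form a fast-growing ``carrier'' that is killed in the limit on $H$. The odd terms are the carrier plus a term of a characterizing sequence, which is recovered as $a_{2k+1}-a_{2k}$.

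The two proofs differ in what gets interleaved.
\begin{itemize}
\item The paper first replaces $\bm b$ by a strictly increasing characterizing sequence $\bm c$. It splits the nonzero values of $\bm b$ into those taken infinitely often ($P$) and those taken finitely often ($R$), and proves $H=P^\natural\cap Q^\natural$. It then lets $\bm c$ enumerate $R$ or $R\cup\{g,2g,\dots\}$ with $g=\gcd P$. Finally it uses the subsequence $c_{m_k}$ as carrier and $c_k$ as differences.
\item You observe that the differences $a_{2k+1}-a_{2k}$ only need to be positive, not monotone. So after deleting zeros you interleave $\bm b$ itself, and the only object to construct is the carrier.
\end{itemize}

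Your unbounded/bounded split (a subsequence of $\bm b$ versus multiples of $g$) plays the role of the paper's $P=\emptyset$ / $P\neq\emptyset$ split. When $\bm b$ is bounded, $R$ is finite and the multiples of $g$ act exactly as in the paper's Case~2.

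Your route avoids the $P^\natural\cap Q^\natural$ decomposition entirely. In the bounded case you only need the inclusion $H\subseteq\{x: gx=0\}$. The reverse inclusion you also verify is never used, because $b_kx\to0$ is recovered directly from the differences.

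The paper's route is longer. In exchange it yields, as a by-product, an explicit canonical strictly increasing characterizing sequence and the structural description $H=P^\natural\cap Q^\natural$ in terms of the values of $\bm b$.
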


The statement in the case where $r_n=0$ for all $n\in \omega$ can be found also in \cite[Section 3.2]{MR3864800}. From the opposite direction, it is worth noting that the sequence $\bm{a}$ in Theorem \ref{thm:increasingSequenceSameFin} cannot satisfy, in general, certain upper bounds such as $\sup_n a_{n+1}/a_n<\infty$. In fact, 
it is well known that there are proper uncountable characterized subgroups of $\mathbb{T}$ (e.g. $\mathsf{H}_{(2^{2^n})}(\mathrm{Fin})$): more precisely, it is known that if $\bm{a}$ is a strictly increasing sequence of positive integers and $a_n$ divides $a_{n+1}$ for all $n \in \omega$ then $\mathsf{H}_{\bm{a}}(\mathrm{Fin})$ is countable if and only if $\sup_n a_{n+1}/a_n<\infty$, see \cite{MR4594809, MR48504}. For instance $\mathsf{H}_{(n!)}(\mathrm{Fin})$ is a proper uncountable subgroup, cf. Remark \ref{rmk:examplecomplete} below. 

We do not know whether the analogue of Theorem \ref{thm:increasingSequenceSameFin} holds for $\mathcal{Z}$, see Question \ref{q:Zstrictlyincreasing}. 

\subsection{Bounds on topological complexities} 
%
%
We now turn to the descriptive complexity of \(\mathcal I\)-characterized
subgroups. Our first result in this direction gives a general upper
bound: if \(\mathcal I\) is sufficiently low in the Borel hierarchy, then the same is true for every subgroup \(\mathsf H_{\bm a}(\mathcal I)\). In turn, this improves on \cite[Theorem 2.5(i)--(iii)]{FKLT26}. 

\begin{theorem}\label{thm:upperbound}
    Let $\I$ be a $\mathbf{\Pi}^0_\xi$ ideal on $\omega$ for some countable ordinal $3\le \xi<\omega_1$. Then each $\mathsf{H}_{\bm{a}}(\I)$ is a $\mathbf{\Pi}^0_\xi$ subgroup.
\end{theorem}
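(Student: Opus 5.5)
Write
$$
\mathsf{H}_{\bm a}(\I)=\bigcap_{k\ge 1} \left\{x\in\mathbb{T}: A_k(x)\in \I\right\},
\qquad
A_k(x):=\{n\in\omega: \|a_nx\|\ge 1/k\},
$$
where we used that the sets $A_k(x)$ increase in $k$, so it suffices to quantify over $\varepsilon=1/k$. The plan is to show that for each $k$ the set $\{x: A_k(x)\in\I\}$ is $\mathbf{\Pi}^0_\xi$. Since $\mathbf{\Pi}^0_\xi$ is closed under countable intersections, this gives the claim; that the set is a subgroup is immediate from the definition of $\I$-convergence and the triangle inequality for $\|\cdot\|$.

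The difficulty is that the map $f_k\colon \mathbb{T}\to 2^\omega$, $x\mapsto A_k(x)$, is not continuous. Each coordinate set $\{x:\|a_nx\|\ge 1/k\}$ is closed, so $f_k$ is only Baire class one, and the preimage $f_k^{-1}(\I)$ would naively be $\mathbf{\Pi}^0_{\xi+1}$. To avoid the loss of one level, I would sandwich between an open and a closed version. For $\delta>0$ set $U_\delta(x):=\{n:\|a_nx\|>\delta\}$, whose coordinates are open, and $C_\delta(x):=\{n:\|a_nx\|\ge\delta\}$, whose coordinates are closed. Then $C_{1/k}(x)\subseteq U_{1/(k+1)}(x)$, and since the sets grow as $\delta$ decreases,
$$
\mathsf{H}_{\bm a}(\I)=\bigcap_k\{x: U_{1/k}(x)\in\I\}=\bigcap_k\{x: C_{1/k}(x)\in\I\}.
$$
So it suffices to use whichever version is easier for each $k$.

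Next I would use that $\I$ is hereditary, hence downward closed. For a downward closed set $\I$, the set $\{x: U(x)\in\I\}$ with $U$ having open coordinates can be rewritten as $\{x: \forall s\in\mathrm{Fin}\,(s\subseteq U(x)\Rightarrow s\in\I)\}$ only when $\I$ is determined by finite sets, which fails in general. Instead, I would argue with a continuous reduction to Borel classes, as follows. Consider the map $g\colon\mathbb{T}\times 2^\omega\to 2^\omega$ and note that
$$
\{x: C_{1/k}(x)\in\I\}\subseteq\{x:\exists B\in\I,\ C_{1/k}(x)\subseteq B\}.
$$
This is only an analytic description, so I would prefer a direct level count. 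Here $\mathbf{\Pi}^0_\xi$ sets for $\xi\ge3$ are closed under preimages by Baire class one functions up to shifting only at finite levels: a Baire class one preimage of $\mathbf{\Pi}^0_\xi$ is $\mathbf{\Pi}^0_{1+\xi}$. When $\xi\ge\omega$ we have $1+\xi=\xi$, which is done. For finite $\xi\ge3$, I would exploit hereditariness. Write $\I=\bigcap_m\bigcup_j F_{m,j}$ with $F_{m,j}\in\mathbf{\Pi}^0_{\xi-2}$, and replace the sets by their downward closures $\downarrow F$ where possible. For a downward closed target, $x\mapsto C(x)\in\I$ is equivalent to $\forall \delta'$ in a countable set, $U_{\delta'}(x)\in\I$, which lets me interleave open and closed approximations and absorb the extra quantifier into the existing $\bigcap\bigcup$. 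This is the step I expect to be the main obstacle: making the ``monotone Baire-one preimage does not raise the rank for $\xi\ge3$'' argument precise. I would try induction on $\xi$, with base case $\xi=3$: a downward closed $\mathbf{\Pi}^0_3$ set is a countable intersection of downward closed $\mathbf{\Sigma}^0_2$ sets, each of which is a union of closed hereditary sets. For these, membership of $U_\delta(x)$ in a closed hereditary set $K$ is a closed condition in $x$, since it says that every finite subset of $U_\delta(x)$ lies in $K$, and for each finite $s$ the condition $s\subseteq U_\delta(x)$ is open. The induction step would then propagate through countable unions and intersections of hereditary sets.
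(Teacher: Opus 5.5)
Your reduction $\mathsf{H}_{\bm a}(\I)=\bigcap_k\{x:U_{1/k}(x)\in\I\}=\bigcap_k\{x:C_{1/k}(x)\in\I\}$, the subgroup check, and the case $\xi\ge\omega$ are fine. For infinite $\xi$, a Baire class one preimage of a $\mathbf{\Pi}^0_\xi$ set is $\mathbf{\Pi}^0_{1+\xi}$, and $1+\xi=\xi$.

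The gap is the finite case $3\le\xi<\omega$, which includes $\xi=3$, the case relevant for analytic $P$-ideals. There everything rests on the claim that a hereditary $\mathbf{\Pi}^0_3$ set is a countable intersection of hereditary $\mathbf{\Sigma}^0_2$ sets, and on analogous ``monotone normal forms'' at higher finite levels. You assert this without proof, and it cannot be taken for granted. Suppose $\I=\bigcap_m A_m$ with $A_m\in\mathbf{\Sigma}^0_2$.
The downward closures $\downarrow A_m$ are still $\mathbf{\Sigma}^0_2$, but $\bigcap_m\downarrow A_m$ can be strictly larger than $\I$, because the $A_m$ contain non-members of $\I$.
The hereditary kernels $\{A:\mathcal P(A)\subseteq A_m\}$ do intersect to $\I$, but they are a priori only coanalytic.

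Even granting such normal forms, the induction does not run with $U_\delta$ alone. For $a_0\neq 0$, the $U_\delta$-preimage of the hereditary clopen set $\{A:0\notin A\}$ is the closed, non-open set $\{x:\|a_0x\|\le\delta\}$. So hereditary open sets pull back only to $\mathbf{\Sigma}^0_2$ sets, and the count loses a level at the even stages; for $\xi=4$ this route yields only $\mathbf{\Pi}^0_5$. You would need to switch to $C_\delta$ at those stages, and the proposal does not make this precise. Note also that your route uses only that the coordinate sets are open or closed. It would therefore prove the same bound for $\I$-limits of \emph{arbitrary} continuous functions, which is a considerably more delicate statement.

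The missing idea, which the paper uses, is to exploit the special structure of the maps $x\mapsto a_nx$. The $n$th coordinate of $\Psi_k(x):=\{n:\|a_nx\|\ge 2^{-k}\}$ is the indicator of a closed set whose boundary lies in the finite set $\{x:\|a_nx\|=2^{-k}\}$. Hence $\Psi_k$ is continuous off a countable set $D_k$. On the $G_\delta$ set $Z_k:=\mathbb{T}\setminus D_k$ the restriction is continuous, so its preimage of $\I$ is $C_k\cap Z_k$ for some $C_k\in\mathbf{\Pi}^0_\xi(\mathbb{T})$. Therefore $\Psi_k^{-1}[\I]=(C_k\cap Z_k)\cup(D_k\cap\Psi_k^{-1}[\I])$. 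The second piece is countable, hence in $\mathbf{\Sigma}^0_2\subseteq\mathbf{\Pi}^0_\xi$, and this is exactly where $\xi\ge 3$ is used. Intersecting over $k$ concludes. No hereditariness and no induction on $\xi$ are needed, and finite and infinite $\xi$ are handled uniformly.
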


We remark that special instances of Theorem \ref{thm:upperbound} (for certain analytic $P$-ideals, hence in the case $\xi=3$) appeared in \cite{protasov, MR1240629, MR4078214}. 

Then, we show that the above upper bound is, in a precise sense, optimal. To this aim, given topological spaces $X,Y$ and subsets $A\subseteq X$ and $B\subseteq Y$, we say that $A$ \emph{is Wadge reducible to $B$}, shortened as 
$$
A\le_{\mathrm{W}} B,
$$
if there exists a continuous function $f: X\to Y$ such that $f^{-1}[B]=A$, see \cite[p. 156]{MR1321597}. Informally, 
in the following result we show that the topological complexity of some $\I$-characterized subgroup cannot be \textquotedblleft simpler\textquotedblright\,than the one of the ideal $\I$. 


\begin{theorem}\label{thm:wadge}
    There exists $\bm{a} \in \omega^\omega$ such that $\I \le_{\mathrm{W}} \mathsf{H}_{\bm{a}}(\I)$ for every ideal $\I$ on $\omega$.
\end{theorem}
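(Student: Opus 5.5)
The plan is to choose a single, very fast growing and highly divisible sequence $\bm a$, and to code a subset $S\subseteq\omega$ as a point $x_S\in\mathbb T$. The point should satisfy $\|a_nx_S\|\approx 1/2$ when $n\in S$ and $\|a_nx_S\|\approx 0$ when $n\notin S$, with errors tending to $0$ uniformly in $S$. Then $\{n:\|a_nx_S\|\ge\varepsilon\}$ will coincide with $S$ up to a finite set for every small $\varepsilon$. Since $\mathrm{Fin}\subseteq\I$, this gives $x_S\in\mathsf H_{\bm a}(\I)$ if and only if $S\in\I$. The construction of $\bm a$ and of the map $S\mapsto x_S$ will not depend on $\I$, which is the uniformity the statement requires.

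Concretely, I would take $a_n:=2^{(n+1)^2}$. Then $a_k\mid a_n$ for $k<n$, each quotient $a_n/a_k$ is even, and $a_{n+1}/a_n=2^{2n+3}\to\infty$. Define $f:\mathcal P(\omega)\to\mathbb T$ (identifying $\mathcal P(\omega)$ with $2^\omega$) by
$$
f(S):=\sum_{k\in S}\frac{1}{2a_k}\pmod 1 .
$$
The series converges uniformly in $S$, since its terms are dominated by the summable sequence $1/(2a_k)$. Each partial sum depends only on finitely many coordinates of $S$, so $f$ is continuous as a uniform limit of continuous maps.

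The key computation is to fix $n$ and split $a_nf(S)$ into three parts:
\begin{itemize}
\item the terms with $k<n$, which are integers $a_n/(2a_k)$ because $a_n/a_k$ is even, so they vanish in $\mathbb T$;
\item the term $k=n$, which contributes exactly $1/2$ if $n\in S$ and $0$ otherwise;
\item the tail with $k>n$, which is at most $\sum_{k>n}a_n/(2a_k)\le 2^{-(2n+3)}$, using $a_k/a_n\ge 2^{2n+3}\cdot 2^{2(k-n-1)}$, a geometric bound.
\end{itemize}
Hence $\bigl|\|a_nf(S)\|-\tfrac12\mathbf 1_S(n)\bigr|\le\delta_n:=2^{-(2n+3)}$, with $\delta_n\to0$ independently of $S$.

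To conclude, suppose first that $S\in\I$. For $\varepsilon>0$ the set $\{n:\|a_nf(S)\|\ge\varepsilon\}$ is contained in $S\cup\{n:\delta_n\ge\varepsilon\}$, which lies in $\I$ because $\mathrm{Fin}\subseteq\I$. Thus $f(S)\in\mathsf H_{\bm a}(\I)$. Suppose instead that $S\notin\I$. Take $\varepsilon=1/4$; the set $\{n:\|a_nf(S)\|\ge 1/4\}$ contains $S\setminus\{n:\delta_n\ge 1/4\}$, a cofinite subset of $S$, which is not in $\I$. Hence $f(S)\notin\mathsf H_{\bm a}(\I)$. Therefore $f^{-1}[\mathsf H_{\bm a}(\I)]=\I$, that is, $\I\le_{\mathrm W}\mathsf H_{\bm a}(\I)$ for every ideal $\I$.

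The only delicate step is the tail estimate, together with the parity condition that makes the earlier terms vanish modulo $1$. Both are secured by the choice $a_n=2^{(n+1)^2}$, so I do not expect a real obstacle.
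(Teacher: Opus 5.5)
Your proof is correct and is essentially the paper's argument. You use the same coding $S\mapsto\sum_{k\in S}1/(2a_k)$ and the same three-part split of $a_nf(S)$: an integer head, the $\frac12\mathbf 1_S(n)$ term, and a tail that is small uniformly in $S$. The only difference is that the paper takes $a_n=2^{2^{n+2}-1}$ where you take $a_n=2^{(n+1)^2}$, and this does not affect the argument.
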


As a consequence, we prove that if $\I$ is not in a given pointclass $\mathbf{\Gamma}$, then the same holds for some $\I$-characterized subgroup. More precisely, a pointclass \(\mathbf{\Gamma}\) is said to be closed under continuous preimages if, for
every pair of Polish spaces \(X,Y\), every continuous map \(f:X\to Y\), and every
\(A\in\mathbf{\Gamma}(Y)\), we have \(f^{-1}[A]\in\mathbf{\Gamma}(X)\). This is the case, for
instance, for all Borel classes \(\mathbf{\Sigma}^0_\xi,\mathbf{\Pi}^0_\xi\)
with \(1\leq\xi<\omega_1\), for the class of Borel sets, for analytic sets, and
for coanalytic sets (and, more generally, for $\mathbf{\Sigma}^1_m$, $\mathbf{\Pi}^1_m$, and $\mathbf{\Delta}^1_m$ with $m\ge 1$), see \cite{MR1321597}. An analogue definition applies for pointclasses closed under preimages by Baire class one maps. 
\begin{corollary}\label{cor:pointclassessimple}
Let \(\mathbf{\Gamma}\) be a pointclass which is closed under continuous preimages. 
Let $\I$ be an ideal on $\omega$ 
with $\I\notin \mathbf{\Gamma}(2^\omega)$, then there exists $\bm{a} \in \omega^\omega$ such that $\mathsf{H}_{\bm{a}}(\I) \notin \mathbf{\Gamma}(\mathbb{T})$. 

For instance, if $\I$ is not analytic \textup{[}or Borel, or $F_{\sigma\delta}$, respectively\textup{]} then there exists $\bm{a} \in \omega^\omega$ such that $\mathsf{H}_{\bm{a}}(\I)$ is not analytic \textup{[}or Borel, or $F_{\sigma\delta}$, resp.\textup{]}. 
\end{corollary}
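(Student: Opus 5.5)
This is a direct consequence of Theorem \ref{thm:wadge}, so I will deduce it from there rather than construct anything new. Let $\bm{a}\in\omega^\omega$ be the sequence given by Theorem \ref{thm:wadge}. It works uniformly for every ideal, so it does not depend on $\I$, and there is a continuous map $f:2^\omega\to\mathbb{T}$ with $f^{-1}[\mathsf{H}_{\bm{a}}(\I)]=\I$. Here $\I$ is viewed as a subset of $2^\omega$ via characteristic functions.

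The argument is by contradiction. Suppose $\mathsf{H}_{\bm{a}}(\I)\in\mathbf{\Gamma}(\mathbb{T})$. Both $2^\omega$ and $\mathbb{T}$ are Polish spaces and $f$ is continuous. Since $\mathbf{\Gamma}$ is closed under continuous preimages, $\I=f^{-1}[\mathsf{H}_{\bm{a}}(\I)]\in\mathbf{\Gamma}(2^\omega)$. This contradicts the hypothesis $\I\notin\mathbf{\Gamma}(2^\omega)$, so $\mathsf{H}_{\bm{a}}(\I)\notin\mathbf{\Gamma}(\mathbb{T})$.

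For the examples, it remains to check that the three pointclasses named in the statement are closed under continuous preimages:
\begin{itemize}
\item analytic sets are $\mathbf{\Sigma}^1_1$;
\item Borel sets form the union of all $\mathbf{\Sigma}^0_\xi$;
\item $F_{\sigma\delta}$ sets are $\mathbf{\Pi}^0_3$.
\end{itemize}
All three closure properties are standard, as recalled just before the statement, and this gives the three instances.

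The only step needing any care is applying the definition of Wadge reducibility between two different spaces, $2^\omega$ and $\mathbb{T}$. The definition in the paper explicitly allows distinct topological spaces, and the pointclass hypothesis is stated for pairs of Polish spaces, so this matches exactly. I expect no real obstacle, since all the work is contained in Theorem \ref{thm:wadge}.
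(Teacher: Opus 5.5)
Your proposal is correct and follows the paper's route: the paper simply states that the corollary is an immediate consequence of Theorem~\ref{thm:wadge}. Your argument spells out exactly that deduction, combining the continuous reduction $\Phi:2^\omega\to\mathbb{T}$ with $\Phi^{-1}[\mathsf{H}_{\bm a}(\I)]=\I$ and the closure of $\mathbf{\Gamma}$ under continuous preimages.
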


In particular, if $\I$ is a maximal $P$-ideal (whose existence is independent of $\mathsf{ZFC}$) then it is well known that $\I$ is not analytic (see e.g. \cite[Proposition 2.5]{FKL24} and \cite[Theorem 21.6]{MR1321597}), hence by Corollary \ref{cor:pointclassessimple} there exists $\bm{a} \in \omega^\omega$ such that $\mathsf{H}_{\bm{a}}(\I)$ is not analytic. This provides an answer to \cite[Question 6.6(c)]{protasov}.

To state the next consequence, we recall \cite[Definition 22.9]{MR1321597}: given a pointclass $\mathbf{\Gamma}$ in Polish spaces and given Polish spaces $X,Y$ with $X$ zero-dimensional, a subset $A \subseteq Y$ is said to be $\mathbf{\Gamma}$\emph{-complete} if $A \in \mathbf{\Gamma}(Y)$ and $B\le_{\mathrm{W}}A$ for all $B \in \mathbf{\Gamma}(X)$. 

Thus, we obtain sharp rank estimates in the case for Borel ideals:  
more precisely, given a Borel ideal $\I$ on $\omega$, we denote its Borel rank by 
$$
\mathrm{rank}(\I):=\min\{\xi<\omega_1: \I \text{ is a }\mathbf{\Pi}^0_{\xi}\text{-subset of }2^\omega\}. 
$$
Analogously, for each Borel subgroup $H$ of $\mathbb{T}$, we write $\mathrm{rank}(H):=\min\{\xi<\omega_1: H \text{ is a }\mathbf{\Pi}^0_{\xi}\text{-subset of }\mathbb{T}\}$; see \cite[Section 1.1]{MR2189217} and references therein. Before we state the next application, recall that if an ideal $\I$ is Borel, then every $\I$-characterized subgroup of $\mathbb{T}$ is Borel, see \cite[Theorem 2.5(iv)]{FKLT26} or Corollary \ref {cor:Boreanalyticetc} below; in particular, we can speak about the Borel rank of $\mathsf{H}_{\bm{a}}(\I)$ whenever $\I$ is Borel. 

\begin{corollary}\label{cor:borelrank}
Let $\I$ be a Borel ideal on $\omega$. Then the following hold\textup{:}
 \begin{enumerate}[label={\rm (\roman*)}]
     \item \label{item:1borrank} If $\I$ is $\mathbf{\Pi}^0_m$-complete ideal for some $m\ge 3$, then $\mathsf{H}_{\bm{a}}(\I)$ is $\mathbf{\Pi}^0_m$-complete subgroup for some $\bm{a} \in \omega^\omega$\textup{;}
     \item \label{item:2borrank} $\mathrm{rank}\left(\mathsf{H}_{\bm{a}}(\I)\right)\le \mathrm{rank}(\I)$ for all $\bm{a} \in \omega^\omega$\textup{;}
     \item \label{item:3borrank} $\mathrm{rank}\left(\mathsf{H}_{\bm{a}}(\I)\right)=\mathrm{rank}(\I)$ for some $\bm{a} \in \omega^\omega$\textup{.}
 \end{enumerate}
\end{corollary}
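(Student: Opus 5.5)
The plan is to derive all three items from Theorem \ref{thm:upperbound} and Theorem \ref{thm:wadge}, using the sequence $\bm{a}\in\omega^\omega$ given by Theorem \ref{thm:wadge}. That sequence works for every ideal simultaneously and satisfies $\I\le_{\mathrm{W}}\mathsf{H}_{\bm{a}}(\I)$.

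\emph{Item \ref{item:1borrank}.} Let $\I$ be $\mathbf{\Pi}^0_m$-complete with $m\ge 3$. Then $\I\in\mathbf{\Pi}^0_m$, so Theorem \ref{thm:upperbound} (with $\xi=m$) shows that $\mathsf{H}_{\bm{a}}(\I)$ is $\mathbf{\Pi}^0_m$. For completeness, take any $B\in\mathbf{\Pi}^0_m(X)$ with $X$ zero-dimensional Polish. Then $B\le_{\mathrm{W}}\I$ by the completeness of $\I$, and $\I\le_{\mathrm{W}}\mathsf{H}_{\bm{a}}(\I)$ by Theorem \ref{thm:wadge}. Composing the two continuous reductions gives $B\le_{\mathrm{W}}\mathsf{H}_{\bm{a}}(\I)$. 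Hence $\mathsf{H}_{\bm{a}}(\I)$ is $\mathbf{\Pi}^0_m$-complete.

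\emph{Item \ref{item:2borrank}.} Set $\xi:=\mathrm{rank}(\I)$. If $\xi\ge 3$, the bound follows directly from Theorem \ref{thm:upperbound}.

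The small ranks are the only real obstacle, because Theorem \ref{thm:upperbound} requires $\xi\ge 3$. I would handle them with known facts about ideals.
\begin{itemize}
\item A closed ideal containing $\mathrm{Fin}$ would contain $\omega$, being the limit of $\{0,\dots,n\}$. This contradicts properness, so $\xi\ne 1$.
\item If $\I$ is $G_\delta$ (that is, $\mathbf{\Pi}^0_2$), then it is comeager in its own closure. Since $\I\supseteq\mathrm{Fin}$, that closure is all of $2^\omega$. Then $\I$ and its translate $\{S\triangle\omega: S\in\I\}$ are both comeager, so they intersect. This forces $\omega\in\I$, which is impossible. Hence $\xi\ge 3$ in every case.
\item The degenerate ranks $0$ or $1$ do not arise, by the first point and properness.
\end{itemize}
So item \ref{item:2borrank} always reduces to Theorem \ref{thm:upperbound}.

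\emph{Item \ref{item:3borrank}.} Keep $\bm{a}$ from Theorem \ref{thm:wadge}, so that $\I\le_{\mathrm{W}}\mathsf{H}_{\bm{a}}(\I)$ via a continuous $f:2^\omega\to\mathbb{T}$. Suppose $\mathsf{H}_{\bm{a}}(\I)\in\mathbf{\Pi}^0_\eta(\mathbb{T})$. Since $\mathbf{\Pi}^0_\eta$ is closed under continuous preimages, $\I=f^{-1}[\mathsf{H}_{\bm{a}}(\I)]\in\mathbf{\Pi}^0_\eta(2^\omega)$. This gives $\mathrm{rank}(\I)\le\mathrm{rank}(\mathsf{H}_{\bm{a}}(\I))$. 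Combined with item \ref{item:2borrank}, we obtain equality.

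Note that this direction uses only Corollary \ref{cor:pointclassessimple}-style reasoning. The delicate point of the whole argument is therefore the lower-rank verification in item \ref{item:2borrank}, which the $G_\delta$ argument above settles.
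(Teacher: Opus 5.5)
Your proof is correct. Its skeleton is the paper's: the upper bound comes from Theorem \ref{thm:upperbound}, and the lower bound from the uniform reduction $\I\le_{\mathrm{W}}\mathsf{H}_{\bm a}(\I)$ of Theorem \ref{thm:wadge}, which the paper routes through Corollary \ref{cor:pointclassessimple}. The differences lie in the details, and in each case your version is, if anything, more self-contained.

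\begin{itemize}
\item In item \ref{item:1borrank}, the paper first shows $\mathsf{H}_{\bm a}(\I)\in\mathbf{\Pi}^0_m(\mathbb{T})\setminus\mathbf{\Sigma}^0_m(\mathbb{T})$. It then invokes Wadge-type results (Kechris, Theorem 22.10 and Exercise 24.20) to turn non-membership in $\mathbf{\Sigma}^0_m$ into completeness in the non-zero-dimensional space $\mathbb{T}$. You instead compose the reductions $B\le_{\mathrm{W}}\I\le_{\mathrm{W}}\mathsf{H}_{\bm a}(\I)$. This follows immediately from the definition of completeness and needs no Wadge lemma.

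\item In item \ref{item:2borrank}, the paper simply cites Theorem \ref{thm:upperbound}, silently using that $\mathrm{rank}(\I)\ge 3$. Your Baire-category argument supplies exactly this missing check: a proper ideal containing $\mathrm{Fin}$ is dense, so if it were $G_\delta$ it would be comeager. It would then meet the comeager set $\{\omega\setminus S:S\in\I\}$, forcing $\omega\in\I$. This argument already covers the closed case, so your first bullet is redundant.

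\item In item \ref{item:3borrank}, working directly with the single $\bm a$ of Theorem \ref{thm:wadge} makes it transparent that one sequence witnesses $\mathsf{H}_{\bm a}(\I)\notin\mathbf{\Pi}^0_\alpha(\mathbb{T})$ for all $\alpha<\mathrm{rank}(\I)$ simultaneously.
\end{itemize}
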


In particular, 
it follows by Corollary \ref{cor:borelrank} and \cite[Remark 10.13]{FKL24} that for each integer $m\ge 3$ there exists a $\mathbf{\Pi}^0_{m}$-ideal $\I$ and a sequence $\bm{a} \in \omega^\omega$ such that $\mathsf{H}_{\bm{a}}(\I)$ is a $\mathbf{\Pi}^0_{m}$-subgroup of $\mathbb{T}$ which is not $\mathbf{\Pi}^0_{m-1}$. This provides an answer to \cite[Question 6.6(b)]{protasov} (in the case of not necessarily $P$-ideals). 

In the following result, we recall that the pointclasses $\mathbf{\Sigma}^1_m$, $\mathbf{\Pi}^1_m$, and $\mathbf{\Delta}^1_m$ with $m\ge 1$ are closed under countable intersections and preimages of Baire class one maps (in particular $\mathbf{\Delta}^1_1$ coincides with the family of Borel sets); note also this does not hold, in general, for the pointclasses $\mathbf{\Sigma}^0_\xi$ or $\mathbf{\Pi}^0_\xi$ with $\xi<\omega$. 
\begin{proposition}\label{prop:completenesspropH}
    Let \(\mathbf{\Gamma}, \mathbf{\Gamma}^\prime\) be pointclasses such that $\mathbf{\Gamma}$ is closed under countable intersections and preimages of Baire class one maps, and $\mathbf{\Gamma}^\prime$ is closed under continuous preimages. 
    Let $\I$ be an ideal on $\omega$ such that 
$\I\in \mathbf{\Gamma}(2^\omega)\setminus \mathbf{\Gamma}^\prime(2^\omega)$. Then there exists $\bm{a} \in \omega^\omega$ such that $\mathsf{H}_{\bm{a}}(\I) \in \mathbf{\Gamma}(\mathbb{T})\setminus \mathbf{\Gamma}^\prime(\mathbb{T})$. 
\end{proposition}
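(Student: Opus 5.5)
Take $\bm{a}$ to be the sequence from Theorem \ref{thm:wadge}; we will show that this single sequence works. Write $H:=\mathsf{H}_{\bm{a}}(\I)$. The two membership claims are proved separately.

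\emph{$H\notin\mathbf{\Gamma}^\prime(\mathbb{T})$.} By Theorem \ref{thm:wadge}, $\I\le_{\mathrm{W}} H$, so there is a continuous $f:2^\omega\to\mathbb{T}$ with $f^{-1}[H]=\I$. If $H$ were in $\mathbf{\Gamma}^\prime(\mathbb{T})$, closure of $\mathbf{\Gamma}^\prime$ under continuous preimages would put $\I$ in $\mathbf{\Gamma}^\prime(2^\omega)$, contradicting the hypothesis. This is exactly the argument behind Corollary \ref{cor:pointclassessimple}.

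\emph{$H\in\mathbf{\Gamma}(\mathbb{T})$.} We rewrite the definition of $H$ as
\[
H=\bigcap_{k\ge 1}\, g_k^{-1}[\I],
\qquad g_k:\mathbb{T}\to 2^\omega,\quad g_k(x):=\{n\in\omega:\|a_nx\|\ge 1/k\}.
\]
Here we identify subsets of $\omega$ with points of $2^\omega$. By the closure properties assumed for $\mathbf{\Gamma}$, it then suffices to show that each $g_k$ is Baire class one.

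To see this, take a basic clopen set of $2^\omega$. It prescribes membership or non-membership of finitely many $n$. The condition $n\in g_k(x)$ defines the closed set $\{x:\|a_nx\|\ge 1/k\}$, and the condition $n\notin g_k(x)$ defines an open set. A finite intersection of closed and open sets is $\Delta^0_2$. Hence the preimage of every open subset of $2^\omega$, being a countable union of such sets, is $\mathbf{\Sigma}^0_2$. Therefore $g_k$ is Baire class one.

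Combining the two parts, $H\in\mathbf{\Gamma}(\mathbb{T})\setminus\mathbf{\Gamma}^\prime(\mathbb{T})$.

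The only delicate point is the pointclass bookkeeping. The preimages $g_k^{-1}[\I]$ must be in $\mathbf{\Gamma}(\mathbb{T})$, which is precisely the hypothesis that $\mathbf{\Gamma}$ is closed under preimages by Baire class one maps between Polish spaces. Their countable intersection is then in $\mathbf{\Gamma}(\mathbb{T})$ because $\mathbf{\Gamma}$ is closed under countable intersections. No additional structure of $\I$ is used, so the argument involves no real obstacle beyond invoking Theorem \ref{thm:wadge}.
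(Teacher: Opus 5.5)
Your proposal is correct and follows the paper's proof essentially step for step. You take the same sequence $\bm a$ from Theorem \ref{thm:wadge} to rule out $\mathbf{\Gamma}^\prime$, and the same decomposition $\mathsf{H}_{\bm a}(\I)=\bigcap_k F_k^{-1}[\I]$ into preimages under Baire class one maps to get membership in $\mathbf{\Gamma}$; the only differences are cosmetic (thresholds $1/k$ instead of $2^{-k}$, and checking Baire class one via preimages of basic clopen sets rather than coordinatewise).
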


As an application, we recover and improve \cite[Theorem 2.5(iv)]{FKLT26}.
\begin{corollary}\label{cor:Boreanalyticetc}
    Let $\I$ be an ideal on $\omega$ which is $\mathbf{\Sigma}^1_m$ \textup{[}or $\mathbf{\Pi}^1_m$ or $\mathbf{\Delta}^1_m$, respectively\textup{]} for some $m\ge 1$. Then $\mathscr{H}(\I)\subseteq \mathbf{\Sigma}^1_m$ \textup{[}or $\mathbf{\Pi}^1_m$ or $\mathbf{\Delta}^1_m$, resp.\textup{]}.
\end{corollary}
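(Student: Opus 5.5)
We derive Corollary \ref{cor:Boreanalyticetc} from the first half of the conclusion of Proposition \ref{prop:completenesspropH}, namely membership of every $\mathsf{H}_{\bm a}(\I)$ in $\mathbf{\Gamma}(\mathbb{T})$. Since the statement concerns all $\bm a$, we do not use the existence part. We argue directly for an arbitrary $\bm a\in\omega^\omega$.

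First we rewrite the subgroup as a countable intersection of preimages of $\I$ under Baire class one maps. For each $k\ge 1$ define
$$
\varphi_k:\mathbb{T}\to 2^\omega,\qquad \varphi_k(x):=\{n\in\omega: \|a_nx\|\ge 1/k\}.
$$
By the definition of $\I$-convergence,
$$
\mathsf{H}_{\bm a}(\I)=\bigcap_{k\ge 1}\varphi_k^{-1}[\I].
$$
It remains to check that each $\varphi_k$ is Baire class one and that the relevant pointclass is closed under these operations.

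For the Baire class one claim, it suffices to treat each coordinate $x\mapsto \mathbf{1}[\|a_nx\|\ge 1/k]$, because the basic open sets of $2^\omega$ are determined by finitely many coordinates. A set $\{x: \varphi_k(x)\in U\}$ with $U$ basic open is then a finite intersection of sets of the form $\{x:\|a_nx\|\ge 1/k\}$ and their complements, which are closed and open respectively. These finite Boolean combinations are $\mathbf{\Delta}^0_2$, so preimages of open sets are $\mathbf{\Sigma}^0_2$ and $\varphi_k$ is Baire class one.

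Now let $\mathbf{\Gamma}$ be $\mathbf{\Sigma}^1_m$, $\mathbf{\Pi}^1_m$ or $\mathbf{\Delta}^1_m$. As recalled before Proposition \ref{prop:completenesspropH}, each of these is closed under countable intersections and under preimages by Baire class one maps. Hence $\I\in\mathbf{\Gamma}(2^\omega)$ gives $\varphi_k^{-1}[\I]\in\mathbf{\Gamma}(\mathbb{T})$ for every $k$, and therefore $\mathsf{H}_{\bm a}(\I)\in\mathbf{\Gamma}(\mathbb{T})$.

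The only delicate point is this closure under Baire class one preimages for the projective classes. For $\mathbf{\Sigma}^1_m$ and $\mathbf{\Pi}^1_m$ it holds because the graph of a Borel function is Borel: one writes $f^{-1}[A]$ as a projection of $\mathrm{graph}(f)\cap(X\times A)$, or dually as a co-projection. The case $\mathbf{\Delta}^1_m$ then follows by intersecting the two. Since the paper states this closure as a recalled fact, we take it as given.
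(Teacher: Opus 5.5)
Your proof is correct and is essentially the paper's argument. The paper obtains the corollary from the second half of the proof of Proposition~\ref{prop:completenesspropH}, taking $\mathbf{\Gamma}^\prime=\{\emptyset\}$. That part writes $\mathsf{H}_{\bm a}(\I)=\bigcap_k F_k^{-1}[\I]$ using the same Baire class one maps $F_k(x)=\{n:\|a_nx\|\ge 2^{-k}\}$ and the same closure properties, and the paper notes, as you do, that it holds for every $\bm a$.
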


Under $\mathbf{\Sigma}^1_1$-determinacy, see e.g. \cite[Definition 26.3]{MR1321597}, we have also the following:
\begin{corollary}\label{cor:analyticcomplete}
    \textup{(}Assume $\mathbf{\Sigma}^1_1$-determinacy.\textup{)} Let $\I$ be an analytic-complete ideal on $\omega$. 
    Then there exists $\bm{a} \in \omega^\omega$ such that $\mathsf{H}_{\bm{a}}(\I)$ is analytic-complete.
\end{corollary}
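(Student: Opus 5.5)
Under $\mathbf{\Sigma}^1_1$-determinacy, the plan is to combine the upper bound of Corollary \ref{cor:Boreanalyticetc} with Wadge's lemma and the reduction of Theorem \ref{thm:wadge}.

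First, since $\I$ is analytic-complete, it is in particular analytic. Corollary \ref{cor:Boreanalyticetc} with $m=1$ then gives that every $\I$-characterized subgroup, and in particular $\mathsf{H}_{\bm a}(\I)$ for any $\bm a$, is an analytic subset of $\mathbb{T}$.

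Next, take the sequence $\bm a\in\omega^\omega$ provided by Theorem \ref{thm:wadge}. Since $\bm a$ works for every ideal at once, we have $\I\le_{\mathrm{W}}\mathsf{H}_{\bm a}(\I)$ via a continuous map $f:2^\omega\to\mathbb{T}$. Now let $X$ be a zero-dimensional Polish space and let $B\subseteq X$ be analytic. By analytic-completeness of $\I$ there is a continuous $g:X\to 2^\omega$ with $g^{-1}[\I]=B$. Then $f\circ g: X\to \mathbb{T}$ is continuous and
$$(f\circ g)^{-1}[\mathsf{H}_{\bm a}(\I)]=g^{-1}[\I]=B,$$
so $B\le_{\mathrm{W}}\mathsf{H}_{\bm a}(\I)$. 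Combined with the first step, $\mathsf{H}_{\bm a}(\I)$ is analytic-complete in the sense of \cite[Definition 22.9]{MR1321597}.

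Where determinacy enters depends on the convention for analytic-complete. With the Kechris definition recalled before Corollary \ref{cor:borelrank}, the argument above is already a plain transitivity of Wadge reductions and uses no determinacy. Determinacy is needed only if "analytic-complete" is read as "analytic but not coanalytic (not Borel)". In that case, the steps are:
\begin{enumerate}[label=(\roman*)]
\item By Corollary \ref{cor:pointclassessimple} applied to the coanalytic class, $\mathsf{H}_{\bm a}(\I)$ is not coanalytic.
\item Since $\mathsf{H}_{\bm a}(\I)$ lives in $\mathbb{T}$ rather than a zero-dimensional space, transfer it to $\omega^\omega$. Take a continuous surjection $p:\omega^\omega\to\mathbb{T}$ with a Borel right inverse $s$, so that $p^{-1}[\mathsf{H}_{\bm a}(\I)]$ is analytic and not coanalytic.
\item Use the Wadge lemma under $\mathbf{\Sigma}^1_1$-determinacy \cite[Theorem 26.4]{MR1321597} to conclude that $p^{-1}[\mathsf{H}_{\bm a}(\I)]$ is $\mathbf{\Sigma}^1_1$-complete.
\item Push this completeness back to $\mathbb{T}$ by composing reductions with $p$.
\end{enumerate}
Step (ii) is the main delicate point. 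I expect it to cause no trouble, since the direct transitivity argument already suffices under the stated definition.
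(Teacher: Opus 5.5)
Your argument is correct. Under the definition of $\mathbf{\Gamma}$-completeness that the paper recalls before Corollary \ref{cor:borelrank} (namely \cite[Definition 22.9]{MR1321597}), it proves the statement without any determinacy.

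Your route is direct transitivity. The single reduction $\I\le_{\mathrm{W}}\mathsf{H}_{\bm a}(\I)$ from Theorem \ref{thm:wadge} composes with any reduction $B\le_{\mathrm{W}}\I$, so $\mathbf{\Sigma}^1_1$-hardness passes upward. Membership in $\mathbf{\Sigma}^1_1(\mathbb{T})$ comes from Corollary \ref{cor:Boreanalyticetc}.

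The paper argues differently:
\begin{enumerate}
\item It invokes \cite[Theorem 26.4 and Exercise 26.5]{MR1321597} to rewrite analytic-completeness of $\I$ as membership in $\mathbf{\Sigma}^1_1\setminus\mathbf{\Pi}^1_1$.
\item It applies Proposition \ref{prop:completenesspropH} with $\mathbf{\Gamma}=\mathbf{\Sigma}^1_1$ and $\mathbf{\Gamma}'=\mathbf{\Pi}^1_1$, obtaining $\mathsf{H}_{\bm a}(\I)\in\mathbf{\Sigma}^1_1(\mathbb{T})\setminus\mathbf{\Pi}^1_1(\mathbb{T})$.
\item It converts back to completeness using the same equivalence.
\end{enumerate}
That route fits the general template of Proposition \ref{prop:completenesspropH}, which only transfers non-membership in a pointclass. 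It therefore genuinely needs $\mathbf{\Sigma}^1_1$-determinacy for the last conversion; the first conversion (complete $\Rightarrow$ not coanalytic) already holds in ZFC. Your argument is shorter and shows that the determinacy hypothesis in the statement can be dropped.

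Your fallback steps (i)--(iv) essentially reproduce the paper's proof. Step (ii) spells out the passage from $\mathbb{T}$ to $\omega^\omega$ that the paper delegates to \cite[Exercise 26.5]{MR1321597}. That step is sound: non-coanalyticity of $p^{-1}[\mathsf{H}_{\bm a}(\I)]$ follows from $\mathsf{H}_{\bm a}(\I)=s^{-1}\bigl[p^{-1}[\mathsf{H}_{\bm a}(\I)]\bigr]$ and closure of $\mathbf{\Pi}^1_1$ under Borel preimages. Under the stated definition, though, these steps are unnecessary.
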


Some open questions related to Borel rank and completeness are listed in Section \ref{sec:openquestions}.







\subsection{Polishable characterized subgroups} 
Let us recall that $G$ is a Polish group if it is a Polish space (that is, a separable and completely metrizable space) which is endowed with a continuous group operation. It follows by \cite[p.62]{MR1321597} that, if $G$ is a Polish group, then the function that maps each element to its inverse is also automatically continuous. 
\begin{definition}\label{def:Polishable}
    A subgroup $H$ of a Polish group $G$ is said to be \emph{Polishable} if 
    there exists a Polish group topology $\tau$ on $H$ having the same Borel sets as $H$ when considered as a topological subgroup of $G$. 
\end{definition}
The notion of Polishable subgroup has been introduced by Kechris and Louveau in \cite{MR1396895}. 
It is remarkable that, if such a topology $\tau$ exists, it is unique, see \cite[Theorem 9.10]{MR1321597}. 
It is known that a subgroup $H$ is Polishable if and only if there exists a continuous surjective homomorphism from a Polish group $P$ onto $H$, 
cf. \cite[Definition 1.18]{MR3461178}. 
In addition, all Polishable subgroups are Borel, and they might attain arbitrarily high Borel complexity; a Borel subgroup of $G$ is Polishable if and only if it admits a finer Polish group topology. 
Detailed studies on Polishable subgroups can be found in \cite{MR2189217, MR2267154, MR5045103}. 

Regarding $\mathcal{P}(\omega)$ as a Polish group under the operation of symmetric difference, an ideal $\mathcal{I}$ on $\omega$ is a subgroup of $\mathcal{P}(\omega)$. 
Now, 
define the ideal 
$$
\mathrm{Fin}\otimes\emptyset
:=
\left\{
A\subseteq\omega^2:
A\subseteq k\times \omega
\text{ for some }k\in\omega
\right\},
$$
see e.g. \cite[Chapter 1]{MR1711328}. Recall also that a map $\varphi: \mathcal{P}(\omega)\to [0,\infty]$ is said to be a \emph{lower semicontinuous submeasure}, or shortly \emph{lscsm}, if it is a submeasure (that is, it is monotone, subadditive, and $\varphi(\emptyset)=0$) such that $\varphi(F)<\infty$ for all finite $F \subseteq \omega$, and $\varphi(A)=\sup\{\varphi(A\cap n): n \in \omega\}$ for all $A\subseteq \omega$. 
Notice that the latter property corresponds to the lower semicontinuity of the submeasure $\varphi$, regarding its domain $\mathcal{P}(\omega)$ as the Cantor space $2^\omega$, that is, if $A_n \to A$ then $\liminf_n \varphi(A_n) \ge \varphi(A)$. Examples of lscsms include $\varphi(A)=|A|$ or $\varphi(A)=\sum_{n \in A}1/(n+1)$ or $\varphi(A)=\sup_{n\ge 1} |A\cap n|/n$, cf. also \cite[Chapter 1]{MR1711328}. 
Given ideals $\I,\J$ on $\omega$, we write $\I\le_{\mathrm{RB}}\J$ if  there exists a finite-to-one map $f:\omega\to\omega$ such that
$A\in\I$ if and only if $f^{-1}[A]\in\J$ 
for every $A\subseteq\omega$. 
Accordingly, we recall an important result of Solecki: 
\begin{theorem}\label{thm:solecki}
    Let $\I$ be an ideal on $\omega$. Then the following are equivalent\textup{:}
    \begin{enumerate}[label={\rm (\roman*)}]
    \item \label{item:1solecki} $\I$ is Polishable\textup{;}
    \item \label{item:2solecki} $\I$ is an analytic $P$-ideal\textup{;}
    \item \label{item:3solecki} $\I=\{S\subseteq \omega: \lim_n \varphi(S\setminus n)=0\}$ for some finite lscsm $\varphi$\textup{.} 
    \end{enumerate}
If, in addition, $\I$ is analytic, then they are also equivalent to\textup{:}
    \begin{enumerate}[label={\rm (\roman*)}]
    \setcounter{enumi}{3} 
    \item \label{item:4solecki} $\mathrm{Fin}\otimes \emptyset \not\le_{\mathrm{RB}} \mathcal{I}$\text{.}
    \end{enumerate}
\end{theorem}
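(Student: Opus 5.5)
This is Solecki's theorem, so the plan is to reconstruct its proof in the order \ref{item:3solecki}$\Rightarrow$\ref{item:1solecki}$\Rightarrow$\ref{item:2solecki}$\Rightarrow$\ref{item:3solecki}, and then treat \ref{item:4solecki} separately.

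\emph{\ref{item:3solecki}$\Rightarrow$\ref{item:1solecki}.} Put $d(A,B):=\varphi(A\triangle B)$ on $\I=\mathrm{Exh}(\varphi)$. Subadditivity gives invariance under $\triangle$ and the triangle inequality, so $(\I,\triangle,d)$ is a metric group. Separability follows because finite sets are dense: $d(A,A\cap n)=\varphi(A\setminus n)\to 0$. For completeness, take a $d$-Cauchy sequence. It converges pointwise to some $A$, and lower semicontinuity of $\varphi$ bounds $\varphi(A\triangle A_k)$ by the Cauchy tail. A standard $\varepsilon/2$ argument then shows $A\in\mathrm{Exh}(\varphi)$. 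Since $\varphi$ is lower semicontinuous, the $d$-topology refines the Cantor topology. A finer Polish topology has the same Borel sets, because the identity is a continuous injection and hence a Borel isomorphism by Lusin--Souslin.

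\emph{\ref{item:1solecki}$\Rightarrow$\ref{item:2solecki}.} $\I$ is the continuous image of a Polish space, so it is analytic. For the $P$-property, take $S_n\in\I$. For each $S\in\I$, the sets $S\cap n$ converge to $S$ in the Polish topology $\tau$. This is the delicate point. Once it holds, $S_n\setminus m\to\emptyset$ in $\tau$ as $m\to\infty$. Choose $m_n$ with $S_n\setminus m_n$ within $2^{-n}$ of $\emptyset$ in a compatible invariant metric. Then $S:=\bigcup_n (S_n\setminus m_n)$ is the $\tau$-limit of finite unions, which are Cauchy by invariance. Since $\tau$ is finer than the Cantor topology, this limit equals the pointwise union, so $S\in\I$ and $S_n\setminus S$ is finite. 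To get the convergence $S\cap n\to S$, note that $\{S\cap n\}\cup\{S\}$ is compact in the Cantor topology. I would argue via Pettis' theorem or automatic continuity of the restriction maps $A\mapsto A\cap F$, which are Borel homomorphisms $\I\to\I$ and hence $\tau$-continuous. Using the subgroups $\I\cap\mathcal P(\omega\setminus n)$, which are closed in $\tau$ with intersection $\{\emptyset\}$, a Baire-category argument should show that they form a neighbourhood base at $\emptyset$.

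\emph{\ref{item:2solecki}$\Rightarrow$\ref{item:3solecki}.} This is the main obstacle and is Solecki's core argument. First I would use the $P$-property together with analyticity and a Baire-category argument on $\mathcal P(\omega)$ to find, for each $k$, a finite-information description of the sets $\{A:\varphi(A)<2^{-k}\}$. Concretely, I would define $\varphi(A):=\sup_{F}\,\inf\{2^{-k}: F\cap A\subseteq$ a finite-witness set at level $k\}$, following Solecki's construction of $\varphi$ from a decreasing sequence of $\triangle$-closed neighbourhoods. I expect this step to require reproducing the full Solecki proof rather than a short argument.

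\emph{Equivalence with \ref{item:4solecki} for analytic $\I$.} If $\I$ is a $P$-ideal, then $\mathrm{Fin}\otimes\emptyset\le_{\mathrm{RB}}\I$ fails: the preimages of the columns $\{k\}\times\omega$ would be sets in $\I$ whose union over $k$ is, modulo finite, contained in a single set of $\I$. This forces the preimage of a set outside $\mathrm{Fin}\otimes\emptyset$ into $\I$, a contradiction. The converse, that a non-$P$ analytic ideal admits such a reduction, is Solecki's dichotomy theorem. Its proof goes by a tree argument producing disjoint finite blocks.
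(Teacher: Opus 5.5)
The paper gives no proof of this statement; it refers to Solecki \cite[Theorems 2.1 and 3.1]{MR1708146}. Judged as a self-contained argument, your proposal has a genuine gap exactly where the content of the theorem lies.

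\emph{The step (ii)$\Rightarrow$(iii) is not proved.} No $\varphi$ is actually defined: the ``finite-witness set at level $k$'' is never specified. Building $\varphi$ ``from a decreasing sequence of $\triangle$-closed neighbourhoods'' presupposes a group topology on $\I$, which is exactly what (ii) does not provide. What must be shown is that analyticity and the $P$-property alone yield closed hereditary sets $K_n\subseteq\mathcal{P}(\omega)$ with $\{A\cup B:A,B\in K_{n+1}\}\subseteq K_n$ and $\I=\{A:\forall n\ \exists m\ A\setminus m\in K_n\}$. Passing from such $K_n$ to a lscsm is a routine Birkhoff--Kakutani-type chain construction, so this existence statement is the whole difficulty, and your sketch contains no argument for it.

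By contrast, (i)$\Rightarrow$(iii) is soft once one has the fact established below. If $\rho\le 1$ is an invariant compatible metric for the Polish topology, then $\varphi(A):=\sup\{\rho(G,\emptyset):G\subseteq A\text{ finite}\}$ works. So the only hard step is getting from (ii) to a Polish group topology.

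\emph{The hard direction of (iv) is only named.} One needs a finite-to-one $f$ such that every set meeting infinitely many columns pulls back to an $\I$-positive set. Analyticity must enter essentially here: a maximal ideal which is not a $P$-ideal satisfies (iv), since if $f$ witnessed $\mathrm{Fin}\otimes\emptyset\le_{\mathrm{RB}}\I$, then $\mathrm{Fin}\otimes\emptyset=\{A:f^{-1}[A]\in\I\}$ would be maximal. As it stands, your proposal also reduces to citing Solecki for the two substantive implications.

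The parts you do argue need repairs.

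\emph{The delicate point in (i)$\Rightarrow$(ii).} Your suggested route aims at a false statement: the subgroups $\I\cap\mathcal{P}(\omega\setminus n)$ are in general not a neighbourhood base at $\emptyset$. For $\I=\mathrm{Fin}$ the Polish topology is discrete. For $\mathcal{Z}$, with the metric induced by $\varphi(A)=\sup_{k\ge 1}|A\cap k|/k$, the sets $\{n,\ldots,2n-1\}$ stay at distance $1/2$ from $\emptyset$. The correct argument is short. For $S\in\I$, the compact group $\mathcal{P}(S)$ is contained in $\I$, and the inclusion $(\mathcal{P}(S),\text{Cantor})\to(\I,\tau)$ is a Borel homomorphism between Polish groups, hence continuous by \cite[Theorem 9.10]{MR1321597}. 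So $\tau$ agrees with the Cantor topology on $\mathcal{P}(S)$, and $S\cap n\to S$ in $\tau$.

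\emph{The unions are not Cauchy ``by invariance''.} For $\bigcup_{n<N}(S_n\setminus m_n)$, the increment is only a subset of $S_N\setminus m_N$, and an invariant metric is not monotone under inclusion. Either disjointify the $S_n$ first, so that unions become symmetric differences, or use the previous fact to choose $m_n$ so that every subset of $S_n\setminus m_n$ lies within $2^{-n}$ of $\emptyset$.

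\emph{The metric in (iii)$\Rightarrow$(i).} $\varphi(A\triangle B)$ is only a pseudometric unless $\varphi(\{n\})>0$ for all $n$. Without this, neither pointwise convergence of Cauchy sequences nor refinement of the Cantor topology holds; lower semicontinuity is not what gives refinement. Add $\sum_{n\in A}2^{-n}$ to $\varphi$, as in the proof of Theorem~\ref{thm:analyticPidealscharacterization}.

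\emph{The easy half of (iv)} is correct once made explicit. If $S\in\I$ almost contains every $f^{-1}[\{k\}\times\omega]$, then $A:=\{p\in\omega^2:f^{-1}(p)\subseteq S\}$ contains a cofinite part of each column. Hence $A\notin\mathrm{Fin}\otimes\emptyset$, while $f^{-1}[A]\subseteq S\in\I$.
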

\begin{proof}
    See \cite[Theorem 2.1 and Theorem 3.1]{MR1708146}.
\end{proof}

In particular, every analytic $P$-ideal is $F_{\sigma\delta}$. 
We remark that the family of analytic $P$-ideals is large and includes, among others, all Erd{\H o}s--Ulam ideals introduced by Just and Krawczyk in \cite{MR748847}, ideals generated by nonnegative regular matrices \cite{Filipow18, MR4041540},   
certain ideals used by Louveau and Veli\u{c}kovi\'{c} \cite{Louveau1994}, 
and, more generally, density-like ideals and generalized density ideals \cite{MR3436368, MR4404626}. Additional pathological examples can be found in \cite{MR0593624}. 
It has been suggested in \cite{MR4124855, MR3436368} that the theory of analytic $P$-ideals may have some relevant yet unexploited potential for the study of the geometry of Banach spaces. 

In Theorem \ref{thm:analyticPidealscharacterization} below, we are going to show that the 
conditions given in Theorem \ref{thm:solecki} are also equivalent to the Polishability of every $\I$-characterized subgroup of $\mathbb{T}$. 

In this regard, B\'ir\'o proved 
in \cite{MR2388789} 
that every characterized subgroup of $\mathbb{T}$ is Polishable, 
cf. also \cite[Proposition 1.13]{MR2921827}. 
Also, it has been claimed in \cite[Proposition 2.4]{MR4078214} that the analogue statement holds for the ideal $\mathcal{Z}$, namely, every $\mathcal{Z}$-characterized subgroup of $\mathbb{T}$ is Polishable. 
However, as observed in \cite[Remark 2.16]{FKLT26}, the argument given in \cite[Proposition 2.4]{MR4078214} appears to show that $\mathsf{H}_{\bm{a}}(\mathcal{Z})$, as a Borel subset of $\mathbb{T}$, admits a finer Polish topology. It does not seem to verify that this topology makes the group operations continuous, which is required for Polishability as a subgroup. 
%
The same gap appears in \cite[Remark 1.12]{MR4780094} replacing $\mathcal{Z}$ with an arbitrary analytic $P$-ideal. Thus, it was left as open question in \cite[Question 2.17]{FKLT26} whether the claimed property holds for $\mathcal{Z}$ or, more generally, for all analytic $P$-ideals. 

Below, we show that the analogue ideal statement holds precisely for analytic $P$-ideals. 
In particular, this improves on B\'ir\'o's result \cite{MR2388789}, fixes the proof gaps in \cite{MR4078214, MR4780094}, and answers an open question in \cite{FKLT26}. Here, we write also $c_0(\I)$ for the set of sequences with values in $\mathbb{T}$ which are $\I$-convergent to $0$, namely, 
$$
c_0(\I):=\left\{\bm{z}=(z_0,z_1,\ldots) \in \mathbb{T}^\omega: \I\text{-}\lim_{n\to \infty} z_n=0\right\}. 
$$
Note that $c_0(\I)$ is a subgroup of the Polish group $\mathbb{T}^\omega$. 
\begin{theorem}\label{thm:analyticPidealscharacterization} 
Let $\mathcal{I}$ be an ideal on $\omega$. Then the following are equivalent\textup{:}
\begin{enumerate}[label={\rm (\roman*)}]
    \item \label{item:1polishability} $\I$ is an analytic $P$-ideal\textup{;}
    \item \label{item:2polishability} $c_0(\I)$ is Polishable\textup{;}
    \item \label{item:3polishability} Every $\mathcal{I}$-characterized subgroup of $\mathbb{T}$ is Polishable\textup{.} 
\end{enumerate}
\end{theorem}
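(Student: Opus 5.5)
We prove the cycle \ref{item:1polishability}$\Rightarrow$\ref{item:2polishability}$\Rightarrow$\ref{item:3polishability}$\Rightarrow$\ref{item:1polishability}.

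\emph{\ref{item:1polishability}$\Rightarrow$\ref{item:2polishability}.} By Theorem \ref{thm:solecki}, write $\I=\{S: \lim_n\varphi(S\setminus n)=0\}$ for a finite lscsm $\varphi$. On $c_0(\I)$ we take the topology generated by the product topology of $\mathbb{T}^\omega$ together with the pseudo-norm
$$
\rho(\bm z):=\inf\left\{\varepsilon+\varphi(\{n: \|z_n\|\ge \varepsilon\}):\varepsilon>0\right\},
$$
or, more conveniently, the invariant metric $d(\bm z,\bm w)=d_{\mathbb{T}^\omega}(\bm z,\bm w)+\rho(\bm z-\bm w)$. We have to check four things. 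First, subadditivity $\rho(\bm z+\bm w)\le\rho(\bm z)+\rho(\bm w)$ and $\rho(-\bm z)=\rho(\bm z)$; subadditivity follows from $\{\|z_n+w_n\|\ge\varepsilon+\delta\}\subseteq\{\|z_n\|\ge\varepsilon\}\cup\{\|w_n\|\ge\delta\}$. These make the group operations continuous. Second, separability: finitely supported sequences with rational entries are dense, since for $\bm z\in c_0(\I)$ we have $\rho(\bm z\cdot 1_{\omega\setminus N})\to0$ (using $\varphi(A\setminus N)\to 0$ for $A\in\I$). Third, completeness: a $d$-Cauchy sequence converges coordinatewise to some $\bm z$, and lower semicontinuity of $\varphi$ lets the $\rho$-bounds pass to the limit. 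Fourth, the limit lies in $c_0(\I)$, by a tail estimate using $\lim_N\varphi(\cdot\setminus N)$. This yields a Polish group topology finer than the subspace topology, so $c_0(\I)$ is Polishable. The main technical care is in the completeness and tail estimates: we need to work with the ``exhaustive'' part $\lim_N\varphi(\cdot\setminus N)$ rather than $\varphi$ itself. The cleanest fix is to add to $d$ the term $\lim_N\rho(\,\cdot\,1_{\omega\setminus N})$, or equivalently to note that $c_0(\I)$ is $\rho$-closed in the $\rho$-finite group.

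\emph{\ref{item:2polishability}$\Rightarrow$\ref{item:3polishability}.} Fix $\bm a$. The map $\pi_{\bm a}:\mathbb{T}\to\mathbb{T}^\omega$, $x\mapsto(a_nx)$, is a continuous homomorphism with $\mathsf{H}_{\bm a}(\I)=\pi_{\bm a}^{-1}[c_0(\I)]$. Let $P=\{(x,\bm z)\in\mathbb{T}\times c_0(\I):\pi_{\bm a}(x)=\bm z\}$, where $c_0(\I)$ carries its Polish group topology. Since that topology is finer than the product topology, $P$ is a closed subgroup of a Polish group, hence Polish. The projection $P\to\mathsf{H}_{\bm a}(\I)$ is a continuous surjective homomorphism into $\mathbb{T}$, so $\mathsf{H}_{\bm a}(\I)$ is Polishable by the criterion recalled after Definition \ref{def:Polishable}.

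\emph{\ref{item:3polishability}$\Rightarrow$\ref{item:1polishability}.} This is where Theorem \ref{thm:wadge} enters. Take $\bm a$ with $\I\le_{\mathrm W}\mathsf{H}_{\bm a}(\I)$. Polishable subgroups are Borel, so $\I$ is Borel, in particular analytic. Suppose, for a contradiction, that $\I$ is not a $P$-ideal. By Theorem \ref{thm:solecki}\ref{item:4solecki} there is a finite-to-one $f$ witnessing $\mathrm{Fin}\otimes\emptyset\le_{\mathrm{RB}}\I$. We would then build $\bm a$ adapted to the blocks $f^{-1}[\{k\}\times\omega]$: along such blocks, take rapidly growing divisibility chains like $n!$-type sequences, so that the resulting subgroup contains a $\mathrm{Fin}\otimes\emptyset$-like non-Polishable structure. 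Concretely, the aim is an $\I$-characterized $H$ that is a countable increasing union $\bigcup_k H_k$ of closed-ish Polishable subgroups with each $H_k$ of infinite index in $H_{k+1}$, yielding a Baire category contradiction in the putative Polish topology. Equivalently, we want to show that $H$ is $\mathbf{\Sigma}^0_2$ but neither countable nor compact-like in a way incompatible with Polishability, since a Polishable $F_\sigma$ subgroup must be generated by a compact set up to countable index.

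We expect this last direction to be the main obstacle. A cleaner alternative to try first is to show that \ref{item:3polishability} implies \ref{item:2polishability}, by realizing $c_0(\I)$ inside a product of Polishable subgroups $\mathsf{H}_{\bm a}(\I)$ via a suitable diagonal embedding, and then to invoke Solecki's theorem that a Polishable $c_0(\I)$ forces $\I=\{S:1_S\cdot\tfrac12\in c_0(\I)\}$ to be Polishable. The last equality holds because $S\mapsto \tfrac12 1_S$ is a continuous homomorphism from $\mathcal P(\omega)$ to $\mathbb{T}^\omega$ with $\I$ equal to the preimage of $c_0(\I)$; Polishability pulls back along continuous homomorphisms by the same fibered-product argument as above. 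This alternative already gives \ref{item:2polishability}$\Rightarrow$\ref{item:1polishability} via Theorem \ref{thm:solecki}.
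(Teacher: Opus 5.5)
You have (i)$\Rightarrow$(ii) and (ii)$\Rightarrow$(iii) right, and they match the paper in substance. Your Ky Fan--type $\rho$ plays the role of the paper's $p_\varphi(\bm z)=\inf\{\varepsilon>0:\varphi(A_\varepsilon(\bm z))\le\varepsilon\}$, and your fibre product is the paper's closed-graph argument for preimages of Polishable subgroups. Your proposed ``fix'' of adding $\lim_N\rho(\cdot\,1_{\omega\setminus N})$ to $d$ is vacuous: this quantity vanishes identically on $c_0(\I)$, and your fourth point already does the job. Your remark that $S\mapsto\tfrac12 1_S$ is a continuous homomorphism $\mathcal P(\omega)\to\mathbb T^\omega$ pulling $c_0(\I)$ back to $\I$ is correct, and gives a neat (ii)$\Rightarrow$(i) via Solecki's theorem. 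However, the proposal does not prove (iii)$\Rightarrow$(i) in the essential case where $\I$ is analytic but not a $P$-ideal.

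In that case your first route names no sequence. Moreover, the obstruction you aim for does not by itself contradict Polishability. Take reals with $1,\alpha_0,\alpha_1,\ldots$ linearly independent over $\mathbb Q$. The countable group $\langle\alpha_0,\alpha_1,\ldots\rangle$ is the increasing union of the subgroups $\langle\alpha_0,\ldots,\alpha_k\rangle$, each of infinite index in the next, yet it is Polishable with the discrete topology.

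Your ``cleaner alternative'' (iii)$\Rightarrow$(ii) via a diagonal embedding cannot work either. A continuous homomorphism $\mathbb T^\omega\to\mathbb T$ is a character depending on finitely many coordinates, and $c_0(\I)$ contains all finitely supported sequences. So every nonzero coordinate of a continuous homomorphism $\mathbb T^\omega\to\prod_k\mathbb T$ maps $c_0(\I)$ onto $\mathbb T$. Hence $c_0(\I)$ is never the preimage of a product of proper subgroups $\mathsf H_{\bm a^k}(\I)$. Polishability of the individual $\mathsf H_{\bm a}(\I)$ does not formally transfer to $c_0(\I)$ or to $\I$; the reduction in Theorem~\ref{thm:wadge} is not a homomorphism. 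You must exhibit a bad $\bm a$.

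The missing idea, as in the paper, is a concrete non-Polishable subgroup.
\begin{itemize}
\item Since $\mathrm{Fin}\otimes\emptyset\le_{\mathrm{RB}}\I$ yields $\mathscr H(\mathrm{Fin}\otimes\emptyset)\subseteq\mathscr H(\I)$, it suffices to find $\bm b\in\omega^{\omega^2}$ with $\mathsf H_{\bm b}(\mathrm{Fin}\otimes\emptyset)$ not Polishable.
\item Take $M_n=n+1$, $r_n=(10M_n)^2$, $q_{n+1}=q_nr_n$, and let row $i$ enumerate $\{mq_n:1\le m\le\lfloor M_n/(i+1)\rfloor\}$.
\item Using the estimate $L\|y\|\le\max_{1\le m\le L}\|my\|$ whenever the right side is below $\tfrac14$, one gets $\mathsf H_{\bm b}(\mathrm{Fin}\otimes\emptyset)=H:=\{x:p_0(x)<\infty\}$, where $p_0(x)=\sup_nM_n\|q_nx\|$.
\item Then $H=\bigcup_kB_k$ with $B_k=\{p_0\le k\}$ closed and $B_k-B_k\subseteq B_{2k}$.
\item In a putative Polish group topology, Baire and Pettis give a neighbourhood of $0$ inside some $B_{2k}$. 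Hence $H\subseteq D+B_{2k}$ for a countable $D$.
\item Choose $A$ with $9A/10>4k$ and consider the Cantor family $x_\eta=\sum_{n\ge 4A}\eta(n)\lfloor Ar_n/M_n\rfloor/q_{n+1}$. It lies in $H$ and satisfies $p_0(x_\eta-x_\theta)>4k$ for $\eta\ne\theta$.
\item So each translate $d+B_{2k}$ meets this uncountable family in at most one point, contradicting $H\subseteq D+B_{2k}$.
\end{itemize}
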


Actually, as it turns out in the proof of Theorem \ref{thm:analyticPidealscharacterization}, we will show something stronger: if $\I$ is an analytic $P$-ideal then each $\I$-characterized subgroup $\mathsf{H}_{\bm{a}}(\I)$ admits a finer Polish group topology induced by the translation-invariant metric
$$
d_{\varphi,\bm{a}}(x,y):=\|x-y\|+\inf\{\varepsilon>0: \varphi(\{n\in \omega: \|a_n(x-y)\|\ge \varepsilon\}) \le \varepsilon\}
$$ 
for all $x,y \in \mathsf{H}_{\bm{a}}(\I)$, where $\varphi$ is a lscsm as in Theorem \ref{thm:solecki}, cf. Equation \eqref{eq:metricpolishability}. 
This seems to be related to the notion of Fr\'{e}chetability subspace studied in \cite[Section 9]{MR5045103}.


As a consequence, using also a result of B\'ir\'o in \cite{MR2388789}, we obtain the following: 
\begin{corollary}\label{cor:birocounterexample}
There exists an $F_\sigma$ subgroup $H$ of $\mathbb{T}$ which is not Polishable, hence not $\I$-characterized for every analytic $P$-ideal $\I$. 
\end{corollary}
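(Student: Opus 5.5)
The corollary should follow by combining a result of B\'ir\'o from \cite{MR2388789} with Theorem \ref{thm:analyticPidealscharacterization}. The first step is to recall B\'ir\'o's result precisely. In \cite{MR2388789}, alongside the Polishability of characterized subgroups, B\'ir\'o exhibits an explicit $F_\sigma$ subgroup of $\mathbb{T}$ that is not Polishable. My recollection is that it is a subgroup generated by a suitable Cantor-type compact set $K\subseteq\mathbb{T}$, that is,
$$
H=\bigcup_{n\in\omega}\left(K_n-K_n\right),
$$
where the $K_n$ are finite sums of copies of $K$. Such an $H$ is a countable union of compact sets, hence $F_\sigma$. B\'ir\'o shows that $H$ admits no finer Polish group topology, typically by a Baire category argument: a Polish group topology $\tau$ would make some $\tau$-closure of a piece $K_n-K_n$ have nonempty $\tau$-interior. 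Pettis' theorem would then force a structure, such as openness of some $K_n-K_n$ in $\tau$ or an algebraic independence property of $K$, that contradicts how $K$ was built.

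I will take the existence of such an $H$ as the cited input. If the exact form of B\'ir\'o's statement differs, I will fall back on the standard alternative: a non-Polishable $F_\sigma$ subgroup of $\mathbb{T}$ generated by an independent perfect set. To check non-Polishability there, I will argue that a Polishable $F_\sigma$ subgroup must have a $\tau$-open, $\sigma$-compact piece, and that independence of the generating set rules this out.

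The second step is purely formal. Suppose that $H=\mathsf{H}_{\bm a}(\I)$ for some analytic $P$-ideal $\I$ and some $\bm a\in\omega^\omega$. The implication \ref{item:1polishability}$\Rightarrow$\ref{item:3polishability} of Theorem \ref{thm:analyticPidealscharacterization} then says that every $\I$-characterized subgroup is Polishable, so $H$ would be Polishable, a contradiction. Hence $H\notin\mathscr{H}(\I)$ for every analytic $P$-ideal $\I$.

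The only real obstacle lies in the first step: making sure the cited example is genuinely $F_\sigma$ and not Polishable in the sense of Definition \ref{def:Polishable}. The logical deduction in the second step is immediate.
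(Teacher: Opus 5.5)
Your proposal follows the paper's proof: there $K$ is an uncountable Kronecker set, B\'ir\'o's theorem (resting on Aaronson--Nadkarni) gives that $\langle K\rangle$ is not Polishable, and writing $\langle K\rangle=\bigcup_{m\ge 1}(C+\cdots+C)$ ($m$ summands) with $C:=K\cup(-K)\cup\{0\}$ compact shows it is $F_\sigma$; the implication \ref{item:1polishability}$\Rightarrow$\ref{item:3polishability} of Theorem \ref{thm:analyticPidealscharacterization} then finishes. Your ``suitable Cantor-type set'' should be pinned down as an uncountable Kronecker set; incidentally, your independent-set fallback works directly: by Baire and Pettis, a Polish group topology $\tau$ would make some $2m$-fold sum of $C$ contain a $\tau$-neighbourhood of $0$, so countably many of its translates would cover $\langle K\rangle$, and this fails for a sum of $2m+1$ independent points of $K$ chosen outside the countably many supports of the translating elements.
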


We also have the following analogue.
\begin{proposition}\label{prop:FsigmadeltacompletenotPolishable}
There exists an $F_{\sigma\delta}$-complete subgroup of $\mathbb{T}$ which is not Polishable, hence not $\I$-characterized for every analytic $P$-ideal $\I$. 
\end{proposition}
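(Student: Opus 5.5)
Take the subgroup $H:=H_1+H_2$, where $H_1$ is a non-Polishable $F_\sigma$ subgroup as in Corollary \ref{cor:birocounterexample} and $H_2$ is a suitable $F_{\sigma\delta}$-complete subgroup, and then show that $H$ is $F_{\sigma\delta}$-complete and still not Polishable. The final clause of the statement is automatic: by Theorem \ref{thm:analyticPidealscharacterization}, every $\I$-characterized subgroup with $\I$ an analytic $P$-ideal is Polishable.

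\emph{Choosing the pieces.} For $H_2$ I use a characterized subgroup $\mathsf{H}_{\bm{b}}(\mathrm{Fin})$ with $\bm{b}$ a fast-growing divisibility chain, say $b_n=n!$. This is proper, uncountable, and (as announced for Remark \ref{rmk:examplecomplete}, or by Theorem \ref{thm:Fsigmadelcomplete} with $\I=\mathrm{Fin}$) $F_{\sigma\delta}$-complete. It is also Polishable, with Polish group topology $\tau_2$.

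\emph{Borel class and hardness.} Suppose the sum $H_1+H_2$ is \emph{direct}, with a Borel, nicely parametrized decomposition. If $H_1$ is $\sigma$-compact, say $H_1=\bigcup_k K_k$ with $K_k$ compact, then $H=\bigcup_k (K_k+H_2)$. Each $K_k+H_2$ is the projection along a compact factor of a $\Pi^0_3$ set, and such projections stay $\Pi^0_3$ when the fibers are handled uniformly. So I should choose $H_1$ $\sigma$-compact, which B\'ir\'o's example should allow. Hardness then comes from continuously reducing a $\Pi^0_3$-complete set into $H_2$ while avoiding $H_1\setminus\{0\}$ contributions.

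\emph{Non-Polishability (main obstacle).} Suppose $H$ carries a Polish group topology $\tau$. The argument needs $H_2$ to be a $\tau$-closed (or at least Borel, hence Polishable) subgroup with $H_1\cap H_2$ countable. Then the quotient $H/H_2$ is a Polishable-type group into which $H_1/(H_1\cap H_2)$ maps bijectively. Polishability of $H_1$ would follow via the continuous surjective homomorphism criterion: $H_1$ would be the image of the Polish group $\{(h,g)\in H\times H_2: h-g\in H_1\}$, provided this set is closed. Verifying these closedness claims is where I expect the real work to lie.

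If that route fails, a cleaner alternative avoids sums altogether. Take $H:=H_1\cup$-style coset-free variants, or better, $H:=\{x: 2x\in H_2\}\cap(\ldots)$. Alternatively, use Theorem \ref{thm:wadge} applied to a non-$P$ $F_{\sigma\delta}$-complete ideal such as $\mathrm{Fin}\otimes\mathrm{Fin}$ restricted suitably. Then Corollary \ref{cor:Fsigmadeltacomplete} yields an $F_{\sigma\delta}$-complete $\mathsf{H}_{\bm a}(\I)$, and the proof of (iii)$\Rightarrow$(i) in Theorem \ref{thm:analyticPidealscharacterization}, which builds a non-Polishable $\I$-characterized subgroup from $\mathrm{Fin}\otimes\emptyset\le_{\mathrm{RB}}\I$ (Theorem \ref{thm:solecki}), would be adapted to the same sequence $\bm a$. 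I would try this second route first. The key step is checking that the witness sequence from the Wadge reduction can be chosen to coincide with the non-Polishability witness, so that one subgroup has both properties.
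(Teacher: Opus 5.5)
The reduction of the final clause to Theorem \ref{thm:analyticPidealscharacterization} is fine, but neither route proves the main claim. The first route, $H:=H_1+H_2$, has a gap at each step.

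\emph{Borel class.} You invoke a false principle. Projecting a $\mathbf{\Pi}^0_3$ set along a compact factor need not give a $\mathbf{\Pi}^0_3$ set: projections of $G_\delta$ subsets of $X\times 2^\omega$ along $2^\omega$ already exhaust the analytic subsets of $X$. So $K_k+H_2$, the projection of $\{(c,x)\in K_k\times\mathbb{T}: x-c\in H_2\}$, is a priori only analytic. Even if the sum is direct, Lusin--Souslin gives Borelness with no control on the rank.

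\emph{Hardness.} A reduction $f$ into $H_2$ says nothing about whether the points $f(\alpha)\notin H_2$ avoid every coset $h+H_2$ with $h\in H_1\setminus\{0\}$.

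\emph{Non-Polishability.} This is the most serious gap: non-Polishability does not pass from $H_1$ to $H_1+H_2$. A Borel subgroup of a Polishable group need not be Polishable ($H_1\le\mathbb{T}$ is itself an example). Nothing you write excludes, say, $H_1\subseteq H_2$, in which case the sum is the Polishable group $H_2$. Your set $\{(h,g)\in H\times H_2: h-g\in H_1\}$ is closed in $(H,\tau)\times(H_2,\tau_2)$ exactly when $H_1$ is $\tau$-closed in $H$ (intersect with $H\times\{0\}$). That is the whole difficulty, not a routine check, and it would already make $H_1$ Polishable directly.

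The second route starts from a wrong example. $\mathrm{Fin}\otimes\mathrm{Fin}$ is $\mathbf{\Sigma}^0_4$-complete, so Theorem \ref{thm:wadge} would give a subgroup that is not even $\mathbf{\Pi}^0_3$. It also leaves open precisely the step you flag: Corollary \ref{cor:Fsigmadeltacomplete} and Case~1 of Theorem \ref{thm:analyticPidealscharacterization} produce unrelated sequences, and you give no mechanism for a single subgroup to carry both witnesses.

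The missing idea is to intersect rather than add, over one divisibility chain shared by both witnesses. Take $\J\cong\emptyset\otimes\mathrm{Fin}$, $M_n:=n+1$, $r_n:=(10M_n)^2$, $q_0:=2$, $q_{n+1}:=q_nr_n$, $a_n:=q_n/2$, $p(x):=\sup_nM_n\|q_nx\|$ and $H_0:=\{x: p(x)<\infty\}$. The paper sets $H:=H_0\cap\mathsf{H}_{\bm a}(\J)$.
\begin{itemize}
\item $H$ is $F_{\sigma\delta}$, since $H_0$ is $F_\sigma$ and $\mathsf{H}_{\bm a}(\J)$ is $\mathbf{\Pi}^0_3$ by Theorem \ref{thm:upperbound}.
\item $H$ is complete. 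The map $\Phi(A):=\sum_{m\in A}1/q_m$ satisfies $p(\Phi(A))<1$, so $\Phi$ lands in $H_0$. Moreover $\|a_n\Phi(A)\|>3/8$ for $n\in A$ and $\|a_n\Phi(A)\|\le 1/r_n$ for $n\notin A$. Hence $\Phi^{-1}[H]=\J$.
\item $H$ is not Polishable. Baire category and Pettis applied to the $\tau$-Borel cover $H=\bigcup_k(H\cap\{p\le k\})$ give $H\subseteq D+\{p\le 2k\}$ with $D$ countable. So it suffices to find an uncountable $P\subseteq H$ with $p(x-y)>4k$ for distinct $x,y\in P$. The points $x_\eta=\sum_n\eta(n)c_n/q_{n+1}$ of Case~1 work once each $c_n$ is chosen even. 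Then all earlier terms become integers after multiplication by $a_n$, so $\|a_nx_\eta\|\to0$ and $x_\eta\in H_0\cap\mathsf{H}_{\bm a}(\mathrm{Fin})\subseteq H$.
\end{itemize}
Incidentally, $H_0$ is $\mathrm{Fin}\otimes\emptyset$-characterized by the argument of Claim \ref{claim:equalityHHfin}. So this $H$ is characterized by the $F_{\sigma\delta}$ non-$P$ ideal $\J\oplus(\mathrm{Fin}\otimes\emptyset)$, which is what a correct version of your second route would have to produce.
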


Along the same lines, following \cite[Section 3]{MR5045103}, 
define the difference class 
$$
\mathsf{D}(\mathbf{\Pi}^0_2):=\{A\setminus B: A,B \text{ are }G_\delta \text{ subsets of }\mathbb{T} \},
$$
cf. also \cite[Section 22.3]{MR1321597}. 
Observe that a subset $S\subseteq \mathbb{T}$ is $\mathsf{D}(\mathbf{\Pi}^0_2)$-complete if and only if $S \in \mathsf{D}(\mathbf{\Pi}^0_2)$ and $\mathbb{T}\setminus S \notin \mathsf{D}(\mathbf{\Pi}^0_2)$.  
Using a non-trivial result of Lupini \cite{MR5045103}, we get the following consequences. 
\begin{corollary}\label{corollary:lupini}
Let $\I$ be an analytic $P$-ideal on $\omega$. Then each proper infinite $\I$-characterized subgroup $H$ of $\mathbb{T}$ is either $F_{\sigma}$-complete or $F_{\sigma\delta}$-complete or 
$\mathsf{D}(\mathbf{\Pi}^0_2)$-complete. 
\end{corollary}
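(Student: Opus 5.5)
By Theorem \ref{thm:analyticPidealscharacterization}, $H$ is a Polishable subgroup of $\mathbb{T}$. The plan is to invoke Lupini's classification \cite{MR5045103} of the possible complexity classes of Polishable subgroups of Polish groups. For a Polishable subgroup $H$ of $\mathbb{T}$, this classification lists the following options:
\begin{enumerate}[label={\rm (\alph*)}]
\item $H$ is closed;
\item $H$ is $F_\sigma$-complete or $\mathsf{D}(\mathbf{\Pi}^0_2)$-complete (this is the low-rank part of the classification);
\item $H$ has Borel rank at least $3$.
\end{enumerate}
By Theorem \ref{thm:upperbound} with $\xi=3$, we have $H\in\mathbf{\Pi}^0_3$, since every analytic $P$-ideal is $F_{\sigma\delta}$ by Theorem \ref{thm:solecki}. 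In the rank-$3$ case, Lupini's result says that a Polishable $\mathbf{\Pi}^0_3$ subgroup which is not $\mathbf{\Sigma}^0_3$ is $\mathbf{\Pi}^0_3$-complete. It also gives the intermediate classes $\mathsf{D}(\mathbf{\Pi}^0_2)$ and $\mathbf{\Sigma}^0_2$ via the Solecki subgroups and the first Solecki subgroup $s_1(H)$.

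So the steps are as follows. First, rule out the closed case: the only closed subgroups of $\mathbb{T}$ are $\mathbb{T}$ itself and the finite cyclic groups, and $H$ is assumed proper and infinite. Second, rule out the non-closed $G_\delta$ case (and more generally the remaining non-listed classes $\mathbf{\Sigma}^0_3\setminus$ lower). By Pettis' theorem, a Polishable subgroup which is $G_\delta$ is closed in its closure. Indeed, it is Polish in the subspace topology, hence equal to its closure. So no $G_\delta$ non-closed case occurs. Third, use Lupini's theorem that, for Polishable subgroups, the Wadge class is determined by the first few Solecki subgroups. Concretely, $H$ is $\mathbf{\Pi}^0_3$-complete unless $s_1(H)$, the $\tau$-closure in $H$ of the identity with respect to the original topology, is $\tau$-open, or a similar condition holds. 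In the latter case, $H$ is $\mathbf{\Sigma}^0_2$ (and then $F_\sigma$-complete, being not $G_\delta$) or $\mathsf{D}(\mathbf{\Pi}^0_2)$-complete. Complete classes at level $3$ other than $\mathbf{\Pi}^0_3$ are excluded because $H\in\mathbf{\Pi}^0_3$: a $\mathbf{\Pi}^0_3$ set cannot be $\mathbf{\Sigma}^0_3$-complete.

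The main obstacle is matching precisely which statement from \cite{MR5045103} delivers the trichotomy for Polishable subgroups lying in $\mathbf{\Pi}^0_3$. I expect Lupini's theorem to state directly that a non-closed Polishable subgroup of Borel rank at most $3$ is complete for exactly one of the classes $\mathbf{\Sigma}^0_2$, $\mathsf{D}(\mathbf{\Pi}^0_2)$, $\mathbf{\Pi}^0_3$ (with $\mathbf{\Sigma}^0_3$ impossible for subgroups of rank $3$ in the appropriate sense). If it does, the proof reduces to checking these hypotheses via Theorem \ref{thm:analyticPidealscharacterization}, Theorem \ref{thm:upperbound}, and properness and infinitude to exclude closedness. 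Note that $F_\sigma$-completeness requires $H$ not to be $G_\delta$, and this is guaranteed by the Pettis argument above.
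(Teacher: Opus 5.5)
Your proposal is correct and follows the paper's proof: you get Polishability from Theorem \ref{thm:analyticPidealscharacterization} and $H\in\mathbf{\Pi}^0_3$ from Theorem \ref{thm:upperbound}, you use properness and infiniteness to exclude the closed (and hence open) case, and you then apply \cite[Theorem 1.1]{MR5045103}, which lists the possible completeness classes of Polishable subgroups, so that only $\mathbf{\Sigma}^0_2$, $\mathsf{D}(\mathbf{\Pi}^0_2)$ and $\mathbf{\Pi}^0_3$ remain. The Pettis step and the digression on Solecki subgroups are not needed, and your description of $s_1(H)$ is garbled (the $\tau$-closure of $\{0\}$ is just $\{0\}$), but the argument does not rely on either.
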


To sum up, if $\I$ is an analytic $P$-ideal on $\omega$, then each $\I$-characterized subgroup is $F_{\sigma\delta}$ and admits a finer Polish group topology generated by a translation-invariant metric. In the case $\I=\mathrm{Fin}$, another necessary condition was found by Gabriyelyan in \cite[Corollary 1]{MR2729343}, namely, every characterized subgroup $H$ of $\mathbb{T}$ is a locally quasi-convex Polishable subgroup. 
More explicitly, 
we recall that a group topology $\tau$ on a subgroup $H$ of $\mathbb{T}$ (in our case, the finer Polish group topology) is \emph{locally quasi-convex} if 
for every \(\tau\)-neighborhood \(U\) of \(0\), there exists a
\(\tau\)-neighborhood \(V\) of \(0\) contained in $U$ such that 
$$
\forall y \in H\setminus V, \exists \phi \in \widehat{H}, \quad \sup_{x \in V}\|\phi(x)\|\le \nicefrac{1}{4}<\|\phi(y)\|.
$$ 
Here, $\widehat{H}$ stands for the set of $\tau$-continuous characters on $H$. As it is clear from its definition, the property of local quasi-convexity does not depend on the choice of the compatible metric generating $\tau$. In the following result, we find all ideals $\I$ for which every $\I$-characterized subgroup is locally quasi-convex Polishable. 

To this aim, we will use also the Fubini sum $\mathrm{Fin}\oplus \mathcal{P}(\omega):=\{A\subseteq \{0,1\}\times \omega: |A \cap \{0\}\times \omega|<\infty\}$ and the Fubini product 
$$
\emptyset \otimes \mathrm{Fin}:=
\left\{
A\subseteq\omega^2:
A_i\in\mathrm{Fin}
\text{ for all }i\in\omega
\right\}
$$
where $A_i:=\{j \in \omega: (i,j) \in A\}$ for all $A\subseteq \omega^2$ and $i \in \omega$. 

Observe that, thanks to \cite[Theorem 2.6(i)]{FKLT26}, if $\I$ and $\J$ are isomorphic ideals then $\mathscr{H}(\I)=\mathscr{H}(\J)$; hence it makes sense to write, for instance, $\mathscr{H}(\emptyset\otimes \mathrm{Fin})$. 

%

\begin{proposition}\label{prop:locallyquasiconvex}
Let $\I$ be an ideal on $\omega$. Then the following are equivalent\textup{:}
\begin{enumerate}[label={\rm (\roman*)}]
\item \label{item:1countablygenerated} $\I=\mathrm{Fin}$ or $\I$ is isomorphic to $\mathrm{Fin}\oplus \mathcal{P}(\omega)$ or $\emptyset \otimes \mathrm{Fin}$\textup{;}
\item \label{item:2countablygenerated} Every $\I$-characterized subgroup of $\mathbb{T}$ is locally quasi-convex Polishable\textup{.}
\end{enumerate}
\end{proposition}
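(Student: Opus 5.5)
We prove (i)$\Rightarrow$(ii) by reducing each of the three ideals to $\mathrm{Fin}$. We prove (ii)$\Rightarrow$(i) by first invoking Theorem \ref{thm:analyticPidealscharacterization} and then exploiting local quasi-convexity.

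For (i)$\Rightarrow$(ii), the case $\I=\mathrm{Fin}$ is Gabriyelyan's result \cite[Corollary 1]{MR2729343}. By \cite[Theorem 2.6(i)]{FKLT26} we may work with the models $\mathrm{Fin}\oplus\mathcal{P}(\omega)$ and $\emptyset\otimes\mathrm{Fin}$ themselves.

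For $\I=\mathrm{Fin}\oplus\mathcal{P}(\omega)$, the coordinates in $\{1\}\times\omega$ are irrelevant. Hence $\mathsf{H}_{\bm a}(\I)=\mathsf{H}_{\bm b}(\mathrm{Fin})$, where $\bm b$ is the restriction of $\bm a$ to $\{0\}\times\omega$, and this case is also covered.

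For $\I=\emptyset\otimes\mathrm{Fin}$ we get $\mathsf{H}_{\bm a}(\I)=\bigcap_{i}\mathsf{H}_{\bm a_i}(\mathrm{Fin})$, where $\bm a_i$ is the $i$-th column, i.e. $\bm a$ restricted to $\{i\}\times\omega$. Enumerate $\omega^2$ column by column in blocks: at step $k$ put the next $k$ terms of columns $0,\ldots,k-1$. Convergence along each column forces convergence along this interleaved sequence only if the rate is uniform, which fails in general, so a plain interleaving does not suffice. Instead, set $H_i:=\mathsf{H}_{\bm a_i}(\mathrm{Fin})$ with its Polish group topology $\tau_i$. The map $x\mapsto(x,x,\ldots)$ embeds $H=\bigcap_i H_i$ as a closed subgroup of $\prod_i(H_i,\tau_i)$; it is closed because each $\tau_i$ refines the circle topology, so all coordinates of a limit point coincide. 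This closed subgroup is Polish, and its topology is the one induced by the metric $d_{\varphi,\bm a}$. A countable product of locally quasi-convex groups is locally quasi-convex, and so is any subgroup with the induced topology. Each $H_i$ is locally quasi-convex by Gabriyelyan, so $H$ is locally quasi-convex Polishable.

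For (ii)$\Rightarrow$(i), Theorem \ref{thm:analyticPidealscharacterization} shows that $\I$ is an analytic $P$-ideal. With a lscsm $\varphi$ as in Theorem \ref{thm:solecki}, the Polish topology on $\mathsf{H}_{\bm a}(\I)$ is given by $d_{\varphi,\bm a}$. We must show that if $\I$ is not isomorphic to one of the three listed ideals, then some $\mathsf{H}_{\bm a}(\I)$ fails local quasi-convexity. The known classification applies here: an analytic $P$-ideal that is not of these forms is not countably generated. For $P$-ideals this means $\I\neq\mathrm{Fin}$ and $\I$ is not generated by a single infinite set together with $\mathrm{Fin}$; the latter case is exactly $\mathrm{Fin}\oplus\mathcal{P}(\omega)$, while $\emptyset\otimes\mathrm{Fin}$ is also included in the list. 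For such ideals one finds pairwise disjoint finite sets $F_k$ such that $\varphi$ restricted to $\bigcup_k F_k$ behaves like a density, i.e. is genuinely non-$F_\sigma$/non-Fin-like: $\varphi(F_k)$ is bounded below while each singleton, and indeed every set of size $o(|F_k|)$, is negligible.

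Choose $\bm a$ so that the characters in a neighbourhood of $0$ are forced to be the maps $x\mapsto a_nx$ and their finite combinations, as in Gabriyelyan's description of the dual of characterized subgroups. Then build $y$ with $\|a_ny\|$ small on most of each $F_k$ but $\ge 1/4$ on a $\varphi$-small portion. Such a $y$ lies outside a basic neighbourhood $V$, yet every continuous character $\phi$ with $\sup_V\|\phi\|\le 1/4$ satisfies $\|\phi(y)\|\le 1/4$, which contradicts quasi-convexity.

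The main obstacle is this last step. It requires identifying the continuous characters of $(\mathsf{H}_{\bm a}(\I),d_{\varphi,\bm a})$ and constructing the witness $y$. I would first try the simplest concrete case, $\I=\mathcal{Z}$ with $a_n$ rapidly divisible ($a_{n+1}/a_n\to\infty$), and then transfer the construction to a general non-countably-generated analytic $P$-ideal via a Katětov-type embedding of a density-like restriction.
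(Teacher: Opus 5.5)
Your direction (i)$\Rightarrow$(ii) is correct. For $\emptyset\otimes\mathrm{Fin}$ it differs from the paper mainly in how local quasi-convexity is obtained. Your closed diagonal embedding of $\bigcap_i\mathsf{H}_{\bm a_i}(\mathrm{Fin})$ into $\prod_i(\mathsf{H}_{\bm a_i}(\mathrm{Fin}),\tau_i)$ is essentially the paper's embedding into $\mathbb{T}\times c_0(\mathrm{Fin})^\omega$; the Borel structures agree by Lusin--Souslin. You then inherit local quasi-convexity from Gabriyelyan's theorem, using that the class is stable under products and subgroups. The paper instead checks quasi-convexity of the basic neighbourhoods by hand, with the characters $x\mapsto qx$ and Claim~\ref{claim:multipleestimate}, so it does not rely on Gabriyelyan for this case.

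The direction (ii)$\Rightarrow$(i) has a genuine gap: its core is not carried out, and the sketch points the wrong way.

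(1) ``Not countably generated'' is the wrong invariant, because $\emptyset\otimes\mathrm{Fin}$ is itself not countably generated and satisfies (ii). What you need is the dichotomy \cite[Corollary 1.2.11]{MR1711328}: an analytic $P$-ideal outside the list has some $S\in\I^+$ every infinite subset of which contains an infinite member of $\I$. This gives $\varphi(\{n\})\to0$ along $S$. With $0<\delta<\min\{\nicefrac{1}{4},\lim_n\varphi(S\setminus n)\}$, it also gives finite sets $E_n\subseteq S$ with $\min E_n>n$ and $\varphi(E_n)>\delta$.

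(2) To refute local quasi-convexity you must fix one neighbourhood $U$, say $U=B_d(0,\delta)$, and derive a contradiction from \emph{every} quasi-convex $V\subseteq U$. A point outside a single $V$ that no character separates only shows that this particular $V$ is not quasi-convex.

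(3) Your witness is inverted. A $y$ with $\|a_ny\|\ge\nicefrac{1}{4}$ only on a $\varphi$-small set is $d_{\varphi,\bm a}$-close to $0$, so it cannot lie outside a fixed $U$. The witness must be bad on a $\varphi$-\emph{large} set while being a sum of many individually tiny pieces. The paper does this as follows.
It takes $a_{n+1}$ a multiple of $2a_n$ with $a_{n+1}>2^na_nL_{n+1}$, and sets $x_n:=1/(2a_n)$. Then $d(\ell x_n,0)\to0$ uniformly in $\ell\le L_n$ as $n\to\infty$ in $S$, because only coordinate $n$ is large and $\varphi(\{n\})\to0$ on $S$.
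Given $V$, it chooses $N$ so that $\ell x_n\in V$ for all $n\in E_N$ and $\ell\le|E_N|$, and puts $y:=\sum_{n\in E_N}x_n$.
Then $\|a_my\|>\nicefrac{1}{4}$ for every $m\in E_N$, so $d(y,0)\ge\delta$ and $y\notin U$.

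(4) The step you call the main obstacle, identifying the continuous characters, is unnecessary. Gabriyelyan's analysis concerns $\mathrm{Fin}$-characterized subgroups, so it would not be available here anyway. Take any continuous $\phi$ with $\sup_V\|\phi\|\le\nicefrac{1}{4}$. The bounds $\|\ell\phi(x_n)\|\le\nicefrac{1}{4}$ for $\ell\le|E_N|$ give $\|\phi(x_n)\|\le1/(4|E_N|)$, by the elementary argument of Claim~\ref{claim:multipleestimate}. Hence $\|\phi(y)\|\le\nicefrac{1}{4}$, and quasi-convexity forces $y\in V\subseteq U$, a contradiction.

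Your fallback of doing $\mathcal{Z}$ first and then transferring is not viable as stated. A Kat\v{e}tov-type map $f$ only gives $\mathsf{H}_{\bm a}(\mathcal{Z})\subseteq\mathsf{H}_{\bm a\circ f}(\I)$, not equality. A two-sided reduction ($A\in\mathcal{Z}\iff f^{-1}[A]\in\I$) cannot exist when $\I$ is $F_\sigma$, since $A\mapsto f^{-1}[A]$ is continuous and $\mathcal{Z}$ is not $F_\sigma$. This rules out, for example, $\I_{1/n}$, which the proposition must cover.
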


As we recall in the next example, not all Polishable subgroups are locally quasi-convex.
\begin{example}\label{example:gabriy}
    Following Gabriyelyan \cite{Gabriy}, the subgroup 
    $$
    G_2:=\left\{ x \in \mathbb{T}: \sum_{n\in \omega}\left\|2^{n^2}x\right\|^2<\infty \right\}
    $$
    is $F_{\sigma}$ Polishable, but not locally quasi-convex, see \cite[Proposition 1 and Theorem 2]{Gabriy}. It follows by Proposition \ref{prop:locallyquasiconvex} that $G_2$ is not characterized. Since every countable subgroup is characterized (see B\'ir\'o, Deshouillers and S\'os \cite{MR1877772}), we obtain that $G_2$ is uncountable. Taking into account that $G_2$ is Borel, it follows by \cite[Theorem 13.6]{MR1321597} that $G_2$ has cardinality $\mathfrak{c}$. Finally, we will obtain from Theorem \ref{thm:Fsigmadelcomplete} below that $G_2$ is also not $\I$-characterized for every generalized density ideal $\I$. 
\end{example}

For the next result, we say that a topology $\tau$ on a subgroup $H\subseteq \mathbb{T}$ is \emph{uniformly free from small subgroups}, shortened as \emph{UFSS}, if there exists a neighborhood $U$ of $0$ such that the family of sets $\{U_m: m\ge 1\}$, where 
$$
U_m:=\left\{x\in H:kx\in U\text{ for every }1\leq k\leq m\right\},
$$
form a neighborhood basis at $0$; see \cite[Definition 3.1]{MR2651665} and \cite[p. 239]{MR263969}. 

\begin{proposition}\label{prop:locallyquasiconvex2}
Let $\I$ be an ideal on $\omega$. Then the following are equivalent\textup{:}
\begin{enumerate}[label={\rm (\roman*)}]
\item \label{item:1countablygenerated222} $\I=\mathrm{Fin}$ or $\I$ is isomorphic to $\mathrm{Fin}\oplus \mathcal{P}(\omega)$\textup{;}
\item \label{item:2countablygenerated222} Every $\I$-characterized subgroup of $\mathbb{T}$ is locally quasi-convex Polishable and its Polish group topology is UFSS\textup{.}
\end{enumerate}
\end{proposition}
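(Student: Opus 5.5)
We prove Proposition \ref{prop:locallyquasiconvex2} by building on Proposition \ref{prop:locallyquasiconvex}. By that result, condition \ref{item:2countablygenerated222} forces $\I$ to be $\mathrm{Fin}$, or isomorphic to $\mathrm{Fin}\oplus\mathcal{P}(\omega)$, or to $\emptyset\otimes\mathrm{Fin}$. So the work splits into two parts:
\begin{enumerate}[label={\rm (\alph*)}]
\item when $\I=\mathrm{Fin}$ or $\I\cong \mathrm{Fin}\oplus\mathcal{P}(\omega)$, every $\I$-characterized subgroup carries a UFSS Polish group topology;
\item some $(\emptyset\otimes\mathrm{Fin})$-characterized subgroup has a Polish group topology that is not UFSS.
\end{enumerate}
Since the finer Polish group topology is unique, UFSS is a property of the subgroup itself, and we may compute it with any compatible metric.

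\emph{Part (a).} Since $\mathscr{H}(\I)$ is invariant under isomorphism of ideals, we first reduce the case $\mathrm{Fin}\oplus\mathcal{P}(\omega)$ to $\mathrm{Fin}$. If $\bm a$ is indexed by $\{0,1\}\times\omega$, then
$$
\mathsf H_{\bm a}(\mathrm{Fin}\oplus\mathcal{P}(\omega))=\mathsf H_{(a_{0,n})}(\mathrm{Fin}).
$$
Indeed, the coordinates in $\{1\}\times\omega$ are irrelevant because that set lies in the ideal. So both cases reduce to characterized subgroups $H=\mathsf H_{\bm a}(\mathrm{Fin})$. For these, the Polish topology from Theorem \ref{thm:analyticPidealscharacterization} is the topology of uniform convergence of $(a_nx)_n$ together with the usual topology. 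Equivalently, it is the topology induced by the embedding $x\mapsto (x,(a_nx)_n)$ into $\mathbb{T}\times c_0$ with the sup norm. Take
$$
U:=\{x\in H:\ \|x\|<1/4 \text{ and } \|a_nx\|<1/4 \text{ for all } n\}.
$$
We use the standard fact that if $\|kz\|<1/4$ for all $1\le k\le m$, then $\|z\|<c/m$ for a universal constant $c$. This follows by induction: when $\|z\|<1/4$, doubling $z$ doubles the distance $\|z\|$. Applying this to $z=x$ and to each $z=a_nx$ shows
$$
U_m\subseteq\{x:\ \|x\|\le c/m \text{ and } \sup_n\|a_nx\|\le c/m\}.
$$
The right-hand sides form a neighborhood basis for the sup-metric topology. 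Conversely, each $U_m$ is a neighborhood of $0$, since $\|k a_n x\|\le k\|a_nx\|$. Hence the sets $U_m$ form a basis, and the topology is UFSS. Local quasi-convexity in these cases is already supplied by Proposition \ref{prop:locallyquasiconvex}.

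\emph{Part (b).} We must exhibit a sequence $\bm a$ on $\omega^2$ such that $H=\mathsf H_{\bm a}(\emptyset\otimes\mathrm{Fin})$ is not UFSS. Here $H=\bigcap_i \mathsf H_{\bm a_i}(\mathrm{Fin})$, where $\bm a_i$ denotes the $i$-th row, and the Polish topology is the projective-limit (countable product) topology. The natural basic neighborhoods are
$$
V_{j,\varepsilon}:=\{x:\ \|a_{i,n}x\|<\varepsilon \text{ for all } i\le j \text{ and all } n\}.
$$
The plan is to choose the rows so that $H$ is not locally "bounded", i.e.\ no single neighborhood $U$ controls all the rows. A natural first choice is $a_{i,n}:=2^{2^{n+i}}$-type lacunary sequences with row-dependent growth, or more simply rows $\bm a_i$ making $H=\bigcap_i\mathsf H_{\bm a_i}(\mathrm{Fin})$ a genuine infinite intersection. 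Given any neighborhood $U\supseteq V_{j,\varepsilon}$, the goal is to find $x\in\bigcap_m U_m$ that lies outside some $V_{j+1,\delta}$. Then the sets $U_m$ fail to shrink to $0$ in row $j+1$, so they cannot form a basis. The element $x$ should be built as a sum of dyadic-type digits placed only where row $j+1$ sees them but rows $\le j$ do not. For this we need rows $\le j$ to be "invisible" at certain frequencies, which suggests taking the rows with disjoint multiplicative structure, e.g.\ $a_{i,n}=p_i^{\,n!}$ for distinct primes $p_i$.

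\emph{Main obstacle.} The hardest step is making part (b) rigorous. We need an explicit $x\in H$ with $\|k a_{i,n}x\|$ small for all $k\le m$, all $i\le j$ and all $m$, yet $\|a_{j+1,n}x\|$ bounded away from $0$. Handling all $m$ simultaneously forces $x$ to be a torsion-like element relative to the first $j$ rows. We would attempt this with rational points $x=1/q$, where $q$ divides the terms $a_{i,n}$ for $i\le j$ but not those of row $j+1$. This gives $kx\cdot a_{i,n}\equiv 0$ in the first rows. Also, $x$ lies in $H$ provided every later row is eventually divisible by $q$. The design of the rows must reconcile these divisibility requirements.
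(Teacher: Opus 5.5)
Your reduction via Proposition~\ref{prop:locallyquasiconvex} and your Part (a) are correct and essentially match the paper, which takes $U=B_{\rho_{\bm a}}(0,\nicefrac18)$ and uses Claim~\ref{claim:multipleestimate} to get $U_m=B_{\rho_{\bm a}}(0,\nicefrac1{8m})$. The gap is Part (b), which carries the whole content of \ref{item:2countablygenerated222}$\implies$\ref{item:1countablygenerated222}. There you never construct a sequence, and the strategy you outline cannot succeed.

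\textbf{Why the strategy fails.} You want, for each neighborhood $U$, a single nonzero $x\in\bigcap_m U_m$, that is, with $kx\in U$ for every $k\ge 1$. But the Polish group topology on $H$ refines the topology inherited from $\mathbb T$. Hence $\{x\in H:\|x\|<\nicefrac14\}$ is a neighborhood of $0$, and to refute UFSS you must also handle every $U$ contained in it. For such $U$ and any $x\neq 0$, Claim~\ref{claim:multipleestimate} gives some $k$ with $\|kx\|\ge\nicefrac14$. So $\bigcap_m U_m=\{0\}$: $H$ with any finer group topology has no small subgroups, because $\mathbb T$ has none.

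Concretely, for a fixed $x=1/q$ the multiples $kx$ eventually have norm close to $\nicefrac12$. Your sets $V_{j,\varepsilon}$ hide this, because they omit the condition $\|x\|<\varepsilon$; in the situation you envisage they are not a neighborhood basis. In addition, a large $q$ cannot divide every term of the rows $i\le j$, since each row has a fixed gcd. So the early terms will always produce small nonzero errors that you have to control.

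\textbf{The missing idea.} Reverse the quantifiers. Fix $U\supseteq W(F,\delta)$, where the basic neighborhood $W(F,\delta)$ includes the condition $\|x\|<\delta$. Fix a row $i\notin F$ and set $V:=\{x\in H:\sup_j\|b_{i,j}x\|<\nicefrac14\}$. Then, for each $m$ separately, find an $x$ depending on $m$ such that $qx\in W(F,\delta)$ for $1\le q\le m$ but $x\notin V$.

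The paper does this as follows. It distributes the lacunary sequence $D_k/2$, with $D_k:=2^{2^k}$, among the rows along a partition $(S_i)$ of $\omega$. It then takes $x_k:=1/D_k$ with $k\in S_i$ large. This gives:
\begin{itemize}
\item $\|qx_k\|\le m/D_k<\delta$ for all $q\le m$;
\item in rows $r\in F$, later terms annihilate $x_k$ exactly, while earlier terms give $\|b_{r,j}x_k\|\le D_{k-1}/(2D_k)$, so $\|b_{r,j}qx_k\|<\delta$;
\item row $i$ contains $D_k/2$, so $\|b_{i,j}x_k\|=\nicefrac12$ for some $j$.
\end{itemize}
The essential point is to use torsion of order growing with $m$, not one fixed torsion element.
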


As a consequence of the previous results, we have the following. 
\begin{corollary}\label{cor:strictFinemptysetFin}
    $\mathscr H(\mathrm{Fin})
    \subsetneq
    \mathscr H(\emptyset\otimes\mathrm{Fin})\subsetneq \mathscr{H}(\mathcal{Z})$.  
\end{corollary}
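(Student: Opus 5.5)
We need to prove $\mathscr H(\mathrm{Fin})\subsetneq \mathscr H(\emptyset\otimes\mathrm{Fin})\subsetneq \mathscr H(\mathcal Z)$, which amounts to two inclusions and two strictness claims.

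\emph{First inclusion.} Given $\bm a$, we enumerate $\omega^2$ and set $b_{(i,j)}:=a_j$ for all $i$, so that every row is a copy of $\bm a$. A set $\{(i,j): \|a_jx\|\ge\varepsilon\}$ lies in $\emptyset\otimes\mathrm{Fin}$ iff each of its rows is finite iff $\{j:\|a_jx\|\ge\varepsilon\}$ is finite. Hence $\mathsf H_{\bm b}(\emptyset\otimes\mathrm{Fin})=\mathsf H_{\bm a}(\mathrm{Fin})$. Since $\emptyset\otimes\mathrm{Fin}$ is an ideal on the countable set $\omega^2$, the isomorphism invariance \cite[Theorem 2.6(i)]{FKLT26} lets us transport this back to $\omega$.

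\emph{Second inclusion.} Let $\bm b$ be indexed by $\omega^2$. We partition $\omega$ into infinite sets $(P_i)$, say $P_i=\{2^i(2k+1)-1:k\in\omega\}$, each of positive density $2^{-i-1}$. Fix bijections $P_i\to\{i\}\times\omega$ that are increasing in $j$, and define $c_n:=b_{(i,j)}$ for $n\in P_i$ corresponding to $(i,j)$. We claim $\mathsf H_{\bm c}(\mathcal Z)=\mathsf H_{\bm b}(\emptyset\otimes\mathrm{Fin})$.

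If $x$ is in the right-hand side, then for each $\varepsilon$ every row of the bad set is finite. So for each $i$, the bad set meets $P_i$ in a finite set, and it is contained in a finite set together with $\bigcup_{l>i}P_l$, which has upper density at most $2^{-i-1}$. Letting $i\to\infty$ shows the bad set has density zero.

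Conversely, if some row $i$ is infinite, the bad set contains an infinite subset of $P_i$, but that alone need not have positive density. This is the main obstacle. The fix I would try is to repeat each entry of row $i$ along $P_i$ in blocks of rapidly growing length, so that any infinite subset of row $i$ yields positive upper density within $P_i$. Equivalently, define $c$ on $P_i$ by $c_n:=b_{(i,j)}$ for $n$ in the $j$-th interval of a partition of $P_i$ into consecutive intervals $I_{i,j}$ with $|I_{i,j}|\ge j\cdot\max I_{i,j-1}$. Each interval then carries a fixed positive fraction of the density up to its endpoint, relative to $P_i$. The forward direction still works because a finite union of intervals is finite.

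\emph{Strictness.} For the first strict inclusion, Proposition \ref{prop:locallyquasiconvex2} together with Proposition \ref{prop:locallyquasiconvex} show that some $\emptyset\otimes\mathrm{Fin}$-characterized subgroup either fails to be locally quasi-convex Polishable or has a topology that is not UFSS. On the other hand, Gabriyelyan's result and the proof of Proposition \ref{prop:locallyquasiconvex2} show that every characterized subgroup has a UFSS locally quasi-convex Polish topology; I would extract this from the implication (i)$\Rightarrow$(ii) there. So that subgroup is not characterized.

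For the second strict inclusion, Proposition \ref{prop:locallyquasiconvex} applied to $\mathcal Z$, which is not among the three listed ideals, gives some $\mathcal Z$-characterized subgroup that is not locally quasi-convex Polishable. By the same proposition, every member of $\mathscr H(\emptyset\otimes\mathrm{Fin})$ is locally quasi-convex Polishable, so this subgroup witnesses strictness.
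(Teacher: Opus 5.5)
Your proof is correct. The strictness half is the paper's argument: Proposition~\ref{prop:locallyquasiconvex2} separates $\mathscr H(\mathrm{Fin})$ from $\mathscr H(\emptyset\otimes\mathrm{Fin})$ via the UFSS property, and Proposition~\ref{prop:locallyquasiconvex} separates $\mathscr H(\emptyset\otimes\mathrm{Fin})$ from $\mathscr H(\mathcal Z)$ via local quasi-convexity. Both comparisons are legitimate because the Polish group topology of a Polishable subgroup is unique. For the second one, you should say why $\mathcal Z$ is none of the three listed ideals: $\mathcal Z$ is tall and those three are not.

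You differ from the paper in the two inclusions. The paper gets $\mathscr H(\mathrm{Fin})\subseteq\mathscr H(\emptyset\otimes\mathrm{Fin})$ from the fact that $\mathscr H(\mathrm{Fin})\subseteq\mathscr H(\mathcal I)$ for every meager $\mathcal I$ \cite[Corollary 2.7]{FKLT26}. It gets $\mathscr H(\emptyset\otimes\mathrm{Fin})\subseteq\mathscr H(\mathcal Z)$ from Solecki's theorem, which gives $\emptyset\otimes\mathrm{Fin}\le_{\mathrm{RB}}\mathcal Z$ since $\mathcal Z$ is a non-$F_\sigma$ analytic $P$-ideal, combined with the transfer result \cite[Theorem 2.6(i)]{FKLT26}.

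You instead build both reductions by hand. The first is the row duplication $b_{(i,j)}:=a_j$. The second is the finite-to-one map sending each block $I_{i,j}$ to $(i,j)$, which explicitly witnesses $\emptyset\otimes\mathrm{Fin}\le_{\mathrm{RB}}\mathcal Z$. Your growth condition $|I_{i,j}|\ge j\max I_{i,j-1}$ forces $\max I_{i,j-1}/\max I_{i,j}\to 0$. Hence any infinite union of blocks inside $P_i$ has upper density at least $2^{-i-1}$, so the obstacle you flagged is genuinely overcome. In the write-up, drop the provisional increasing bijections $P_i\to\{i\}\times\omega$ and use only the block version.

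Your route is self-contained and elementary, but it is tied to the density structure of $\mathcal Z$. The paper's citations are shorter and give the same inclusion for every analytic $P$-ideal that is not $F_\sigma$.
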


\subsection{Topological dichotomies} 
Several questions have been asked in the literature regarding the Borel complexities of characterized subgroups. Among others, we recall below the following ones, see \cite[Section 4]{MR3288121}: 
\begin{enumerate}
    \item [(a)] \label{item:questionA}
    Does there exist a characterized subgroup which is not $G_{\delta\sigma}$? 
    \item [(b)] \label{item:questionB}
    Does there exist an explicit method to decide whether a characterized subgroup is $G_{\delta\sigma}$? And a method to decide whether it is $F_\sigma$?
    \item [(c)] \label{item:questionC}
    Does there exist a proper uncountable $F_\sigma$ characterized subgroup?
\end{enumerate}
We answer all the above questions with the aid of the following result. To this aim, we recall that an ideal $\mathcal{I}$ on $\omega$ is said to be a \emph{generalized density ideal} if there exists a sequence $(\varphi_k:k \in \omega)$ of lscsms with finite pairwise disjoint supports such that 
$$
\mathcal{I}=\left\{S\subseteq \omega: \lim_{k\to \infty} \varphi_k(S)=0\right\}.
$$
Generalized density ideals have been introduced by Farah in \cite[Section 2.10]{MR1988247}, see also \cite{MR2254542}, and have been used in different contexts, see e.g. \cite{MR3436368, MR2320288}. 
It is clear generalized density ideals are analytic $P$-ideals and, as it follows by \cite[Proposition 2.3]{MR4404626} the unique $F_\sigma$ generalized density ideals are $\mathrm{Fin}$ and $\mathrm{Fin}\oplus \mathcal{P}(\omega)$. However, the family of generalized density ideals is very rich. Indeed, in addition, it includes $\emptyset\otimes \mathrm{Fin}$, $\mathcal{Z}$, the density ideals as defined in \cite[Section 1.13]{MR1711328}, all the Erd{\H o}s--Ulam ideals introduced by Just and Krawczyk in \cite{MR748847}, and a large class of generalized density ideals has been defined by Louveau and Veli\v{c}kovi\'{c} in \cite{MR1708151, Louveau1994}, cf. also \cite[Section 2.11]{MR1988247}.



\begin{theorem}\label{thm:Fsigmadelcomplete}
Let $H$ be a proper $\I$-characterized subgroup of $\mathbb{T}$, where $\I$ is a generalized density ideal on $\omega$. Then $H$ is either countable or $F_{\sigma\delta}$-complete.
\end{theorem}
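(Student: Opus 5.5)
Since $\I$ is a generalized density ideal, it is an analytic $P$-ideal and hence $F_{\sigma\delta}$ (Theorem \ref{thm:solecki}). By Theorem \ref{thm:upperbound} with $\xi=3$, every $H=\mathsf{H}_{\bm a}(\I)$ is then $\mathbf{\Pi}^0_3$. So the upper bound is free, and the whole content of the theorem is a dichotomy: if $H$ is proper and uncountable, then $H$ is not $\mathbf{\Sigma}^0_3$. Equivalently, we must Wadge-reduce a known $\mathbf{\Pi}^0_3$-complete set, such as $P_3=\{x\in 2^{\omega\times\omega}:\forall i\ \forall^\infty j\ x(i,j)=0\}$, to $H$.

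Write $\I=\{S:\lim_k\varphi_k(S)=0\}$, where the $\varphi_k$ have pairwise disjoint finite supports $I_k$. Then $x\in H$ if and only if $\max_{n\in I_k\cap A_\varepsilon(x)}\ldots$ tends to zero; more precisely, iff $\varphi_k(\{n\in I_k:\|a_nx\|\ge\varepsilon\})\to 0$ for every $\varepsilon>0$.

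\textbf{Step 1: extract a sparse block structure from uncountability.*} Since $H$ is uncountable and Borel, it contains a perfect set. Since $H$ is proper and Polishable (Theorem \ref{thm:analyticPidealscharacterization}), it is meagre and null in $\mathbb{T}$. From these facts I would extract a Cantor-scheme of \emph{independent perturbations*}. The aim is a sequence of reals $\delta_0,\delta_1,\ldots$ with $\sum\|\delta_j\|$ rapidly convergent, together with an increasing sequence of blocks $k_0<k_1<\cdots$, such that:
\begin{enumerate}[label={\rm (\alph*)}]
\item each $\delta_j\in H$, and in fact every finite or suitably convergent sum $\sum_{j\in F}\delta_j$ lies in $H$;
\item on the blocks $I_{k_j}$, the element $\delta_j$ is "$\varphi$-large", i.e.\ $\varphi_{k_j}(\{n:\|a_n\delta_j\|\ge\varepsilon_0\})\ge\varepsilon_0$, while all other $\delta_{j'}$ contribute negligibly there.
\end{enumerate}
I would get this by a fusion argument. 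Uncountability (via the Erd\H{o}s--Taylor style counting, or via Corollary \ref{corollary:lupini}'s trichotomy together with excluding the $F_\sigma$ case) gives elements of $H$ that are arbitrarily small in $\mathbb{T}$ but not small in the finer metric $d_{\varphi,\bm a}$. Properness gives a single $z\notin H$, whose witnesses live on infinitely many blocks. I expect to use the proper witness $z$ to produce $\varepsilon_0$-large blocks, and uncountability (non-discreteness of the Polish topology) to make them approximable inside $H$.

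\textbf{Step 2: the reduction.} Index the blocks by pairs $(i,j)$, and choose for each pair an element $\delta_{i,j}$ as above. Define
$$
f(x)=\sum_{i,j} x(i,j)\,\delta_{i,j}.
$$
This map is continuous, because $\sum\|\delta_{i,j}\|$ converges fast. The intended behaviour is as follows. Choose $\delta_{i,j}$ so that the large blocks of $\delta_{i,j}$, for fixed $i$, are "the same kind of witness" and accumulate to a copy of $z$'s $i$-th contribution; then infinitely many $1$'s in row $i$ force infinitely many $\varepsilon_0$-large blocks, so $f(x)\notin H$. If instead each row is eventually zero, then in each fixed block only finitely many terms matter. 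The finer-metric convergence of the tails (the $d_{\varphi,\bm a}$-smallness from Step 1) then gives $\varphi_k(\cdot)\to0$, so $f(x)\in H$. Hence $f^{-1}[H]=P_3$.

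\textbf{Main obstacle.} The hard step is the tail control in Step 2. The sum $\sum_{(i,j)}x(i,j)\delta_{i,j}$ must be in $H$ whenever each row is finite, even though there are infinitely many rows. This requires the elements $\delta_{i,j}$ to be chosen with $d_{\varphi,\bm a}$-norms decaying fast and with supports on disjoint block ranges, so that the $\varphi_k$ of a sum is controlled by a maximum rather than a sum. This is exactly where the disjointness of the supports in a generalized density ideal is used, and it is what fails for $\I_{1/n}$ (Theorem \ref{thm:strangecounterexample}).

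For the construction I would first try to pick $\delta_{i,j}$ inductively, adjusting each one by an element of $H$ that is very close to $0$ in $d_{\varphi,\bm a}$. The purpose of the adjustment is that its effect on all earlier blocks is below $2^{-(i+j)}$ while it remains large on a fresh block.
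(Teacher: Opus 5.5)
There is a genuine gap at exactly the point where uncountability has to be used. What your Step~2 needs from Step~1 is the paper's Claim~\ref{claim:nonloccompactGeneralizedDensity}. With $\varphi=\sup_k\varphi_k$, it says: for every $r>0$ there is $\eta=\eta(r)\in(0,r)$ such that, for every $\delta>0$, some $x\in H$ has $\|x\|<\delta$ and $\eta\le p_\varphi(\bm a x)\le r$. Small $\|x\|$ already forces $p_{\varphi_k}(\bm a x)$ to be small for any prescribed finitely many $k$, so the mass of $x$ sits on a fresh block.

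None of your proposed sources gives this lemma. A point $z\notin H$ exists just as well when $H$ is countable, where the conclusion fails, and the large blocks must be carried by elements of $H$ anyway. Elements that are small in $\mathbb{T}$ but not small in $d_{\varphi,\bm a}$ exist in every infinite proper $H$, e.g.\ in $\mathsf{H}_{(b^n)}(\mathrm{Fin})=\mathbb{Z}[1/b]/\mathbb{Z}$ (otherwise the two topologies would agree, making $H$ $G_\delta$, hence closed). What matters is the upper bound $p_\varphi(\bm a x)\le r$ with $r$ arbitrarily small. And excluding the $F_\sigma$ case in Lupini's trichotomy is itself a substantial part of what must be proved, not a tool.

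Non-discreteness of $(H,d_{\varphi,\bm a})$ is the right input, but it becomes the lemma only through two arguments you do not give.
\begin{itemize}
\item If the lemma fails for some $r$, the ball $B=\{x\in H:p_\varphi(\bm a x)\le r/2\}$ is $d_{\varphi,\bm a}$-compact. Along a $\mathbb{T}$-convergent sequence in $B$, $\mathbb{T}$-closeness controls the finitely supported $p_{\varphi_k}$ of differences. The failure of the lemma then controls $p_\varphi$ of these differences, which satisfy $p_\varphi\le r$. So the sequence is $d_{\varphi,\bm a}$-Cauchy, and $(H,d_{\varphi,\bm a})$ is locally compact.
\item Since $H\neq\mathbb{T}$, a locally compact $(H,d_{\varphi,\bm a})$ is discrete. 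The image of its identity component is a connected subgroup of $\mathbb{T}$, hence trivial. Van Dantzig's theorem then gives a compact open subgroup, which maps injectively onto a compact proper subgroup of $\mathbb{T}$ and is therefore finite. So $\{0\}$ is open and $H$ is countable. This is where properness really enters.
\end{itemize}
The paper organizes the same ingredients differently. Not $F_\sigma$ implies the lemma, since otherwise $H$ is $\sigma$-compact; and $F_\sigma$ implies local compactness by a gliding-hump argument, then countability by van Dantzig.

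Your size bookkeeping must also be repaired. A single $\varepsilon_0$ in (b) cannot work: if $x\in P_3$ has exactly one $1$ in each row, $f(x)$ carries mass about $\varepsilon_0$ on infinitely many blocks, so $f(x)\notin H$. Conversely, if the $d_{\varphi,\bm a}$-norms of all $\delta_{i,j}$ were summable, $f(x)$ would converge in the complete metric $d_{\varphi,\bm a}$ and lie in $H$ for every $x$. The sizes must depend on the row and be uniform along it.

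Take $r_i\downarrow 0$ and $\theta_i:=\eta(r_i)/8$. Along an enumeration of $\omega^2$, build elements $\delta_{i,j}\in H$ and disjoint increasing block intervals $I_{i,j}$ such that:
\begin{itemize}
\item $p_\varphi(\bm a\delta_{i,j})\le r_i$;
\item $p_{\varphi_t}(\bm a\delta_{i,j})>4\theta_i$ for some $t\in I_{i,j}$;
\item $\|\delta_{i,j}\|\le\lambda_{i,j}$ and $p_{\varphi_k}(\bm a\delta_{i,j})\le\lambda_{i,j}$ for $k\notin I_{i,j}$, where the $\lambda$'s of all later stages sum to less than $\theta_i/8$;
\item $I_{i,j}$ starts beyond the point where all earlier terms contribute less than $\theta_i$.
\end{itemize}
The lemma is exactly what feeds this induction. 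Then your $f$ is a reduction. If every row is finite, late blocks see a single dominant term of size at most $r_i$ with $i\to\infty$. If row $i$ is infinite, infinitely many blocks carry more than $2\theta_i$. With both repairs, your reduction of $P_3$ is equivalent to the paper's reduction of $\{\alpha\in\omega^\omega:\alpha(m)\to\infty\}$.
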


It follows that, if $H$ is a proper uncountable characterized subgroup of $\mathbb{T}$, it is necessarily $F_{\sigma\delta}$-complete. This is case, for instance, of $\mathsf{H}_{(n!)}(\mathrm{Fin})$, cf. Remark \ref{rmk:examplecomplete} below. 
In addition, it follows by Theorem \ref{thm:Fsigmadelcomplete} and B\'ir{\'o}, Deshouillers and S{\'o}s' result \cite{MR1877772} that a proper $G_{\delta\sigma}$ subgroup is characterized if and only if it is $F_\sigma$ if and only if it is countable. 
Lastly, Theorem \ref{thm:Fsigmadelcomplete} implies that there are no proper uncountable $F_\sigma$ characterized subgroups of $\mathbb{T}$; it is remarkable, as observed in \cite[Section 4]{MR3288121}, that the analogue claim where $\mathbb{T}$ is replaced by a non-isomorphic compact metrizable abelian group fail. 
These observations answer questions (a)--(c) above.


At this point, it is natural to ask whether Theorem \ref{thm:Fsigmadelcomplete} can be extended to all analytic $P$-ideals. In particular, since $\mathrm{Fin}\subseteq \I_{1/n} \subseteq \mathcal{Z}$ by \eqref{eq:inclusionideals} and both $\mathrm{Fin}$ and $\mathcal{Z}$ are generalized density ideals, one might be tempted to conjecture that the analogue claim holds for the summable ideal $\I_{1/n}$. However, rather surprisingly, it fails.

\begin{theorem}\label{thm:strangecounterexample}
    There exists $H\in \mathscr{H}(\I_{1/n})$ which is neither $F_\sigma$ nor $F_{\sigma\delta}$-complete.
\end{theorem}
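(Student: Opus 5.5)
By Corollary \ref{corollary:lupini}, every proper infinite $\I_{1/n}$-characterized subgroup is $F_\sigma$-complete, $F_{\sigma\delta}$-complete, or $\mathsf{D}(\mathbf{\Pi}^0_2)$-complete. So it suffices to exhibit a sequence $\bm a$ such that $H:=\mathsf{H}_{\bm a}(\I_{1/n})$ is $\mathsf{D}(\mathbf{\Pi}^0_2)$-complete. For this I will show that $H=A\setminus B$, equivalently $H=A\cap C$ with $A$ a $G_\delta$ set and $C$ an $F_\sigma$ set, and that $H$ is neither $F_\sigma$ nor $G_\delta$. Such a set is not $F_\sigma$, and it cannot be $F_{\sigma\delta}$-complete because $\mathsf{D}(\mathbf{\Pi}^0_2)\subsetneq\mathbf{\Delta}^0_3$.

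\textbf{Construction.} I would build $H$ as a ``sum'' of two pieces living on disjoint blocks of coordinates:
\begin{enumerate}[label={\rm (\roman*)}]
\item a countable dense part, forcing $G_\delta$-type behaviour that prevents $H$ from being $F_\sigma$ in a Baire-category sense;
\item a summable part, forcing $F_\sigma$ behaviour.
\end{enumerate}
Concretely, fix a very fast growing sequence $b_k$ with $b_k\mid b_{k+1}$ and $b_{k+1}/b_k\to\infty$, for instance $b_k=2^{k^2}$. Write $x=\sum_k c_k/b_k$ in the mixed-radix expansion with digits $c_k\in[0,b_{k}/b_{k-1})$. Then $\|b_kx\|$ is governed by the next digit ratio $c_{k+1}b_k/b_{k+1}$. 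I would place the terms $a_n=b_k$ on an interval $I_k$ of indices whose $1/n$-weight $w_k:=\sum_{n\in I_k}1/(n+1)$ is prescribed. This gives
$$
x\in H \iff \sum_k w_k\,\mathbf 1\bigl[\|b_kx\|\ge\varepsilon\bigr]<\infty \text{ for all }\varepsilon>0.
$$
Now alternate two kinds of blocks:
\begin{itemize}
\item on ``even'' blocks take $w_k\to\infty$ (say $w_k=1$ repeated so that the block behaves like $\mathrm{Fin}$ on a subsequence), yielding the condition $\|b_{2k}x\|\to0$;
\item on ``odd'' blocks take weights $w_k$ tuned so that the condition becomes $\sum_k \|b_{2k+1}x\|$-type summability.
\end{itemize}
The union over $\varepsilon$ of countably many conditions gives the upper bound on complexity: the $\mathrm{Fin}$-part is $F_{\sigma\delta}$ in general. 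To land in $\mathsf{D}(\mathbf{\Pi}^0_2)$ instead, I would make the even-block condition restricted so that, given the odd-block summability (a $\Sigma^0_2$ condition $C$), the digits are pinned enough that the remaining condition becomes $G_\delta$.

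\textbf{Main obstacle.} This is the delicate step: arranging the interaction so that $H=A\cap C$ exactly, with $A$ a $G_\delta$ set and $C$ an $F_\sigma$ set. What I would try first is to choose the odd-block weights so that membership in $C$ forces all but finitely many odd digits to vanish. Then, on $C$, the tail of $x$ is a union of countably many ``shifted'' copies. On each copy the condition $\|b_{2k}x\|\to0$ reduces to a genuinely $G_\delta$ (indeed closed-in-a-Polish-piece) condition on the even digits, uniform over the countable union. The lower bounds I would then prove as follows:
\begin{itemize}
\item \emph{Not $F_\sigma$.} Restricting to the compact set where odd digits vanish, $H$ is a dense $G_\delta$ with dense complement there, so Baire category rules out $F_\sigma$.
\item \emph{Not $G_\delta$.} Restricting to the compact set where even digits vanish, $H$ is a dense countable-union (summability) set with dense complement, so it is not $G_\delta$.
\end{itemize}
Both restrictions use continuous digit-encodings from $2^\omega$, and this completes the argument.
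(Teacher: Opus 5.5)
Your reduction is sound: it suffices to find $H\in\mathscr H(\I_{1/n})$ of the form $A\cap C$, with $A$ a $G_\delta$ set and $C$ an $F_\sigma$ set, such that $H$ is not $F_\sigma$. The ``not $G_\delta$'' part is superfluous, since $\mathsf{D}(\mathbf{\Pi}^0_2)\subseteq\mathbf{\Delta}^0_3$ already excludes $F_{\sigma\delta}$-completeness. The construction, however, breaks down in two places.

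First, take $a_n=b_k$ on a block of weight $w_k$. Membership then says exactly that $\{k:\|b_kx\|\ge\varepsilon\}$ lies in the summable ideal with weights $(w_k)$, for every $\varepsilon>0$. This holds automatically whenever $\|b_kx\|\to0$, so no choice of weights yields a condition like $\sum_k\|b_{2k+1}x\|<\infty$. To encode an $\ell^1$ condition inside $\I_{1/n}$ you must use all multiples $rc$ of an integer $c$, with weights $r^{-2}$, because $\sum_{r\ge1}r^{-2}\mathbf 1_{\{\|rt\|\ge\varepsilon\}}$ is comparable to $\|t\|$. This is the role of the blocks $c_{(1,n,r)}=rd_n$ in the paper.

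Second, and fatally, in your mixed-radix setting $\|b_kx\|$ is essentially $c_{k+1}b_k/b_{k+1}$. So the two families of blocks constrain disjoint sets of digits, and no condition on the odd blocks can ``pin'' the digits $c_{2k+1}$ that the condition $\|b_{2k}x\|\to0$ sees. Concretely, set $c_{2k+2}:=0$. Then $\|b_{2k+1}x\|\le b_{2k+1}/b_{2k+2}$, so the odd-block condition holds whatever it is. Now let $c_{2k+1}:=\lfloor (b_{2k+1}/b_{2k})/(\alpha(k)+2)\rfloor$ for $\alpha\in\omega^\omega$. This continuous map reduces $\{\alpha:\alpha(k)\to\infty\}$ to $H$, so your $H$ would be $F_{\sigma\delta}$-complete, which is the opposite of what is needed. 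In particular, the assertion that $\|b_{2k}x\|\to0$ becomes a $G_\delta$ condition on each piece is false. The Baire-category argument for non-$F_\sigma$ that you build on it therefore has no basis.

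What is missing is an interaction between the $\mathrm{Fin}$-part and the summable part on the same quantity. The paper takes $d_n=\gamma_{n+1}-\gamma_n$. On the $F_\sigma$ set $\mathsf H_\sharp=\{x:\sum_n\|d_nx\|<\infty\}$, the sequence $(\gamma_nx)$ has finite total variation and hence converges. There, $\gamma_nx\to0$ is equivalent to $0$ being a cluster point of $(\gamma_nx)$, which is a $G_\delta$ condition. So $H=\mathsf H_\sharp\cap R$ is $G_{\delta\sigma}$.

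Non-$F_\sigma$ then needs its own mechanism. $H$ is Polishable under $\rho(x,y)=\|x-y\|+\sum_n\|d_n(x-y)\|$. The primes $q_j$ satisfy $\gamma_n\equiv\gamma_j\pmod{q_j}$ for $n\ge j$, and they yield $h_m\in H$ with $\rho(h_m,0)<(1+3\gamma_j)/q_j$ and $h_m\to 1/q_j\notin H$ in $\mathbb T$. Hence no $\rho$-ball has its $\mathbb T$-closure inside $H$, and Baire category in $(H,\rho)$ rules out $F_\sigma$. Neither the interaction nor this approximation argument appears in your plan.
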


We remark that, thanks to Corollary \ref{corollary:lupini}, the subgroup $H$ constructed in Theorem \ref{thm:strangecounterexample} 
is necessarily $\mathsf{D}(\mathbf{\Pi}^0_2)$-complete. In addition, we show that $H$ is  locally quasi-convex Polishable in Remark \ref{rmk:strangeLQC} below. 

Lastly, by a suitable modification of the construction given in the proof of Theorem \ref{thm:strangecounterexample}, it is possible to show that there exists an $\I_{1/n}$-characterized subgroup of $\mathbb{T}$ which is $\mathsf{D}(\mathbf{\Pi}^0_2)$-complete and Polishable, but not locally quasi-convex, see Remark \ref{rmk:strangenotLQCksdjhg} below.


\section{Examples and Applications}\label{sec:exampleapplications}

We start providing some explicit examples of $\I$-characterized subgroups (for some special choices for $\bm{a}$). Then we continue with some applications and a summarizing figure in Section \ref{subsec:applications}. 

\subsection{Examples: Powers} 
Given an integer $b\ge 2$, let us write each $x \in \mathbb{T}$ as 
\begin{equation}\label{eq:representationx}
x=\sum_{k \in \omega}
d_{x,k}b^{-k}
=\overline{d_0.d_1d_2d_3\ldots}_{(b)}
\end{equation}
through its canonical (unique) base $b$-representation, where $d_{x,0}:=0$, $d_{x,k} \in \{0,1,\ldots,b-1\}$ for all $k \in \omega$, and $d_{x,k}\neq b-1$ for infinitely many $k \in\omega$. 
If the base $b\ge 2$ is understood, we write $\overline{d_1\cdots d_k}$ for a block of digits $d_1,\ldots,d_k \in \{0,1,\ldots,b-1\}$; a block of $k$ consecutive digits $d \in \{0,1,\ldots,b-1\}$ is denoted by $d^k$.

Armacost proved in \cite{MR637201} that, if $p\ge 2$ is a prime number, then $\mathsf{H}_{(p^n)}(\mathrm{Fin})$ is precisely the $p$-Pr\"ufer group $\mathbb{Z}(p^\infty)=\mathbb{Z}[1/p]/\mathbb{Z}$, that is, 
$$
\mathsf{H}_{(p^n)}(\mathrm{Fin})=\left\{\,\frac{r}{p^m}: r \in \mathbb{Z} \text{ and }m \in \omega\right\}/\mathbb{Z}.
$$
In the following result, given an integer $b\ge 2$ and an ideal $\I$ on $\omega$, we characterize the subgroup $\mathsf{H}_{(b^n)}(\I)$ 
(cf. also \cite{MR4264228} for the special case $\mathcal{I}=\mathcal{Z}$). 
\begin{theorem}\label{thm:armacost}
    Let $\mathcal{I}$ be an ideal on $\omega$, and pick an integer $b\ge 2$. 
    Then 
    \begin{equation}\label{eq:pruferII}
    \mathsf{H}_{(b^n)}(\mathcal{I})=
    \left\{x \in \mathbb{T}: P_x-m \in \I \text{ and }Q_x-m \in \I \text{ for all }m\ge 1\right\},
    \end{equation}
    where $P_x:=\{k \in \omega: d_{x,k} \in \{1,\ldots,b-2\}\}$ and $Q_x:=\{k \in \omega: d_{x,k}\neq d_{x,k+1}\}$.
\end{theorem}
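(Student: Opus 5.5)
The plan is to translate the condition $\|b^n x\|\to 0$ along $\I$ into a condition on blocks of base-$b$ digits of $x$, and then rewrite the exceptional sets as finite unions of shifts of $P_x$ and $Q_x$. Throughout, $A-m$ denotes $\{k-m: k\in A,\ k\ge m\}$, so $n\in A-m$ iff $n+m\in A$. Fix $x\in\mathbb{T}$ with its canonical expansion \eqref{eq:representationx}. Then $b^n x\equiv \overline{0.d_{x,n+1}d_{x,n+2}\ldots}_{(b)}$ modulo $1$, and this tail expansion is again canonical, i.e.\ not eventually $b-1$. For $m\ge 1$ define
$$
B_m(x):=\{n\in\omega: \text{the block } d_{x,n+1}\cdots d_{x,n+m} \text{ is not of the form } 0^m \text{ or } (b-1)^m\}.
$$
The block $d_{x,n+1}\cdots d_{x,n+m}$ is constant in $\{0,b-1\}$ iff no position $n+j$ with $1\le j\le m$ lies in $P_x$ and no position $n+j$ with $1\le j\le m-1$ lies in $Q_x$. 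Hence
$$
B_m(x)=\bigcup_{j=1}^{m}(P_x-j)\ \cup\ \bigcup_{j=1}^{m-1}(Q_x-j).
$$

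The second step is a two-sided estimate relating $B_m(x)$ to the sets $\{n:\|b^nx\|\ge\varepsilon\}$. I claim
$$
\{n: \|b^n x\|> b^{-m}\}\subseteq B_{m+1}(x)
\quad\text{and}\quad
B_m(x)\subseteq \{n:\|b^nx\|\ge b^{-m-1}\}.
$$
For the first inclusion: if the next $m+1$ digits are $0^{m+1}$, the fractional part lies in $[0,b^{-m-1}]$. If they are $(b-1)^{m+1}$, it lies in $[1-b^{-m-1},1)$. In both cases $\|b^nx\|\le b^{-m}$.

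For the second inclusion, there are two cases.
\begin{itemize}
\item Some digit $d_{x,n+j}$ with $j\le m$ lies in $\{1,\ldots,b-2\}$. Then the fractional part is at least $b^{-j}$. Since the tail is not eventually $b-1$, it is also strictly less than $1-b^{-j}$ (the strict inequality needs $d_{x,n+j}\le b-2$). So $\|b^nx\|\ge b^{-m}$.
\item All digits in the block lie in $\{0,b-1\}$ but there is a switch at positions $n+j,n+j+1$ with $j<m$. If the block starts with $0$, the value is below $1/2$ and at least $(b-1)b^{-j-1}\ge b^{-m-1}$. If it starts with $b-1$, the value is at least $1/2$ and at most $1-b^{-j-1}$, again using that the tail is not eventually $b-1$.
\end{itemize}
Checking these boundary cases carefully is the only delicate point. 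Note that $b=2$ is covered: then $P_x=\emptyset$ and only $Q_x$ matters.

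Now conclude. By the first inclusion, $x\in\mathsf{H}_{(b^n)}(\I)$ iff $B_m(x)\in\I$ for every $m\ge1$. Indeed, if all $B_m(x)\in\I$ then $\{n:\|b^nx\|\ge\varepsilon\}\in\I$ for every $\varepsilon>0$, by picking $m$ with $b^{-m}<\varepsilon$. Conversely, membership in $\mathsf{H}_{(b^n)}(\I)$ gives $B_m(x)\in\I$ via the second inclusion. Since $\I$ is closed under finite unions and subsets, the displayed formula for $B_m(x)$ gives: all $B_m(x)\in\I$ iff $P_x-j\in\I$ and $Q_x-j\in\I$ for all $j\ge1$. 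This is exactly \eqref{eq:pruferII}. I expect the main obstacle to be the digit-boundary analysis in the second step, specifically handling the forbidden trailing $(b-1)$'s, rather than the ideal-theoretic bookkeeping.
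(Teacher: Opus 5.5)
Your proposal is correct and takes essentially the same route as the paper. The paper also introduces the block sets $B_m$, shows that $x\in\mathsf{H}_{(b^n)}(\I)$ if and only if $B_m\in\I$ for all $m\ge 1$ via two distance estimates, and then writes $B_m=\bigcup_{j=1}^m(P_x-j)\cup\bigcup_{j=1}^{m-1}(Q_x-j)$. Your off-by-one bounds (using $B_{m+1}$ and $b^{-m-1}$) are weaker than the paper's $\|b^kx\|\ge b^{-m}$ for $k\in B_m$, but they suffice, and your case analysis usefully spells out where canonicity of the expansion (no trailing $(b-1)$'s) is needed, which the paper leaves implicit.
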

Observe that, if $\mathcal{I}=\mathrm{Fin}$ and $b:=p\ge 2$ is prime then the subgroup in \eqref{eq:pruferII} coincides with $p$-Pr\"ufer group. 
%
Hereafter, we say that an ideal $\I$ on $\omega$ is  
\emph{translation invariant} if $A+k:=\{n\in\omega:n-k\in A\}\in\I$ for all $A\in\I$ and $k\in\mathbb{Z}$. 
Accordingly, we have the following simpler characterization.
\begin{corollary}\label{cor:armacostconsequence}
Let $\I$ be 
a
translation invariant ideal on $\omega$, and pick an integer $b\ge 2$. 
Then
$$
\forall b\ge 2, \quad 
\mathsf{H}_{(b^n)}(\I)=\left\{x \in \mathbb{T}: P_x\cup Q_x \in \I\right\}.
$$

In particular, if $b=2$, then
$$
\mathsf{H}_{(2^n)}(\I)=\left\{x \in \mathbb{T}: \{k \in \omega: d_{x,k}\neq d_{x,k+1}\} \in \I\right\}.
$$
\end{corollary}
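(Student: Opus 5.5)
We deduce Corollary \ref{cor:armacostconsequence} from Theorem \ref{thm:armacost}. We only need that, for a translation invariant ideal $\I$ and any $S\subseteq\omega$, the two conditions
$$
\text{``}S-m\in\I \text{ for all } m\ge 1\text{''}
\qquad\text{and}\qquad
\text{``}S\in\I\text{''}
$$
are equivalent. Here $S-m=\{n\in\omega: n+m\in S\}$, which in the paper's notation is $S+(-m)$. Given this, the condition in \eqref{eq:pruferII} reads $P_x\in\I$ and $Q_x\in\I$. Since $\I$ is an ideal, this is equivalent to $P_x\cup Q_x\in\I$, and the first identity follows.

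For the equivalence, first suppose $S\in\I$. Translation invariance with $k=-m$ gives $S-m\in\I$ directly. Conversely, suppose $S-m\in\I$ for some $m\ge 1$ (one suffices). Apply translation invariance with $k=m$ to get $(S-m)+m\in\I$. This set is $\{n\in\omega: n-m\in\omega,\ n\in S\}=S\setminus m$. Since $\mathrm{Fin}\subseteq\I$, we conclude $S=(S\setminus m)\cup(S\cap m)\in\I$. The only point needing care is checking the shift notation against the definition $A+k=\{n\in\omega: n-k\in A\}$, so that the translates land exactly on $S-m$ and $S\setminus m$ and not on some other set.

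For the case $b=2$, the digits lie in $\{0,1\}$, so the set $\{1,\ldots,b-2\}$ is empty and $P_x=\emptyset$. Hence $P_x\cup Q_x=Q_x=\{k\in\omega: d_{x,k}\neq d_{x,k+1}\}$, which gives the second formula.

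I do not expect any genuine obstacle. All the substance lies in Theorem \ref{thm:armacost}, and this corollary only uses the fact that translation invariance, together with $\mathrm{Fin}\subseteq\I$, makes membership in $\I$ insensitive to finite shifts.
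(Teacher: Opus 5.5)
Your proof is correct and follows the paper's route. The paper also obtains the first identity directly from Theorem \ref{thm:armacost}, leaving implicit the shift-invariance step you spell out, namely that $S-m\in\I$ if and only if $S\in\I$, via $(S-m)+m=S\setminus m$ and $\mathrm{Fin}\subseteq\I$. It handles $b=2$ exactly as you do, by observing that $P_x=\emptyset$.
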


In the case $\I=\mathcal{Z}$, Corollary \ref{cor:armacostconsequence} provides an answer to \cite[Problem 6.9]{MR4932230}. 
It turns out that the addition hypothesis of translation invariance provide us some interesting consequences. 
\begin{theorem}\label{thm:propertiespowers}
    Let $\mathcal{I}, \mathcal{J}$ be 
    translation invariant ideals on $\omega$, and pick an integer $b\ge 2$. Then the following hold\textup{:}
     \begin{enumerate}[label={\rm (\roman*)}]
     \item \label{prop:1powers} $\mathsf{H}_{(b^n)}(\I)$ is countable if and only if $\I=\mathrm{Fin}$\textup{;}
     \item \label{prop:2powers} $\mathsf{H}_{(b^n)}(\I)$ is $F_\sigma$ if and only if $\I$ is an $F_\sigma$ ideal\textup{;}
     \item \label{prop:3powers} $\mathcal{I}=\mathcal{J}$ if and only if $\mathsf{H}_{(b^n)}(\I)=\mathsf{H}_{(b^n)}(\J)$\textup{.}
     \end{enumerate}
\end{theorem}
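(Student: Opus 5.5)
The whole proof will rest on Corollary \ref{cor:armacostconsequence}, which describes $H_\I:=\mathsf{H}_{(b^n)}(\I)=\{x\in\mathbb{T}: P_x\cup Q_x\in\I\}$ for translation invariant $\I$. The key tool is a continuous "digit-coding" map $g:2^\omega\to\mathbb{T}$ such that $P_{g(A)}\cup Q_{g(A)}$ is, up to a finite-to-one bounded-shift transformation, essentially $A$. Translation invariance then gives $g(A)\in H_\I\iff A\in\I$.

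\emph{Construction of $g$.} Given $A\subseteq\omega$, define the base-$b$ digits of $g(A)$ by switching between $0$ and $b-1$: the digit sequence is constant on blocks, and a change of digit occurs exactly at positions $2k$ with $k\in A$. To guarantee $d_{k}\neq b-1$ infinitely often (canonical representation) and continuity of $g$, interleave: put digit $0$ at all odd positions $2k+1$, and at position $2k+2$ put $b-1$ iff $k\in A$, else $0$. Then $P_{g(A)}=\emptyset$ and $Q_{g(A)}\subseteq\{2k+1,2k+2: k\in A\}$ with equality up to finitely many points. Since the digit at odd positions is always $0$, the representation is canonical and $g$ is continuous from $2^\omega$ into $\mathbb{T}$ (the tail digits contribute at most $b^{-n}$).

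\emph{Reduction to $\I$.} I need $\{2k+1,2k+2:k\in A\}\in\I\iff A\in\I$. This is not immediate because of the doubling. I would handle it by a translation-invariant argument: if $A\in\I$ and $B:=\{2k+1: k\in A\}$, I expect no direct conclusion in general. So instead I would avoid doubling and code $A$ directly: digit at position $k+1$ equal to $b-1$ iff $k\in A'$, where $A'$ is chosen so that $Q$ equals $A\cup(A-1)$ essentially. Concretely, set the digit $d_{k+1}=b-1$ iff $|A\cap (k+1)|$ is odd ("parity coding"), so changes occur exactly at $A$. Then $Q_{g(A)}=A$ up to a shift by one, and $P=\emptyset$. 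The canonical-representation issue, when the digit is eventually $b-1$, only affects a countable set of $A$ (those with $A$ finite of odd size). For those, $A\in\mathrm{Fin}\subseteq\I$ anyway, and the represented point is a $b$-adic rational, hence lies in $H_\I$. Continuity of $g$ holds since $d_{k+1}$ depends on $A\cap(k+1)$. The conclusion is $g^{-1}[H_\I]=\I$ up to translation, i.e.\ $\I\le_{\mathrm W}H_\I$, via Corollary \ref{cor:armacostconsequence}.

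\emph{The three items.} For \ref{prop:3powers}, if $H_\I=H_\J$ then $A\in\I\iff g(A)\in H_\I=H_\J\iff A\in\J$; the converse is trivial. For \ref{prop:1powers}, if $\I=\mathrm{Fin}$ then $P_x\cup Q_x$ is finite, so $x$ has an eventually constant expansion and is a $b$-adic rational, giving countability. If $\I\neq\mathrm{Fin}$, pick an infinite $A\in\I$; then all $g(B)$ with $B\subseteq A$ lie in $H_\I$, and $g$ is injective on them modulo countably many points, giving uncountably many. For \ref{prop:2powers}, if $\I$ is $F_\sigma$, write $H_\I=h^{-1}[\I]$ where $h:\mathbb{T}\to 2^\omega$, $x\mapsto P_x\cup Q_x$, is continuous off the countable set of $b$-adic rationals, all of which lie in $H_\I$. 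Thus $H_\I$ is an $F_\sigma$ subset of the $G_\delta$ set of non-$b$-adic points, union a countable set. I need care here: a relatively $F_\sigma$ subset of a $G_\delta$ set is only $\mathbf{\Sigma}^0_3$-ish in general. I will fix this by checking that $h$ is lower semicontinuous enough: each closed piece of $\I$ (for example $\{S:\varphi(S)\le m\}$, by Mazur's theorem) pulls back to a set whose closure in $\mathbb{T}$ adds only $b$-adic rationals. Conversely, if $H_\I$ is $F_\sigma$, then $\I=g^{-1}[H_\I]$ is $F_\sigma$. I expect this $F_\sigma$ upper bound (the discontinuities of $h$) to be the main obstacle; I would use that $\{x:\varphi(P_x\cup Q_x)\le m\}$ is closed up to adding $b$-adic rationals, since both expansions of a $b$-adic rational give finite $P\cup Q$.
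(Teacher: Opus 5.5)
Your proof is correct. Your parity coding ($d_{k+1}=b-1$ iff $|A\cap(k+1)|$ is odd) is exactly the paper's map $T\mapsto x_T$, i.e.\ digits in $\{0,b-1\}$ with $d_k\neq d_{k+1}$ iff $k\in T$. Whenever the expansion is canonical, you in fact get $Q_{g(A)}=A$ exactly, with no shift. With this map, items (i) and (iii), and the implication ``$\mathsf{H}_{(b^n)}(\mathcal{I})$ is $F_\sigma$ $\Rightarrow$ $\mathcal{I}$ is $F_\sigma$'', go just as in the paper.

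The genuine difference is in the implication ``$\mathcal{I}$ is $F_\sigma$ $\Rightarrow$ $\mathsf{H}_{(b^n)}(\mathcal{I})$ is $F_\sigma$''. The paper lifts to the compact symbolic space $\Sigma_b$ of all digit strings, where both the evaluation map $\pi$ and the map $R$ (positions carrying a middle digit or a digit change) are continuous. It then checks that noncanonical strings alter $R$ only by a finite set, and writes $\mathsf{H}_{(b^n)}(\mathcal{I})=\bigcup_m\pi[R^{-1}[K_m]]$ as a countable union of compact sets.

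You instead stay in $\mathbb{T}$, using two facts: $h(x)=P_x\cup Q_x$ is continuous at every point outside the countable set $D$ of $b$-adic rationals, and $D\subseteq\mathsf{H}_{(b^n)}(\mathcal{I})$. These already resolve the issue you flag, without Mazur's theorem or lower semicontinuity. Write $\mathcal{I}=\bigcup_m K_m$ with each $K_m$ closed. If $x_j\to x\notin D$ with $h(x_j)\in K_m$, then $h(x_j)\to h(x)$, so $h(x)\in K_m$. Hence $\overline{h^{-1}[K_m]}\subseteq h^{-1}[K_m]\cup D\subseteq\mathsf{H}_{(b^n)}(\mathcal{I})$, and therefore $\mathsf{H}_{(b^n)}(\mathcal{I})=\bigcup_m\overline{h^{-1}[K_m]}$.

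The reason in your last sentence (both expansions of a $b$-adic rational have finite $P\cup Q$) only gives $D\subseteq\mathsf{H}_{(b^n)}(\mathcal{I})$. The ``closed up to $D$'' statement comes from the continuity of $h$ off $D$, which you had noted earlier.

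Your route avoids the bookkeeping about noncanonical expansions. The paper's route avoids discussing points of discontinuity by trading them for compactness of $\Sigma_b$.
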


In particular, Theorem \ref{thm:propertiespowers}\ref{prop:3powers} strengthens \cite[Proposition 3.2]{MR4078214}, which studies the case $\I=\mathrm{Fin}$, $\J=\mathcal{Z}$, and $b=2$. 
    It is worth noting that the hypothesis of translation invariance is fundamental in each item of Theorem \ref{thm:propertiespowers}. To see it, define 
    $$
    \J_1:=\{S\subseteq \omega: S\cap 2\omega \in \mathrm{Fin}\}
    \quad \text{ and }\quad 
    \J_2:=\{S\in \J_1: S\setminus 2\omega \in \mathcal{Z}\}. 
    $$
    That is, $\J_1$ and $\J_2$ are isomorphic copies on $\omega$ of $\mathrm{Fin}\oplus \mathrm{Fin}$ and $\mathrm{Fin}\oplus \mathcal{Z}$, respectively.  
    It is immediate to see that both $\J_1$ and $\J_2$ are not translation invariant, they are distinct from $\mathrm{Fin}$, and $\mathsf{H}_{(b^n)}(\mathrm{Fin})=\mathsf{H}_{(b^n)}(\J_1)=\mathsf{H}_{(b^n)}(\J_2)$ (which is countable by item \ref{prop:1powers}). At the same time, $\J_1$ is an $F_\sigma$ ideal, while $\J_2$ is not $F_\sigma$. Hence all items \ref{prop:1powers}--\ref{prop:3powers} fail without the hypothesis of translation invariance. 
    

\begin{corollary}\label{cor:consequencepowers22}
Pick an integer $b\ge 2$ and let $\I$ be a 
translation invariant ideal on $\omega$. Then the following hold\textup{:}
\begin{enumerate}[label={\rm (\roman*)}]
\item \label{item:1corpowers} If $\I=\mathrm{Fin}$ then $\mathsf{H}_{(b^n)}(\I)$ is a countably infinite subgroup\textup{;}
\item \label{item:2corpowers} If $\I\neq \mathrm{Fin}$ is $F_\sigma$ then $\mathsf{H}_{(b^n)}(\I)$ is an uncountable proper $F_\sigma$ subgroup\textup{;}
\item \label{item:3corpowers} If $\I\neq \mathrm{Fin}$ is a generalized density ideal  then $\mathsf{H}_{(b^n)}(\I)$ is an $F_{\sigma\delta}$-complete subgroup\textup{.}
\end{enumerate}
\end{corollary}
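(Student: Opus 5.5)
Each item will be derived from Theorem \ref{thm:propertiespowers} together with the general results of Section \ref{sec:mainresults}.

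For item \ref{item:1corpowers}, apply Theorem \ref{thm:propertiespowers}\ref{prop:1powers}, which gives that $\mathsf{H}_{(b^n)}(\mathrm{Fin})$ is countable. It is also infinite. Every rational $r/b^m$ lies in it, because $b^n\cdot r/b^m\in\mathbb{Z}$ for all $n\ge m$.

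For item \ref{item:2corpowers}, Theorem \ref{thm:propertiespowers}\ref{prop:1powers} shows that $\mathsf{H}_{(b^n)}(\I)$ is uncountable, since $\I\neq\mathrm{Fin}$. Item \ref{prop:2powers} of the same theorem shows that it is $F_\sigma$. It remains to check properness. Pick $x$ with $d_{x,k}=1$ for even $k$ and $d_{x,k}=0$ for odd $k$ (for $b=2$), or $d_{x,k}=1$ for all $k\ge1$ (for $b\ge3$). In either case $P_x\cup Q_x$ is cofinite. Hence $P_x\cup Q_x\notin\I$, because $\I$ is proper and contains $\mathrm{Fin}$. By Corollary \ref{cor:armacostconsequence}, $x\notin\mathsf{H}_{(b^n)}(\I)$. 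Here translation invariance is needed to invoke that corollary.

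For item \ref{item:3corpowers}, a generalized density ideal is in particular proper and contains $\mathrm{Fin}$, and we assume it is translation invariant. The same point $x$ as above shows that $\mathsf{H}_{(b^n)}(\I)$ is proper, and Theorem \ref{thm:propertiespowers}\ref{prop:1powers} shows that it is uncountable. The dichotomy of Theorem \ref{thm:Fsigmadelcomplete} then forces it to be $F_{\sigma\delta}$-complete.

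The only step needing any care is properness, and the explicit digit construction above handles it. Everything else is a direct citation.
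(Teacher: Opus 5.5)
Your proof is correct. Items (i) and (ii) match the paper's argument. The paper uses the witness $\overline{0.0(b-1)0(b-1)\cdots}_{(b)}$ for properness, and yours work equally well. One small fix: for $b=2$ the alternating pattern should start at $k\ge 1$, since $d_{x,0}=0$ by convention; $Q_x$ is still cofinite.

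For item (iii) you take a genuinely different and shorter route.

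The paper first proves that $\I$ is not $F_\sigma$:
\begin{itemize}
\item by \cite[Proposition 2.3]{MR4404626}, an $F_\sigma$ generalized density ideal other than $\mathrm{Fin}$ is a copy $\{S\subseteq\omega: A\cap S\in\mathrm{Fin}\}$ of $\mathrm{Fin}\oplus\mathcal{P}(\omega)$, and the set $\{n\in A: n+1\notin A\}$ then contradicts translation invariance;
\item the implication `$\mathsf{H}_{(b^n)}(\I)$ is $F_\sigma$ $\Rightarrow$ $\I$ is $F_\sigma$' in Theorem \ref{thm:propertiespowers}\ref{prop:2powers} then shows that $\mathsf{H}_{(b^n)}(\I)$ is not $F_\sigma$;
\item this gives properness and uncountability at once, since both $\mathbb{T}$ and countable sets are $F_\sigma$, and Theorem \ref{thm:Fsigmadelcomplete} concludes.
\end{itemize}

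You instead feed the dichotomy directly. Uncountability comes from Theorem \ref{thm:propertiespowers}\ref{prop:1powers}, and properness from the witness you already built in (ii). This avoids both the external classification of $F_\sigma$ generalized density ideals and that implication of item \ref{prop:2powers}.

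Your route even recovers the paper's intermediate fact as a byproduct. An $F_{\sigma\delta}$-complete set is not $F_\sigma$, so the reverse implication `$\I$ is $F_\sigma$ $\Rightarrow$ $\mathsf{H}_{(b^n)}(\I)$ is $F_\sigma$' shows that no translation invariant generalized density ideal other than $\mathrm{Fin}$ is $F_\sigma$. What the paper's route buys is that it makes this incompatibility between the hypotheses of (ii) and (iii) explicit at the level of ideals, before the subgroup is considered.
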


\subsection{Examples: Fibonacci} 
Now, let us consider the Fibonacci sequence $(f_n: n \in \omega)$ defined by $f_0:=0$, $f_1:=1$ and $f_{n+2}:=f_{n+1}+f_n$ for all $n \in \omega$. 
For each $x \in [0,1)$ and $n \in \omega$, let $p_n(x)$ be a  nearest integer to $f_nx$, e.g., $p_n(x):=\lfloor f_nx+\nicefrac{1}{2}\rfloor$. Define 
$$
\forall n \in \omega, \quad 
r_{x,n}:=p_{n+2}(x)-p_{n+1}(x)-p_n(x).
$$
Accordingly, denote the golden ratio by $\varphi:=\frac{1+\sqrt{5}}{2}$. 
\begin{lemma}\label{lem:fiborepresent}
    Fix $x \in \mathbb{T}$. Then 
    $r_{x,n}\in \{-1,0,1\}$ for all $n \in \omega$. 
    In addition, 
    $$
    x=\sum_{n \in \omega} 
    r_{x,n} \varphi^{-n-1} 
    \quad 
    \text{ in }\mathbb{T}.
    $$
\end{lemma}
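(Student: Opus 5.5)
The plan is to control everything through the rounding errors
$$
e_n:=f_nx-p_n(x), \qquad n\in\omega,
$$
where $x$ is represented in $[0,1)$. Since $p_n(x)$ is a nearest integer to $f_nx$, we have $|e_n|\le \nicefrac{1}{2}$. By the Fibonacci recurrence $f_{n+2}-f_{n+1}-f_n=0$, the terms in $x$ cancel in the definition of $r_{x,n}$, and we get
$$
r_{x,n}=(f_{n+2}-f_{n+1}-f_n)x-(e_{n+2}-e_{n+1}-e_n)=e_n+e_{n+1}-e_{n+2}.
$$

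\emph{First claim.} The identity above gives $|r_{x,n}|\le \nicefrac{3}{2}$. Since $r_{x,n}$ is an integer, $|r_{x,n}|\le 1$, that is, $r_{x,n}\in\{-1,0,1\}$. In particular the series $\sum_n r_{x,n}\varphi^{-n-1}$ converges absolutely.

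\emph{Second claim: summation by parts.} Put $w_n:=\varphi^{-n-1}$ and consider the partial sum
$$
S_N:=\sum_{n<N} w_n(e_n+e_{n+1}-e_{n+2}).
$$
Collect the coefficient of each $e_k$.
\begin{itemize}
\item For $2\le k\le N-1$, the coefficient is $w_k+w_{k-1}-w_{k-2}=\varphi^{-k-1}(1+\varphi-\varphi^2)$, which vanishes because $\varphi^2=\varphi+1$.
\item The coefficient of $e_0$ is $w_0=\varphi^{-1}$.
\item The coefficient of $e_1$ is $w_0+w_1=\varphi^{-1}+\varphi^{-2}=1$.
\item The only remaining terms involve $e_N$ and $e_{N+1}$. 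Their coefficients are linear combinations of $w_{N-2},w_{N-1}$, so these terms are $O(\varphi^{-N})$ because $|e_n|\le\nicefrac12$.
\end{itemize}
Letting $N\to\infty$ gives
$$
\sum_{n\in\omega} r_{x,n}\varphi^{-n-1}=\varphi^{-1}e_0+e_1.
$$

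Finally, $f_0=0$ and $p_0(x)=\lfloor \nicefrac12\rfloor=0$, so $e_0=0$. Also $f_1=1$, so $e_1=x-p_1(x)$ with $p_1(x)\in\mathbb{Z}$. Hence the series equals $x$ modulo $\mathbb{Z}$, which is the claim in $\mathbb{T}$.

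The only step needing care is the bookkeeping of the boundary terms in the summation by parts; I expect no genuine obstacle. The key cancellation is exactly that $\varphi^{-1}$ satisfies $1+\varphi-\varphi^2=0$, which is why the weights $\varphi^{-n-1}$ are the right ones. I would also note that the argument does not depend on which nearest integer is chosen in case of ties.
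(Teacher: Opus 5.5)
Your proof is correct. The first part, writing $r_{x,n}=e_n+e_{n+1}-e_{n+2}$ and using integrality, is exactly the paper's argument. For the series identity you take a genuinely different route.

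You pair the three-term expression against the weights $w_n=\varphi^{-n-1}$. These satisfy the adjoint recurrence $w_k+w_{k-1}-w_{k-2}=0$, so the partial sums telescope exactly to
$\varphi^{-1}e_0+e_1+(w_{N-1}-w_{N-2})e_N-w_{N-1}e_{N+1}$.

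The paper instead unrolls the inhomogeneous recurrence $p_{n+2}(x)=p_{n+1}(x)+p_n(x)+r_{x,n}$ into the closed form $p_N(x)=p_1(x)f_N+\sum_{n\le N-2}r_{x,n}f_{N-n-1}$. It then divides by $f_N$ and passes to the limit using three facts: $f_{N-n-1}/f_N\to\varphi^{-n-1}$, a uniform bound $f_{N-n-1}/f_N\le C\varphi^{-n}$ (a dominated-convergence step), and $p_N(x)/f_N\to t$.

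Your version buys three things:
\begin{itemize}
\item it avoids the Fibonacci-ratio asymptotics and the interchange of limit and sum;
\item it gives an explicit error bound: the boundary terms are at most $\tfrac12\varphi^{1-N}$ in absolute value, since $w_{N-2}-w_{N-1}=\varphi^{-N-1}$;
\item it makes transparent why $\varphi^{-1}$ is the right base.
\end{itemize}

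The paper's version, on the other hand, expresses the nearest integers $p_N(x)$ as Fibonacci combinations of the digits $r_{x,n}$. This is exactly the form reused later, in the proof of Theorem~\ref{thm:propertiesfibo}, where sets $A$ are coded via $q_N=\sum_{n\in A,\,n\le N-2}f_{N-n-1}$ and the recurrence $q_{N+2}=q_{N+1}+q_N+\mathbf 1_A(N)$.
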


Using the above simple representation, we describe the elements of $\mathsf{H}_{(f_n)}(\I)$, in the case where $\I$ is translation invariant. To this aim, for each $x \in \mathbb{T}$, we write 
$$
R_x:=\{n \in \omega: r_{x,n}\neq 0\}
$$
for the \textquotedblleft support\textquotedblright\,of $x \in \mathbb{T}$  in the representation given in Lemma \ref{lem:fiborepresent}. This allows to describe explicitly the elements of $\mathsf{H}_{(f_n)}(\I)$.
\begin{theorem}\label{thm:fibonacci}
Let $\I$ be a translation invariant ideal on $\omega$. Then 
$$
\mathsf{H}_{(f_n)}(\I)=\{x\in \mathbb{T}: R_x \in \I\}. 
$$
\end{theorem}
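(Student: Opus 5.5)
The proof works directly with the signed rounding errors. For $x\in[0,1)$ and $n\in\omega$, set
$$
e_n:=f_nx-p_n(x)\in[-\nicefrac12,\nicefrac12),
$$
so that $\|f_nx\|=|e_n|$. Since $f_{n+2}x=f_{n+1}x+f_nx$, the definition of $r_{x,n}$ gives the key identity
$$
e_{n+2}=e_{n+1}+e_n-r_{x,n}\qquad (n\in\omega).
$$
Both inclusions will be read off from this identity, using translation invariance of $\I$ to absorb bounded shifts of index sets.

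For $\subseteq$, take $x\in\mathsf{H}_{(f_n)}(\I)$ and put $A:=\{n:\|f_nx\|\ge \nicefrac13\}\in\I$. Since $r_{x,n}$ is an integer with $|r_{x,n}|\le |e_{n+2}|+|e_{n+1}|+|e_n|$, it vanishes whenever $n,n+1,n+2\notin A$. Hence $R_x\subseteq A\cup(A-1)\cup(A-2)$. Each translate $A-j$ lies in $\I$ by translation invariance, so $R_x\in\I$.

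For $\supseteq$, assume $R_x\in\I$. On any interval of indices where $r_{x,n}=0$, the sequence $(e_n)$ satisfies the Fibonacci recurrence. It can therefore be written there as $e_n=\alpha\varphi^n+\beta\psi^n$ with $\psi=-1/\varphi$, where the components are recovered as
$$
\alpha\varphi^n=\frac{e_{n+1}-\psi e_n}{\varphi-\psi},\qquad \beta\psi^n=\frac{\varphi e_n-e_{n+1}}{\varphi-\psi}.
$$
Both components are bounded by an absolute constant $C$ at every point of the window, because $|e_n|\le\nicefrac12$.

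Now suppose $R_x\cap[m-L,m+L]=\emptyset$ with $m\ge L$. Comparing the expanding component at times $m+L$ and $m$ gives $|\alpha\varphi^m|\le C\varphi^{-L}$. Comparing the contracting component at times $m-L$ and $m$ gives $|\beta\psi^m|\le C\varphi^{-L}$. Hence $\|f_mx\|\le 2C\varphi^{-L}$.

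Given $\varepsilon>0$, choose $L$ with $2C\varphi^{-L}<\varepsilon$. Then
$$
\{m:\|f_mx\|\ge\varepsilon\}\subseteq [0,L)\cup\bigcup_{|j|\le L}(R_x+j),
$$
and the right-hand side is in $\I$ by translation invariance together with $\mathrm{Fin}\subseteq\I$. Thus $\I\text{-}\lim_n f_nx=0$, i.e. $x\in\mathsf{H}_{(f_n)}(\I)$.

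The only delicate step is this two-sided window argument. The expanding mode $\alpha\varphi^n$ can be controlled only from the future and the contracting mode $\beta\psi^n$ only from the past, which is why the window must extend on both sides of $m$. One should also check the index bookkeeping: $r_{x,n}=0$ for $n\in[m-L,m+L]$ makes the recurrence hold on the whole range of indices used, so the representation $e_n=\alpha\varphi^n+\beta\psi^n$ is valid throughout the window. Lemma \ref{lem:fiborepresent} is needed only for $r_{x,n}\in\{-1,0,1\}$, and the argument above does not even use that.
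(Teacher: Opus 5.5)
Your proposal is correct and follows essentially the same route as the paper. The inclusion $\subseteq$ uses the same identity $r_{x,n}=\delta_n+\delta_{n+1}-\delta_{n+2}$ with the threshold $1/3$ and the translates $A-1,A-2$. The inclusion $\supseteq$ uses the same two-sided window argument via the Binet form $\alpha\varphi^j+\beta(-1/\varphi)^j$; the only difference is that the paper bounds $\alpha$ and $\beta$ from the two endpoint values $u_0,u_{2M}$ of the window, whereas you use the component formulas at consecutive indices, which works equally well.
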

As a special case, it follows that $\mathsf{H}_{(f_n)}(\mathrm{Fin})$ coincides with the subgroup generated by $\varphi$, which is known since the work of Larcher \cite{MR947645}. 
In the case  $\I=\mathcal{Z}$, Theorem \ref{thm:fibonacci} provides an answer to \cite[Problem 7.9]{MR4932230}. 

Mimicking Theorem \ref{thm:propertiespowers} and Corollary \ref{cor:consequencepowers22} above, we obtain the analogue results.

\begin{theorem}\label{thm:propertiesfibo}
    Let $\mathcal{I}, \mathcal{J}$ be 
    translation invariant ideals on $\omega$. Then the following hold\textup{:}
     \begin{enumerate}[label={\rm (\roman*)}]
     \item \label{prop:1fibo} $\mathsf{H}_{(f_n)}(\I)$ is countable if and only if $\I=\mathrm{Fin}$\textup{;}
     \item \label{prop:2fibo} $\mathsf{H}_{(f_n)}(\I)$ is $F_\sigma$ if and only if $\I$ is an $F_\sigma$ ideal\textup{;}
     \item \label{prop:3fibo} $\mathcal{I}=\mathcal{J}$ if and only if $\mathsf{H}_{(f_n)}(\I)=\mathsf{H}_{(f_n)}(\J)$\textup{.}
     \end{enumerate}
\end{theorem}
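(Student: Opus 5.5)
The plan is to reduce everything to Theorem \ref{thm:fibonacci}, which gives $\mathsf{H}_{(f_n)}(\I)=\{x\in\mathbb{T}: R_x\in\I\}$ for translation invariant $\I$. The key structural input is that the map $x\mapsto (r_{x,n})_n$ from Lemma \ref{lem:fiborepresent} is close to a bijection between $\mathbb{T}$ and a shift space of admissible digit strings $r\in\{-1,0,1\}^\omega$. First I will describe the admissible strings (Zeckendorf-type restrictions coming from the nearest-integer choice). Then I will show that there is a large family of admissible strings whose supports are prescribed. Concretely, fix a sparse pattern: for every $S\subseteq\omega$, let $\sigma(S)\subseteq 3\omega$ be $\{3k:k\in S\}$. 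Let $x_S:=\sum_{k\in S}\varphi^{-3k-1}$ in $\mathbb{T}$. The first technical lemma I will prove is that, for $S$ not too irregular near the start (say after discarding finitely many positions), $R_{x_S}$ and $3S$ differ only in a set $E$ that is bounded by a fixed finite enlargement of $3S$. In particular the symmetric difference is controlled so that $R_{x_S}\subseteq 3S+\{-c,\dots,c\}$ and $3S\subseteq R_{x_S}+\{-c,\dots,c\}$. By translation invariance, this gives $R_{x_S}\in\I\iff 3S\in\I$. The map $S\mapsto x_S$ is continuous from $2^\omega$ to $\mathbb{T}$, since the tails are geometrically small.

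Second, I will transfer between $S$ and $3S$. Since $\I$ is translation invariant, $\I$ is determined by its restriction to any residue class modulo $3$. More precisely, $A\in\I$ iff $A\cap(3\omega+i)\in\I$ for $i=0,1,2$, and shifting moves each class to $3\omega$. Hence the trace $\I\restriction 3\omega$ determines $\I$. However, I also need $\{S:3S\in\I\}$ to be comparable with $\I$ itself. The cleanest route is to avoid dilation and instead spread digits by a gap of one: I will use strings with no two adjacent nonzero digits, chosen so that the digit string is directly legal, taking $\sigma(S)=2S$ if gap $2$ suffices. The residue problem persists, so I will define $\I_3:=\{S:3S\in\I\}$, which is again translation invariant.

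Now the items. For (i), if $\I=\mathrm{Fin}$, the group is generated by $\varphi$ (countable) as noted after Theorem \ref{thm:fibonacci}. If $\I\ne\mathrm{Fin}$, pick an infinite $A\in\I$. Thin $A$ to an infinite $A'\subseteq A$ whose elements are pairwise far apart, which is still in $\I$. Then every $x=\sum_{k\in B}\varphi^{-k-1}$ with $B\subseteq A'$ is legal with $R_x=B\in\I$. This gives continuum many elements, since distinct $B$ give distinct $x$ by uniqueness of the representation. For (ii), the backward direction holds because $\{x:R_x\in\I\}$ is a preimage of $\I$ under a Borel map that is continuous off a countable set; I expect to handle this by showing that $x\mapsto R_x$ is continuous at non-``boundary'' points and treating the countable set separately. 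Alternatively, I will write $\I=\bigcup_m K_m$ with $K_m$ closed hereditary, and note that $\{x:R_x\in K_m\}$ is closed by a compactness argument on admissible strings. The forward direction takes the continuous embedding $B\mapsto x_B$ on $\mathcal P(A')$, where $A'$ is a fixed infinite sparse set. This gives $\I\restriction A'$ as a continuous preimage of $H$, so $\I\restriction A'$ is $F_\sigma$. Then I will use translation invariance to choose $A'$ as a union of arithmetic progressions with large difference $d$; $\I$ is the finite join of shifted copies of $\I\restriction d\omega$, hence $F_\sigma$. For (iii), if $\mathsf{H}(\I)=\mathsf{H}(\J)$, the same embedding yields $\I\restriction d\omega=\J\restriction d\omega$, and translation invariance gives $\I=\J$.

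The main obstacle is the admissibility lemma: showing that for sparse supports (gaps $\ge d$) the string $r$ with $r_n=1$ on $B$ and $0$ elsewhere is exactly the digit string $r_{x,n}$ produced by the nearest-integer rule for $x=\sum_{n\in B}\varphi^{-n-1}$. I would try to prove this by computing $f_nx$ via Binet's formula: $f_n\varphi^{-m-1}$ is close to an integer plus $\pm\varphi^{n-m-1}/\sqrt5$ corrections, and sparse sums keep the fractional errors below $1/2$. This would let me identify $p_n(x)$ explicitly and check the recurrence defect.
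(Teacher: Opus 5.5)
Your treatment of (i), (iii), and the implication ``$\mathsf{H}_{(f_n)}(\mathcal{I})$ is $F_\sigma$ $\Rightarrow$ $\mathcal{I}$ is $F_\sigma$'' is essentially the paper's. It rests on three ingredients: a Binet-formula coding lemma giving $R_{x_A}=A$ for $x_A=\sum_{n\in A}\varphi^{-n-1}$; the continuous map $S\mapsto x_{e[S]}$ along a sparse progression; and translation invariance, to recover $\mathcal{I}$ from its trace on that progression. Two small corrections. First, the coding needs $\min A\geq L$ as well as $L$-separation: $x=\varphi^{-1}$ has $R_x=\{1\}$, not $\{0\}$. Second, you should code along a single progression $e(k)=i+L(k+1)$, not a union of several residue classes, since such a union is not sparse. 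The $3S$/$\mathcal{I}_3$ detour can be dropped.

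The genuine gap is the implication ``$\mathcal{I}$ is $F_\sigma$ $\Rightarrow$ $\mathsf{H}_{(f_n)}(\mathcal{I})$ is $F_\sigma$''. The map $x\mapsto R_x$ is really discontinuous. For example, $R_{1/6}\cap[0,10]=\{2,4,6,8\}$, whereas $R_y\cap[0,10]=\{3,7\}$ for every $y$ slightly below $1/6$, because the roundings of $f_4/6=\nicefrac12$ and $f_8/6=\nicefrac72$ flip. This breaks both of your mechanisms:
\begin{itemize}
\item The set of canonical digit strings is not closed in $\{-1,0,1\}^\omega$. A compactness argument on admissible strings therefore produces non-canonical limit strings whose support is not $R_x$, and you give no control over these.
\item The device of Theorem \ref{thm:upperbound} (continuity off a countable set, then adding that set back) only works at levels $\xi\geq 3$. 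An $F_\sigma$ set minus a countable set need not be $F_\sigma$.
\end{itemize}
Your assertion that $\{x:R_x\in K_m\}$ is closed is in fact true, but for a reason you do not supply. The discontinuities occur only at nonzero rationals. There, $R_x$ and the support of the one-sided limit string are infinite and periodic (by Pisano periodicity), hence lie outside every proper translation invariant ideal. So no sequence $y_j$ with $R_{y_j}\in K_m$ can converge to such a point.

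The paper avoids all this with a different set. It puts $B_x:=\{n:\|f_nx\|>\nicefrac14\}$. Each condition $\|f_ny\|>\nicefrac14$ is open in $y$, so $\{x:B_x\in K_m\}$ is closed for hereditary compact $K_m$. It then proves $\mathsf{H}_{(f_n)}(\mathcal{I})=\{x:B_x\in\mathcal{I}\}$ from the inclusion $R_x\subseteq B_x\cup(B_x-1)\cup(B_x-2)$, which follows from $r_{x,n}=\delta_n+\delta_{n+1}-\delta_{n+2}$, together with translation invariance.
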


\begin{corollary}\label{cor:consequencefibo22}
Let $\I$ be a 
translation invariant ideal on $\omega$. Then the following hold\textup{:}
\begin{enumerate}[label={\rm (\roman*)}]
\item \label{item:1corpowersfibo} If $\I=\mathrm{Fin}$ then $\mathsf{H}_{(f_n)}(\I)$ is a countably infinite subgroup\textup{;}
\item \label{item:2corpowersfibo} If $\I\neq \mathrm{Fin}$ is $F_\sigma$ then $\mathsf{H}_{(f_n)}(\I)$ is an uncountable proper $F_\sigma$ subgroup\textup{;}
\item \label{item:3corpowersfibo} If $\I\neq \mathrm{Fin}$ is a generalized density ideal  then $\mathsf{H}_{(f_n)}(\I)$ is an $F_{\sigma\delta}$-complete subgroup\textup{.}
\end{enumerate}
\end{corollary}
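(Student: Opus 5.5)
The plan is to deduce Corollary \ref{cor:consequencefibo22} from Theorem \ref{thm:fibonacci} and Theorem \ref{thm:propertiesfibo}, in the same way Corollary \ref{cor:consequencepowers22} follows from the results on powers.

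\emph{Item \ref{item:1corpowersfibo}.} Let $\I=\mathrm{Fin}$. Then $\mathsf{H}_{(f_n)}(\mathrm{Fin})$ is countable by Theorem \ref{thm:propertiesfibo}\ref{prop:1fibo}. It is infinite because, by Theorem \ref{thm:fibonacci}, it contains every $x$ with $R_x$ finite. Concretely, it contains the subgroup generated by $\varphi$, and $\varphi$ is irrational, so this subgroup of $\mathbb{T}$ is infinite.

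\emph{Item \ref{item:2corpowersfibo}.} Let $\I\neq\mathrm{Fin}$ be $F_\sigma$. Then $H:=\mathsf{H}_{(f_n)}(\I)$ is uncountable by Theorem \ref{thm:propertiesfibo}\ref{prop:1fibo}, and it is $F_\sigma$ by Theorem \ref{thm:propertiesfibo}\ref{prop:2fibo}. It remains to show that $H$ is proper. I will do this directly with Theorem \ref{thm:fibonacci}. Choose any sequence $(r_n)\in\{-1,0,1\}^\omega$ with no two consecutive nonzero entries and infinitely many nonzero entries placed so that the set of nonzero positions is cofinite in $2\omega$; for instance $r_n=1$ for even $n$ and $r_n=0$ for odd $n$. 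Set $x:=\sum_n r_n\varphi^{-n-1}$.

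The point to verify is that the representation of Lemma \ref{lem:fiborepresent} recovers these digits, i.e.\ $r_{x,n}=r_n$ for all large $n$. The reason is that $f_n x$ is then within less than $1/2$ of the integer predicted by the digits. Granting this, $R_x$ contains almost all even numbers. Since $\I$ is translation invariant and proper, $R_x\notin\I$: if $R_x\in\I$, then $R_x\cup(R_x+1)$ would be a cofinite set in $\I$, forcing $\omega\in\I$. Hence $x\notin H$.

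An alternative route to properness is to invoke the Weyl-type fact that $(f_nx)$ is uniformly distributed for almost every $x$. Then $\{n:\|f_nx\|\ge 1/4\}$ has positive upper density, and one argues via translation invariance that such sets are not in $\I$. This route is less clean, so I prefer the explicit construction. The digit-recovery check is the main obstacle, and I would handle it with the standard Binet estimate $f_n\varphi^{-k}\approx \varphi^{n-k}/\sqrt5$ together with the identity $\varphi^{m}=f_m\varphi+f_{m-1}$.

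\emph{Item \ref{item:3corpowersfibo}.} Let $\I\neq\mathrm{Fin}$ be a generalized density ideal. It is translation invariant, so items \ref{prop:1fibo} of Theorem \ref{thm:propertiesfibo} gives that $H$ is uncountable. The same construction as above shows that $H$ is proper. Theorem \ref{thm:Fsigmadelcomplete} then applies, because $H$ is a proper $\I$-characterized subgroup with $\I$ a generalized density ideal, and since $H$ is uncountable it must be $F_{\sigma\delta}$-complete.
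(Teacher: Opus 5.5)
\textbf{Gap: the properness witness in item (ii), which item (iii) also relies on.}

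Item (i) is fine. Your overall plan for (ii) matches the paper's: uncountable and $F_\sigma$ from Theorem \ref{thm:propertiesfibo}, then exhibit a point outside $H$. The problem is that point. Your $x=\sum_{k\ge0}\varphi^{-2k-1}=\varphi^{-1}/(1-\varphi^{-2})=1$, so $x=0$ in $\mathbb T$ and $R_x=\emptyset$.

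The digit-recovery claim fails exactly here. For odd $N$, the integer predicted by your digits is $q_N=f_2+f_4+\cdots+f_{N-1}=f_N-1$, so $f_Nx-q_N=1$, not less than $\nicefrac12$. This is the base-$\varphi$ analogue of $0.999\ldots=1$. Any tail of $+1$'s on the even positions collapses, since $\sum_{k\ge K}\varphi^{-2k-1}=\varphi^{-2K}$. Such an $x$ therefore has a finite expansion and lies in $\langle\varphi\rangle=\mathsf H_{(f_n)}(\mathrm{Fin})\subseteq H$. So ``no two consecutive nonzero digits'' is not sufficient: the Binet errors of nearby digits accumulate, and no Binet estimate can rescue this particular example.

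\textbf{How to repair it.} The paper uses Claim \ref{claim:fibocoding}. Place the nonzero digits at mutual distance at least a large $L$. Then the accumulated error is at most $1/\sqrt5$ plus a geometric tail of ratio about $\varphi^{-L}$, hence below $\nicefrac12$, and $R_{x_A}=A$. Next choose $i<L$ with $E_i=\{i+L(k+1):k\in\omega\}\in\I^+$; this is possible because $E_0,\ldots,E_{L-1}$ cover a cofinite set. Then $x_{E_i}\notin H$ by Theorem \ref{thm:fibonacci}.

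Your own idea can also be saved by alternating the signs, $r_{2k}=(-1)^k$. This gives $x=\varphi/(\varphi^2+1)=1/\sqrt5$. Writing $v_n:=\varphi^n+(-\varphi)^{-n}$ for the Lucas numbers, $f_n/\sqrt5=(v_n-2(-\varphi)^{-n})/5$. No $v_n$ is divisible by $5$, so $\|f_nx\|\ge\nicefrac15-o(1)$, and $x\notin H$ for every proper $\I$.

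Your uniform-distribution fallback does not work as stated. Positive upper density does not keep a set out of a proper translation invariant ideal. For example, take the ideal generated by $\mathrm{Fin}$ and the translates of $\bigcup_k[a_k,2a_k)$ with $a_{k+1}>4a_k$.

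\textbf{Item (iii).} Once properness is fixed, your route is correct and differs from the paper's. You get uncountability directly from Theorem \ref{thm:propertiesfibo}\ref{prop:1fibo} and properness from the witness above, then apply Theorem \ref{thm:Fsigmadelcomplete}. The paper instead argues that a translation invariant generalized density ideal $\I\neq\mathrm{Fin}$ cannot be $F_\sigma$, because it would then be a copy of $\mathrm{Fin}\oplus\mathcal P(\omega)$. Hence $H$ is not $F_\sigma$, so it is neither countable nor all of $\mathbb T$. Your version avoids the classification of $F_\sigma$ generalized density ideals, at the cost of needing an explicit non-member of $H$.
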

Thus, $\mathsf{H}_{(f_n)}(\mathcal{Z})$ is an $F_{\sigma\delta}$-complete subgroup (hence, with cardinality $\mathfrak{c}$ and distinct from $\mathsf{H}_{(f_n)}(\mathrm{Fin})$), which answers \cite[Question 6.4]{MR4078214} and strengthens \cite[Corollary 2.5]{MR4458173}. 

In the case where $\I$ is not translation invariant (or even satisfies a stronger property as in Definition \cite[Definition 2.4]{MR4594809}), it is still possible that $\mathsf{H}_{(f_n)}(\I) \neq \mathsf{H}_{(f_n)}(\mathrm{Fin})$. The next example provides a negative answer to \cite[Conjecture 4.8]{MR4594809}.
\begin{example}\label{example:fibonaccino}
    Consider the irrational $\alpha:=\varphi^{-1}=[0;1,1,\ldots]$ and let $(a_n:n\in\omega)$ be the sequence of denominators of its convergents, so that $a_n=f_{n+1}$ for each $n \in \omega$. 
    For each $j \in \{0,1,2\}$, set $R_j:=3\omega+j$. Then $\{R_0,R_1,R_2\}$ is a partition of $\omega$ and  
    $$
    \J:=\{A\subseteq\omega:A\cap R_2\in\mathrm{Fin}\}
    $$ 
    is an $F_\sigma$ $P$-ideal. Moreover, if $A\in\J$ is an infinite set, then at least one between $A\cap R_0$ and $A\cap R_1$ is infinite, so that $A+2\notin\J$ or $A+1\notin\J$ (hence $\J$ satisfies the property stated in Definition \cite[Definition 2.4]{MR4594809}). Set also $x:=\nicefrac{1}{2}$. 

    Since $a_n=f_{n+1}$ is even if and only if $n\in R_2$, we have
    $$
    \|a_nx\|=
    \begin{cases}
    \,0&\text{ if }n\in R_2,\\
    \,x&\text{ if }n\notin R_2.
    \end{cases}
    $$
    Taking into account that $\mathsf{H}_{(a_n)}(\mathrm{Fin})=\mathsf{H}_{(f_n)}(\mathrm{Fin})$ contains only irrational values except $0$ and that $\omega\setminus R_2\in \J$, it follows that $x \in \mathsf H_{(a_n)}(\J)\setminus \mathsf{H}_{(a_n)}(\mathrm{Fin})$. In particular, we conclude that $\mathsf H_{(a_n)}(\J)\neq \mathsf{H}_{(a_n)}(\mathrm{Fin})$ and $\mathsf H_{(f_n)}(\I)\neq \mathsf{H}_{(f_n)}(\mathrm{Fin})$, where $\I:=\{A\subseteq \omega: A-1 \in \J\}$. 
\end{example}


\subsection{Examples: Factorials} 
Let us write each \(x\in\mathbb T\) in its canonical factorial expansion in the
shifted form
\[
x=
\sum_{n=1}^{\infty}\frac{q_{x,n}}{(n+1)!},
\]
where \(q_{x,n}\in\{0,1,\ldots,n\}\) for every \(n\geq1\), and
\(q_{x,n}\neq n\) for infinitely many \(n\in \omega\). As usual, this is the unique
such representation of \(x\). 
In the case of the sequence of factorials $(n!: n\ge 1)$, a special case of the following characterization (for translation invariant $P$-ideals) appeared recently in \cite[Theorem 1.8]{DiSantoDikranjanGiordanoBrunoWeber2026}, cf. also \cite{MR4780094}. 

\begin{theorem}\label{thm:factorialcharacterization}
Let \(\mathcal I\) be an ideal on \(\omega\). Then
$$
\mathsf{H}_{(n!:\, n\ge 1)}(\mathcal I)=
\left\{
x \in \mathbb{T}:
\mathcal I\text{-}\lim_{n\to\infty}
\frac{q_{x,n}}{n}=0
\,\,\text{ in }\mathbb{T}
\right\}.
$$
\end{theorem}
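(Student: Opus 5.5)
The plan is to compute $n!\,x$ modulo $1$ directly from the factorial expansion, and to show that it differs from $q_{x,n}/n$ by a real error of order $1/n$. Since $\mathrm{Fin}\subseteq\mathcal I$, such a vanishing error cannot affect $\mathcal I$-convergence to $0$. Throughout I index the sequence $(n!:n\ge 1)$ by $n\ge 1$. Shifting the index by one replaces $\mathcal I$ by an isomorphic ideal via a bijection that preserves cofiniteness, and both sides of the claimed identity transform in the same way, so the indexing convention is harmless.

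\emph{Step 1: compute $n!\,x$ in $\mathbb T$.} Fix $x\in\mathbb T$ with expansion $x=\sum_{k\ge1} q_{x,k}/(k+1)!$. For $k<n$ the number $n!/(k+1)!$ is an integer, so
\[
n!\,x \equiv t_n:=\sum_{k\ge n} q_{x,k}\,\frac{n!}{(k+1)!}\pmod 1 .
\]
These tails satisfy the recursion $t_n=\frac{q_{x,n}+t_{n+1}}{n+1}$.

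\emph{Step 2: bound the tail.} Next I show $0\le t_n<1$ for every $n$. The non-strict bound $t_n\le 1$ follows by comparing with the maximal expansion $q_{x,k}=k$, since $\sum_{k\ge n} k\,n!/(k+1)!$ telescopes to $1$. Strictness comes from the requirement that $q_{x,k}\neq k$ for infinitely many $k$, so some term falls strictly short of the maximum. This is the only point where the canonicity of the expansion enters, and it is the step to check with care. Feeding $t_{n+1}\in[0,1)$ into the recursion gives
\[
\frac{q_{x,n}}{n+1}\le t_n<\frac{q_{x,n}+1}{n+1}.
\]

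\emph{Step 3: compare with $q_{x,n}/n$.} Since $q_{x,n}\le n$, we have
\[
\left|\frac{q_{x,n}}{n}-\frac{q_{x,n}}{n+1}\right|=\frac{q_{x,n}}{n(n+1)}\le\frac1{n+1}.
\]
Combined with Step 2, this yields the real-number estimate $|t_n-q_{x,n}/n|\le 2/(n+1)$. Consequently
\[
\bigl|\,\|n!\,x\|-\|q_{x,n}/n\|\,\bigr|\le \frac{2}{n+1}.
\]

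\emph{Step 4: conclude.} Fix $\varepsilon>0$ and let $N$ be such that $2/(N+1)<\varepsilon/2$. For $n\ge N$, the estimate of Step 3 gives
\[
\{n: \|n!\,x\|\ge\varepsilon\}\subseteq\{n:\|q_{x,n}/n\|\ge\varepsilon/2\}\cup N,
\]
and symmetrically with the two sequences interchanged. Since $\mathrm{Fin}\subseteq\mathcal I$ and $\varepsilon$ is arbitrary, $\mathcal I\text{-}\lim n!\,x=0$ holds if and only if $\mathcal I\text{-}\lim q_{x,n}/n=0$ in $\mathbb T$. This is exactly the claimed equality of sets.
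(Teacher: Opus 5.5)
Your proof is correct and follows essentially the same route as the paper: the paper also writes $n!\,x$ in $\mathbb{T}$ as the tail $\tau_{x,n}=\sum_{k\ge n} q_{x,k}\,n!/(k+1)!$, and uses $0\le \tau_{x,n}<1$ together with the recursion $\tau_{x,n}=(q_{x,n}+\tau_{x,n+1})/(n+1)$ to obtain $\|n!\,x-q_{x,n}/n\|\le 2/(n+1)$. The differences are minor: the paper cites an external result for the final step (two sequences whose difference tends to $0$ have the same $\mathcal I$-limit behaviour), which your Step 4 proves directly, and you justify the strict bound $t_n<1$ via the canonicity of the expansion, whereas the paper simply asserts it.
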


This provides an answer to \cite[Problem 6.10]{MR4932230} in the case of sequence $(n!: n\ge 1)$. 
Although we are not going to prove a list of special properties of $\mathsf{H}_{(n!)}(\I)$ as in the previous cases, we collect below some easy consequences of our results. 
\begin{remark}\label{rmk:examplecomplete}
    Let $\I$ be an ideal on $\omega$. 
    For each $S\subseteq \omega$, consider the point $x \in \mathbb{T}$ such that $q_{x,n}=1$ if $n \in S$ and $q_{x,n}:=0$ otherwise, for each $n\ge 1$. 
    It follows by Theorem \ref{thm:factorialcharacterization} that $x \in \mathsf{H}_{(n!)}(\I)$. At the same time, the point $y \in \mathbb{T}$ defined by $q_{y,n}:=\lfloor \nicefrac{n}{2}\rfloor$ for all $n\ge 1$ does not belong to $\mathsf{H}_{(n!)}(\I)$ by Theorem \ref{thm:factorialcharacterization}. 
    Therefore $\mathsf{H}_{(n!)}(\I)$ is a proper subgroup of $\mathbb{T}$ with cardinality $\mathfrak{c}$. It follows by Theorem \ref{thm:Fsigmadelcomplete} that both $\mathsf{H}_{(n!)}(\mathrm{Fin})$ and $\mathsf{H}_{(n!)}(\mathcal{Z})$ (and, more generally, $\mathsf{H}_{(n!)}(\I)$ with $\I$ generalized density ideal) are $F_{\sigma\delta}$-complete. 
    In addition, the latter subgroups are distinct, cf. Remark \ref{rmk:factorial} below. 
\end{remark}

Finally, differently from previous cases, we show that $\mathsf{H}_{(n!)}(\I)$ cannot be $F_\sigma$. 
\begin{proposition}\label{prop:factnotFsigma}
Let $\I$ be an ideal on $\omega$. Then $\mathsf{H}_{(n!)}(\I)$ is not an $F_\sigma$ subgroup. 
\end{proposition}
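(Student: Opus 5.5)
We argue via the Baire category theorem, using the explicit description of $H:=\mathsf{H}_{(n!)}(\I)$ from Theorem \ref{thm:factorialcharacterization}. Suppose for contradiction that $H=\bigcup_k F_k$ with each $F_k$ closed in $\mathbb{T}$. We will exhibit a closed set $K\subseteq H$ that is uncountable and perfect, or better a compact subgroup-like set, together with a dense $G_\delta$ subset of a perfect compact set $L\supseteq K$ lying outside $H$. Then $L\cap H=\bigcup_k (L\cap F_k)$ is a countable union of closed sets which is dense in $L$ and has dense complement in $L$, and we derive a contradiction from Baire's theorem.

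The concrete choice is the following. Let $L$ be the set of $x\in\mathbb{T}$ with $q_{x,n}\in\{0,\lfloor n/2\rfloor\}$ for all $n\ge 2$ (and, say, $q_{x,1}=0$). The map $\{0,1\}^{\omega}\to L$ sending a binary sequence to the corresponding point is a homeomorphism onto a closed subset of $\mathbb{T}$. This holds because the digits avoid the maximal value $n$ for $n\ge 2$, so no carrying or ambiguity arises and the factorial expansion is continuous on this set. By Theorem \ref{thm:factorialcharacterization}, since $\|q_{x,n}/n\|$ is either $0$ or roughly $1/2$, a point $x\in L$ lies in $H$ if and only if the support $S_x:=\{n: q_{x,n}\neq 0\}$ belongs to $\I$. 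Hence $L\cap H$ is homeomorphic to $\I$ itself, viewed as a subset of $2^{\omega}$ (modulo the harmless shift of index).

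It therefore suffices to show that no ideal $\I$ containing $\mathrm{Fin}$ and proper is $F_\sigma$ relative to nothing. That claim is false, however, since $\mathrm{Fin}$ is $F_\sigma$. So the sets used must be modified to force a $G_\delta$ dense phenomenon independent of $\I$. This is the main obstacle. The fix is to use digits tending to $0$ in the quotient slowly instead. Let $L$ consist of $x$ with $q_{x,n}\in\{0,1,\dots,\lfloor n/2\rfloor\}$ chosen freely: a compact perfect set homeomorphic to $\prod_n\{0,\dots,\lfloor n/2\rfloor\}$. Inside $L$, the set $D$ of $x$ with $q_{x,n}=0$ for all but finitely many $n$ is dense in $L$ and contained in $H$, since finite sets belong to $\I$. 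The set of $x\in L$ with $q_{x,n}/n\ge 1/4$ for infinitely many $n$ in every tail is a dense $G_\delta$ of $L$. More carefully, we take $G:=\{x\in L: \forall m\,\exists n\ge m,\ \forall j\in[n,2n]\ q_{x,j}\ge j/4\}$, which is dense $G_\delta$ in $L$; we must arrange that its points are outside $H$ for every proper $\I$.

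To guarantee this, we restrict $L$ further by a Cantor-style block structure so that membership in $H$ only depends on which blocks are "large". Choose disjoint intervals $I_k$ partitioning $\omega$, and let $L$ consist of $x$ whose digits on each $I_k$ are either all $0$ or all $\lfloor n/2\rfloor$. Then $x\in H$ if and only if $\bigcup_{k\in T_x}I_k\in\I$, where $T_x$ is the set of large blocks. The set $\{T: \bigcup_{k\in T}I_k\in \I\}$ is a proper ideal on the blocks containing $\mathrm{Fin}$, hence a dense and codense subset of $2^\omega$. Being an ideal, if it were $F_\sigma$ it would be meager, with a comeager complement. We then exploit the group structure instead: $H$ is a subgroup, so if $H$ were $F_\sigma$ then $H\cap L'$ would be Baire in the compact group generated. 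Using the classical fact that a non-meager subgroup with the Baire property is open, the meager alternative forces $H$ to be meager in $\mathbb{T}$ yet $G_\delta$-dense complement. The genuine difficulty is concluding non-$F_\sigma$ from this, and we expect to finish with Hurewicz's theorem. We will show that $H$ contains a closed copy of the rationals relative to a perfect set whose closure meets the complement densely, namely the set $D$ above, which is countable, dense in $L$, and such that $D=H\cap L$ on a suitably chosen $L$ where all nonzero tails are excluded from $H$ (digits $\lfloor n/2\rfloor$ on infinite sets never lie in $\I$ once we pass to a subsequence of blocks along which $\I$ restricts to $\mathrm{Fin}$). Such a subsequence exists by choosing blocks whose union is $\I$-positive and inductively thinning. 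A countable dense set with dense $G_\delta$ complement in a perfect compact set is not $G_\delta$ there, but $F_\sigma$ is what is needed. We therefore instead choose blocks so that $H\cap L\cong$ the co-countable-type set $\{T: T \text{ infinite}\}$ complemented, which Hurewicz shows is not $F_\sigma$.
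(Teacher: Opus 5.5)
Your argument does not close, and the problem lies in the overall strategy, not in a technical detail. First, the Baire set-up points the wrong way. A countable union of closed sets that is dense in $L$ and has dense complement gives no contradiction (think of $\mathbb{Q}\cap[0,1]$). Showing that $H$ is meager, via the open-subgroup fact, does not rule out $F_\sigma$ either, since countable subgroups are meager and $F_\sigma$. For a contradiction you need a perfect $L$ in which $L\cap H$ is comeager while $L\setminus H$ is dense.

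Second, every $L$ you build decides membership in $H$ through two-valued digits ($0$ or about $n/2$) on blocks. So $L\cap H$ is just a copy of the trace of $\I$ on those blocks. When the blocks are chosen so that all infinite unions are $\I$-positive, this trace is $\mathrm{Fin}$, which is countable and $F_\sigma$, as you yourself note. Your final requirement $H\cap L\cong\{T:T\text{ infinite}\}$ cannot hold for this coding: finitely supported points (e.g.\ $0$) always lie in $H$, because $\mathrm{Fin}\subseteq\I$.

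Third, the ``inductive thinning'' that is supposed to produce disjoint finite blocks with all infinite unions $\I$-positive is not available for an arbitrary ideal. Such blocks force $\I$ to be meager, so the step fails for instance for maximal ideals.

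The missing idea is to use digits at infinitely many scales. That is what lets the quantifier ``for every $\varepsilon>0$'' in $\I\text{-}\lim_n q_{x,n}/n=0$ produce complexity beyond $F_\sigma$. The paper also uses your first observation, but as leverage rather than as an obstacle:
\begin{enumerate}
\item The map with digits $\lfloor n/2\rfloor$ on $S$ shows $\I\le_{\mathrm W}H$. So if $H$ were $F_\sigma$, then $\I$ would be $F_\sigma$.
\item Mazur's theorem then gives a lscsm $\varphi$ with $\I=\{A:\varphi(A)<\infty\}$. Hence there are disjoint finite sets $F_s$ with $\varphi(F_s)>s$, and every infinite union of them is $\I$-positive.
\item Index $s=\langle i,j\rangle$, and put the digit $\lfloor n/(i+3)\rfloor$ at each $n\in F_{\langle i,j\rangle}$ whenever $(i,j)\in A$. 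Then $x_A\in H$ if and only if every section $A_i$ is finite.
\item This is a continuous reduction of the non-$F_\sigma$ ideal $\emptyset\otimes\mathrm{Fin}$ to $H$, so $H$ is not $F_\sigma$.
\end{enumerate}
If you want the Hurewicz-type witness you were after, compose this with $T\mapsto\{(|T\cap j|,j):j\in\omega\}$, which reduces $\{T:T\text{ infinite}\}$ to $\emptyset\otimes\mathrm{Fin}$. Either way, the multi-scale digits are indispensable.
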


\subsection{Applications}\label{subsec:applications} 
As consequence of our results, we show that for every ideal $\I$ there exists an $\I$-characterized subgroup which is not $G_{\delta\sigma}$. 
\begin{proposition}\label{prop:noPi02}
    There are no ideals $\I$ on $\omega$ such that $\mathscr{H}(\I)\subseteq \mathbf{\Sigma}^0_3$. In particular, there are no ideals $\I$ such that every $\I$-characterized subgroup is $F_\sigma$. 
\end{proposition}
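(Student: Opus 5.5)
Fix an arbitrary ideal $\I$ on $\omega$. The plan is to exhibit a single $\I$-characterized subgroup that is not $\mathbf{\Sigma}^0_3$ (i.e.\ not $G_{\delta\sigma}$). We take the sequence of factorials $(n!: n\ge 1)$ and the subgroup $H:=\mathsf{H}_{(n!)}(\I)$. By Proposition \ref{prop:factnotFsigma}, $H$ is never $F_\sigma$, whatever $\I$ is. It then suffices to show that a subgroup of $\mathbb{T}$ which is $G_{\delta\sigma}$ but not $F_\sigma$ cannot arise in this way.

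The key tool is a dichotomy for $G_{\delta\sigma}$ subgroups. Suppose $H$ is $\mathbf{\Sigma}^0_3$, so $H=\bigcup_k G_k$ with each $G_k$ a $G_\delta$ subset of $\mathbb{T}$. We split into two cases, according to whether some $\overline{G_k}$ has nonempty interior.

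\emph{Case 1: no $\overline{G_k}$ has interior.} Then $\overline{G_k}$ is nowhere dense for every $k$, so $H$ is meager. We then aim for $H$ being $F_\sigma$ (and in fact countable), contradicting Proposition \ref{prop:factnotFsigma}. The relevant idea is that, for Polishable subgroups, Solecki and Farah's analysis gives $\mathrm{rank}\in\{2\}\cup[3,\ldots)$ with exact $\mathbf{\Pi}^0_3$ behaviour. However, $\I$ is arbitrary here, so Polishability is not available. This is the main obstacle.

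To avoid it, I would instead use a direct argument via Theorem \ref{thm:factorialcharacterization}. Write $E:=\{x\in\mathbb{T}: q_{x,n}\in\{0,1\}\text{ for all }n\}$, a compact set homeomorphic to $2^\omega$ via $x\mapsto\{n: q_{x,n}=1\}$, and note $E\subseteq H$ by Remark \ref{rmk:examplecomplete}. Then consider the continuous map $f:2^\omega\to\mathbb{T}$ sending $S$ to the point with $q_n=\lfloor n/2\rfloor$ for $n\in S$ and $q_n=0$ otherwise. By Theorem \ref{thm:factorialcharacterization}, $f(S)\in H$ iff $S\in\I$ (using $\|\lfloor n/2\rfloor/n\|\to 1/2$). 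Thus $\I\le_{\mathrm{W}} H$, but that only transfers complexity from $\I$, which may itself be simple.

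Better, reduce a $\mathbf{\Pi}^0_3$-complete set that does not depend on $\I$. Let $P_3:=\{\sigma\in 2^{\omega\times\omega}: \forall i\, \forall^\infty j\ \sigma(i,j)=0\}$, which is $\mathbf{\Pi}^0_3$-complete. Partition $\{n\ge 1\}$ into consecutive finite blocks $B_{i,j}$, ordered so that each $i$ recurs infinitely often. On block $B_{i,j}$, encode $\sigma(i,j)=1$ by setting $q_n:=\lfloor n/(i+2)\rfloor$ for all $n$ in the block, and $\sigma(i,j)=0$ by setting $q_n:=0$. The resulting map $g$ is continuous.

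We then check $g(\sigma)\in H$ iff $\sigma\in P_3$. If $\sigma\in P_3$, then for each $\varepsilon>0$ the set of $n$ with $\|q_n/n\|\ge\varepsilon$ lies in finitely many blocks, hence is finite and belongs to $\I$. If $\sigma\notin P_3$, pick $i$ with infinitely many $j$ such that $\sigma(i,j)=1$. The resulting set $\{n:\|q_n/n\|\ge 1/(2(i+2))\}$ must be shown not to be in $\I$, and this is the delicate point: it is merely an infinite union of blocks. To fix it, let block $B_{i,j}$ for $\sigma(i,j)=1$ instead copy \emph{all} $n$ beyond some point, i.e.\ use the coding "for all large $n$". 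Concretely, replace the blocks by an interleaving where level $i$ being active forces $q_n/n\approx 1/(i+2)$ on a cofinite set. If that still fails, fall back on combining the first Wadge reduction with Theorem \ref{thm:wadge} together with the Baire category Case 2 below.

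\emph{Case 2: some $\overline{G_k}$ has interior.} Then $H$ is nonmeager and has the Baire property, so by Pettis' theorem $H$ is open, hence $H=\mathbb{T}$. This contradicts the point $y\notin H$ of Remark \ref{rmk:examplecomplete}. Combining the two cases, $H\notin\mathbf{\Sigma}^0_3$, and in particular $H$ is not $F_\sigma$, which gives the "in particular" clause.
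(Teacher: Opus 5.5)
Your Case~2 is fine: it just says that a proper Borel subgroup of $\mathbb{T}$ is meager. The gap is in Case~1, which carries all the content, and it is not proved.

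\textbf{Why Case~1 fails as written.} The dichotomy you first hint at cannot help. There are $\I$-characterized subgroups that are $G_{\delta\sigma}$, meager and not $F_\sigma$ (Theorem~\ref{thm:strangecounterexample}). So everything rests on your block reduction $g$ of $P_3$.

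That reduction is correct only if $\bigcup_{j\in A}B_{i,j}\notin\I$ for every $i$ and every infinite $A\subseteq\omega$. With blocks fixed in advance this fails. Let $\sigma(0,j)=1$ exactly for even $j$ and $\sigma=0$ elsewhere. Then $\sigma\notin P_3$, yet $g(\sigma)\in\mathsf{H}_{(n!)}(\I)$ for any ideal $\I$ containing the co-infinite set $\bigcup_j B_{0,2j}$.

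Your proposed repair, forcing the bad set to be cofinite, is impossible. Suppose a continuous $g$ reduced $P_3$ to $\mathsf{H}_{(n!)}(\I)$ for all ideals at once. It would have to send $P_3$ into $\mathsf{H}_{(n!)}(\mathrm{Fin})$. Testing against all maximal ideals, it would have to send the complement of $P_3$ into $L:=\{x\in\mathbb{T}:\liminf_n\|n!x\|>0\}$. Since $L$ is $F_\sigma$ and disjoint from $\mathsf{H}_{(n!)}(\mathrm{Fin})$, the complement of $P_3$ would equal $g^{-1}[L]$ and so be $F_\sigma$, which is absurd. Hence the blocks must depend on $\I$, and your closing ``fall back'' sentence is not an argument.

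\textbf{The missing idea: split on $\I$, not on $H$.}
\begin{itemize}
\item If $\I$ is not meager, it lacks the Baire property, so it is not Borel. Your first reduction $\I\le_{\mathrm{W}}\mathsf{H}_{(n!)}(\I)$ (or Corollary~\ref{cor:pointclassessimple}) then gives an $\I$-characterized subgroup that is not Borel.
\item If $\I$ is meager, Talagrand's characterization gives a partition of $\omega$ into finite intervals $(P_k)$ such that every set containing infinitely many $P_k$ is $\I$-positive. Take $B_{i,j}:=P_{\langle i,j\rangle}$ for a bijection $\langle\cdot,\cdot\rangle:\omega^2\to\omega$. Then your reduction is correct, so $P_3\le_{\mathrm{W}}\mathsf{H}_{(n!)}(\I)$.
\end{itemize}
Within your own setup you even get meagerness for free. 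Once you assume $\mathsf{H}_{(n!)}(\I)\in\mathbf{\Sigma}^0_3$, your first reduction makes $\I$ a $\mathbf{\Sigma}^0_3$ ideal, hence Borel, hence meager. So only Talagrand's theorem is missing.

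\textbf{Comparison with the paper.} The paper makes exactly this split but outsources the meager case. It uses $\mathscr{H}(\mathrm{Fin})\subseteq\mathscr{H}(\I)$ for meager $\I$, from \cite[Corollary 2.7]{FKLT26}. It then uses that $\mathsf{H}_{(n!)}(\mathrm{Fin})$ is $F_{\sigma\delta}$-complete (Remark~\ref{rmk:examplecomplete}). Your block reduction with $\I=\mathrm{Fin}$ proves this last fact directly, without Theorem~\ref{thm:Fsigmadelcomplete}.
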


In particular, this improves on Corollary \ref{cor:borelrank}\ref{item:1borrank} in the case $m=3$:
\begin{corollary}\label{cor:Fsigmadeltacomplete}
Let $\I$ be an $F_{\sigma\delta}$ ideal on $\omega$. Then there exists an $F_{\sigma\delta}$-complete $\I$-characterized subgroup of $\mathbb{T}$. 
\end{corollary}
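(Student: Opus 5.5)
Since $\I$ is $F_{\sigma\delta}=\mathbf{\Pi}^0_3$, Theorem \ref{thm:upperbound} with $\xi=3$ gives that every $\mathsf{H}_{\bm a}(\I)$ is $\mathbf{\Pi}^0_3$. It therefore suffices to produce one $\bm a\in\omega^\omega$ for which $\mathsf{H}_{\bm a}(\I)$ is not $\mathbf{\Sigma}^0_3$. The reason is the following standard fact: by Wadge's lemma (Borel determinacy suffices), a $\mathbf{\Pi}^0_3$ subset of the Polish space $\mathbb{T}$ that is not $\mathbf{\Sigma}^0_3$ is $\mathbf{\Pi}^0_3$-complete. For non-zero-dimensional $\mathbb{T}$, one can either use \cite[Theorem 22.10]{MR1321597} directly, or transfer $\mathsf{H}_{\bm a}(\I)$ to a zero-dimensional space. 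For the transfer, write $\mathbb{T}$ minus a countable set as a homeomorphic copy of $\omega^\omega$; removing countably many points does not affect $\mathbf{\Pi}^0_3$-completeness, because countable sets are $\mathbf{\Sigma}^0_2$.

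To get a non-$\mathbf{\Sigma}^0_3$ subgroup, I would split into two cases according to whether $\I$ itself is $\mathbf{\Sigma}^0_3$.

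\emph{Case 1: $\I\notin\mathbf{\Sigma}^0_3$.} The class $\mathbf{\Sigma}^0_3$ is closed under continuous preimages, so Corollary \ref{cor:pointclassessimple} applies. Concretely, Theorem \ref{thm:wadge} gives $\I\le_{\mathrm{W}}\mathsf{H}_{\bm a}(\I)$, and hence $\mathsf{H}_{\bm a}(\I)\notin\mathbf{\Sigma}^0_3(\mathbb{T})$.

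\emph{Case 2: $\I\in\mathbf{\Sigma}^0_3$.} Here $\I$ itself is too simple, so the non-$\mathbf{\Sigma}^0_3$ subgroup must come from a different source, namely Proposition \ref{prop:noPi02}. That proposition asserts that for every ideal $\I$ on $\omega$ there is some $\bm a$ with $\mathsf{H}_{\bm a}(\I)\notin\mathbf{\Sigma}^0_3$. Its statement does not depend on the complexity of $\I$, so it covers this case, and in fact it makes the case split unnecessary: for any $F_{\sigma\delta}$ ideal $\I$, Proposition \ref{prop:noPi02} yields $\bm a$ with $\mathsf{H}_{\bm a}(\I)\notin\mathbf{\Sigma}^0_3$, while Theorem \ref{thm:upperbound} gives $\mathsf{H}_{\bm a}(\I)\in\mathbf{\Pi}^0_3$.

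The main obstacle is therefore Proposition \ref{prop:noPi02}, which I am allowed to assume. If I had to prove it, I would take $\bm a=(n!)$ and use Theorem \ref{thm:factorialcharacterization}. I would build a continuous map from $2^{\omega\times\omega}$ to factorial expansions whose digits $q_{x,n}$ encode a code set $C\subseteq\omega\times\omega$, designed so that $x\in\mathsf{H}_{(n!)}(\I)$ if and only if every column of $C$ is finite. This would be a reduction of the $\mathbf{\Pi}^0_3$-complete set $\{C: C_i \text{ finite for all } i\}$.

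To make the encoding independent of $\I$, I would make the ratios $q_{x,n}/n$ either $0$ or converge genuinely along all $n$. Fix blocks $B_i$ of indices. Whenever column $i$ contains at least $k$ elements of $C$, set $q_{x,n}/n\approx 1/2$ for all $n$ in a tail-cofinite portion scheduled so that, if some column is infinite, the ratio stays bounded away from $0$ on a cofinite set. Any proper ideal contains no cofinite set, so this forces $x\notin\mathsf{H}_{(n!)}(\I)$. Conversely, when all columns are finite, the ratios tend to $0$ along cofinitely many $n$, so $x\in\mathsf{H}_{(n!)}(\I)$.

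The delicate point is arranging this tail behaviour continuously in $C$. I expect a standard priority-style scheduling of the blocks to work: at stage $n$, look only at the finite portion of $C$ determined so far, and let the digit $q_{x,n}$ depend on the largest $i$ with a new witness.

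Combining the two ingredients, Theorem \ref{thm:upperbound} and Proposition \ref{prop:noPi02}, with Wadge's lemma gives the claim.
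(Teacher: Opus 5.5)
Your argument is correct and is essentially the paper's proof: Theorem~\ref{thm:upperbound} puts every $\mathsf{H}_{\bm a}(\I)$ in $\mathbf{\Pi}^0_3$, Proposition~\ref{prop:noPi02} supplies one that is not in $\mathbf{\Sigma}^0_3$, and Wadge's lemma gives completeness. The paper cites \cite[Theorem 22.10 and Exercise 24.20]{MR1321597} for that last step, and your transfer to $\mathbb{T}$ minus a countable set, which is homeomorphic to $\omega^\omega$, is a valid substitute; as you note, the case split is superfluous. One caveat on your optional sketch of Proposition~\ref{prop:noPi02}: no single continuous encoding can work for all proper ideals at once, so that sketch would fail. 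The reason is that a point $x$ outside $\mathsf{H}_{(n!)}(\I)$ for every proper $\I$ must have $\{n:\|n!x\|\ge\varepsilon\}$ cofinite for some $\varepsilon>0$; otherwise these sets, together with $\mathrm{Fin}$, generate a proper ideal $\I$ with $x\in\mathsf{H}_{(n!)}(\I)$. This is a $\mathbf{\Sigma}^0_2$ condition on $x$, so such a reduction would make the $\mathbf{\Pi}^0_3$-complete set $\{C: C_i \text{ finite for all } i\}$ a $\mathbf{\Pi}^0_2$ set. The paper instead uses that $\I$ is meager (automatic for $F_{\sigma\delta}$ ideals), so that $\mathsf{H}_{(n!)}(\mathrm{Fin})\in\mathscr{H}(\I)$ by \cite[Corollary 2.7]{FKLT26}.
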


In the opposite direction of Proposition \ref{prop:noPi02}, notice that there are $2^{\mathfrak{c}}$ subgroups of $\mathbb{T}$, while $|\mathscr{H}(\I)|\le |\omega^\omega|=\mathfrak{c}$ for every ideal $\I$. This implies that, for every ideal $\I$ on $\omega$ there exists a subgroup of $\mathbb{T}$ which is not $\I$-characterized. 

Thus, one might ask whether there exist ideals which contains, let's say, all Borel subgroups. To answer it, we will need the following result.
\begin{proposition}\label{prop:RKorder}
    Let $\{\I_\xi: \xi <\mathfrak{c}\}$ be a family of \textup{(}not necessarily distinct\textup{)} ideals on $\omega$. Then there exists an ideal $\J$ on $\omega$ such that 
    $\mathscr{H}(\I_\xi)\subseteq \mathscr{H}(\J)$ for each $\xi<\mathfrak{c}$. 
\end{proposition}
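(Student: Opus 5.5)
We build $\J$ as a disjoint sum of copies of the $\I_\xi$, glued so that a single ideal on $\omega$ can simulate each of them, using only $\mathfrak{c}$ many pieces of data. Since each $\mathscr{H}(\I_\xi)$ has at most $\mathfrak{c}$ elements, it suffices to realize, for every pair $(\xi,\bm{a})$ with $\xi<\mathfrak{c}$ and $\bm{a}\in\omega^\omega$, the subgroup $\mathsf{H}_{\bm{a}}(\I_\xi)$ as $\mathsf{H}_{\bm{b}}(\J)$ for some $\bm{b}$. Enumerate all such pairs as $\{(\xi_\alpha,\bm{a}^\alpha):\alpha<\mathfrak{c}\}$.

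The key device is an almost disjoint family $\{A_\alpha:\alpha<\mathfrak{c}\}$ of infinite subsets of $\omega$, for instance branches of $2^{<\omega}$ transported to $\omega$. Let $e_\alpha:\omega\to A_\alpha$ be the increasing enumeration, and transport $\I_{\xi_\alpha}$ to $A_\alpha$ by $\I^\alpha:=\{e_\alpha[S]:S\in\I_{\xi_\alpha}\}$. Define $\J$ to be the ideal generated by $\mathrm{Fin}$ and all sets $B\subseteq\omega$ such that $B\cap A_\alpha\in \I^\alpha$ for every $\alpha$, or, more cautiously, the family of all $B$ with $B\cap A_\alpha\in\I^\alpha$ for all $\alpha<\mathfrak{c}$. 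Both descriptions are closed under subsets and finite unions, and they contain $\mathrm{Fin}$ because each $\I^\alpha$ contains the finite subsets of $A_\alpha$. Properness holds since $\omega\cap A_\alpha=A_\alpha\notin\I^\alpha$.

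The crucial trace property is $\J\restriction A_\alpha=\I^\alpha$. The inclusion "$\subseteq$" is by definition. For "$\supseteq$", take $S\in\I^\alpha$. For $\beta\neq\alpha$ the set $S\cap A_\beta$ is finite by almost disjointness, hence it lies in $\I^\beta$, so $S\in\J$.

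Now fix $\alpha$ and define $\bm{b}$ by $b_{e_\alpha(n)}:=a^\alpha_n$ and $b_m:=0$ for $m\notin A_\alpha$. Then for every $\varepsilon>0$,
\[
\{m:\|b_mx\|\ge\varepsilon\}=e_\alpha\bigl[\{n:\|a^\alpha_nx\|\ge\varepsilon\}\bigr]\subseteq A_\alpha .
\]
By the trace property, this set lies in $\J$ if and only if $\{n:\|a^\alpha_nx\|\ge\varepsilon\}\in\I_{\xi_\alpha}$. Hence $\mathsf{H}_{\bm{b}}(\J)=\mathsf{H}_{\bm{a}^\alpha}(\I_{\xi_\alpha})$, which gives $\mathscr{H}(\I_\xi)\subseteq\mathscr{H}(\J)$ for every $\xi$.

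The main point to check is the trace property, and in particular that sets outside the $A_\alpha$ impose no constraint on it. Almost disjointness handles this cleanly, because finite sets belong to every $\I^\beta$. The other detail is the standing convention that $\J$ contains $\mathrm{Fin}$ and is proper; both were verified above.
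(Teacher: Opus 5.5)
Your proof is correct and follows essentially the same route as the paper. Both use an almost disjoint family of size $\mathfrak{c}$, copies of the ideals transported onto its members via bijections $e$, the trace property ($S\subseteq A$ lies in $\J$ iff its $e$-preimage lies in the corresponding ideal), and the sequence $\bm b$ obtained by placing $\bm a$ on $A$ and zeros elsewhere. The differences are minor: the paper takes $\J$ to be the ideal generated by $\mathrm{Fin}$ and the sets $e_\xi[B]$ with $B\in\I_\xi$, so almost disjointness is spent on properness and on the nontrivial direction of the trace property, whereas you take the largest ideal with the prescribed traces and spend it on the reverse inclusion. Indexing the almost disjoint sets by pairs $(\xi,\bm a)$ rather than by $\xi$ alone is harmless but unnecessary.
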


As a consequence, we provide a strong affirmative answer: 

\begin{corollary}\label{cor:analyticconalaytc}
    Let $\{H_\xi: \xi<\mathfrak{c}\}$ be a family of \textup{(}not necessarily distinct\textup{)} subgroups of $\mathbb{T}$. Then there exists an ideal $\J$ on $\omega$ such that 
    $
    \left\{H_\xi: \xi<\mathfrak{c}\right\}\subseteq \mathscr{H}(\J).
    $ 
    
    In particular, there exists an ideal $\J$ on $\omega$ such that every analytic or coanalytic subgroup of $\mathbb{T}$ is $\J$-characterized.
\end{corollary}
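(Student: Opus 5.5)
The plan is to deduce the statement directly from Proposition~\ref{prop:RKorder}, by attaching to each subgroup $H_\xi$ an ideal that characterizes it. By \cite[Theorem 2.1]{MR2227021} (equivalently \cite[Theorem 2.11]{FKLT26}), every subgroup of $\mathbb{T}$ is $\I$-characterized for some ideal $\I$ on $\omega$. So for each $\xi<\mathfrak{c}$ I fix an ideal $\I_\xi$ on $\omega$ and a sequence $\bm{a}^{(\xi)}\in\omega^\omega$ with $H_\xi=\mathsf{H}_{\bm{a}^{(\xi)}}(\I_\xi)$. This uses the axiom of choice only to select, for each $\xi$, one such pair among the witnesses, which is harmless.

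Next I apply Proposition~\ref{prop:RKorder} to the family $\{\I_\xi:\xi<\mathfrak{c}\}$. It gives an ideal $\J$ on $\omega$ with $\mathscr{H}(\I_\xi)\subseteq\mathscr{H}(\J)$ for every $\xi<\mathfrak{c}$. Since $H_\xi\in\mathscr{H}(\I_\xi)$, it follows that $H_\xi\in\mathscr{H}(\J)$ for all $\xi$, which is the first claim.

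For the second claim, I count subgroups. Every analytic subset of $\mathbb{T}$ is the continuous image of $\omega^\omega$, so there are at most $\mathfrak{c}$ analytic subsets of the Polish space $\mathbb{T}$. Coanalytic sets are complements of analytic sets, so there are also at most $\mathfrak{c}$ of them. Hence the family of analytic or coanalytic subgroups of $\mathbb{T}$ has cardinality at most $\mathfrak{c}$. It is nonempty, since it contains $\{0\}$, so it can be enumerated as $\{H_\xi:\xi<\mathfrak{c}\}$, repeating entries if needed; the statement explicitly allows repetitions. The first part then yields the required $\J$.

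I expect no real obstacle here. All the substance lies in Proposition~\ref{prop:RKorder} and in the fact that every subgroup is characterized by some ideal. The only points to check are the cardinality bound on analytic and coanalytic sets and the handling of repetitions in the indexing, both of which are immediate.
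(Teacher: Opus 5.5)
Your proposal is correct and follows the paper's proof: choose for each $H_\xi$ an ideal $\I_\xi$ with $H_\xi\in\mathscr{H}(\I_\xi)$ via \cite[Theorem 2.11]{FKLT26} (cf.\ \cite[Theorem 2.1]{MR2227021}), apply Proposition~\ref{prop:RKorder}, and then use that there are at most $\mathfrak{c}$ analytic or coanalytic subgroups. Your additional details on the cardinality count and on enumerating with repetitions are exactly what the paper leaves implicit.
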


Lastly, putting together our previous results, we determine when all $\I$-characterized subgroups are actually characterized. 
\begin{proposition}\label{prop:equalitymathscrHFin}
    Let $\I$ be an ideal on $\omega$. Then $\mathscr{H}(\I)=\mathscr{H}(\mathrm{Fin})$ if and only if either $\I=\mathrm{Fin}$ or $\I$ is isomorphic to $\mathrm{Fin}\oplus \mathcal{P}(\omega)$. 
\end{proposition}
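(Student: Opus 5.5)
We prove both directions of Proposition \ref{prop:equalitymathscrHFin} separately, relying on the structural results already stated.

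\emph{Sufficiency.} If $\I=\mathrm{Fin}$ there is nothing to prove. If $\I$ is isomorphic to $\mathrm{Fin}\oplus\mathcal{P}(\omega)$, then by \cite[Theorem 2.6(i)]{FKLT26} we may assume $\I=\{A\subseteq\omega: A\cap E\in\mathrm{Fin}\}$ for some infinite coinfinite $E\subseteq\omega$. We first show $\mathscr{H}(\I)\subseteq\mathscr{H}(\mathrm{Fin})$. Given $\bm a$, enumerate $E$ increasingly as $(e_k)$ and set $b_k:=a_{e_k}$. Then $\mathsf{H}_{\bm a}(\I)=\mathsf{H}_{\bm b}(\mathrm{Fin})$, because the coordinates outside $E$ impose no condition. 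For the reverse inclusion $\mathscr{H}(\mathrm{Fin})\subseteq\mathscr{H}(\I)$, start from $\bm b$ and define $a_{e_k}:=b_k$ and $a_n:=0$ for $n\notin E$. This gives $\mathsf{H}_{\bm a}(\I)=\mathsf{H}_{\bm b}(\mathrm{Fin})$.

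\emph{Necessity.} Assume $\mathscr{H}(\I)=\mathscr{H}(\mathrm{Fin})$. By B\'ir\'o's theorem \cite{MR2388789}, every characterized subgroup is Polishable. Hence every $\I$-characterized subgroup is Polishable, and Theorem \ref{thm:analyticPidealscharacterization} shows that $\I$ is an analytic $P$-ideal. Next, every proper uncountable characterized subgroup is $F_{\sigma\delta}$-complete by Theorem \ref{thm:Fsigmadelcomplete} applied to $\mathrm{Fin}$. In particular no characterized subgroup is an uncountable proper $F_\sigma$ set, and Proposition \ref{prop:locallyquasiconvex} already tells us that all of them are locally quasi-convex. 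The most efficient route is therefore through Proposition \ref{prop:locallyquasiconvex}: since every $\I$-characterized subgroup is characterized, and hence locally quasi-convex Polishable, we get that $\I$ is $\mathrm{Fin}$, or isomorphic to $\mathrm{Fin}\oplus\mathcal{P}(\omega)$, or isomorphic to $\emptyset\otimes\mathrm{Fin}$. The third case is excluded by the strict inclusion $\mathscr H(\mathrm{Fin})\subsetneq\mathscr H(\emptyset\otimes\mathrm{Fin})$ of Corollary \ref{cor:strictFinemptysetFin}. Alternatively, one could use Proposition \ref{prop:locallyquasiconvex2} and argue that the Polish topology of each characterized subgroup is UFSS.

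\emph{Main obstacle.} All the real work lies in Proposition \ref{prop:locallyquasiconvex} and Corollary \ref{cor:strictFinemptysetFin}, which we take as given. If one wanted to avoid them, one would have to show directly that some $\emptyset\otimes\mathrm{Fin}$-characterized subgroup is not characterized. For instance, one could take $\bm a$ running through $(b^n)$ along each column $\{i\}\times\omega$ with bases $b=p_i$ ranging over the primes. The resulting subgroup is the sum of all Pr\"ufer groups, which is countable and hence characterized, so this choice is not useful. A better candidate is a countable union of increasing characterized subgroups that fails to be $F_{\sigma\delta}$-complete or countable. Finding such a witness is the delicate step, so deferring to Corollary \ref{cor:strictFinemptysetFin} is the plan.
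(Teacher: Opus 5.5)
Your proof is correct. Your route differs from the paper's mainly in packaging.

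For sufficiency, you re-index along $E$ directly. This is a self-contained special case of the Rudin--Keisler invariance \cite[Theorem 2.6(i)]{FKLT26} that the paper cites. You also don't need that citation to put $\I$ in the form $\{A\subseteq\omega: A\cap E\in\mathrm{Fin}\}$, since any copy of $\mathrm{Fin}\oplus\mathcal{P}(\omega)$ on $\omega$ already has this form.

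For necessity, the paper applies Proposition \ref{prop:locallyquasiconvex2} in one step: every characterized subgroup is locally quasi-convex Polishable with a UFSS topology. You instead apply Proposition \ref{prop:locallyquasiconvex} and then exclude copies of $\emptyset\otimes\mathrm{Fin}$ via Corollary \ref{cor:strictFinemptysetFin}, using that $\mathscr{H}$ is invariant under isomorphism. This is legitimate and not circular. However, the strictness in Corollary \ref{cor:strictFinemptysetFin} is itself derived from Proposition \ref{prop:locallyquasiconvex2}, namely from the non-UFSS $\emptyset\otimes\mathrm{Fin}$-characterized subgroup. So both routes rest on the same ingredient, and the paper's is just shorter. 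Your appeals to Bir\'o's theorem and Theorem \ref{thm:Fsigmadelcomplete} are never used.

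Your closing remark contains an error. For $\bm b=(b_{i,j})$ one has $\mathsf{H}_{\bm b}(\emptyset\otimes\mathrm{Fin})=\bigcap_{i\in\omega}\mathsf{H}_{(b_{i,j}:\,j\in\omega)}(\mathrm{Fin})$. This is a countable \emph{intersection} of characterized subgroups, not a sum or a union. Your Pr\"ufer example therefore gives $\bigcap_i\mathbb{Z}(p_i^\infty)=\{0\}$, not the sum of the Pr\"ufer groups. A direct witness should be sought among countable intersections of characterized subgroups; the subgroup built in the proof of Proposition \ref{prop:locallyquasiconvex2} is exactly of this form.
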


As a consequence, we improve on \cite[Corollary 2.8]{FKLT26}:
\begin{corollary}\label{cor:lastone}
    Let $\I\neq \mathrm{Fin}$ be a meager ideal on $\omega$ which is not isomorphic to $\mathrm{Fin}\oplus \mathcal{P}(\omega)$. Then $\mathscr{H}(\mathrm{Fin})\subsetneq \mathscr{H}(\I)$. 
\end{corollary}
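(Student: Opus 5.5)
The plan is to combine Proposition \ref{prop:equalitymathscrHFin} with an inclusion result for meager ideals. There are two things to show: $\mathscr{H}(\mathrm{Fin})\subseteq \mathscr{H}(\I)$, and that the inclusion is strict.

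Strictness should be the easy half. We have $\I\neq\mathrm{Fin}$ and $\I$ is not isomorphic to $\mathrm{Fin}\oplus\mathcal{P}(\omega)$. By Proposition \ref{prop:equalitymathscrHFin}, this gives $\mathscr{H}(\I)\neq\mathscr{H}(\mathrm{Fin})$. So once the inclusion is known, it is automatically proper.

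For the inclusion I would invoke \cite[Corollary 2.8]{FKLT26}, which is stated as being improved here. Presumably it gives $\mathscr{H}(\mathrm{Fin})\subseteq\mathscr{H}(\I)$ for meager ideals, perhaps with strictness under stronger hypotheses. If that citation only gives the inclusion, we are done by the previous paragraph.

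If instead I have to prove the inclusion directly, I would use Talagrand's characterization of meager ideals. It provides a partition of $\omega$ into consecutive finite intervals $(F_k:k\in\omega)$ such that every $A\in\I$ contains only finitely many of the $F_k$. Given a characterized subgroup $H=\mathsf{H}_{\bm a}(\mathrm{Fin})$, I define $\bm b$ by setting $b_n:=a_k$ for $n\in F_k$, so that $\bm a$ is spread along the blocks. I then check $\mathsf{H}_{\bm b}(\I)=H$ as follows.
\begin{itemize}
\item If $a_kx\to 0$, then the set $\{n:\|b_nx\|\ge\varepsilon\}$ is contained in a finite union of blocks, so it is finite and hence in $\I$.
\item Conversely, if $\|a_kx\|\ge\varepsilon$ for infinitely many $k$, then the set $\{n:\|b_nx\|\ge\varepsilon\}$ contains infinitely many whole blocks $F_k$. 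By the choice of the partition it is not in $\I$, so $x\notin\mathsf{H}_{\bm b}(\I)$.
\end{itemize}
The main obstacle is making sure Talagrand's partition can be chosen so that sets containing infinitely many blocks lie outside $\I$. This is exactly the content of Talagrand's theorem for meager ideals, so I would cite \cite{MR1711328} for it.
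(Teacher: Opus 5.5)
Your proposal is correct and follows the paper's proof: strictness comes from Proposition \ref{prop:equalitymathscrHFin}, and the inclusion $\mathscr{H}(\mathrm{Fin})\subseteq\mathscr{H}(\I)$ for meager $\I$ is exactly what the paper cites, though as \cite[Corollary 2.7]{FKLT26} rather than Corollary 2.8. Your Talagrand block construction $b_n:=a_k$ for $n\in F_k$ is a valid self-contained proof of that inclusion (it is just the reduction $\mathrm{Fin}\le_{\mathrm{RB}}\I$ via the finite-to-one map collapsing each $F_k$ to $k$), so you do not need to rely on the speculative citation.
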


Finally, in Figure \ref{fig:schematic-characterized-polishable} below, we provide a picture summarizing properties of proper $\I$-characterized subgroups of $\mathbb{T}$ for the ideal $\mathrm{Fin}$, $\emptyset\otimes \mathrm{Fin}$, and $\mathcal{Z}$. More precisely: 
\begin{enumerate}[label={\rm (\roman*)}]
\item The dark gray area is empty by Theorem \ref{thm:Fsigmadelcomplete}; the light gray area does not contain any $\I$-characterized subgroup with $\I$ analytic $P$-ideal by Corollary \ref{corollary:lupini};
\item Every countable subgroup is characterized \cite{MR1877772};
\item $\mathscr H(\mathrm{Fin})
    \subsetneq
    \mathscr H(\emptyset\otimes\mathrm{Fin})\subsetneq \mathscr{H}(\mathcal{Z})$ by Corollary \ref{cor:strictFinemptysetFin}; 
\item Each $\emptyset\otimes\mathrm{Fin}$-characterized subgroup is locally quasi-convex Polishable thanks to Proposition \ref{prop:locallyquasiconvex}; each $\mathcal{Z}$-characterized subgroup is Polishable by Theorem \ref{thm:analyticPidealscharacterization};
\item \textbf{A}: $F_\sigma$ non-Polishable subgroup in Corollary \ref{cor:birocounterexample};
\item \textbf{B}: subgroup $G_2$ in Example \ref{example:gabriy}; 
\item \textbf{C}: $\mathsf{H}_{(n!)}(\mathrm{Fin})$ is $F_{\sigma\delta}$-complete by Remark \ref{rmk:examplecomplete};
\item \textbf{D}: subgroup defined in the proof of Proposition \ref{prop:locallyquasiconvex2}; 
\item \textbf{E}: the existence of a $\mathcal{Z}$-characterized subgroup which is $F_{\sigma\delta}$-complete but not locally quasi-convex Polishable follows by Theorem \ref{thm:Fsigmadelcomplete} and Proposition \ref{prop:locallyquasiconvex};
\item \textbf{F}: $F_{\sigma\delta}$-complete non-Polishable subgroup in Proposition \ref{prop:FsigmadeltacompletenotPolishable};
\item \textbf{G}: see Theorem \ref{thm:strangecounterexample} and Remark \ref{rmk:strangenotLQCksdjhg};
\item \textbf{H}: see Theorem \ref{thm:strangecounterexample} and Remark \ref{rmk:strangeLQC}.
\end{enumerate}

\begin{figure}[ht]
\centering
\begin{tikzpicture}[
    scale=1.2,
    every path/.style={line width=0.75pt},
    region/.style={draw}
]


\begin{scope}
\fill[gray!16] (-0.10,0.00) ellipse (3.55 and 5.15);
\end{scope}

\fill[white, rotate around={-17:(-2.55,0.75)}]
    (-2.47,0.89) ellipse (3.15 and 0.90);

\fill[white, rotate around={26:(-2.20,-1.45)}]
    (-2.20,-1.7) ellipse (2.65 and 0.50);

\fill[gray!30]
    (-0.10,-0.30) ellipse (3.45 and 2.25);

\fill[white]
    (3.5,-0.35) ellipse (2.70 and 1.45);
    
\fill[white]
    (0.05,0.00) ellipse (0.52 and 0.27);


\draw[region, black]
    (0.00,-0.15) ellipse (6.70 and 3.95);
\node at (-5,2.8) {\footnotesize $\mathbf{\Pi}^0_3$};

\draw[region, green!35!black]
    (-0.10,0.00) ellipse (3.55 and 5.15);
\node at (1.8,5) {\footnotesize \textcolor{green!35!black}{Polishable}};

\draw[region, red!80!orange]
    (-.1,-0.10) ellipse (2.7 and 4.6);
\node at (3.1,4) {\footnotesize \textcolor{red!80!orange}{locally quasi-convex Polishable}};

\draw[region, blue, rotate around={-17:(-2.55,0.75)}]
    (-2.4,0.89) ellipse (3.15 and 0.9);
\node at (-5,.75) {\footnotesize \textcolor{blue}{$\mathbf{\Sigma}^0_2$}};

\draw[region, yellow!30!orange, rotate around={26:(-2.20,-1.45)}]
    (-2.20,-1.7) ellipse (2.4 and 0.50);
\node at (-4.7,-1.8) {\footnotesize \textcolor{yellow!30!orange}{$\mathsf{D}(\mathbf{\Pi}^0_2)$-complete}};

\draw[region, purple]
    (3.5,-0.35) ellipse (2.70 and 1.45);
\node at (4.8,1.2) {\footnotesize \textcolor{purple}{$\mathbf{\Pi}^0_3$-complete}};

\draw[region, black]
    (-0.10,-0.30) ellipse (3.45 and 2.25);
\node at (.7,.98) {\tiny {$\mathscr{H}(\mathrm{Fin})$}};
\node at (-0.6,2.1) {\tiny {$\mathscr{H}(\mathcal{Z})$}};

\draw[region, green!80!black]
    (-0.05,-0.20) ellipse (2.55 and 1.60);
\node at (0.1,1.5) {\tiny \textcolor{green!80!black}{$\mathscr{H}(\emptyset\otimes \mathrm{Fin})$}};

\draw[region, black]
    (0.05,-0.10) ellipse (1.65 and 1.00);


\draw[region, red]
    (0.05,0.00) ellipse (0.52 and 0.27);
\node at (0.05,0.00) {\tiny \textcolor{red}{countable}};

\node at (-4,1.3) {\tiny {$\textbf{A}$}};
\node at (-3.2,1.2) {\tiny {$\textbf{B}$}};
\node at (1.3,-.13) {\tiny {$\textbf{C}$}};
\node at (2,-.2) {\tiny {$\textbf{D}$}};
\node at (2.9,-.27) {\tiny {$\textbf{E}$}};
\node at (4.5,-.4) {\tiny {$\textbf{F}$}};
\node at (-3,-2.2) {\tiny {$\textbf{G}$}};
\node at (-2.35,-2.18) {\tiny {$\textbf{H}$}};

\end{tikzpicture}
\caption{Proper $\I$-characterized subgroups of $\mathbb{T}$ for ideals  $\mathrm{Fin}, \emptyset \otimes \mathrm{Fin}, \mathcal{Z}$.}
\label{fig:schematic-characterized-polishable}
\end{figure}

\section{Proofs of Results from Section \ref{sec:mainresults}}\label{sec:proofsmain}

\begin{proof}
[Proof of Theorem \ref{thm:increasingSequenceSameFin}]
As it follows by \cite{Schoen} (cf. \cite[Theorem D]{MR4932230} and also \cite[Theorem 7.8]{MR419394} for a textbook exposition) 
there exists a sequence $\bm{b} \in \omega^\omega$ such that 
$$
H=\mathsf{H}_{\bm{b}}(\mathrm{Fin})
\quad \text{ and }\quad 
\mathrm{supp}(\bm{b})\in \mathrm{Fin}^+.
$$
Define 
$
P:=\{q\in\omega\setminus \{0\}: \{n \in \omega: b_n=q\} \notin \mathrm{Fin}\}
$ 
and 
$
R:=\{q\in\omega\setminus \{0\}: \emptyset \neq \{n \in \omega: b_n=q\} \in \mathrm{Fin}\}.
$ 
Since the support of \(\bm b\) is infinite, at least one of the sets \(P\) and \(R\) is nonempty. Moreover, if \(P=\emptyset\), then \(R\) is infinite. At this point, set 
$$
P^\natural:=\{x \in \mathbb{T}: \forall q \in P, qx=0\}
$$ 
and 
$$
Q^\natural:=\{x \in \mathbb{T}: \forall \varepsilon>0, \{q\in R:\|qx\|\geq\varepsilon\}\in\mathrm{Fin}\}.
$$ 

\begin{claim}\label{claim:HsplitPnaturalQnatural}
$H=P^\natural\cap Q^\natural$. 
\end{claim}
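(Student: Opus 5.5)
We prove the two inclusions separately, working directly from the definition of $\mathsf{H}_{\bm b}(\mathrm{Fin})$. We also use the observation that every $n$ with $b_n=0$ is harmless, since $\|0\cdot x\|=0$. Thus $x\in H$ if and only if $\|b_nx\|\to 0$ along the infinite set $\mathrm{supp}(\bm b)$. We partition this support into two pieces:
\[
N_P:=\{n: b_n\in P\}, \qquad N_R:=\{n: b_n\in R\}.
\]

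\emph{Inclusion $H\subseteq P^\natural\cap Q^\natural$.} Fix $x\in H$.
\begin{itemize}
\item For $q\in P$, the set $\{n: b_n=q\}$ is infinite. Hence the constant value $\|qx\|$ is a limit of a subsequence of $(\|b_nx\|)$, so $\|qx\|=0$, that is, $qx=0$. This gives $x\in P^\natural$.
\item For $\varepsilon>0$, suppose infinitely many $q\in R$ satisfied $\|qx\|\ge\varepsilon$. Each such $q$ occurs as $b_n$ for at least one $n$. Distinct $q$'s give distinct indices $n$, so we obtain infinitely many $n$ with $\|b_nx\|\ge\varepsilon$, contradicting $x\in H$. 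Hence $x\in Q^\natural$.
\end{itemize}

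\emph{Inclusion $P^\natural\cap Q^\natural\subseteq H$.} Fix $x\in P^\natural\cap Q^\natural$ and $\varepsilon>0$. We must show that $\{n:\|b_nx\|\ge\varepsilon\}$ is finite.
\begin{itemize}
\item Indices in $N_P$ contribute nothing, because $b_nx=0$ there.
\item For indices in $N_R$, the set $F:=\{q\in R:\|qx\|\ge\varepsilon\}$ is finite by the definition of $Q^\natural$. Each fiber $\{n:b_n=q\}$ with $q\in R$ is finite by the definition of $R$. So the bad indices lie in $\bigcup_{q\in F}\{n:b_n=q\}$, a finite union of finite sets.
\end{itemize}
Therefore $x\in H$.

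The only delicate point is bookkeeping. We must note that $P$ and $R$, together with the zero values, exhaust the range of $\bm b$, so that $N_P\cup N_R$ is exactly $\mathrm{supp}(\bm b)$. We must also use finiteness of the fibers over $R$ in the second inclusion. Neither step uses the properness of $H$, nor the fact that $\mathrm{supp}(\bm b)$ is infinite. Those hypotheses matter only later, when the subgroups $P^\natural$ and $Q^\natural$ are used to build the increasing sequence $\bm a$.
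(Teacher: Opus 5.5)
Your proof is correct and follows the paper's argument essentially step for step: a subsequence argument gives $qx=0$ for $q\in P$; distinct witnesses $n_q$ give $x\in Q^\natural$; and for the reverse inclusion the bad indices are covered by the finitely many finite fibers over $\{q\in R:\|qx\|\ge\varepsilon\}$. The only cosmetic difference is that the paper treats the case $P=\emptyset$ separately, while your argument covers it vacuously.
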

\begin{proof} 
First, suppose that \(x\in H\), so that $\lim_n b_nx=0$. If $P=\emptyset$ then $H\subseteq \mathbb{T}=P^\natural$. Otherwise, pick \(q\in P\). Then there exists $S_q \in \mathrm{Fin}^+$ such that $b_n=q$ for all $n \in S_q$. In particular, we have $\lim_{n\in S_q} b_nx=0$ so that $qx=0$, i.e., $H\subseteq P^\natural$. 

Now fix \(\varepsilon>0\) and suppose for the sake of contradiction that $T:=\{q\in R:\|qx\|\geq\varepsilon\}$ is infinite. For each $q \in T$, pick $n_q \in \omega$ such that $b_{n_q}=q$. Since the integers $\{n_q: q \in T\}$ are distinct, we have $\|b_{n_q}x\|\ge \varepsilon$ for all $q \in T$. This contradicts the standing hypothesis $\lim_n b_nx=0$. Therefore also $H\subseteq Q^\natural$. 

Conversely, we need to show that $P^\natural\cap Q^\natural\subseteq H$. To this aim, pick $x \in P^\natural\cap Q^\natural$. Fix \(\varepsilon>0\). To complete the proof, it will be enough to show that  $A_\varepsilon:=\{n\in\omega:\|b_nx\|\geq\varepsilon\}$ is finite.  
In fact, if \(n\in A_\varepsilon\), then \(b_n\neq0\), so that $b_n \in P\cup R$. If $b_n \in P$ then $b_nx=0$, which is impossible for $n \in A_\varepsilon$. Hence 
$
A_\varepsilon
\subseteq
\{n\in\omega:b_n\in R\text{ and }\|b_nx\|\geq\varepsilon\}.
$ 
By assumption, the set $F_\varepsilon:=\{q\in R:\|qx\|\geq\varepsilon\}$ is finite. 
Since each \(q\in R\) occurs only finitely many times among the values \(b_n\), we get
\[
A_\varepsilon
\subseteq
\bigcup_{q\in F_\varepsilon}\{n\in\omega: b_n =q\}\in\mathrm{Fin}.
\]
Therefore $x \in H$. This proves the converse inequality.
\end{proof}

\begin{claim}\label{claim:claimPnonempty}
If $P\neq \emptyset$ and $g:=\mathrm{gcd}(P)$, then 
$P^\natural=\{x \in \mathbb{T}: gx=0\}$. 
\end{claim}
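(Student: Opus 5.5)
The claim is a statement about torsion in $\mathbb{T}$, and I would prove it by showing both inclusions directly, using Bézout's identity. Here $P\subseteq\omega\setminus\{0\}$ is a nonempty set of positive integers. Its gcd $g$ is a well-defined positive integer, and it is attained by a finite subset: the gcds of the initial segments $P\cap m$ form a non-increasing sequence of positive integers, so they eventually stabilize. Choose $q_1,\dots,q_k\in P$ with $\gcd(q_1,\dots,q_k)=g$. Bézout's identity then gives integers $c_1,\dots,c_k$ with $\sum_i c_iq_i=g$.

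For the inclusion $P^\natural\subseteq\{x: gx=0\}$, take $x\in P^\natural$. Then $q_ix=0$ in $\mathbb{T}$ for every $i$. Hence
$$gx=\sum_i c_i(q_ix)=0,$$
since $\mathbb{T}$ is a $\mathbb{Z}$-module.

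For the reverse inclusion, take $x$ with $gx=0$. Every $q\in P$ is a multiple of $g$, say $q=tg$ with $t\in\mathbb{Z}$. Then $qx=t(gx)=0$, so $x\in P^\natural$.

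There is no real obstacle here. The only point needing a word of care is the reduction from a possibly infinite set $P$ to a finite subset with the same gcd, and this is handled by the stabilization argument above.
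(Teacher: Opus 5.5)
Your proof is correct and follows essentially the same route as the paper: one inclusion because $g$ divides every $q\in P$, the other by writing $g$ as an integer combination of finitely many elements of $P$ (Bézout). Your stabilization argument for reducing to a finite subset of $P$ fills in a step the paper leaves implicit.
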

\begin{proof}
    Suppose that $P\neq \emptyset$ and fix $x \in \mathbb{T}$. If $gx=0$ then $qx=0$ for all $q \in P$ since $g$ divides every $q \in P$. To prove the converse, since $g:=\mathrm{gcd}(P)$, it follows that there exist \(q_0,\ldots,q_{r-1}\in P\) and \(c_0,\ldots,c_{r-1}\in\mathbb Z\) such that $g=c_0q_0+\cdots+c_{r-1}q_{r-1}$. Hence, if $x \in P^\natural$ then $q_ix=0$ for all $i<r$, which implies that $gx=0$. 
\end{proof}

Now, we construct the strictly increasing sequence \(\bm c\) of positive integers such that 
\begin{equation}\label{eq:representationwithsequencec}
H=\mathsf{H}_{\bm{c}}(\mathrm{Fin}). 
\end{equation} 

\medskip

\textsc{Case 1}: \(P=\emptyset\). Then \(R\) is infinite. Let $\bm c=(c_n:n\in\omega)$ be the strictly increasing enumeration of \(R\). For every \(x\in\mathbb T\), the convergence $\lim_nc_nx=0$ is equivalent to saying that, for every \(\varepsilon>0\), there are only finitely many \(q\in R\) for which \(\|qx\|\geq\varepsilon\). Since \(P=\emptyset\), it follows by Claim \ref{claim:HsplitPnaturalQnatural} that 
\eqref{eq:representationwithsequencec} holds. 
 
\medskip

\textsc{Case 2}: \(P\neq \emptyset\). Set $g:=\gcd(P)$ and let $\bm c=(c_n:n\in\omega)$ be the strictly increasing enumeration of the infinite set of positive integers 
\[
R\cup\{g,2g,3g,\ldots\}.
\]

To show \eqref{eq:representationwithsequencec}, fix \(x\in\mathbb T\). Note that the condition $x \in \mathsf H_{\bm c}(\mathrm{Fin})$ (i.e., $\lim_n c_nx=0$), is equivalent to 
$$
\forall \varepsilon>0, \qquad
\{q\in R:\|qx\|\geq\varepsilon\}\in\mathrm{Fin}
\quad \text{ and }\quad 
\{k\in \omega :\|kgx\|\geq\varepsilon\}\in\mathrm{Fin}.
$$
With the above notation, the first condition is precisely $x \in Q^\natural$. In addition, the second condition is equivalent to $\lim_k kgx=0$, that is, $gx \in \mathsf{H}_{\bm{u}}(\mathrm{Fin})$. 
It follows that $gx=\lim_k((k+1)gx)-\lim_k kgx=0$. 
In turn, the latter is equivalent to $x \in P^\natural$ by Claim \ref{claim:claimPnonempty}. 
To sum up, $x \in \mathsf H_{\bm c}(\mathrm{Fin})$ if and only if $x \in P^\natural \cap Q^\natural$. It follows by Claim \ref{claim:HsplitPnaturalQnatural} that \eqref{eq:representationwithsequencec} holds. 

\medskip

To complete the proof, fix $\bm{r}\in \mathbb{Z}^\omega$ and let $\bm{c}$ be a strictly increasing sequence of positive integers  satisfying \eqref{eq:representationwithsequencec}. Since $\lim_n c_n=\infty$, it is possible to choose inductively a strictly increasing
sequence \((m_k:k\in\omega)\) in \(\omega\) such that, for all $k \in \omega$: 
 \begin{enumerate}[label={\rm (\roman*)}]
\item $c_{m_k}\ge r_{2k}$, 
\item $c_{m_k}+c_k\ge r_{2k+1}$, and  
\item $c_{m_{k+1}}>c_{m_{k}}+c_{k}$. 
 \end{enumerate}

Finally, define the sequence \(\bm a=(a_n:n\in\omega)\) by
\[
\forall k \in \omega, \qquad 
a_{2k}:=c_{m_k}
\quad\text{and}\quad
a_{2k+1}:=c_{m_k}+c_k
\]
By construction, \(a_n\ge r_n\) for all \(n\in\omega\). In addition, $a_0=c_{m_0}>0$, and $\bm{a}$ is strictly increasing. 
Taking into account \eqref{eq:representationwithsequencec}, it will be enough to show that 
$$
\mathsf H_{\bm a}(\mathrm{Fin})=\mathsf H_{\bm c}(\mathrm{Fin}).
$$
Indeed, if \(x\in\mathsf H_{\bm c}(\mathrm{Fin})\), then $\lim_k c_kx=0$ and
\(\lim_k c_{m_k}x=0\). Therefore \(\lim_k a_{2k}x=0\) and
\(\lim_k a_{2k+1}x=\lim_k(c_{m_k}+c_k)x=0\), so that
\(x\in\mathsf H_{\bm a}(\mathrm{Fin})\). Hence $\mathsf H_{\bm c}(\mathrm{Fin})\subseteq \mathsf H_{\bm a}(\mathrm{Fin})$.

Conversely, pick \(x\in\mathsf H_{\bm a}(\mathrm{Fin})\), so that 
\(\lim_k a_{2k}x=0\) and \(\lim_k a_{2k+1}x=0\). Since $c_k=a_{2k+1}-a_{2k}$ 
for every $k \in \omega$, we get \(\lim_k c_kx=0\). Hence also $\mathsf H_{\bm a}(\mathrm{Fin})\subseteq \mathsf H_{\bm c}(\mathrm{Fin})$. 

As claimed, we conclude that $H=\mathsf{H}_{\bm{c}}(\mathrm{Fin})=\mathsf{H}_{\bm{a}}(\mathrm{Fin})$.
\end{proof}

\medskip

\begin{proof}
    [Proof of Theorem \ref{thm:upperbound}]
    Fix $\bm{a} \in \omega^\omega$. For every $k \in \omega$ define $\Psi_k:\mathbb T\to 2^\omega$ by 
    $$
    \forall x \in \mathbb{T}, \quad 
    \Psi_k(x):=\{n\in\omega:\|a_nx\|\ge 2^{-k}\}. 
    $$
    \begin{claim}\label{claim:1Psik}
        Each $\Psi_k$ has at most countably many discontinuities. 
    \end{claim}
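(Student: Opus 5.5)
Fix $k$ and identify $2^\omega$ with $\mathcal{P}(\omega)$ carrying the product topology, so that $\Psi_k$ is continuous at $x$ if and only if each coordinate map $x\mapsto \mathbf{1}[\|a_nx\|\ge 2^{-k}]$ is continuous at $x$. It therefore suffices to show that each coordinate map has only finitely many (or at most countably many) discontinuities; since a countable union of countable sets is countable, the claim then follows.

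Now fix $n\in\omega$. If $a_n=0$, then $\|a_nx\|=0<2^{-k}$ for all $x$, so the coordinate is identically $0$ and hence continuous. Otherwise, the map $x\mapsto \|a_nx\|$ is continuous on $\mathbb{T}$. Consequently the set $\{x:\|a_nx\|\ge 2^{-k}\}$ is closed and the set $\{x:\|a_nx\|> 2^{-k}\}$ is open. The indicator of the closed set is therefore continuous at every point of its interior and of its exterior. The only possible discontinuities lie at points with $\|a_nx\|=2^{-k}$, that is, at points where $a_nx\equiv \pm 2^{-k} \pmod 1$. There are at most $2|a_n|$ such points (at most $|a_n|$ solutions for each sign), so the set $D_n$ of discontinuities of the $n$-th coordinate is finite.

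Finally, if $x\notin\bigcup_n D_n$, then every coordinate is continuous at $x$, so $\Psi_k$ is continuous at $x$. Hence the set of discontinuities of $\Psi_k$ is contained in $\bigcup_n D_n$, which is countable. No step is a genuine obstacle. The only point needing care is the reduction to coordinates, which is valid because the product topology on $2^\omega$ makes a map continuous at a point if and only if all of its coordinates are continuous there.
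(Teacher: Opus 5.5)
Your proof is correct and follows the paper's argument: you reduce to coordinates, observe that each coordinate is the indicator of the closed set $\{x:\|a_nx\|\ge 2^{-k}\}$ and can only be discontinuous on the finite set $\{x:\|a_nx\|=2^{-k}\}$, and then take the countable union over $n$. One small point: your ``that is'' identifying this set with $\{x: a_nx\equiv\pm 2^{-k}\}$ is only an inclusion (for $k=0$ the first set is empty but the second is not), and the inclusion is all your finiteness count needs.
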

    \begin{proof}
        Fix $k \in \omega$. For each $n \in \omega$, the \(n\)-th coordinate of \(\Psi_k\) is the
characteristic function of the closed set 
 $
\{x\in\mathbb T:\|a_nx\|\ge 2^{-k}\}.
$ 
This coordinate can be discontinuous only at points \(x\in\mathbb T\) satisfying 
$ 
\|a_nx\|=2^{-k},
$ 
hence at finitely many points. 
Therefore the set of discontinuities of \(\Psi_k\) is contained in a countable union of finite
sets. 
\end{proof}

\begin{claim}\label{claim:Pi0m}
For each $k \in \omega$, we have $\Psi_k^{-1}[\I]\in\mathbf{\Pi}^0_\xi(\mathbb{T})$.
\end{claim}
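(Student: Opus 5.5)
We need to show $\Psi_k^{-1}[\I]\in\mathbf{\Pi}^0_\xi(\mathbb{T})$ for $\xi\ge 3$, using Claim \ref{claim:1Psik}. Write $D\subseteq\mathbb{T}$ for the countable set of discontinuities of $\Psi_k$. We then have
$$
\Psi_k^{-1}[\I]=\bigl(\Psi_k^{-1}[\I]\cap (\mathbb{T}\setminus D)\bigr)\cup \bigl(\Psi_k^{-1}[\I]\cap D\bigr).
$$
The second piece is a countable subset of $\mathbb{T}$, hence $F_\sigma$, so it lies in $\mathbf{\Pi}^0_3\subseteq\mathbf{\Pi}^0_\xi$. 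Since $\mathbf{\Pi}^0_\xi$ is closed under finite unions, it is enough to handle the first piece.

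The restriction $\Psi_k\upharpoonright(\mathbb{T}\setminus D)$ is continuous. Hence $\Psi_k^{-1}[\I]\cap(\mathbb{T}\setminus D)$ is the preimage of the $\mathbf{\Pi}^0_\xi$ set $\I$ under a continuous map on the subspace $\mathbb{T}\setminus D$, so it is $\mathbf{\Pi}^0_\xi$ relative to $\mathbb{T}\setminus D$. Any set that is $\mathbf{\Pi}^0_\xi$ relative to a subspace $Y$ is of the form $B\cap Y$ with $B\in\mathbf{\Pi}^0_\xi(\mathbb{T})$. Here $Y=\mathbb{T}\setminus D$ is $G_\delta$, so $Y\in\mathbf{\Pi}^0_2\subseteq\mathbf{\Pi}^0_\xi$, and therefore $B\cap Y\in\mathbf{\Pi}^0_\xi(\mathbb{T})$.

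The point that needs care is exactly why $\xi\ge 3$ is assumed. Both the countable set $\Psi_k^{-1}[\I]\cap D$ and the cocountable set $\mathbb{T}\setminus D$ must belong to $\mathbf{\Pi}^0_\xi$. The first is only guaranteed to be $F_\sigma$, so it lies in $\mathbf{\Pi}^0_3$ but not necessarily in $\mathbf{\Pi}^0_2$, and this forces $\xi\ge3$.

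An alternative route is to write $\Psi_k=\lim_N\Psi_k^{(N)}$ coordinatewise. Each coordinate is then a characteristic function of a closed set, which gives Baire class one. But Baire class one maps only pull $\mathbf{\Pi}^0_\xi$ back to $\mathbf{\Pi}^0_{\xi+1}$, so this loses a level. The countable-discontinuity decomposition above is the argument I would use.
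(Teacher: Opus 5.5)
Your proof is correct and is essentially the paper's argument: restrict $\Psi_k$ to the cocountable $G_\delta$ set $\mathbb{T}\setminus D$ of continuity points, write the pulled-back set as $B\cap(\mathbb{T}\setminus D)$ with $B\in\mathbf{\Pi}^0_\xi(\mathbb{T})$, and add back the countable piece $\Psi_k^{-1}[\I]\cap D\in\mathbf{\Sigma}^0_2\subseteq\mathbf{\Pi}^0_\xi$, which is exactly where $\xi\ge 3$ is used. One small correction to your closing aside, which your proof does not rely on: Baire class one preimages lose a level only for finite $\xi$, and for $\xi\ge\omega$ they preserve $\mathbf{\Pi}^0_\xi$.
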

\begin{proof}
Fix \(k\in\omega\). 
Let \(D_k\) be the set of discontinuities of \(\Psi_k\), which is countable by Claim \ref{claim:1Psik}. 
Define $Z_k:=\mathbb T\setminus D_k$. 
Then \(Z_k\) is a \(G_\delta\) subset of \(\mathbb T\), and
\(\Psi_k\restriction Z_k\) is continuous. Since
\(\mathcal I\in\mathbf{\Pi}^0_\xi(2^\omega)\), we have 
$
(\Psi_k\restriction Z_k)^{-1}[\mathcal I]\in
\mathbf{\Pi}^0_\xi(Z_k).
$ 
Hence, there exists
\(C_k\in\mathbf{\Pi}^0_\xi(\mathbb T)\) such that 
$
(\Psi_k\restriction Z_k)^{-1}[\mathcal I]=C_k\cap Z_k.
$ 
This implies that 
\[
\Psi_k^{-1}[\mathcal I]
=
(C_k\cap Z_k)\cup \bigl(D_k\cap \Psi_k^{-1}[\mathcal I]\bigr).
\]
The first term belongs to \(\mathbf{\Pi}^0_\xi(\mathbb T)\), while the second one
is countable, hence belongs to \(\mathbf{\Sigma}^0_2(\mathbb T)\subseteq
\mathbf{\Pi}^0_\xi(\mathbb T)\), because \(\xi\ge 3\). This proves the claim because 
\(\mathbf{\Pi}^0_\xi(\mathbb{T})\) is closed under finite unions.
\end{proof}

Taking into account the identity $\mathsf{H}_{\bm{a}}(\I)=\bigcap_k \Psi_k^{-1}[\I]$, the conclusion follows by Claim \ref{claim:Pi0m} and the fact that $\mathbf{\Pi}^0_\xi(\mathbb{T})$ is closed under countable intersections. 
\end{proof}

\medskip

\begin{proof}
    [Proof of Theorem \ref{thm:wadge}]
    We shall construct a
    sequence $\bm a\in\omega^\omega$ and a continuous map
    $\Phi:2^\omega\to\mathbb T$ such that
    \begin{equation}\label{eq:claimwadgefirstresult}
    \Phi^{-1}[\mathsf H_{\bm a}(\mathcal I)]=\mathcal I.
    \end{equation}

    To this aim, for each $n \in \omega$ define 
    \begin{equation}\label{eq:defawadge}
    a_n:=2^{2^{n+2}-1} 
    \quad \text{ and }\quad q_n:=2^{2^{n+2}}. 
    \end{equation}
    Observe that, for all $m,n \in \omega$ with $m<n$, we have $q_n/(2q_m) \in \omega$, and that
    $$
    \delta_n:=\sum_{k>n}\frac{q_n}{2q_k}
    =\frac{1}{2}\sum_{j=1}^\infty\frac{1}{2^{2^{n+2}(2^j-1)}}
    \le \frac{1}{2}\sum_{j=1}^\infty \frac{1}{2^{2^{n+2}j}}
    $$
    for all $n \in \omega$. Hence $\delta_n<\nicefrac{1}{8}$ for all $n \in \omega$, and $\lim_n \delta_n=0$. 

    At this point, define the map $\Phi: 2^\omega\to \mathbb{T}$ by 
    \begin{equation}\label{eq:definitionPhi}
    \forall A\subseteq \omega, \quad 
    \Phi(A):=\sum_{m \in A}\frac{1}{q_m}+\mathbb{Z}. 
    \end{equation}
    Note that each $\Phi(A)$ is well defined since the above series converges. In addition, $\Phi$ is continuous: indeed, the tails $\sum_{m\ge N, m \in A}\frac{1}{q_m}$ converge to $0$ uniformly in $A\subseteq \omega$, while the finite sums $\sum_{m< N, m \in A}\frac{1}{q_m}$ depend only on the first $N$ coordinates of $A$. 
    Finally, we claim that \eqref{eq:claimwadgefirstresult} holds, that is, for each $A\subseteq \omega$ we have $A \in \I$ if and only if $\Phi(A) \in \mathsf{H}_{\bm{a}}(\I)$. 

    To this end, fix $A \subseteq \omega$, and observe that 
    \begin{displaymath}
    \begin{split}
    a_n \Phi(A)=\frac{q_n}{2}\sum_{m \in A}\frac{1}{q_m}
    &=\sum_{\substack{m\in A\\ m<n}}\frac{q_n}{2q_m}
    +
    \frac{1}{2}\chi_A(n)
    +
    \sum_{\substack{m\in A\\ m>n}}\frac{q_n}{2q_m}\\
    &
    =
    \frac{1}{2}\chi_A(n)
    +
    \sum_{\substack{m\in A\\ m>n}}\frac{q_n}{2q_m}
    \,\,\,\text{ in }\mathbb{T}.
    \end{split}
    \end{displaymath}
    It follows that, if $n\notin A$, then $0\le a_n\Phi(A) \le \delta_n<\nicefrac{1}{8}$. In the opposite case, if $n\in A$, then $\|a_n\Phi(A)\|\ge \nicefrac{1}{2}-\delta_n>\nicefrac{3}{8}$. 

    To conclude, suppose that $A \in \I$ and pick $\varepsilon\in (0,\nicefrac{1}{4})$. Then there is $F_\varepsilon \in \mathrm{Fin}$ such that $\{n \in \omega: \|a_n\Phi(A)\|\ge \varepsilon\}\subseteq A\cup F_\varepsilon \in \I$, so that $\Phi(A) \in \mathsf{H}_{\bm{a}}(\I)$. 
    Vice versa, suppose that $A\in \I^+$. Then $\{n \in \omega: \|a_n\Phi(A)\|\ge \nicefrac{1}{4}\}\supseteq A \in \I^+$, hence $\Phi(A) \notin \mathsf{H}_{\bm{a}}(\I)$. This completes the proof. 
\end{proof}

\medskip

\begin{proof}
    [Proof of Corollary \ref{cor:pointclassessimple}]
    This is an immediate consequence of Theorem \ref{thm:wadge}. 
\end{proof}

\medskip

\begin{proof}
    [Proof of Corollary \ref{cor:borelrank}] 
    \ref{item:1borrank} Since $\I$ is $\mathbf{\Pi}^0_m$-complete for some $m\ge 3$, then $\I\in \mathbf{\Pi}^0_m(2^\omega)\setminus \mathbf{\Sigma}^0_m(2^\omega)$. 
    It follows 
    by Theorem \ref{thm:upperbound} 
    and Corollary \ref{cor:pointclassessimple} that there exists $\bm{a} \in \omega^\omega$ such that $\mathsf{H}_{\bm{a}}(\I)\in \mathbf{\Pi}^0_m(\mathbb{T})\setminus \mathbf{\Sigma}^0_m(\mathbb{T})$.     
    The completeness claim follows by \cite[Theorem 22.10 and Exercise 24.20]{MR1321597}.

    \medskip

    \ref{item:2borrank} It is a consequence of Theorem \ref{thm:upperbound}. 

    \medskip
    
    \ref{item:3borrank} We proceed as in item \ref{item:1borrank}. Let $\xi<\omega_1$ be a countable ordinal such that $\I \in \mathbf{\Pi}^0_{\xi}(2^\omega)\setminus \mathbf{\Pi}^0_{\alpha}(2^\omega)$ for all $\alpha<\xi$. 
    It follows 
    by Theorem \ref{thm:upperbound} 
    and Corollary \ref{cor:pointclassessimple} that there exists $\bm{a} \in \omega^\omega$ such that $\mathsf{H}_{\bm{a}}(\I) \in \mathbf{\Pi}^0_{\xi}(\mathbb{T})\setminus \mathbf{\Pi}^0_{\alpha}(\mathbb{T})$ for all $\alpha<\xi$. 
\end{proof}

\medskip

\begin{proof}
    [Proof of Proposition \ref{prop:completenesspropH}] 
    Pick $\bm{a}$ as in the proof of Theorem \ref{thm:wadge}. Fix also an  ideal $\I$ such that $\I\in \mathbf{\Gamma}(2^\omega)\setminus \mathbf{\Gamma}^\prime(2^\omega)$. Thanks to Theorem \ref{thm:wadge} we have $\mathsf{H}_{\bm{a}}(\I) \notin \mathbf{\Gamma}^\prime(\mathbb{T})$. Hence, it remains to prove that $\mathsf H_{\bm a}(\mathcal I)\in\mathbf{\Gamma}(\mathbb{T})$. 

    To this aim, for each $k\in \omega$, define the map 
    $
    F_k:\mathbb T\to2^\omega
    $ 
    by
    $$
\forall x \in \mathbb{T}, \quad     F_k(x):=
    \{n\in\omega:\|a_nx\|\geq 2^{-k}\}.
    $$
    Notice that $F_k$ is Baire class one: indeed, for each
    $n\in\omega$, the $n$-th coordinate map of $F_k$ is 
    $ 
    x\mapsto
    \chi_{K_n}(x),
    $  
    where $K_n:=\{y\in\mathbb T:\|a_ny\|\geq 2^{-k}\}$ is closed in $\mathbb{T}$ since the map $y\mapsto a_ny$ is continuous. 
    Hence each coordinate map is Baire class one, and therefore
    $F_k$ is Baire class one as well. 

    By the definition of $\I$-convergence, we obtain that 
    $$
    \mathsf{H}_{\bm{a}}(\I)=
    \{x \in \mathbb{T}: \forall k\in\omega, F_k(x) \in \I\}
    =\bigcap_{k \in \omega}F_k^{-1}[\I].
    $$
    The claim follows by the hypotheses that $\I \in \mathbf{\Gamma}(2^\omega)$ and that $\mathbf{\Gamma}(2^\omega)$ is closed under countable intersections and preimages of Baire class one maps. 
\end{proof}

\medskip

\begin{proof}
    [Proof of Corollary \ref{cor:Boreanalyticetc}]
    It is enough to set $\mathbf{\Gamma}=\mathbf{\Sigma}^1_m$ \textup{[}or $\mathbf{\Pi}^1_m$ or $\mathbf{\Delta}^1_m$, respectively\textup{]} and $\mathbf{\Gamma}^\prime=\{\emptyset\}$ in Proposition \ref{prop:completenesspropH} (note the second part of the proof of Proposition \ref{prop:completenesspropH} does not depend on the choice of $\bm{a}$).
\end{proof}

\medskip

\begin{proof}
     [Proof of Corollary \ref{cor:analyticcomplete}] 
    Recall by \cite[Theorem 26.4 and Exercise 26.5]{MR1321597} that, if $X$ is a Polish space and we assume $\mathbf{\Sigma}^1_1$-determinacy, then a set $S\subseteq X$ is analytic-complete if and only if $S \in \mathbf{\Sigma}^1_1(X)\setminus \mathbf{\Pi}^1_1(X)$. Using twice this equivalence, the claim follows by setting  $\mathbf{\Gamma}=\mathbf{\Sigma}^1_1$ and $\mathbf{\Gamma}^\prime=\mathbf{\Pi}^1_1$ in Proposition \ref{prop:completenesspropH}. 
\end{proof}

\medskip

Before we proceed to the proof of Theorem  \ref{thm:analyticPidealscharacterization}, we need the following auxiliary result. 
\begin{proposition}\label{prop:preimagePolishable}
Let \(f:X\to Y\) be a continuous homomorphism between two Polish groups $X,Y$. Let $K$ be a Polishable subgroup of $Y$. 
Then $f^{-1}[K]$ is a Polishable subgroup. 
\end{proposition}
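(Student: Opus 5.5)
The plan is to realize $f^{-1}[K]$ as the image of a Polish group under a continuous homomorphism, and then use the characterization recalled after Definition \ref{def:Polishable}: a subgroup is Polishable if and only if it is the image of a Polish group under a continuous surjective homomorphism. Let $\tau_K$ be the Polish group topology on $K$ witnessing Polishability. Since $\tau_K$ and the subspace topology have the same Borel sets and $\tau_K$ is Polish, the inclusion $\iota:(K,\tau_K)\to Y$ is a Borel homomorphism between Polish groups, hence continuous by Pettis' automatic continuity theorem. In other words, $\tau_K$ is finer than the subspace topology.

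The key construction is the fibered product
$$
P:=\{(x,k)\in X\times (K,\tau_K): f(x)=k\}.
$$
The steps are as follows.

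First, $P$ is a subgroup of the Polish group $X\times(K,\tau_K)$, because $f$ is a homomorphism.

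Second, $P$ is closed. It is the preimage of the closed set $\{0_Y\}$ under the map $(x,k)\mapsto f(x)-\iota(k)$. This map is continuous from $X\times(K,\tau_K)$ to $Y$, since both $f$ and $\iota$ are continuous and $Y$ is Hausdorff. A closed subgroup of a Polish group is a Polish group, so $P$ is a Polish group.

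Third, the projection $\pi:P\to X$, $(x,k)\mapsto x$, is a continuous homomorphism. Its image is exactly $f^{-1}[K]$: if $f(x)\in K$, then $(x,f(x))\in P$, and conversely every $(x,k)\in P$ satisfies $f(x)=k\in K$.

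Hence $f^{-1}[K]$ is the continuous homomorphic image of a Polish group, and therefore Polishable. The same argument shows directly that it is analytic, which is consistent with the Borel requirement.

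One could also make this explicit. The projection $\pi$ is injective, since $k=f(x)$ is determined by $x$. So transporting the topology of $P$ along $\pi$ gives a Polish group topology on $f^{-1}[K]$. This topology is generated by the sets $U\cap f^{-1}[V]$, where $U$ is open in $X$ and $V$ is $\tau_K$-open. A continuous injective image of a Polish space is Borel, and by Lusin--Souslin Borel sets are preserved under this bijection. Therefore the Borel structure agrees with the one inherited from $X$, which is exactly the condition in Definition \ref{def:Polishable}.

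The main obstacle is justifying that $\iota$ is continuous, since Definition \ref{def:Polishable} only asks that the Borel sets agree. I would invoke Pettis' theorem: a Borel, hence Baire-measurable, homomorphism between Polish groups is continuous. Alternatively, I would cite the remark in the paper that Polishability of a Borel subgroup is equivalent to admitting a finer Polish group topology. Everything else in the argument is routine.
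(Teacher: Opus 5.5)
Your proposal is correct and is essentially the paper's proof. The paper also gets continuity of $\iota:(K,\tau_K)\to Y$ from automatic continuity, shows that the graph $\{(x,f(x)):x\in f^{-1}[K]\}$ (your fibered product $P$) is closed in $X\times(K,\tau_K)$, and transports its Polish group topology to $f^{-1}[K]$. The only difference is that the paper compares Borel structures via second countability and the sets $L\cap f^{-1}[B]$, where you use Lusin--Souslin, and both arguments work.
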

\begin{proof}
Define \(L:=f^{-1}[K]\), and let \(\tau_K\) be a Polish group topology on \(K\) with the same Borel sets as \(K\) has as a subspace of \(Y\). 
First, observe that the inclusion map
\[
\iota:(K,\tau_K)\to Y
\]
is continuous. Indeed, it is a Borel homomorphism from a Polish group into a Polish group,
hence it is Baire measurable. Therefore, by
\cite[Theorem 9.10]{MR1321597}, it is continuous.

Now consider the map
$
\Phi:L\to X\times (K,\tau_K)$ by 
\begin{equation}\label{eq:defPhiIII}
\forall x \in L, \quad 
\Phi(x):=(x,f(x)).
\end{equation}

We claim that \(\Gamma:=\Phi[L]\) is closed in \(X\times (K,\tau_K)\). Indeed, suppose that 
$((x_m,k_m): m\in \omega)$ is a sequence with values in $\Gamma$ which is convergent to \(((x,k)\) in
\(X\times (K,\tau_K)\). 
Then \(x_m\to x\) in \(X\), and, since \(\iota\) is
continuous, \(k_m\to \iota(k)\) in \(Y\). On the other hand, $\iota(k_m)=f(x_m)$ for every \(m\), and the continuity of \(f\) gives \(f(x_m)\to f(x)\) in
\(Y\). Since \(Y\) is Hausdorff, \(\iota(k)=f(x)\). Hence \((x,k)\in\Gamma\). This proves that \(\Gamma\) is closed.

Since \(X\times (K,\tau_K)\) is a Polish group, it follows that \(\Gamma\) is a Polish subgroup.
Transport its topology to \(L\) through the bijection \(\Phi:L\to\Gamma\), and denote the resulting topology by \(\tau_L\). Then \((L,\tau_L)\) is a Polish group. 
To complete the proof, we need to compare the Borel structures. Since the first-coordinate map $\Gamma\to X$ defined by $(x,k)\mapsto x$ is continuous, 
the identity map $(L,\tau_L)\to X$ is continuous as well. 
Hence every Borel subset of \(L\) inherited from \(X\) is
\(\tau_L\)-Borel. 

Conversely, let \(U\subseteq K\) be a \(\tau_K\)-open set. Pick a Borel set \(B\subseteq Y\) such that $U=K\cap B$. Then 
$
\{x\in L:f(x)\in U\}
=
L\cap f^{-1}[B]
$ 
is Borel in the relative topology inherited from \(X\). Since the topology \(\tau_L\) is generated by the original \(X\)-coordinate together with
sets of this form, and since \((L,\tau_L)\) is second countable, every
\(\tau_L\)-open set is Borel in the relative topology. 
Hence every \(\tau_L\)-Borel set is
inherited-Borel. 
Therefore \(\tau_L\) is a Polish group topology on \(L=f^{-1}[K]\) with the
same Borel sets as the subspace \(L\subseteq X\), that is,  \(L\) is
Polishable.
\end{proof}

\smallskip

\begin{remark}\label{rmk:compatiblemetricpolishability}
With the notations above, let \(d_X\) be a compatible complete metric on \(X\), and let \(d_K\) be a
compatible complete metric on \((K,\tau_K)\). For each \(x,y\in L\), define 
\[
d_L(x,y):=d_X(x,y)+d_K(f(x),f(y)). 
\]
Then \(d_L\) is precisely the metric transported to \(L\) from the product
metric on \(X\times (K,\tau_K)\) through the map $\Phi$ defined in \eqref{eq:defPhiIII}.
\end{remark}

\medskip

\begin{proof}
[Proof of Theorem \ref{thm:analyticPidealscharacterization}]
\ref{item:1polishability} $\implies$ \ref{item:2polishability}. 
Let $\I$ be an analytic $P$-ideal on $\omega$. By the equivalence \ref{item:2solecki} $\Longleftrightarrow$ \ref{item:3solecki} in Theorem \ref{thm:solecki},   
there exists a finite lscsm 
$\varphi:\mathcal P(\omega)\to[0,\infty]$ such that
$\mathcal I=\mathrm{Exh}(\varphi)$. 
Replacing $\varphi$, if necessary, by the lscsm $\varphi^\prime$ defined by 
$\varphi^\prime(S):=\varphi(S)+\sum_{n\in S}2^{-n}$ for all $S\subseteq \omega$, we may suppose without loss of generality that
$0<\varphi(\{n\})<\infty$ for every $n\in\omega$ (in fact, $\mathrm{Exh}(\varphi)=\mathrm{Exh}(\varphi^\prime)$).

For each $\bm z=(z_n:n\in\omega)\in\mathbb T^\omega$ and $\varepsilon>0$, define 
$$
A_\varepsilon(\bm z):=\{n\in\omega:\|z_n\|\geq\varepsilon\},
$$
so that 
$
c_0(\mathcal I)=\{\bm z\in\mathbb T^\omega:\forall \varepsilon>0, A_\varepsilon(\bm{z}) \in \mathrm{Exh}(\varphi)\}.
$ 
For each $\bm z\in\mathbb T^\omega$, define also 
\begin{equation}\label{eq:definitionpvarphi}
p_\varphi(\bm z):=
\inf\{\varepsilon>0:\varphi(A_\varepsilon(\bm z))\leq\varepsilon\}.
\end{equation}
Observe that $p_\varphi(\bm z)<\infty$, since $A_\varepsilon(\bm z)=\emptyset$ whenever $\varepsilon>\nicefrac12$. 
Hence, for each $\bm{z} \in \mathbb{T}^\omega$ and $\varepsilon>0$, the set 
$\{\varepsilon>0:\varphi(A_\varepsilon(\bm z))\le \varepsilon\}$ 
has the form 
$(\alpha,\infty)$ or 
$[\alpha,\infty)$ for some $\alpha \in [0,\nicefrac{1}{2}]$. 
At this point, set 
$$
\forall \bm z,\bm w\in c_0(\mathcal I),\quad 
d_\varphi(\bm z,\bm w):=p_\varphi(\bm z-\bm w). 
$$
We are going to show that $d_\varphi$ is a (translation invariant) metric on $c_0(\mathcal I)$ which induces a Polish group topology having the same Borel sets as $c_0(\I)$ when considered as a topological subgroup of $\mathbb{T}^\omega$.

\smallskip

\textsc{Metric.} First, \(p_\varphi\) is subadditive. Let \(\bm z,\bm w\in\mathbb T^\omega\), and
pick \(\alpha>p_\varphi(\bm z)\) and \(\beta>p_\varphi(\bm w)\). Choose
\(\alpha_0<\alpha\) and \(\beta_0<\beta\) such that $\varphi(A_{\alpha_0}(\bm z))\le \alpha_0$ and $\varphi(A_{\beta_0}(\bm w))\le \beta_0$. 
Then 
$
A_{\alpha+\beta}(\bm z+\bm w)
\subseteq
A_{\alpha_0}(\bm z)\cup A_{\beta_0}(\bm w)$,
and therefore, since $\varphi$ is a lscsm, we obtain 
\[
\varphi(A_{\alpha+\beta}(\bm z+\bm w))
\le
\alpha_0+\beta_0
<
\alpha+\beta.
\]
Hence $p_\varphi(\bm z+\bm w)\le \alpha+\beta$. Letting \(\alpha\downarrow p_\varphi(\bm z)\) and
\(\beta\downarrow p_\varphi(\bm w)\), we obtain 
$p_\varphi(\bm z+\bm w)
\le
p_\varphi(\bm z)+p_\varphi(\bm w)$. 
Thus \(d_\varphi\) satisfies the triangle
inequality. Also, \(p_\varphi(-\bm z)=p_\varphi(\bm z)\), so \(d_\varphi\) is
symmetric.

\smallskip

Now, pick $\bm{z} \in c_0(\I)$ such that \(p_\varphi(\bm z)=0\). 
If \(\bm z\neq \bm{0}\), there exists \(n\in\omega\) such that \(z_n\neq 0\), and we can choose $\varepsilon \in (0,\min\{\|z_n\|,\varphi(\{n\})\})$. 
Since \(p_\varphi(\bm z)=0\), there is \(0<\tau<\varepsilon\) such that 
$
\varphi(A_\tau(\bm z))\le \tau$. 
But \(n\in A_\tau(\bm z)\), hence
\[
\varphi(\{n\})\le \varphi(A_\tau(\bm z))\le \tau<\varphi(\{n\}),
\]
which is impossible. Thus \(\bm z=\bm{0}\). Therefore \(d_\varphi\) is a metric on $c_0(\I)$.

\smallskip

\textsc{Completeness.} Let $(\bm z^m:m\in\omega)$ be a $d_\varphi$-Cauchy sequence in
$c_0(\mathcal I)$, with $\bm z^m=(z^m_n:n\in\omega)$ for all $m \in\omega$. Fix $n\in\omega$, $\eta>0$, and pick $\delta\in (0,\min\{\eta,\varphi(\{n\})\})$. 
For all sufficiently large $m,\ell\in \omega$, we have 
$p_\varphi(\bm z^m-\bm z^\ell)<\delta$. Hence there is $\tau\in (0,\delta)$ such that
$\varphi(A_\tau(\bm z^m-\bm z^\ell))\leq\tau$. If
$\|z^m_n-z^\ell_n\|\geq\eta$, then $n\in A_\tau(\bm z^m-\bm z^\ell)$, which is impossible.
Thus $(z^m_n:m\in\omega)$ is Cauchy in $\mathbb T$. Let
$\bm z=(z_n:n\in\omega)$ be the coordinatewise limit.

We claim that $\bm z^m\to\bm z$ in $d_\varphi$. We will use the following consequence of lower
semicontinuity of $\varphi$: if $\varphi(B_\ell)\leq c$ for all $\ell\in \omega$ and
$B\subseteq\liminf_\ell B_\ell$, then $\varphi(B)\leq c$. Indeed, for every $r\in \omega$, the
finite set $B\cap r$ is eventually contained in $B_\ell$, hence
$\varphi(B\cap r)\leq c$, so that $\varphi(B)=\lim_r \varphi(B\cap r)\leq c$.

Fix $\varepsilon>0$ and choose $\eta \in (0,\nicefrac{\varepsilon}{2})$. 
Since the sequence $(\bm z^m:m\in\omega)$ is
$d_\varphi$-Cauchy, there is $M$ such that $p_\varphi(\bm z^m-\bm z^\ell)<\eta$ for all
$m,\ell\geq M$. Then $\varphi(A_\eta(\bm z^m-\bm z^\ell))\leq\eta$ for all
$m,\ell\geq M$. Fixing $m\geq M$, we have 
$\|z^m_n-z^\ell_n\|
\geq
\|z^m_n-z_n\|-\|z^\ell_n-z_n\|
>
\eta$ 
whenever \(n\in A_{2\eta}(\bm z^m-\bm z)\) and $\ell$ is sufficiently large, so that 
\[
A_{2\eta}(\bm z^m-\bm z)
\subseteq
\liminf_{\ell\to\infty}A_\eta(\bm z^m-\bm z^\ell).
\]
Hence, using the previous observation, $\varphi(A_{2\eta}(\bm z^m-\bm z))\leq\eta$. 
It follows that 
$p_\varphi(\bm z^m-\bm z)\leq2\eta<\varepsilon$ for all $m\geq M$.

Finally, $\bm z\in c_0(\I)$. To this end, fix $\varepsilon,\theta>0$. Pick $m\in \omega$ such
that $p_\varphi(\bm z-\bm z^m)<\min\{\nicefrac{\varepsilon}{2},\theta\}$. Then for some
$\delta \in (0,\min\{\nicefrac{\varepsilon}{2},\theta\})$ we have
$\varphi(A_\delta(\bm z-\bm z^m))<\theta$. Since
\[
A_\varepsilon(\bm z)
\subseteq
A_{\varepsilon/2}(\bm z^m)\cup A_\delta(\bm z-\bm z^m),
\]
and $\bm z^m\in c_0(\I)$, we get
$$
\lim_{r\to \infty}\varphi(A_\varepsilon(\bm z)\setminus r)\leq
\lim_{r\to \infty}\varphi(A_{\varepsilon/2}(\bm z^m)\setminus r)
+\varphi(A_\delta(\bm z-\bm z^m))
\le \theta.
$$
Since $\theta>0$ is
arbitrary, it follows that $A_\varepsilon(\bm z)\in\mathrm{Exh}(\varphi)$ for every $\varepsilon>0$.

\smallskip

\textsc{Separability.} Let $Q\subseteq\mathbb T$ be countable dense with $0\in Q$, and let
$C_0\subseteq \mathbb{T}^\omega$ be the set of finitely supported sequences with values in $Q$. Then $C_0$ is
countable and contained in $c_0(\I)$. Fix $\bm z\in c_0(\I)$ and
$\varepsilon>0$. Choose $\delta \in (0,\varepsilon)$ and an integer $M\in \omega$ such that
$\varphi(A_\delta(\bm z)\setminus M)<\delta$. Pick $q_0,\ldots,q_{M-1}\in Q$ with
$\|z_n-q_n\|<\delta$ for all $n<M$, and set $q_n=0$ for all $n\geq M$. Then
\[
A_\delta(\bm z-\bm q)\subseteq A_\delta(\bm z)\setminus M,
\]
which implies that $d_\varphi(\bm z,\bm q)<\varepsilon$. Thus $(c_0(\I), d_\varphi)$ is Polish.

\smallskip

\textsc{Continuity of addition.} 
Since \(d_\varphi\) is translation-invariant and $p_\varphi$ is subadditive, it follows that 
\[
d_\varphi(\bm x+\bm y,\bm x'+\bm y')
=
p_\varphi((\bm x-\bm x')+(\bm y-\bm y'))
\leq
p_\varphi(\bm x-\bm x')+p_\varphi(\bm y-\bm y')
\]
for all 
\(\bm x,\bm y,\bm x',\bm y'\in c_0(\mathcal I)\). Hence addition is continuous.

\smallskip

\textsc{Borel structure.} It remains to compare the Borel structures. First, the identity map 
$
(c_0(\mathcal I),d_\varphi)\to c_0(\mathcal I)\subseteq\mathbb T^\omega
$ 
is continuous. Indeed, if \(\bm z^m\to\bm z\) in \(d_\varphi\), then the
argument used above shows that \(\bm z^m\to\bm z\) coordinatewise. Hence every
Borel subset of \(c_0(\mathcal I)\) inherited from \(\mathbb T^\omega\) is
\(d_\varphi\)-Borel.

Conversely, we show that every \(d_\varphi\)-open ball is Borel in the relative topology inherited from \(\mathbb T^\omega\). 
To this aim, fix
\(\bm y\in c_0(\mathcal I)\) and \(t>0\). For each \(q>0\), define
\[
B_q^{\bm y}(\bm z):=
\{n\in\omega:\|z_n-y_n\|\ge q\}.
\]
The map
$
\bm z\mapsto B_q^{\bm y}(\bm z)
$ 
from \(\mathbb T^\omega\) to \(2^\omega\) is Borel, because every coordinate
condition 
$
\|z_n-y_n\|\ge q
$ 
is closed. In addition, since \(\varphi\) is lower semicontinuous, the map 
$
B\mapsto\varphi(B)
$ 
from \(2^\omega\) to \([0,\infty]\) is Borel. Therefore, for every rational
\(q>0\), the set
\[
\{\bm z\in\mathbb T^\omega:\varphi(B_q^{\bm y}(\bm z))<q\}
\]
is Borel.

Now, 
$
p_\varphi(\bm z-\bm y)<t
$ 
if and only if there exists a rational \(q\) with \(q \in (0,t)\) such that 
$
\varphi(B_q^{\bm y}(\bm z))<q.
$ 
Indeed, the \textsc{if} part is immediate. Conversely, if
\(p_\varphi(\bm z-\bm y)<t\), choose \(\alpha<t\) such that 
$
\varphi(A_\alpha(\bm z-\bm y))\le \alpha,
$ 
and then choose a rational \(q \in (\alpha,t)\), so that 
\[
\varphi(A_q(\bm z-\bm y))\le \varphi(A_\alpha(\bm z-\bm y))\le \alpha<q.
\]

By the above observations, it follows that
\[
\{\bm z\in c_0(\mathcal I):d_\varphi(\bm z,\bm y)<t\}
\]
is Borel in the relative topology inherited from \(\mathbb T^\omega\). To conclude, since \((c_0(\mathcal I),d_\varphi)\) is separable, every \(d_\varphi\)-open set
is a countable union of \(d_\varphi\)-open balls. Hence every
\(d_\varphi\)-Borel set is also Borel in the relative topology inherited from \(\mathbb T^\omega\).

Therefore \(c_0(\mathcal I)\) is Polishable.

\medskip

\ref{item:2polishability} $\implies$ \ref{item:3polishability}. Pick $\bm{a} \in \omega^\omega$ and let $f_{\bm{a}}: \mathbb{T}\to \mathbb{T}^\omega$ be the continuous homomorphism defined by 
$$
\forall x \in \mathbb{T}, \quad 
f_{\bm{a}}(x):=(a_nx: n \in \omega). 
$$
Since $c_0(\I)$ is a Polishable subgroup of the Polish group $\mathbb{T}^\omega$ and since $\mathsf{H}_{\bm{a}}(\I)=f_{\bm{a}}^{-1}[c_0(\I)]$, it follows by Proposition \ref{prop:preimagePolishable}
that $\mathsf{H}_{\bm{a}}(\I)$ is a Polishable subgroup of $\mathbb{T}$. 

According to Remark \ref{rmk:compatiblemetricpolishability}, given the lscsm $\varphi$ and the sequence $\bm{a} \in \omega^\omega$, the subgroup $\mathsf{H}_{\bm{a}}(\I)$ becomes Polishable under the metric $d_{\varphi,\bm{a}}$ defined by 
\begin{equation}\label{eq:metricpolishability}
\begin{split}
d_{\varphi,\bm{a}}(x,y)&:=\|x-y\|+d_\varphi(f_{\bm{a}}(x),f_{\bm{a}}(y))\\
&=\|x-y\|+p_\varphi((a_n(x-y): n \in \omega))\\
&=\|x-y\|+\inf\{\varepsilon>0: \varphi(\{n\in \omega: \|a_n(x-y)\|\ge \varepsilon\}) \le \varepsilon\}.
\end{split}
\end{equation}
for each $x,y \in \mathsf{H}_{\bm{a}}(\I)$.

\medskip

\ref{item:3polishability} $\implies$ \ref{item:1polishability}. 
Let $\I$ be an ideal on $\omega$ which is \emph{not} an analytic $P$-ideal. We need to show that there exists $\bm{a} \in \omega^\omega$ such that $\mathsf{H}_{\bm{a}}(\I)$ is not Polishable. We divide the rest of the proof in two cases. 

\medskip

\textsc{Case 1: $\I$ is analytic.} 
Let $\I$ be an analytic ideal on $\omega$ which is not a $P$-ideal. 
By the equivalence \ref{item:2solecki} $\Longleftrightarrow$ \ref{item:4solecki} in Theorem \ref{thm:solecki}, we have that $\mathrm{Fin}\otimes \emptyset \le_{\mathrm{RB}} \mathcal{I}$. Now, let $\mathcal J$ be an ideal on $\omega$ which is isomorphic to  $\mathrm{Fin}\otimes \emptyset$ and observe, proceeding verbatim as in \cite[Theorem 2.6]{FKLT26}, that 
$$
\mathscr{H}(\J)=\{\mathsf{H}_{\bm{b}}(\mathrm{Fin}\otimes \emptyset): \bm{b}=(b_{i,j}: (i,j) \in \omega^2) \in \omega^{\omega^2}\}.
$$
Since $\J\le_{\mathrm{RB}} \mathcal{I}$, it will be enough to show that there exists a sequence $\bm{b}\in \omega^{\omega^2}$ such that $\mathsf{H}_{\bm{b}}(\mathrm{Fin}\otimes \emptyset)$
%
%
%
is a Borel subgroup of $\mathbb T$ which is not Polishable.

To this aim, set for convenience 
$M_n:=n+1$ and $r_n:=(10M_n)^2$ for all $n\in\omega$, and define recursively $q_0:=1$ and $q_{n+1}:=q_nr_n$ for all $n\in\omega$. 
For each $i,n\in\omega$, put $L_{i,n}:=\left\lfloor M_n/(i+1)\right\rfloor$ and $R_i:=\{mq_n:n\in\omega,\ 1\leq m\leq L_{i,n}\}$. 
Since each $R_i$ is infinite, it is possible to pick an enumeration $R_i=\{b_{i,j}:j\in\omega\}$ for every $i\in\omega$. Hence, we regard
$$
\bm b=(b_{i,j}:(i,j)\in\omega^2)
$$
as a sequence indexed by
$\omega^2$.

At this point, for each $x\in\mathbb T$ and $i \in \omega$, define
\[
s_i(x):=\sup_{j\in\omega}\|b_{i,j}x\|
=
\sup_{n\in\omega}\sup_{1\leq m\leq L_{i,n}}\|mq_nx\|.
\]
By the definition of $\mathrm{Fin}\otimes \emptyset$, we have
\begin{equation}\label{eq:J0si}
\mathsf H_{\bm b}(\mathrm{Fin}\otimes \emptyset)
=
\{x\in\mathbb T: \lim_{i\to \infty} s_i(x)=0\}.
\end{equation}

Define also
\[
\forall x \in \mathbb{T}, \quad p_0(x):=\sup_{n\in\omega}M_n\|q_nx\|,
\]
where the value $+\infty$ is allowed, and set $H:=\{x \in \mathbb{T}: p_0(x)<\infty\}$.

We shall use the following elementary
fact. 
\begin{claim}\label{claim:multipleestimate}
Pick $y\in\mathbb T$ and an integer $L\geq1$. 
If $\max_{1\leq m\leq L}\|my\|<\nicefrac14$, then
\[
L\|y\|\leq \max_{1\leq m\leq L}\|my\|.
\]
\end{claim}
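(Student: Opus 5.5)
Write $\theta:=\|y\|\in[0,\nicefrac12]$ and $\mu:=\max_{1\leq m\leq L}\|my\|$, so that $\mu<\nicefrac14$ by hypothesis. The plan is to prove by induction on $m\in\{1,\ldots,L\}$ that
\[
\|my\|=m\theta, \qquad\text{that is,}\qquad m\theta\le \mu<\tfrac14 .
\]
Applying this with $m=L$ gives $L\|y\|=\|Ly\|\le \mu$, which is the claim.

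First, reduce to a real representative: replacing $y$ by $-y$ if needed (this changes neither $\|my\|$ nor $\mu$), choose $t\in[0,\nicefrac12]$ with $y=t+\mathbb{Z}$. Then $\theta=t$, and $my=mt+\mathbb{Z}$.

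The base case $m=1$ is immediate, since $\|y\|=t=\theta$.

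For the inductive step, assume $mt\le\mu<\nicefrac14$ for some $m<L$.
\begin{itemize}
\item Since $t\le mt<\nicefrac14$, we get $(m+1)t=mt+t<\nicefrac12$.
\item Because $0\le (m+1)t<\nicefrac12$, the nearest integer to $(m+1)t$ is $0$, so $\|(m+1)y\|=(m+1)t$.
\item Since $m+1\le L$, the hypothesis gives $(m+1)t=\|(m+1)y\|\le\mu<\nicefrac14$. This closes the induction.
\end{itemize}

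The only delicate point is preventing the multiples from wrapping around modulo $1$. This is exactly where the threshold $\nicefrac14$ is used: adding one more step of size $t<\nicefrac14$ to a quantity below $\nicefrac14$ keeps the sum below $\nicefrac12$. That bound is what licenses the identification $\|(m+1)y\|=(m+1)t$.
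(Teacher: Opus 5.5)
Your proof is correct and is essentially the paper's argument. The paper argues by minimal counterexample: the least $m\le L$ with $m\|y\|\ge\nicefrac14$ still has $m\|y\|<\nicefrac12$, so $\|my\|=m\|y\|\ge\nicefrac14$, a contradiction. You run the same no-wrap-around step as a forward induction on $m$; the phrase ``$\|my\|=m\theta$, that is, $m\theta\le\mu$'' is not literally an equivalence, but your induction in fact carries both statements together, so nothing is lost.
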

\begin{proof}
Suppose for the sake of contradiction that
$L\|y\|\geq\nicefrac14$, and let $m\leq L$ be the least positive integer
such that $m\|y\|\geq\nicefrac14$. 
Since the hypothesis gives
$\|y\|<\nicefrac14$, we have $m>1$. By minimality,
also $(m-1)\|y\|<\nicefrac14$, and consequently $m\|y\|<\nicefrac12$. 
Thus $\|my\|=m\|y\|\geq\nicefrac14$, a contradiction. Therefore
$L\|y\|<\nicefrac14$, and so $\|Ly\|=L\|y\|$. 
\end{proof}

\begin{claim}\label{claim:equalityHHfin}
    $H=\mathsf{H}_{\bm{b}}(\mathrm{Fin}\otimes \emptyset)$. 
\end{claim}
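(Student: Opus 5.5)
The plan is to prove the two inclusions directly from the formula \eqref{eq:J0si}, so it suffices to show that $p_0(x)<\infty$ if and only if $\lim_i s_i(x)=0$. The key tool will be the elementary fact $\|my\|\le m\|y\|$ in one direction, and Claim \ref{claim:multipleestimate} in the other. The particular choice of $r_n$ should play no role here; only $M_n=n+1$ and $L_{i,n}=\lfloor M_n/(i+1)\rfloor$ are used.

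\emph{Inclusion $H\subseteq \mathsf H_{\bm b}(\mathrm{Fin}\otimes\emptyset)$.} Fix $x\in H$ and set $C:=p_0(x)<\infty$. Then $\|q_nx\|\le C/M_n$ for every $n\in\omega$. Hence, for every $i,n\in\omega$ and every $1\le m\le L_{i,n}$,
\[
\|mq_nx\|\le m\|q_nx\|\le L_{i,n}\,\frac{C}{M_n}\le \frac{C}{i+1}.
\]
This gives $s_i(x)\le C/(i+1)$ for every $i$. Therefore $\lim_i s_i(x)=0$, and $x\in \mathsf H_{\bm b}(\mathrm{Fin}\otimes\emptyset)$ by \eqref{eq:J0si}.

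\emph{Inclusion $\mathsf H_{\bm b}(\mathrm{Fin}\otimes\emptyset)\subseteq H$.} Fix $x$ with $\lim_i s_i(x)=0$ and choose a single index $i$ with $s_i(x)<\nicefrac14$. For every $n$ with $L_{i,n}\ge1$, we have $\max_{1\le m\le L_{i,n}}\|mq_nx\|\le s_i(x)<\nicefrac14$. So Claim \ref{claim:multipleestimate}, applied with $y:=q_nx$ and $L:=L_{i,n}$, yields
\[
L_{i,n}\|q_nx\|\le s_i(x).
\]
Now restrict to $n$ with $M_n\ge 2(i+1)$. For these $n$,
\[
L_{i,n}\ge \frac{M_n}{i+1}-1\ge \frac{M_n}{2(i+1)},
\]
and hence $M_n\|q_nx\|\le 2(i+1)s_i(x)<\frac{i+1}{2}$. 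The remaining indices are the finitely many $n$ with $M_n<2(i+1)$. For each of them we simply use $M_n\|q_nx\|\le M_n/2<i+1$. Combining the two ranges gives $p_0(x)\le i+1<\infty$, so $x\in H$.

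There is no real obstacle in this argument. The only point needing care is the passage from $L_{i,n}\|q_nx\|$ to $M_n\|q_nx\|$ in the second inclusion. The floor in $L_{i,n}$ can be small, or even zero, for small $n$, so those finitely many indices have to be handled separately by the trivial bound $\|\cdot\|\le\nicefrac12$. One must also fix one index $i$ with $s_i(x)<\nicefrac14$ first, so that Claim \ref{claim:multipleestimate} applies uniformly in $n$.
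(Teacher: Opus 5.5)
Your proof is correct and follows the paper's argument essentially step for step: for one inclusion you bound $s_i(x)\le C/(i+1)$; for the other you fix one $i$ with $s_i(x)<\nicefrac14$, apply Claim \ref{claim:multipleestimate} to $y=q_nx$ and $L=L_{i,n}$ when $M_n\ge 2(i+1)$, and use the trivial bound $M_n/2$ for the finitely many remaining $n$. Your explicit derivation of $L_{i,n}\ge M_n/(2(i+1))$ from $\lfloor t\rfloor\ge t-1$ fills in a step that the paper states without justification.
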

\begin{proof}
Fix $x \in \mathbb{T}$. First, suppose that $x \in H$, so that $C:=p_0(x)<\infty$. Then
$\|q_nx\|\leq C/M_n$ for every $n\in\omega$. Hence, if
$1\leq m\leq L_{i,n}$, then
\[
\|mq_nx\|
\leq m\|q_nx\|
\leq \frac{M_n}{i+1}\cdot\frac{C}{M_n}
=
\frac{C}{i+1}.
\]
Therefore $s_i(x)\leq C/(i+1)$ for all $i\in\omega$. 
Hence $x \in \mathsf{H}_{\bm{b}}(\mathrm{Fin}\otimes \emptyset)$ by \eqref{eq:J0si}.

Conversely, suppose that $\lim_i s_i(x)=0$. 
Pick $i\in \omega$ such that
$s_i(x)<\nicefrac14$, and $n\in\omega$ with $M_n\geq 2i+2$. 
Since $L_{i,n}\geq M_n/(2i+2)$, it follows by Claim \ref{claim:multipleestimate} (applied to
$y=q_nx$ and $L=L_{i,n}$) that 
\[
M_n\|q_nx\|
\leq
\frac{M_n}{L_{i,n}}s_i(x)
\leq
(2i+2)s_i(x).
\]
For the finitely many $n$ such that $M_n<2i+2$, we have
$M_n\|q_nx\|\leq M_n/2\le i+1$. Hence $p_0(x)<\infty$, i.e., $x \in H$.
\end{proof}

In particular, it follows by Claim \ref{claim:equalityHHfin} that $H$ is a subgroup of $\mathbb{T}$. For every $k\in\omega$, define
$B_k:=\{x\in\mathbb T:p_0(x)\leq k\}$, so that 
\[
B_k
=
\bigcap_{n\in\omega}
\{x\in\mathbb T:M_n\|q_nx\|\leq k\}.
\]
Since each $B_k$ is closed in $\mathbb T$, and $H=\bigcup_kB_k$, then $H$ is
an $F_\sigma$ subgroup of $\mathbb T$.

It remains to prove that $H$ is not Polishable. Suppose, towards a
contradiction, that $\tau$ is a Polish group topology on $H$ having the same
Borel sets as the ones inherited from $\mathbb T$. Since $H=\bigcup_kB_k$
and each $B_k$ is $\tau$-Borel, by Baire's category theorem \cite[Theorem 8.4]{MR1321597} there is $k\in\omega$ such that $B_k$ is
nonmeager in $(H,\tau)$. By Pettis' theorem \cite[Theorem 9.9]{MR1321597}, $B_k-B_k$ contains a
$\tau$-neighbourhood of $0$. Since $B_k-B_k\subseteq B_{2k}$, there is a
$\tau$-neighbourhood $U$ of $0$ such that $U\subseteq B_{2k}$. Thus, let $D$ be a countable $\tau$-dense subset of $H$, and observe that
\begin{equation}\label{eq:coverbyD}
H\subseteq D+B_{2k}.
\end{equation}
To contradict the Polishability property, it will be enough to construct an uncountable subset of $H$ which cannot be covered 
by $D+B_{2k}$, which will contradict Inequality \eqref{eq:coverbyD}. 

To this aim, choose $A\geq1$ such that
$\nicefrac{9A}{10}>4k$ and, for each $n\ge 4A$, define 
\[
c_n:=\left\lfloor\frac{Ar_n}{M_n}\right\rfloor.
\]
Then
\[
M_n\frac{c_n}{r_n}\geq A-\frac{1}{100M_n}
\quad \text{ and }\quad 
c_n\leq \frac{Ar_n}{M_n}\leq \frac{r_n}{4}.
\]

For each $\eta\in 2^{\{n\in\omega:n\geq 4A\}}$, define
\[
x_\eta:=
\sum_{n=4A}^{\infty}\eta(n)\frac{c_n}{q_{n+1}}+\mathbb Z \in \mathbb{T}.
\]
The series is absolutely convergent. We first show that $x_\eta\in H$. Fix
$n\geq 4A$. Since $q_{m+1}$ divides $q_n$ whenever $m<n$, we have
\[
q_nx_\eta
=
\eta(n)\frac{c_n}{r_n}
+
\sum_{m>n}\eta(m)c_m\frac{q_n}{q_{m+1}}
\,\,\text{ in }\mathbb{T}
\]
For each $n \in \omega$, using $q_n/q_{m+1}=1/(r_n\cdots r_m)$ and $r_\ell \ge 2$ for all $\ell$, we get
$$
\sum_{m>n}c_m\frac{q_n}{q_{m+1}}
\leq
\sum_{m>n}\frac{A}{M_mr_n r_{n+1}\cdots r_{m-1}}
\le 
\frac{A}{r_n}\sum_{m>n}\frac{1}{2^{m-n-1}}
\le \frac{2A}{r_n}.
$$
Therefore
\[
\forall n\ge 4A, \quad 
M_n\|q_nx_\eta\|
\leq
A+\frac{2AM_n}{r_n}
=
A+\frac{A}{50M_n}
<2A.
\]
For the finitely many $n<4A$, we have $M_n\|q_nx_\eta\|\leq M_n/2 \le 2A$. Hence
$p_0(x_\eta)<\infty$, and so $x_\eta\in H$.

Next, we show that distinct points of the type $x_\eta$ are far apart from each other. Let
$\eta\neq\theta$, and let $n\geq 4A$ be the least index such that
$\eta(n)\neq\theta(n)$. Then 
\[
q_n(x_\eta-x_\theta)
=
\pm\frac{c_n}{r_n}
+
\sum_{m>n}(\eta(m)-\theta(m))c_m\frac{q_n}{q_{m+1}}
\,\,\text{ in }\mathbb{T}.
\]
Again, the tail has absolute value at most $2A/r_n$. Moreover,
$c_n/r_n\leq A/M_n\leq\nicefrac14$ and $2A/r_n\leq\nicefrac14$, so the above
representative has absolute value $<\nicefrac12$. Therefore, using also the
lower estimate on $c_n/r_n$, we obtain
\[
M_n\|q_n(x_\eta-x_\theta)\|
\geq
A-\frac{1}{100M_n}-\frac{A}{50M_n}
>
\frac{9A}{10}.
\]
Hence $p_0(x_\eta-x_\theta)>\nicefrac{9A}{10}>4k$.

To conclude, the set $P:=\{x_\eta:\eta\in 2^{\{n\in\omega:n\geq 4A\}}\}$ is an uncountable subset of $H$, and
$p_0(x-y)>4k$ for all distinct $x,y\in P$. On the other hand, each translate
$d+B_{2k}$ contains at most one point of $P$. 
Indeed, if
$x,y\in P\cap(d+B_{2k})$, then $x=d+u$ and $y=d+v$ for some
$u,v\in B_{2k}$, and therefore
\[
p_0(x-y)=p_0(u-v)\leq p_0(u)+p_0(v)\leq4k.
\]
Thus $x=y$. Since $D$ is countable, then $P\cap (D+B_{2k})$ is at most countable. This contradicts Inequality \eqref{eq:coverbyD}. Therefore $H$ is not Polishable.

    \medskip

\textsc{Case 2: $\I$ is not analytic.} Suppose now that $\I$ is not analytic. In particular, $\I$ is not Borel. It follows by Corollary \ref{cor:pointclassessimple} that $\mathsf{H}_{\bm{a}}(\I)$ is not Borel for some $\bm{a} \in \omega^\omega$, hence not Polishable. 
\end{proof}

\medskip

\begin{proof}
[Proof of Corollary \ref{cor:birocounterexample}]
    Let $K\subseteq \mathbb{T}$ be a Kronecker set, that is, a nonempty compact set such that for every $\varepsilon>0$ and every continuous map $f: K\to \mathbb{T}$ there exists $n \in \mathbb{Z}$ such that $\|f(x)-nx\|<\varepsilon$ for all $x \in K$, and suppose that $K$ is uncountable. 
    Using a non-trivial result of J. Aaronson and N. Nadkarni \cite{MR907232}, 
    B\'ir\'o proved in \cite{MR2388789} that the subgroup $H:=\langle K\rangle$ generated by $K$ is not Polishable. 

    In addition, $H$ is $F_\sigma$: for a direct proof, observe that $C:=K\cup (-K)\cup \{0\}$ is compact. Hence also $m\star C:=C+\cdots+C$, where $C$ is repeated $m$ times, is compact (being a continuous image of a compact set). 
    Therefore $H=\bigcup_{m\ge 1}(m\star C)$ is $F_\sigma$. 

    The conclusion follows by Theorem \ref{thm:analyticPidealscharacterization}. 
\end{proof} 

\medskip

\begin{proof}
[Proof of Proposition \ref{prop:FsigmadeltacompletenotPolishable}] 
We proceed as in the proof of the implication \ref{item:3polishability} $\implies$ \ref{item:1polishability} of Theorem \ref{thm:analyticPidealscharacterization}. 
Let $\J$ be a copy on $\omega$ of the $F_{\sigma\delta}$-complete ideal $\emptyset\otimes\mathrm{Fin}$.\footnote{Indeed, by \cite[Exercise~23.2]{MR1321597}, the set
$
P:=\{x\in\omega^\omega:|\{n\in\omega:x(n)=i\}|<\infty\text{ for every }i\in\omega\}
$
is $F_{\sigma\delta}$-complete. The continuous map
$
x\mapsto\{(i,n)\in\omega^2:x(n)=i\}
$
reduces $P$ to $\emptyset\otimes\mathrm{Fin}$.} Set $M_n:=n+1$ and $r_n:=(10M_n)^2$ for all $n\in\omega$, and define recursively $q_0:=2$, $q_{n+1}:=q_nr_n$ and $a_n:=q_n/2$ for all $n \in \omega$.  
In particular, $q_n/(2q_m)\in\omega$ whenever $m<n$. As in Case~1 of the proof of Theorem \ref{thm:analyticPidealscharacterization}, define $H_0:=\{x\in\mathbb{T}:p(x)<\infty\}$ and $B_k:=\{x\in\mathbb{T}:p(x)\leq k\}$ for all $k \in \omega$, where $p(x):=\sup_{n\in\omega}M_n\|q_nx\|$. 
The function $p$ is subadditive, each $B_k$ is closed, and $H_0=\bigcup_kB_k$. Hence $H_0$ is an $F_\sigma$ subgroup. Define
$$
H:=H_0\cap\mathsf{H}_{\bm a}(\J).
$$
Since $\J$ is $F_{\sigma\delta}$, Theorem \ref{thm:upperbound} implies that $H$ is an $F_{\sigma\delta}$ subgroup.

\begin{claim}\label{claim:2222222A}
    $H$ is $F_{\sigma\delta}$-complete.
\end{claim}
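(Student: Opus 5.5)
Since $H$ is already known to be $F_{\sigma\delta}$, it suffices to Wadge reduce an $F_{\sigma\delta}$-complete subset of a zero-dimensional Polish space to $H$. The natural candidate is $\J$ itself, viewed in $2^\omega$; it is $F_{\sigma\delta}$-complete by the footnote. I would copy the Wadge construction from the proof of Theorem~\ref{thm:wadge}, with the present $q_n$ and $a_n=q_n/2$ in place of those of \eqref{eq:defawadge}. Concretely, define $\Phi:2^\omega\to\mathbb{T}$ by $\Phi(A):=\sum_{m\in A} 1/q_{m+s}$ for a suitable fixed shift $s$, or more flexibly $\Phi(A):=\sum_{m\in A}\epsilon_m/q_{m+1}$ with small numerators. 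Continuity follows exactly as in that proof, since the tails are uniformly small.

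Next I would analyse the reduction along the lines of \eqref{eq:claimwadgefirstresult}. Because $q_n/(2q_m)\in\omega$ for $m<n$, the computation in that proof gives
\[
a_n\Phi(A)=\tfrac12\chi_A(n)+(\text{tail}) \quad \text{in }\mathbb{T}.
\]
The tail is at most $\sum_{m>n}q_n/(2q_m)\le 1/r_n$, which tends to $0$. Hence $\|a_n\Phi(A)\|\to 0$ along $n\notin A$ and $\|a_n\Phi(A)\|\ge \tfrac12-1/r_n$ for $n\in A$. So $\Phi(A)\in \mathsf H_{\bm a}(\J)$ if and only if $A\in\J$, the argument being verbatim as before.

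The main obstacle is the extra requirement that $\Phi(A)\in H_0$ for every $A$, i.e.\ $\sup_n M_n\|q_n\Phi(A)\|<\infty$. With numerators $1$ at level $q_n$ this fails: $q_n\Phi(A)$ picks up terms that are whole integers, but the coefficient $1/q_m$ with $m=n$ gives $1$, which is integral, so it is harmless. The real check is the tail, where $M_n\sum_{m>n}q_n/q_m\le M_n\cdot 2/r_n\to 0$. So $p(\Phi(A))<\infty$ automatically, since every term is either an integer or has small tail. I would verify this carefully, because the multiplier in $H_0$ is $q_n$ while the multiplier in $\mathsf H_{\bm a}$ is $q_n/2$. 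The point of choosing $a_n=q_n/2$ is exactly that $q_n\Phi(A)$ loses the $\tfrac12\chi_A(n)$ term, which becomes an integer, whereas $a_n\Phi(A)$ keeps it.

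Thus $\Phi[2^\omega]\subseteq H_0$, and therefore $\Phi^{-1}[H]=\Phi^{-1}[\mathsf H_{\bm a}(\J)]=\J$. So $\J\le_{\mathrm W}H$ and $H$ is $F_{\sigma\delta}$-hard. Combined with $H\in F_{\sigma\delta}$, this shows $H$ is $F_{\sigma\delta}$-complete, using \cite[Theorem 22.10]{MR1321597} as in Corollary~\ref{cor:borelrank}.
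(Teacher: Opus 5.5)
Your argument is correct and is essentially the paper's proof. With $\Phi(A)=\sum_{m\in A}1/q_m$ (shift $s=0$, numerators $1$), the integrality of $q_n/(2q_m)$ for $m<n$ gives $\Phi^{-1}[\mathsf H_{\bm a}(\J)]=\J$, and the tail bound $M_n\sum_{m>n}q_n/q_m\le 2M_n/r_n<1$ places $\Phi[2^\omega]$ inside $H_0$, so $\J\le_{\mathrm W}H$. The hedging about a shift or small numerators, and the momentary ``this fails'' in your third paragraph, are unnecessary, since the plain choice already works.
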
 
\begin{proof}
    For every $n\in\omega$, we have 
$$
\delta_n:=\sum_{m>n}\frac{q_n}{2q_m}\leq\frac{1}{r_n}.
$$
In particular, $\delta_n<\nicefrac18$ and $\lim_n\delta_n=0$. Define the continuous map $\Phi:2^\omega\to\mathbb{T}$ by  
$
\Phi(A):=\sum_{m\in A}1/q_m+\mathbb{Z}
$ 
as in \eqref{eq:definitionPhi}. As in the proof of Theorem \ref{thm:wadge}, $\|a_n\Phi(A)\|\leq\delta_n$ if $n\notin A$, while
$\|a_n\Phi(A)\|>\nicefrac38$ if $n\in A$. Consequently, 
$\Phi(A)\in\mathsf{H}_{\bm a}(\J)$ if and only if $A \in \J$. 
In addition, 
$$
p(\Phi(A))
\leq
\sup_{n\in\omega}M_n\sum_{m>n}\frac{q_n}{q_m}
\leq
\sup_{n\in\omega}\frac{2M_n}{r_n}
<1.
$$
Hence $\Phi(A)\in H_0$ for every $A\subseteq\omega$, so 
$
\Phi^{-1}[H]=\J.
$ 
Therefore $H$ is $F_{\sigma\delta}$-complete.
\end{proof}

\begin{claim}\label{claim:2222222B}
    $H$ is not Polishable.
\end{claim}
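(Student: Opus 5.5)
The plan is to run the Baire--Pettis argument from Case~1 of the proof of Theorem \ref{thm:analyticPidealscharacterization}, but with the candidate uncountable separated family placed inside $\mathsf{H}_{\bm a}(\J)$. That family must witness the failure for $H=H_0\cap\mathsf{H}_{\bm a}(\J)$, not just for $H_0$.

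\emph{Reduction to a separated family.} Suppose, towards a contradiction, that $\tau$ is a Polish group topology on $H$ with the same Borel sets as the subspace topology from $\mathbb{T}$. Then $H=\bigcup_k (B_k\cap H)$, and each $B_k\cap H$ is $\tau$-Borel. By the Baire category theorem some $B_k\cap H$ is $\tau$-nonmeager, and Pettis' theorem gives a $\tau$-neighbourhood $U$ of $0$ with
$$U\subseteq (B_k\cap H)-(B_k\cap H)\subseteq B_{2k}.$$
This uses the subadditivity and symmetry of $p$. If $D$ is a countable $\tau$-dense subset of $H$, we get $H\subseteq D+B_{2k}$. As in Case~1, it then suffices to exhibit an uncountable $P\subseteq H$ with $p(x-y)>4k$ for all distinct $x,y\in P$: each translate $d+B_{2k}$ meets $P$ in at most one point, so $P\cap(D+B_{2k})$ is countable, a contradiction.

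\emph{Construction of $P$.} Choose $A$ with $9A/10>4k+1$, and for $n\ge 4A$ take $c_n$ to be an \emph{even} integer with $|c_n-Ar_n/M_n|\le 2$. Set
$$x_\eta:=\sum_{n\ge 4A}\eta(n)\frac{c_n}{q_{n+1}}+\mathbb{Z}$$
for $\eta\in 2^{\{n:n\ge 4A\}}$. The estimates from Case~1 carry over with negligible changes, because the rounding error in $c_n$ is $O(1/r_n)$ relative to $c_n/r_n$:
\begin{itemize}
\item $M_n\|q_nx_\eta\|<2A$ for $n\ge 4A$, and $\le M_n/2$ otherwise, so $x_\eta\in H_0$;
\item for $\eta\neq\theta$ first differing at $n$, $M_n\|q_n(x_\eta-x_\theta)\|>9A/10-o(1)>4k$.
\end{itemize}
The recursion now starts at $q_0=2$ rather than $1$, so I would recheck the tail bound $\sum_{m>n}c_mq_n/q_{m+1}\le 2A/r_n$; it only uses $r_\ell\ge 2$.

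\emph{Membership in $\mathsf{H}_{\bm a}(\J)$.} This is the new point, and I expect it to be the main obstacle. I will in fact show $\lim_n a_nx_\eta=0$ in $\mathbb{T}$, which suffices since $\mathrm{Fin}\subseteq\J$. Write
$$a_nx_\eta=\sum_{m\ge 4A}\eta(m)\,c_m\,\frac{q_n}{2q_{m+1}}.$$
\begin{itemize}
\item For $m+1<n$, the coefficient $q_n/(2q_{m+1})$ is an integer, so these terms vanish in $\mathbb{T}$.
\item For $m=n-1$, the coefficient is $1/2$, so the term is $c_{n-1}/2$. This is an integer precisely because each $c_m$ was chosen even, and this is where the parity choice is essential.
\item The remaining terms ($m\ge n$) have total size at most $c_n/(2r_n)+A/r_n\le A/(2M_n)+O(1/r_n)$, which tends to $0$.
\end{itemize}
Hence $x_\eta\in H_0\cap\mathsf{H}_{\bm a}(\J)=H$. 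So $P:=\{x_\eta\}$ is an uncountable $4k$-separated subset of $H$, contradicting $H\subseteq D+B_{2k}$. Therefore $H$ is not Polishable.
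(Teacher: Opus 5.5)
Your proposal is correct and follows essentially the same route as the paper. Both use the Baire--Pettis reduction to an uncountable $4k$-separated family, the same points $x_\eta=\sum_{n}\eta(n)c_n/q_{n+1}$ with even $c_n$ (the paper takes $c_n:=2\lfloor Lr_n/(2M_n)\rfloor$), and the observation that evenness makes the $m=n-1$ term an integer after multiplying by $a_n=q_n/2$, so that $x_\eta\in H_0\cap\mathsf{H}_{\bm a}(\mathrm{Fin})\subseteq H$. Your explicit use of $B_k\cap H$ in the Pettis step is just a slightly more careful rendering of the paper's appeal to Case~1.
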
 
\begin{proof}
The Baire category and Pettis argument used in Case~1 of the proof of Theorem \ref{thm:analyticPidealscharacterization} shows that it is enough to prove that, for every $k\in\omega$, there exists an uncountable set $P\subseteq H$ such that 
$
p(x-y)>4k
$ 
for all distinct $x,y\in P$. Indeed, otherwise a Polish group topology on $H$ would yield a countable set $D\subseteq H$ such that $H\subseteq D+B_{2k}$, whereas each translate of $B_{2k}$ could contain at most one point of $P$.

Fix $k \in \omega$ and choose $L \in \omega$ such that $\nicefrac{9L}{10}>4k$. Now, define 
$$
x_\eta:=
\sum_{n=4L}^{\infty}\eta(n)\frac{c_n}{q_{n+1}}+\mathbb{Z} 
\qquad \text{ for all }\eta\in2^{\{n\in\omega:n\geq4L\}},
$$
where $c_n:=2\left\lfloor\frac{Lr_n}{2M_n}\right\rfloor$ for each $n\ge 4L$. The estimates from Case~1 of the proof of Theorem \ref{thm:analyticPidealscharacterization} (with $L$ in place of $A$), give
$$
\sum_{m>n}c_m\frac{q_n}{q_{m+1}}
\leq\frac{2L}{r_n}
\qquad\text{and}\qquad
p(x_\eta)\leq2L.
$$
Moreover, since every $c_n$ is even, all the terms preceding the $n$th one become integers after multiplication by $a_n$, and hence
$$
\|a_nx_\eta\|
\leq
\frac{c_n}{2r_n}+\frac{L}{r_n}
\leq
\frac{L}{2M_n}+\frac{L}{r_n},
$$
which converges to $0$ as $n\to \infty$. 
Therefore
$$
x_\eta\in H_0\cap\mathsf{H}_{\bm a}(\mathrm{Fin})\subseteq H
\qquad \text{ for all }\eta\in2^{\{n\in\omega:n\geq4L\}}.
$$
Finally, if $\eta\neq \theta$ and $n:=\min \{k\geq4L: \eta(k)\neq \theta(k)\}$, the same estimates yield
$$
p(x_\eta-x_\theta)
\geq
M_n\|q_n(x_\eta-x_\theta)\|
\geq
L-\frac{1+L}{50M_n}
>
\frac{9L}{10}
>
4k.
$$
Thus 
$
P:=\{x_\eta:\eta\in2^{\{n\in\omega:n\geq4L\}}\}
$ 
has the required properties.
\end{proof}

Therefore $H$ satisfies the claimed properties. The last assertion follows from Theorem \ref{thm:analyticPidealscharacterization}.
\end{proof}

\medskip

\begin{proof}
    [Proof of Corollary \ref{corollary:lupini}]
    Since $H$ is an infinite proper subgroup of $\mathbb{T}$, it is neither open nor closed. Thanks to Theorem \ref{thm:analyticPidealscharacterization}, $H$ is Polishable. In addition, since analytic $P$-ideals are $F_{\sigma\delta}$, it follows by Theorem \ref{thm:upperbound} that $H$ is $F_{\sigma\delta}$. We conclude by \cite[Theorem 1.1]{MR5045103} that $H$ is either $F_{\sigma}$-complete or $F_{\sigma\delta}$-complete or 
    $\mathsf{D}(\mathbf{\Pi}^0_2)$-complete. 
\end{proof}

\medskip

\begin{proof}
[Proof of Proposition \ref{prop:locallyquasiconvex}]
\ref{item:1countablygenerated} $\implies$ \ref{item:2countablygenerated}.  
First, suppose that $\I=\mathrm{Fin}$ or $\I$ is isomorphic to $\mathrm{Fin}\oplus \mathcal{P}(\omega)$. Then $\I$ is Rudin--Keisler equivalent to $\mathrm{Fin}$, hence by \cite[Theorem 2.6(i)]{FKLT26} we get $\mathscr{H}(\I)=\mathscr{H}(\mathrm{Fin})$. In such case, the implication follows by \cite[Corollary 1]{MR2729343}. 

Hence, let us suppose hereafter that $\I$ is isomorphic to $\J:=\emptyset \otimes \mathrm{Fin}$. Again by \cite[Theorem 2.6(i)]{FKLT26}, it is enough to prove the claim for $\J$. To this aim, fix a sequence $\bm b=(b_{i,j}:(i,j)\in\omega^2)\in \omega^{\omega^2}$ and observe that 
$$
\mathsf{H}_{\bm b}(\J)=\left\{x \in \mathbb{T}: \lim_{j\to \infty} b_{i,j}x=0 \text{ for all }i \in \omega\right\}. 
$$
Now, define the lscsm $\varphi: \mathcal{P}(\omega^2)\to [0,\infty]$ 
by $\varphi(A):=\sup_i 2^{-i}\min\{1,|A_i|\}$ 
for every $A\subseteq \omega^2$, so that $\J=\{A\subseteq \omega^2: \inf_{F \in [\omega^2]^{<\omega}}\varphi(A\setminus F)=0\}$, 
cf. \cite[Example 1.2.3(b)]{MR1711328}. 

Endow $c_0(\mathrm{Fin})$ with the supremum metric
$d_\infty(\bm u,\bm v):=\sup_j\|u_j-v_j\|$, and define the group $G:=\mathbb T\times \left(c_0(\mathrm{Fin})\right)^\omega$. Thus, endow $G$ with its product topology. In this way, both $c_0(\mathrm{Fin})$ and $G$ are Polish groups. Define the homomorphism $T:\mathsf{H}_{\bm{b}}(\J)\to G$ by
\[
\forall x \in \mathsf{H}_{\bm{b}}(\J),\quad 
T(x):=
\left(x,\bigl((b_{i,j}x:j\in\omega): i \in \omega\bigr)\right).
\]
Of course, $T$ is a well-defined injective map. 
\begin{claim}\label{claim:Polishgroup}
    $T[\mathsf{H}_{\bm{b}}(\J)]$ is closed in $G$.
\end{claim}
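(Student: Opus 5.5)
We argue directly with sequences, since $G=\mathbb T\times (c_0(\mathrm{Fin}))^\omega$ is metrizable. Take a sequence $(x_m:m\in\omega)$ in $\mathsf H_{\bm b}(\J)$ with $T(x_m)\to g$ in $G$. Write $g=(x,(\bm u^i:i\in\omega))$ with $x\in\mathbb T$ and $\bm u^i=(u^i_j:j\in\omega)\in c_0(\mathrm{Fin})$. We must show that $x\in\mathsf H_{\bm b}(\J)$ and $T(x)=g$.

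First, convergence in the product topology means two things. In the first coordinate, $x_m\to x$ in $\mathbb T$. For each fixed $i\in\omega$, we have $d_\infty\bigl((b_{i,j}x_m:j\in\omega),\bm u^i\bigr)\to 0$ as $m\to\infty$. Convergence in $d_\infty$ implies coordinatewise convergence, so $b_{i,j}x_m\to u^i_j$ for every $(i,j)\in\omega^2$. On the other hand, multiplication by the integer $b_{i,j}$ is continuous on $\mathbb T$, so $b_{i,j}x_m\to b_{i,j}x$. Since $\mathbb T$ is Hausdorff, this gives $u^i_j=b_{i,j}x$ for all $i,j$.

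Next, for each $i\in\omega$ we have $(b_{i,j}x:j\in\omega)=\bm u^i\in c_0(\mathrm{Fin})$, that is, $\lim_j b_{i,j}x=0$. By the description of $\mathsf H_{\bm b}(\J)$ given just before the claim, this means $x\in\mathsf H_{\bm b}(\J)$. Moreover $T(x)=(x,(\bm u^i:i\in\omega))=g$. Hence $T[\mathsf H_{\bm b}(\J)]$ is sequentially closed, and therefore closed in $G$.

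The only point needing care is that $c_0(\mathrm{Fin})$ carries the sup metric rather than the product topology, so we must check that the limit $\bm u^i$ is identified with $(b_{i,j}x:j\in\omega)$. This follows because sup-metric convergence dominates pointwise convergence. Membership of $\bm u^i$ in $c_0(\mathrm{Fin})$ is automatic, since it is the limit inside the space $c_0(\mathrm{Fin})$ itself, which is closed in $\ell^\infty(\mathbb T)$ under $d_\infty$. The remaining steps are routine.
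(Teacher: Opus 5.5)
Your proof is correct and follows the same route as the paper: you identify $u^i_j=b_{i,j}x$ by comparing the two limits of $b_{i,j}x_m$ in $\mathbb{T}$, then use $\bm u^i\in c_0(\mathrm{Fin})$ to conclude that $x\in\mathsf H_{\bm b}(\J)$ and $T(x)=g$. Your closing remark on why $\bm u^i\in c_0(\mathrm{Fin})$ is more than you need: it holds simply because the limit $g$ is taken in $G$.
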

\begin{proof}
Suppose that $\lim_n T(x_n)=(x,(\bm u^i)_{i\in\omega})$ in $G$, where
$\bm u^i=(u^i_j:j\in\omega)\in c_0(\mathrm{Fin})$ for all $i \in \omega$. 
Then $\lim_n x_n= x$ in
$\mathbb T$, and so $\lim_n b_{i,j}x_n= b_{i,j}x$ for all $i,j\in \omega$. On the other hand, convergence in the $i$th copy of $c_0(\mathrm{Fin})$ gives
$\lim_n b_{i,j}x_n= u^i_j$. Hence $u^i_j=b_{i,j}x$ for all $i,j \in \omega$. Since
$\bm u^i\in c_0(\mathrm{Fin})$, we have $\lim_j b_{i,j}x=0$ for
every $i\in \omega$, and therefore $x\in \mathsf{H}_{\bm{b}}(\J)$. Thus
$(x,(\bm u^i)_{i\in \omega})=T(x) \in T[\mathsf{H}_{\bm{b}}(\J)]$.
\end{proof}

It follows by Claim \ref{claim:Polishgroup} that the topology $\tau$ on $\mathsf{H}_{\bm{b}}(\J)$ pulled back from $G$ through $T$ is Polish. By the uniqueness of such topology and using Theorem \ref{thm:analyticPidealscharacterization}, we obtain that $\tau$ is generated by the metric $d$ defined by 
$$
d(x,y):=\|x-y\|+\inf\{\varepsilon>0: \varphi(\{(i,j)\in \omega^2: \|b_{i,j}(x-y)\|\ge \varepsilon\})\le \varepsilon\}
$$
for all $x,y \in \mathsf{H}_{\bm{b}}(\J)$. 

It remains to prove local quasi-convexity. To this aim, fix a neighbourhood $U$ of $0$ in $\mathsf{H}_{\bm{b}}(\J)$. Since $\tau$ is metrized by $d$, there are $r\in\omega$ and positive integers
$M_\star,M_0,\ldots,M_r$ such that the set
\[
V:=
\left\{
x\in \mathsf{H}_{\bm{b}}(\J):
\|x\|\leq\frac{1}{4M_\star}
\text{ and }
\sup_{j\in \omega}\|b_{i,j}x\|\leq\frac{1}{4M_i}
\text{ for all }i\leq r
\right\}
\]
is a neighborhood of $0$ contained in $U$. 

To complete the proof, it is enough to show that, for each $y \in \mathsf{H}_{\bm{b}}(\J)\setminus V$ there exists $q \in \omega$ such that $\sup_{x \in V}\|qx\|\le \nicefrac{1}{4}<\|qy\|$. Fix $y \in \mathsf{H}_{\bm{b}}(\J)\setminus V$ and consider the following two cases:
\begin{enumerate}
    \item  Suppose that
$\|y\|>\nicefrac{1}{4M_\star}$. Thanks to 
Claim \ref{claim:multipleestimate}, there exists $1\leq q\leq M_\star$ such
that $\|qy\|>\nicefrac14$. On the other hand, for every $x\in V$,
$\|qx\|\leq q\|x\|\leq\nicefrac14$. 
    \item  Suppose that $\|y\|\leq \nicefrac{1}{4M_\star}$. Since $y\notin V$, there are
$i\leq r$ and $j\in\omega$ such that
$\|b_{i,j}y\|>1/(4M_i)$. Again by Claim \ref{claim:multipleestimate}, there
exists $1\leq \ell\leq M_i$ such that
$\|\ell b_{i,j}y\|>\nicefrac14$. Put $q:=|\ell b_{i,j}|$. Then, for every $x\in V$,
\[
\|qx\|=\|\ell b_{i,j}x\|\leq \ell\|b_{i,j}x\|\leq\nicefrac14,
\]
while $\|qy\|=\|\ell b_{i,j}y\|>\nicefrac14$.
\end{enumerate}
Therefore $(\mathsf{H}_{\bm{b}}(\J), \tau)$ is a locally quasi-convex Polishable subgroup. 

\medskip

\ref{item:2countablygenerated} $\implies$ \ref{item:1countablygenerated}. Since every $\I$-characterized subgroup is, in particular, a Polishable subgroup, it follows by Theorem \ref{thm:analyticPidealscharacterization} that $\I$ is an analytic $P$-ideal. Hence by 
the equivalence \ref{item:2solecki} $\Longleftrightarrow$ \ref{item:3solecki} in Theorem \ref{thm:solecki} there exists a finite lscsm $\varphi: \mathcal{P}(\omega)\to [0,\infty]$ such that $\I=\{S\subseteq \omega: \lim_n \varphi(S\setminus n)=0\}$. 

Now, we prove our claim by contrapositive: suppose $\I$ is an analytic $P$-ideal such that $\I \neq \mathrm{Fin}$ and that $\I$ is isomorphic neither to $\mathrm{Fin}\oplus \mathcal{P}(\omega)$ nor to $\emptyset \otimes \mathrm{Fin}$. We need to show that there exists $\bm{a} \in \omega^\omega$ such that $\mathsf{H}_{\bm{a}}(\I)$ is not locally quasi-convex Polishable. 

Thanks to \cite[Corollary 1.2.11]{MR1711328}, it is possible to pick $S \in \I^+$ such that every infinite $A\subseteq S$ contains some infinite $B\subseteq A$ with $B \in \I$. Since $\varphi$ is a lscsm, we easily obtain 
\begin{equation}\label{eq:limitlongS}
\lim_{n\to \infty, n \in S} \varphi(\{n\})=0.
\end{equation}
Now, fix a real $\delta \in (0,\min\{\nicefrac{1}{4}, \lim_n \varphi(S\setminus n)\})$. Again by the fact that $\varphi$ is a lscsm, for each $n \in \omega$ it is possible to fix a nonempty finite set $E_n \subseteq S$ such that $\min E_n>n$ and $\varphi(E_n)>\delta$. Accordingly, set 
$$
\forall n \in\omega, \quad 
L_n:=\max\{16, |E_0|,\ldots,|E_n|\}.
$$

Now, we define recursively a sequence $\bm a\in \omega^\omega$ as it follows. Set $a_0:=1$ and, if $a_0,\ldots,a_n$ have been defined for some $n \in \omega$, let $a_{n+1}$ be a positive multiple of $2a_n$ such that $a_{n+1}>2^na_nL_{n+1}$. Observe that this implies that
\begin{equation}\label{eq:growth-conditions}
\frac{L_{n+1}a_m}{2a_{n+1}}<2^{-(n+1)}
\quad\text{and}\quad
\frac{a_m}{2a_{n+1}}<2^{-(n+1)-4}.
\end{equation}
for all $m,n \in \omega$ with $m\le n$. It follows by \eqref{eq:growth-conditions} that $\frac{L_na_m}{2a_n}<2^{-n}$ whenever $m<n$, and that $\sum_{j>m}\frac{a_m}{2a_j}<\nicefrac{1}{8}$. Taking into account that $a_m/a_n$ is an even integer for all $n<m$, it follows that 
$$
x_n:=\frac{1}{2a_n}\in \mathsf{H}_{\bm{a}}(\mathrm{Fin})\subseteq \mathsf{H}_{\bm{a}}(\I)
$$
for all $n \in \omega$: indeed, the sequence $(a_mx_n: m \in \omega)\in \mathbb{T}^\omega$ is finitely supported. Set also $d:=d_{\varphi,\bm{a}}$ as in \eqref{eq:metricpolishability}, so that $(\mathsf{H}_{\bm{a}}(\I), d)$ is a Polishable subgroup by Theorem \ref{thm:analyticPidealscharacterization}. We shall prove that $(\mathsf{H}_{\bm{a}}(\I), d)$ is not locally quasi-convex. 
\begin{claim}\label{claim:uniformdlekjh}
We have 
$$
\lim_{n\to \infty, n\in S}d(\ell x_n,0)=0,
$$
uniformly for $1\le \ell \le L_n$. 
\end{claim}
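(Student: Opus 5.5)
The plan is to bound the two summands of $d(\ell x_n,0)=\|\ell x_n\|+p_\varphi\bigl((a_m\ell x_n:m\in\omega)\bigr)$ separately. Both bounds will be expressed through quantities that do not depend on $\ell\in\{1,\ldots,L_n\}$, which gives the uniformity at once. The only inputs are the arithmetic of $\bm a$ (each $a_{n+1}$ is a multiple of $2a_n$) and the growth condition \eqref{eq:growth-conditions}. The third input is \eqref{eq:limitlongS}, which says that $\varphi(\{n\})\to 0$ along $S$.

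First, for the $\mathbb{T}$-part, $\|\ell x_n\|\le \ell/(2a_n)\le L_na_0/(2a_n)<2^{-n}$ for $n\ge 1$. This is the consequence $\frac{L_na_m}{2a_n}<2^{-n}$ (for $m<n$) of \eqref{eq:growth-conditions}, taken with $m=0$ and $a_0=1$.

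Second, I compute the coordinates $\|a_m\ell x_n\|=\|\ell a_m/(2a_n)\|$ in three cases.
\begin{itemize}
\item If $m>n$, then $a_m/(2a_n)$ is an integer, so $a_m\ell x_n=0$ in $\mathbb{T}$.
\item If $m=n$, then $a_n\ell x_n=\ell/2$, whose norm is at most $\nicefrac12$.
\item If $m<n$, then $\|a_m\ell x_n\|\le L_na_m/(2a_n)<2^{-n}$, again by \eqref{eq:growth-conditions}.
\end{itemize}
Hence, for every $\varepsilon>2^{-n}$, the set $A_\varepsilon\bigl((a_m\ell x_n)_m\bigr)$ is contained in $\{n\}$. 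Now take any $\varepsilon>\max\{2^{-n},\varphi(\{n\})\}$. Monotonicity of $\varphi$ gives $\varphi(A_\varepsilon)\le\varphi(\{n\})<\varepsilon$, and therefore, by \eqref{eq:definitionpvarphi}, $p_\varphi\bigl((a_m\ell x_n)_m\bigr)\le\max\{2^{-n},\varphi(\{n\})\}$.

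Combining the two estimates gives
$$
\sup_{1\le \ell\le L_n} d(\ell x_n,0)\le 2^{-n}+\max\{2^{-n},\varphi(\{n\})\}.
$$
The right-hand side tends to $0$ as $n\to\infty$ with $n\in S$, by \eqref{eq:limitlongS}. This proves the claim.

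There is no real obstacle in this argument. The only point needing care is the divisibility bookkeeping: one must check that $a_m/(2a_n)\in\omega$ for $m>n$, which follows inductively from the fact that $a_{k+1}$ is a multiple of $2a_k$. One must also check that the coordinate $m=n$ contributes only the single index $n$, whose $\varphi$-mass is exactly what \eqref{eq:limitlongS} controls along $S$. This is precisely why the limit is taken along $S$ rather than along all of $\omega$.
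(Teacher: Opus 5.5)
Your proof is correct and follows the paper's argument essentially verbatim. Both use the same three-case analysis ($m<n$, $m=n$, $m>n$) of the coordinates $\|a_m\ell x_n\|$ to get $\{m:\|a_m\ell x_n\|\ge\gamma_n\}\subseteq\{n\}$ with $\gamma_n=\max\{2^{-n},\varphi(\{n\})\}$, hence an $\ell$-independent bound on $d(\ell x_n,0)$, and then conclude via \eqref{eq:limitlongS}; the only cosmetic difference is that you bound $\|\ell x_n\|$ by $2^{-n}$ using the growth condition with $m=0$, whereas the paper keeps the bound $L_n/(2a_n)$.
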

\begin{proof}
Observe that $\|\ell x_n\|\leq L_n/(2a_n)$ for each $n,\ell \in \omega$ with $1\le \ell\le L_n$. 
Moreover, 
since $\frac{L_na_m}{2a_n}<2^{-n}$ whenever $m<n$, we get 
$\|a_m\ell x_n\|<2^{-n}$ for every $m<n$,
whereas $a_m\ell x_n=0$ for all $m>n$, and $a_n\ell x_n=\ell/2$. This implies that, if $\gamma_n:=\max\{2^{-n},\varphi(\{n\})\}$, then 
$
\left\{m \in \omega: \|a_m\ell x_n\|\ge \gamma_n\right\}\subseteq \{n\}.
$ 
It follows that 
$$
d(\ell x_n,0)\le \frac{L_n}{2a_n}+\gamma_n
$$
for all $n,\ell \in \omega$ with $1\le \ell\le L_n$. We obtain the conclusion by the limit in \eqref{eq:limitlongS}. 
\end{proof}

Finally, set $U:=\{x \in \mathsf{H}_{\bm{a}}(\I): d(x,0)<\delta\}$ and suppose for the sake of contradiction that there is a quasi-convex neighborhood $V$ of $0$ in $\mathsf{H}_{\bm{a}}(\I)$ which is contained in $U$. 
Since $V$ is a $d$-neighborhood of $0$, it is possible to pick $\eta>0$ such that 
$$
\{x\in \mathsf{H}_{\bm a}(\I):d(x,0)<\eta\}\subseteq V.
$$
By Claim \ref{claim:uniformdlekjh}, we may choose a sufficiently large $N\in \omega$ such 
that, for every $n\in S\setminus N$ and every $1\leq\ell\leq L_n$, we have
$d(\ell x_n,0)<\eta$. Since $E_N\subseteq S\setminus N$ and
$|E_N|\leq L_n$ for every $n\in E_N$, it follows that
$\ell x_n \in V$ for all $n,\ell \in \omega$ with $n \in E_N$ and
$1\le \ell\le |E_N|$.

Define 
\[
y:=\sum_{n\in E_N}x_n \in \mathsf{H}_{\bm a}(\I).
\]
\begin{claim}\label{claimyinV}
$y\in V$.
\end{claim}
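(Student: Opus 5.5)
The plan is to exploit quasi-convexity of $V$. In this context that should mean: $V$ is quasi-convex if every $z\in\mathsf{H}_{\bm a}(\I)\setminus V$ admits a $\tau$-continuous character $\phi$ with $\sup_{x\in V}\|\phi(x)\|\le\nicefrac14<\|\phi(z)\|$.

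Suppose towards a contradiction that $y\notin V$. Then there is a continuous character $\phi$ of $(\mathsf{H}_{\bm a}(\I),d)$ with $\|\phi(x)\|\le\nicefrac14$ for all $x\in V$, but $\|\phi(y)\|>\nicefrac14$. Writing $E_N=\{n_1,\ldots,n_s\}$ with $s=|E_N|$, we have $\ell x_{n_i}\in V$ for all $1\le\ell\le s$ and all $i$. Hence
\[
\|\ell\,\phi(x_{n_i})\|=\|\phi(\ell x_{n_i})\|\le\nicefrac14
\quad\text{for all }1\le \ell\le s.
\]
Apply Claim \ref{claim:multipleestimate} with $L=s$ and with $y$ replaced by $\phi(x_{n_i})$ (the hypothesis holds since the maximum is $\le\nicefrac14$). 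Strictly, the claim as stated needs $<\nicefrac14$. So we first shrink: replace $V$-membership by membership in $\{d<\eta\}$ and use $2\ell x_n$ with $\ell\le L_n/2$. Alternatively, we redo the minimality argument of Claim \ref{claim:multipleestimate} with $\le$; it gives $s\|\phi(x_{n_i})\|\le\nicefrac14+$(tiny), say $s\|\phi(x_{n_i})\|\le \nicefrac12$ at worst.

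The cleanest route is as follows. If $\|\ell t\|\le\nicefrac14$ for all $1\le\ell\le s$, then by induction on $\ell$ we get $\ell\|t\|\le\nicefrac14$. Indeed, if $(\ell-1)\|t\|\le\nicefrac14$ then $\ell\|t\|\le \nicefrac14+\|t\|\le\nicefrac12$, so $\|\ell t\|=\ell\|t\|$, which is therefore $\le\nicefrac14$. Thus $\|\phi(x_{n_i})\|\le 1/(4s)$ for each $i$.

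By additivity of $\phi$,
\[
\|\phi(y)\|\le\sum_{i=1}^s\|\phi(x_{n_i})\|\le s\cdot\frac{1}{4s}=\nicefrac14,
\]
contradicting $\|\phi(y)\|>\nicefrac14$. Hence $y\in V$.

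The only step requiring care is the inductive version of Claim \ref{claim:multipleestimate} with non-strict inequalities, handled as above. The key input is simply that all multiples $\ell x_n$ with $\ell\le |E_N|$ lie in $V$, which was arranged right before the claim. This is presumably then contrasted with $d(y,0)\ge\delta$, coming from $\varphi(E_N)>\delta$ and $\|a_n y\|\approx\nicefrac12$ for $n\in E_N$, to contradict $V\subseteq U$.
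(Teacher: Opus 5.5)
Your proof is correct and follows the paper's argument. You assume $y\notin V$, take the separating continuous character $\phi$, use $\ell x_n\in V$ for $1\le \ell\le |E_N|$ to get $\|\phi(x_n)\|\le 1/(4|E_N|)$, and sum to obtain $\|\phi(y)\|\le\nicefrac14$. Your induction with non-strict inequalities is the same as the paper's minimal-counterexample argument, so the detour about shrinking $V$ or invoking Claim \ref{claim:multipleestimate} with strict inequalities is unnecessary.
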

\begin{proof}
Suppose for the sake of contradiction that $y\notin V$. 
On the one hand, since $V$ is quasi-convex, there exists a continuous character $\phi \in \widehat{\mathsf{H}_{\bm{a}}(\I)}$ such that
\[
\sup_{z\in V}\|\phi(z)\|\leq \frac14<\|\phi(y)\|.
\]
For every $n\in E_N$ and every
$1\leq\ell\leq |E_N|$, we have $\ell x_n\in V$, and therefore 
$
\|\ell \phi(x_n)\|=\|\phi(\ell x_n)\|\leq \nicefrac14.
$ 
This implies that
\[
\forall n \in E_N, \qquad 
\|\phi(x_n)\|
\leq \frac{1}{4|E_N|}.
\]
Indeed, otherwise, letting $m\leq |E_N|$ be the least positive integer
such that $m\|\phi(x_n)\|>\nicefrac14$, 
we would have $\nicefrac14
<
m\|\phi(x_n)\|
\leq
\nicefrac12$, 
and hence 
$\|m\phi(x_n)\|
=
m\|\phi(x_n)\|
>
\nicefrac14$, 
a contradiction. 
Consequently,
\[
\|\phi(y)\|
\leq
\sum_{n\in E_N}\|\phi(x_n)\|
\leq
\sum_{n\in E_N}\frac{1}{4|E_N|}=\frac{1}{4}.
\]
This provides the desired contradiction. 
\end{proof}

\begin{claim}\label{claim:ynotinU}
$y\notin U$.
\end{claim}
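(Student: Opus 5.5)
The plan is to show directly that $d(y,0)\ge\delta$, where $d=d_{\varphi,\bm a}$ is the metric in \eqref{eq:metricpolishability}. Since $\|y\|\ge 0$, it suffices to prove
$$
\inf\{\varepsilon>0:\varphi(\{m\in\omega:\|a_my\|\ge\varepsilon\})\le\varepsilon\}\ge\delta .
$$
The only ingredients are a lower bound on $\|a_my\|$ for $m\in E_N$ and the choice $\varphi(E_N)>\delta$.

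\emph{Step 1: lower bound on $\|a_my\|$.} Fix $m\in E_N$ and split $a_my=\sum_{n\in E_N}a_mx_n$ according to the position of $n$ relative to $m$.
\begin{enumerate}[label={\rm (\alph*)}]
\item If $n<m$, then $a_m/a_n$ is an even integer by the recursive choice of $\bm a$. Hence $a_mx_n=a_m/(2a_n)=0$ in $\mathbb T$.
\item If $n=m$, the term is $a_mx_m=\nicefrac12$.
\item If $n>m$, the term is $a_m/(2a_n)$. By \eqref{eq:growth-conditions}, $\sum_{j>m}a_m/(2a_j)<\nicefrac18$.
\end{enumerate}
So, as a real representative, $a_my=\nicefrac12+t$ with $0\le t<\nicefrac18$. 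Therefore $\|a_my\|\ge\nicefrac12-\nicefrac18=\nicefrac38$ for every $m\in E_N$.

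\emph{Step 2: bound on the infimum.} Let $\varepsilon\in(0,\delta)$. Since $\delta<\nicefrac14<\nicefrac38$, Step 1 gives $E_N\subseteq\{m:\|a_my\|\ge\varepsilon\}$. By monotonicity of $\varphi$ and the choice of $E_N$,
$$
\varphi(\{m:\|a_my\|\ge\varepsilon\})\ge\varphi(E_N)>\delta>\varepsilon .
$$
Thus no $\varepsilon<\delta$ lies in the set whose infimum defines $p_\varphi((a_my:m\in\omega))$. Hence that infimum is at least $\delta$, so $d(y,0)\ge\delta$ and $y\notin U$.

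The only delicate point is Step 1: one must check the parity and divisibility of $a_m/a_n$ for $n<m$ and the tail estimate from \eqref{eq:growth-conditions}. Both are already arranged in the recursive construction of $\bm a$. Combined with Claim~\ref{claimyinV}, this gives $y\in V\setminus U$, contradicting $V\subseteq U$.
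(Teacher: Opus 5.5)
Your proposal is correct and follows essentially the same route as the paper's proof. You decompose $a_my$ into $\tfrac12$ plus a nonnegative tail below $\tfrac18$ (using that $a_m/a_n$ is an even integer for $n<m$ and the growth conditions). You then use $\delta<\tfrac14$, $E_N\subseteq\{m:\|a_my\|\ge\varepsilon\}$, and $\varphi(E_N)>\delta$ to get $d(y,0)\ge\delta$. The only difference is that you record the slightly sharper bound $\|a_my\|\ge\tfrac38$, where the paper states $\|a_my\|>\tfrac14$.
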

\begin{proof}
    Fix
$m\in E_N$ and observe that 
$$
0\leq
\sum_{\substack{n\in E_N\\ n>m}}\frac{a_m}{2a_n}
\leq
\sum_{n>m}\frac{a_m}{2a_n}
<
\frac18.
$$
Since $a_m/a_n$ is an even integer for all $n<m$, it follows that 
$$
a_my=
\frac{1}{2}+\sum_{\substack{n\in E_N\\ n>m}}\frac{a_m}{2a_n},
$$
so that $\|a_my\|>\nicefrac{1}{4}$. Since $\delta<\nicefrac{1}{4}$ it follows that $E_N\subseteq \{m\in \omega: \|a_my\|\ge \varepsilon\}$ for all $\varepsilon \in (0,\delta)$. Hence by monotonicity 
\[
\varphi(\{m\in\omega:\|a_my\|\geq\varepsilon\})
\geq
\varphi(E_N)
>
\delta
>
\varepsilon.
\]
Therefore $d(y,0)\geq\delta$, i.e., $y\notin U$, 
\end{proof}

The final contradiction comes putting together Claim \ref{claimyinV}, Claim \ref{claim:ynotinU}, and the inclusion $V\subseteq U$. Therefore $(\mathsf{H}_{\bm{a}}(\I),d)$ is not locally quasi-convex. 
\end{proof}



\medskip

\begin{proof}
[Proof of Proposition \ref{prop:locallyquasiconvex2}] 
    \ref{item:1countablygenerated222} $\implies$ \ref{item:2countablygenerated222}. By the proof of the implication \ref{item:1countablygenerated} $\implies$ \ref{item:2countablygenerated} of Proposition \ref{prop:locallyquasiconvex}, we have $\mathscr{H}(\mathrm{Fin})=\mathscr{H}(\mathrm{Fin}\oplus \mathcal{P}(\omega))$. Hence, pick a characterized subgroup $H=\mathsf H_{\bm a}(\mathrm{Fin})$. Thanks to Proposition \ref{prop:locallyquasiconvex}, $H$ is locally quasi-convex Polishable. 
    Its finer Polish group topology is generated by the translation-invariant metric
$$
\rho_{\bm a}(x,y):=
\max\left\{\|x-y\|,\,\sup_{n\in\omega}\|a_n(x-y)\|\right\},
$$
see e.g. \cite{MR2388789}. Now, set $U:=B_{\rho_{\bm a}}(0,\nicefrac18)$. We claim that 
$$
U_m=B_{\rho_{\bm a}}\left(0,\frac{1}{8m}\right)
\qquad\text{for every }m\geq1.
$$
Indeed, the inclusion from right to left follows from
$\rho_{\bm a}(kx,0)\leq k\rho_{\bm a}(x,0)$. Conversely, if $x\in U_m$, then
$\rho_{\bm a}(kx,0)<\nicefrac18$ for every $1\leq k\leq m$. Applying Claim
\ref{claim:multipleestimate} to $x$ and to $a_nx$, for every $n\in\omega$, we obtain
$$
m\|x\|<\frac18
\quad\text{and}\quad
m\|a_nx\|<\frac18.
$$
Hence $\rho_{\bm a}(x,0)<1/(8m)$, proving the claim. Therefore $(U_m:m\geq1)$ is a neighborhood basis at $0$, that is, the Polish group topology of $H$ is UFSS. 

    \medskip

    \ref{item:2countablygenerated222} $\implies$ \ref{item:1countablygenerated222}. Thanks to Proposition \ref{prop:locallyquasiconvex}, $\I=\mathrm{Fin}$ or $\I$ is isomorphic to $\mathrm{Fin}\oplus \mathcal{P}(\omega)$ or $\emptyset \otimes \mathrm{Fin}$. Hereafter, suppose for the sake of contradiction that $\I=\emptyset \otimes \mathrm{Fin}$. To complete the proof, it is enough to construct an $\emptyset\otimes\mathrm{Fin}$-characterized subgroup of $\mathbb{T}$ whose Polish topology is not UFSS. 

    To this aim, let $(S_i:i\in\omega)$ be a partition of $\omega$ into infinite sets, and write
$S_i=\{s_i(j):j\in\omega\}$ increasingly for each $i \in \omega$. Define
$$
D_k:=2^{2^k},
\qquad
b_{i,j}:=\frac{D_{s_i(j)}}{2},
\quad \text{ and }\quad 
H:=\mathsf H_{\bm b}(\emptyset\otimes\mathrm{Fin}).
$$
Thus $H\in\mathscr H(\emptyset\otimes\mathrm{Fin})$. 
As in the proof of
Proposition \ref{prop:locallyquasiconvex}, its finer Polish group topology $\tau$ has a
neighborhood basis at $0$ consisting of
$$
W(F,\delta):=
\left\{
x\in H:
\|x\|<\delta
\text{ and }
\sup_{j\in\omega}\|b_{i,j}x\|<\delta
\text{ for all }i\in F
\right\},
$$
where $F\subseteq\omega$ is finite and $\delta>0$.

We show that $\tau$ is not UFSS. Fix a $\tau$-neighborhood
$U$ of $0$, and choose $F\in \mathrm{Fin}$ and $\delta>0$ such that $W(F,\delta)\subseteq U$.
Pick $i\notin F$, and set
$$
V:=\left\{x\in H:\sup_{j\in\omega}\|b_{i,j}x\|<\frac14\right\}.
$$
Fix $m\geq1$. Choose a large integer $k\in S_i$ such that 
$
m/D_k<\delta
$
and
$
mD_{k-1}/(2D_k)<\delta,
$
and put $x_k:=1/D_k$. Then $x_k\in H$ and, in addition, 
$$
\sup_{j\in \omega}\|b_{i,j}x_k\|=\frac12,
\quad\text{ and }\quad 
\sup_{j\in \omega}\|b_{r,j}x_k\|
\leq\frac{D_{k-1}}{2D_k}
\quad\text{for every }r\in F.
$$
Consequently, for every $1\leq q\leq m$,
$$
\|qx_k\|<\delta
\quad\text{and}\quad
\sup_{j\in \omega}\|b_{r,j}qx_k\|<\delta
\quad\text{for every }r\in F.
$$
Hence $qx_k\in U$ for every $1\leq q\leq m$, while $x_k\notin V$. Therefore no
neighborhood $U$ of $0$ in $(H,\tau)$ can have the property that
$
\{x:kx\in U\text{ for all }1\leq k\leq m\},
$
$m\geq1$, form a neighborhood basis at $0$. Therefore $\tau$ is not UFSS. 
\end{proof}

\medskip

\begin{proof}
[Proof of Corollary \ref{cor:strictFinemptysetFin}]
Since $\emptyset\otimes\mathrm{Fin}$ is meager, the inclusion $\mathscr H(\mathrm{Fin})
    \subseteq
    \mathscr H(\emptyset\otimes\mathrm{Fin})$ follows by \cite[Corollary 2.7]{FKLT26}. In addition, they are distinct by Proposition \ref{prop:locallyquasiconvex2}.

Since $\mathcal{Z}$ is an analytic $P$-ideal which is not $F_\sigma$, it follows by a result of Solecki \cite{MR1416872} that 
    $
    \emptyset\otimes\mathrm{Fin}\le_{\mathrm{RB}}\mathcal{Z}.
    $ 
    Hence $\mathscr{H}(\emptyset\otimes\mathrm{Fin})\subseteq\mathscr{H}(\mathcal{Z})$ by \cite[Theorem 2.6(i)]{FKLT26}. Finally, it follows from Proposition \ref{prop:locallyquasiconvex} that the latter inclusion is strict. 
\end{proof}

\medskip

\begin{proof}
[Proof of Theorem \ref{thm:Fsigmadelcomplete}]
Let $(\varphi_k: k \in \omega)$ be a sequence of lscsms with finite disjoint supports such that $\I=\{S\subseteq \omega: \lim_n \varphi(S\setminus n)=0\}$, where $\varphi:=\sup_k\varphi_k$. 
Fix $\bm{a} \in \omega^\omega$ and write for the sake of simplicity $H:=\mathsf{H}_{\bm{a}}(\I)$. Suppose that $H\neq \mathbb{T}$ and let $d:=d_{\varphi, \bm{a}}$ be the metric on $H$ defined in \eqref{eq:metricpolishability}, that is, 
$$
d(x,y)=\|x-y\|+p_\varphi(\bm{a}(x-y))
$$
for all $x,y \in H$, where $\bm{a}(x-y)=(a_n(x-y): n \in \omega)$.  
Taking into account from the definition of $p_\varphi$ in  \eqref{eq:definitionpvarphi} that the sets $\{\varepsilon>0: \varphi(A_\varepsilon(\bm{z}))\le \varepsilon\}$ have the form $(\alpha,\infty)$ or $[\alpha,\infty)$ for some $\alpha \in [0,\nicefrac{1}{2}]$, it easily follows that 
$p_\varphi=\sup_k p_{\varphi_k}$.  
In particular, we have 
$$
\forall x\in \mathbb{T}, \quad 
p_\varphi(\bm{a}x)=\sup_{k \in \omega}p_{\varphi_k}(\bm{a}x)
$$
Recall also from Theorem \ref{thm:analyticPidealscharacterization} that $(H,d)$ is a Polish group. The remaining part of this proof is divided into two main steps: in the first one, we suppose that $H$ is not $F_\sigma$ and we show that $H$ is $F_{\sigma\delta}$-complete; in the second one, we suppose that $H$ is $F_\sigma$ and we show that $H$ is countable. 

\bigskip

\textsc{Part one: $H$ is not $F_\sigma$.} In this part, suppose that $H$ is not $F_\sigma$. 
\begin{claim}\label{claim:nonloccompactGeneralizedDensity}
For every $r>0$, there is $\eta\in(0,r)$ such that, for every nonempty finite
$F\subseteq\omega$ and every $\delta>0$, there exists $x\in H$ such that 
\[
\|x\|<\delta,
\qquad
\max_{k \in F}p_{\varphi_k}(\bm{a}x)<\delta, 
\quad \text{ and }\quad
\eta \le p_\varphi(\bm{a}x)\le r.
\]
\end{claim}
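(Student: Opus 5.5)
The plan is to argue by contradiction, using that $H$ is not $F_\sigma$. Suppose the claim fails for some $r>0$. Then for every $\eta\in(0,r)$ there are a nonempty finite $F_\eta\subseteq\omega$ and $\delta_\eta>0$ such that every $x\in H$ with $\|x\|<\delta_\eta$ and $\max_{k\in F_\eta}p_{\varphi_k}(\bm a x)<\delta_\eta$ satisfies $p_\varphi(\bm a x)<\eta$ or $p_\varphi(\bm a x)>r$.

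\textbf{Step 1: an open--closed dichotomy.} Fix $\eta:=r/2$ together with the corresponding $F:=F_\eta$ and $\delta:=\delta_\eta$. Consider the $d$-neighborhood
\[
W:=\{x\in H:\|x\|<\delta,\ \max_{k\in F}p_{\varphi_k}(\bm a x)<\delta\}.
\]
This set is a neighbourhood of $0$ in $(H,d)$, because $p_{\varphi_k}\le p_\varphi$. By the failure of the claim, $W\cap\{p_\varphi(\bm a\,\cdot)\le r\}\subseteq\{p_\varphi(\bm a\,\cdot)<r/2\}$.

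The subset $W_0:=W\cap\{x: p_\varphi(\bm a x)<r/2\}$ contains $0$. I want to show that $W_0$ coincides with $W\cap\{x:p_\varphi(\bm a x)\le r\}$. Note that $x\mapsto p_\varphi(\bm a x)$ is $1$-Lipschitz for $d$. The plan is to use a connectedness-like argument: from a point of $W_0$ one cannot jump continuously into the region $\{p_\varphi>r\}$ without passing through the forbidden band $[r/2,r]$. This step only shows that $W_0$ is $d$-clopen in $W$. To get more, I would use the group structure. Since $p_\varphi$ is subadditive and $\sup_k p_{\varphi_k}=p_\varphi$, one can shrink $\delta$ so that $W_0+W_0\subseteq W_0$ modulo the band. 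The conclusion is that $W_0$ is a neighbourhood of $0$ on which $p_\varphi(\bm a\,\cdot)<r/2$.

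\textbf{Step 2: $H$ would be $F_\sigma$.} The key fact is that the set
\[
K:=\{x\in\mathbb T:\|x\|\le\delta/2,\ \max_{k\in F}p_{\varphi_k}(\bm a x)\le\delta/2,\ p_\varphi(\bm a x)\le r/2\}
\]
is closed in $\mathbb T$. This follows from lower semicontinuity: each condition $\varphi_k(A_\varepsilon(\bm a x))\le\varepsilon$ is closed, because the finite supports make $\varphi_k$ depend on finitely many coordinates, and $A_\varepsilon$ is defined by closed conditions. Moreover $K\subseteq H$, since $p_\varphi(\bm a x)$ is small. Here I must check that small $p_\varphi$ together with exhaustiveness forces membership in $\I$. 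I would handle this by running the dichotomy at every scale $r'<r$, with tails of $\varphi$ replaced by $\varphi_k$ for large $k$.

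Once $K$ is known to be a closed subset of $H$ that is also a $d$-neighbourhood of $0$, separability of $(H,d)$ gives a countable $D$ with $H=D+K$, which is $F_\sigma$. This contradicts the standing assumption of Part one.

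\textbf{Main obstacle.} The delicate point is Step 1: turning "no element with $p_\varphi$ in $[\eta,r]$ near $0$" into "$p_\varphi$ is uniformly small near $0$". I would first try a halving trick. If $x\in W$ had $p_\varphi(\bm a x)>r$, then consider the multiples $mx$ or suitable small elements of $W$ obtained by truncating $x$. For generalized density ideals, one can approximate $x$ by points whose $\bm a$-orbits agree with $\bm a x$ only on the supports of $\varphi_k$ for $k$ in a chosen finite set. These intermediate points should land in the band $[r/2,r]$, because each individual $p_{\varphi_k}$ contributes to $p_\varphi$ as a supremum rather than a sum. This is exactly where the generalized-density structure ($p_\varphi=\sup_k p_{\varphi_k}$ with disjoint finite supports) is used.
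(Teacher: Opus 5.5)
There is a genuine gap. The step that carries the whole argument is never carried out, and the device you propose for it does not exist.

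\textbf{Step 1 produces nothing new.} The identity $W\cap\{p_\varphi(\bm a\,\cdot)\le r\}=W_0$ is just the negated claim restated. That $W_0$ is a $d$-neighbourhood of $0$ on which $p_\varphi(\bm a\,\cdot)<r/2$ is automatic, since it contains the $d$-ball of radius $\min\{\delta,r/2\}$. What you actually need is a $d$-neighbourhood of $0$ that is compact in $\mathbb T$.

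\textbf{Step 2 assumes what must be proved.} The needed compactness is hidden in the inclusion $K\subseteq H$, which you leave open. That inclusion is the entire content of the claim: once you have it, $H=D+K$ is $F_\sigma$ without using the negated claim at all. Nothing you state supports it. $K$ consists of arbitrary points of $\mathbb T$ with small $p_\varphi(\bm a x)$, but the negated claim only constrains points of $H$. Moreover, smallness of $p_\varphi(\bm a x)$ says nothing about $\lim_k p_{\varphi_k}(\bm a x)=0$.

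\textbf{The proposed fix is not available.} There is no way to modify a point $x\in\mathbb T$ so that $\bm a x$ changes only on the supports of finitely many prescribed $\varphi_k$. So the intermediate points you want to land in the band $[r/2,r]$ cannot be produced.

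\textbf{A minor point.} The set $\{x:\varphi_k(A_\varepsilon(\bm a x))\le\varepsilon\}$ need not be closed, because $A_\varepsilon$ is defined by non-strict inequalities. $K$ is nonetheless closed, since $p_\varphi$ is lower semicontinuous; you can see this by intersecting over $\varepsilon>c$, or by using strict inequalities.

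\textbf{The missing idea: stay inside $H$ and apply the negated claim to differences, at every scale.} Put $B:=\{x\in H:p_\varphi(\bm a x)\le r/2\}$, and let $(x_j)$ be a sequence in $B$ that converges in $\mathbb T$.
\begin{itemize}
\item Fix $\eta\in(0,r)$ together with its $F$ and $\delta$. For large $i,j$ you have $\|x_i-x_j\|<\delta$, and $p_{\varphi_k}(\bm a(x_i-x_j))<\delta$ for $k\in F$, because the supports are finite.
\item Subadditivity gives $p_\varphi(\bm a(x_i-x_j))\le r$ for free.
\item Since $x_i-x_j\in H$, the negated claim then forces $p_\varphi(\bm a(x_i-x_j))<\eta$.
\end{itemize}
Because this holds for every $\eta$, the sequence $(x_j)$ is $d$-Cauchy. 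Its limit in $(H,d)$ lies in $B$, because $B$ is $d$-closed. Hence $B$ is $d$-compact. Since $B$ contains a $d$-neighbourhood of $0$ and $(H,d)$ is separable, $H=\bigcup_{q\in D}(q+B)$ is $\sigma$-compact in $\mathbb T$, that is, $F_\sigma$.

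This is the paper's argument. It shows, a posteriori, that the $\mathbb T$-closure of $B$ lies in $H$, which is the correct substitute for your $K\subseteq H$. Your remark about $W_0+W_0$ gestures in this direction. But the difference trick has to be applied to points of $B$, combined with $\mathbb T$-convergence, and used at every $\eta$. Using it only at $\eta=r/2$, as in your Step 1, cannot give Cauchy sequences.
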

\begin{proof}
Suppose for the sake of contradiction that the claim fails. Then there is $r>0$ such that, for every $\eta \in (0,r)$, there are
a finite nonempty $F\subseteq\omega$ and a real $\delta>0$ with the following property:
if $x\in H$, $p_\varphi(\bm{a}x)\le r$, $\|x\|<\delta$, and
$p_{\varphi_k}(\bm{a}x)<\delta$ for all $k\in F$, then $p_\varphi(\bm{a}x)<\eta$. 

We show that
\[
B:=\left\{x\in H:p_\varphi(\bm{a}x)\leq \nicefrac{r}{2}\right\}
\]
is compact in $(H,d)$. In fact, let $(x_j:j\in\omega)$ be a sequence in $B$. Passing
to a subsequence, if needed, we may suppose without loss of generality that $x_j\to x$ in $\mathbb T$. We prove
that this subsequence is $d$-Cauchy. To this end, fix $\eta>0$, and choose $F,\delta$ as
above, replacing $\eta$ by a smaller positive number if necessary. Since
$(x_j)$ is Cauchy in $\mathbb T$ and the submeasures $(\varphi_k: k \in F)$ have finite supports, there exists $n_0 \in \omega$ such that 
\[
\forall i,j \ge n_0, \qquad 
\|x_i-x_j\|<\delta
\quad\text{and}\quad
p_{\varphi_k}(\bm{a}(x_i-x_j))
<\delta \text{ for all } k\in F.
\]
Moreover, $p_\varphi(\bm{a}(x_i-x_j))\leq p_\varphi(\bm{a}x_i)+p_\varphi(\bm{a}x_j)\leq r$. Hence
$p_\varphi(\bm{a}(x_i-x_j))<\eta$. Thus $(x_j:j\in\omega)$ is $d$-Cauchy, and so by completeness it
converges in $(H,d)$. Since $B$ is closed in $(H,d)$, it
follows that $B$ is compact. 

Of course, the set $B$ contains a $d$-open neighborhood $U$ of $0$. 
Since $(H,d)$ is Polish, it is possible to pick a countable dense \(D\subseteq H\). Then
\[
H=\bigcup_{q\in D}(q+U)\subseteq \bigcup_{q\in D}(q+B).
\]
Since each \(q+B\) is compact, it follows that \((H,d)\) is \(\sigma\)-compact. 
The identity map
$(H,d)\to\mathbb T$ is continuous, hence $H$ is a countable union of compact
subsets of $\mathbb T$. Therefore $H$ is $F_\sigma$, which is the desired contradiction. 
\end{proof}

At this point, define
\[
P:=\{\alpha\in\omega^\omega:\lim_{m\to\infty}\alpha(m)=+\infty\}
\]
and recall that $P$ is $F_{\sigma\delta}$-complete by \cite[Exercise 23.2]{MR1321597}. For every $k\in\omega$, set
$r_k:=2^{-k-6}$. By Claim \ref{claim:nonloccompactGeneralizedDensity},
choose $\eta_k\in(0,r_k)$ such that the conclusion of the claim holds with
$r=r_k$, and put for notational convenience $\theta_k:=\eta_k/8$.

Fix an enumeration $((m_s,k_s):s\in\omega)$ of $\omega^2$. Choose
$\lambda_0>0$ such that 
$
\lambda_0<
\min\left\{
\frac{1}{2^{m_0+k_0+10}},
\theta_{k_0}
\right\}.
$ 
Recursively, for every $s\geq1$, choose $\lambda_s>0$ such that
$$
\lambda_s<
\min\left\{
\frac{\lambda_{s-1}}{2},
\frac{1}{2^{m_s+k_s+s+10}},
\theta_{k_s},
\frac{1}{2^{s+4}}\min_{q<s}\theta_{k_q}
\right\}.
$$
In this way, $(\lambda_s:s\in\omega)$ is strictly decreasing and, for every
$q\in\omega$, we have
$$
\sum_{s>q}\lambda_s
<
\theta_{k_q}\sum_{s>q}\frac{1}{2^{s+4}}
<
\frac{\theta_{k_q}}{8}.
$$

\smallskip

Next, we construct, by induction on $s\in \omega$, points $x_s\in H$, and natural numbers 
$L_s<t_s<M_s$ such that the intervals $I_s:=\omega \cap (L_s,M_s)$ are pairwise disjoint and increasing and, in addition, 
$$
p_\varphi(\bm{a}x_s)\le r_{k_s}, \quad 
p_{\varphi_{t_s}}(\bm{a}x_s)>4\theta_{k_s}, 
\quad \text{ and }\quad 
\sup_{k \notin I_s}p_{\varphi_k}(\bm{a}x_s)\le \lambda_s.
$$ 
Indeed, pick $s \in \omega$ and suppose that the values $x_q,L_q,t_q,M_q$ have been chosen for all $q<s$, with $M_{-1}:=0$. Choose
$L_s>M_{s-1}$ such that, in addition, 
$\sum_{q<s}p_{\varphi_j}(\bm{a}x_q)<\theta_{k_s}$ for all $j\ge L_s$. 
Apply Claim \ref{claim:nonloccompactGeneralizedDensity} with
$r=r_{k_s}$, $F=\{0,\ldots,L_s\}$, and $\delta=\lambda_s$. We get
$x_s\in H$ such that
$\|x_s\|<\lambda_s$,
$p_{\varphi_k}(\bm a x_s)<\lambda_s$ for all $k\leq L_s$,
and $\eta_{k_s}\leq p_\varphi(\bm{a}x_s)\leq r_{k_s}$. 
Since $\eta_{k_s}=8\theta_{k_s}$ and $\lambda_s<\theta_{k_s}$, there exists
$t_s>L_s$ such that 
$p_{\varphi_{t_s}}(\bm a x_s)>4\theta_{k_s}$. 
Finally, since $x_s\in H$, choose $M_s>t_s$ such that 
$
p_{\varphi_k}(\bm a x_s)<\lambda_s
\text{ for all }k\geq M_s
$ (which is possible since the lscsms $(\varphi_k)$ have finite disjoint supports). 
This completes the induction.

\begin{claim}\label{claim:wadgePH}
$P\le_{\mathrm{W}} H$. 
\end{claim}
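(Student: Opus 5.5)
We will build a continuous map $\Psi:\omega^\omega\to\mathbb T$ of the form
\[
\Psi(\alpha):=\sum_{s\in\omega}\epsilon_s(\alpha)\,x_s,
\qquad
\epsilon_s(\alpha):=\begin{cases}1&\text{if }\alpha(m_s)\le k_s,\\0&\text{otherwise,}\end{cases}
\]
and show that $\Psi^{-1}[H]=P$.

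\textbf{Why the sum makes sense and $\Psi$ is continuous.} Each $\epsilon_s(\alpha)$ depends only on the coordinate $\alpha(m_s)$. We also have $\|x_s\|<\lambda_s$ and $\sum_s\lambda_s<\infty$. So the partial sums converge uniformly in $\alpha$, the limit exists in $\mathbb T$, and $\Psi$ is continuous.

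The key point is that the construction makes the sum converge in the finer metric $d$ as well, block by block. This will let us compute $p_{\varphi_k}(\bm a\Psi(\alpha))$ for each $k$, using the subadditivity of each $p_{\varphi_k}$. Subadditivity holds for $p_{\varphi_k}$ exactly as shown for $p_\varphi$ in the proof of Theorem \ref{thm:analyticPidealscharacterization}.

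\textbf{Case $\alpha\in P$.} For each fixed $k$, only finitely many $s$ with $k_s=k$ have $\epsilon_s(\alpha)=1$, because $\alpha(m)\le k$ for only finitely many $m$. Hence, for any $j$, the set $\{s:\epsilon_s(\alpha)=1,\ k_s\le j\}$ is finite. Call the finite sum over this set $y_j$; it lies in $H$.

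The remainder $\sum\{x_s:\epsilon_s(\alpha)=1,\ k_s>j\}$ has $p_\varphi$ bounded by $\sum_{k>j}\sum_{s:k_s=k}\epsilon_s(\alpha)\,p_\varphi(\bm a x_s)$. This needs care, since $p_\varphi(\bm a x_s)\le r_{k_s}$ and infinitely many $s$ may share the same $k_s$. The way around this uses the disjoint block structure, via $p_\varphi=\sup_k p_{\varphi_k}$:
\begin{itemize}
\item for $k\in I_s$, the contribution of $x_s$ is at most $r_{k_s}$, and the other $x_q$ add at most $\sum_q\lambda_q$;
\item for $k$ outside every block, only the $\lambda$ terms appear.
\end{itemize}
So $p_\varphi$ of the remainder is at most $\sup_{k>j}r_k+\sum_q\lambda_q$-type tails, made small by taking $j$ large and restricting to large $s$.

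Thus $\Psi(\alpha)$ is a $d$-limit of elements of $H$. Since $(H,d)$ is complete and $d$-convergence implies convergence in $\mathbb T$, we get $\Psi(\alpha)\in H$. More concretely, $p_\varphi(\bm a(\Psi(\alpha)-y_j))\to0$, which is the precise statement we will verify.

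\textbf{Case $\alpha\notin P$.} Some $k$ has $\alpha(m)\le k$ for infinitely many $m$. Then infinitely many $s$ satisfy $k_s=k$ and $\epsilon_s(\alpha)=1$. For each such $s$, evaluate $p_{\varphi_{t_s}}$ at $\Psi(\alpha)$:
\begin{itemize}
\item the term $x_s$ contributes more than $4\theta_k$;
\item the earlier terms $q<s$ contribute less than $\theta_k$, by the choice of $L_s$, since $t_s>L_s$;
\item the later terms $q>s$ contribute at most $\sum_{q>s}\lambda_q<\theta_k/8$, since $t_s<L_q$ gives $t_s\notin I_q$.
\end{itemize}
The reverse triangle inequality for $p_{\varphi_{t_s}}$ then gives $p_{\varphi_{t_s}}(\bm a\Psi(\alpha))>2\theta_k$ for infinitely many distinct $t_s$.

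Suppose $\Psi(\alpha)\in H$. Then $A_\varepsilon(\bm a\Psi(\alpha))\in\mathcal I$ for every $\varepsilon>0$, so $\varphi_t(A_\varepsilon)\to0$ as $t\to\infty$. Since $\varphi_t$ depends only on its finite support, this forces $p_{\varphi_t}(\bm a\Psi(\alpha))\to0$. This contradicts the infinitely many $t_s$ above, so $\Psi(\alpha)\notin H$.

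\textbf{Main obstacle.} The delicate step is the membership $\Psi(\alpha)\in H$ when $\alpha\in P$. It is an infinite sum of elements of $H$, and we must show the tails are small in $d$, not merely in $\mathbb T$. I would handle it by proving directly that $\sup_t p_{\varphi_t}$ of the tail $\sum_{s\ge s_0}\epsilon_s(\alpha)x_s$ tends to $0$. For each $t$, at most one block $I_s$ contains $t$, so this supremum is bounded by $\max_{s\ge s_0,\,\epsilon_s(\alpha)=1}r_{k_s}+\sum_{s\ge s_0}\lambda_s$. Since $\alpha\in P$, we have $k_s\to\infty$ along $\{s:\epsilon_s(\alpha)=1\}$, so $r_{k_s}\to0$ and the bound tends to $0$.

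If the infinite sum should raise issues with lower semicontinuity, we would pass to limits using the $\liminf$ observation from the completeness part of the proof of Theorem \ref{thm:analyticPidealscharacterization}.
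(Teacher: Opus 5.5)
Your proposal is correct and follows the paper's argument: it uses the same points $x_s$ and the same block estimates (the choice of $L_s$, the intervals $I_s$, the indices $t_s$ and the bounds on $\sum\lambda_s$) to prove $\Psi^{-1}[H]=P$ in both directions. The only difference is the coding: the paper adds exactly one term $x_{m,\alpha(m)}$ for each $m$, whereas you add every $x_{(m,k)}$ with $k\ge\alpha(m)$. This changes nothing essential, since for $\alpha\in P$ only finitely many selected indices still have $k_s<K$, and for $\alpha\notin P$ you work with a single fixed $k$ instead of the paper's $\theta_\star=\min\{\theta_0,\ldots,\theta_K\}$.
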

\begin{proof}
    For each $m,k\in\omega$, let $x_{m,k}:=x_s$, where $s$ is the unique integer such
that $(m_s,k_s)=(m,k)$. Define also the map $f:\omega^\omega\to\mathbb T$ by 
\[
\forall \alpha \in \omega^\omega, \qquad 
f(\alpha):=\sum_{m\in \omega}x_{m,\alpha(m)}.
\]
Observe that the series converges uniformly in $\alpha$. Indeed,
$\|x_s\|<\lambda_s<2^{-m_s-k_s-s-10}$ for every $s\in \omega$, and the selected series has uniformly summable tails. Hence $f$ is continuous. To conclude the proof of the claim, it will be enough to show that 
\begin{equation}\label{eq:reductionP}
f^{-1}[H]=P. 
\end{equation}

First, suppose that $\alpha\in P$, and fix $\varepsilon>0$. Choose
$K\in\omega$ such that $r_k<\varepsilon/4$ for all $k\geq K$. Since
$\lim_m \alpha(m)=+\infty$, only finitely many selected pairs
$(m,\alpha(m))$ have second coordinate smaller than $K$. Choose $s_0\in\omega$ sufficiently large such that all such selected pairs occur before stage $s_0$, and also
$
\sum_{s\geq s_0}\lambda_s<\nicefrac{\varepsilon}{4}.
$ 
Since there are only finitely many selected $x_s$'s with $s<s_0$, there is
$N\in\omega$ such that
\[
\forall k\ge N, \qquad 
\sum_{\substack{s<s_0\\ x_s\text{ is selected by }\alpha}}
p_{\varphi_k}(\bm a x_s)
<
\frac{\varepsilon}{4}.
\]
Fix $k\geq N$. Among the intervals $I_s$, at most one contains $k$. If no
selected interval $I_s$ with $s\geq s_0$ contains $k$, then all selected
$x_s$'s with $s\geq s_0$ contribute at most $\sum_{s\geq s_0}\lambda_s$. If
one selected interval $I_s$ with $s\geq s_0$ contains $k$, then
$k_s\geq K$, and therefore
\[
p_{\varphi_k}(\bm a x_s)\leq p_{\varphi}(\bm{a}x_s)\leq r_{k_s}<\frac{\varepsilon}{4};
\]
all other selected $x_q$'s with $q\geq s_0$ contribute at most
$\sum_{q\geq s_0}\lambda_q$. Hence, in both cases, 
$
p_{\varphi_k}(\bm a f(\alpha))<\varepsilon
$ 
for all sufficiently large $k$. Therefore $f(\alpha)\in H$.

Conversely, suppose that $\alpha\notin P$. Then there are $K\in\omega$ and
infinitely many $m\in\omega$ such that $\alpha(m)\leq K$. Define 
$ 
\theta_\star:=\min\{\theta_0,\ldots,\theta_K\}>0.
$ 
Let $s$ range over the infinitely many stages such that
$(m_s,k_s)=(m,\alpha(m))$ for some $m\in\omega$ with
$\alpha(m)\leq K$. Observe that these indexes $s$ tend to infinity and that 
$
p_{\varphi_{t_s}}(\bm a x_s)>4\theta_{k_s}.
$ 
The selected terms $x_q$ with $q<s$ contribute less than
$\theta_{k_s}$ by the choice of $L_s$. Since the intervals $I_q$ are
increasing, every selected term $x_q$ with $q>s$ satisfies
$t_s\notin I_q$, and therefore these later terms contribute at most 
$
\sum_{q>s}\lambda_q<\nicefrac{\theta_{k_s}}8.
$ 
Hence, using the triangle inequality for
$p_{\varphi_{t_s}}$, we obtain
\[
p_{\varphi_{t_s}}(\bm a f(\alpha))
>
4\theta_{k_s}-\theta_{k_s}-\frac{\theta_{k_s}}8
>
2\theta_{k_s}
\geq
2\theta_\star.
\]
Since $\lim_s t_s=\infty$, it follows that
$f(\alpha)\notin H$. 
\end{proof}

Putting together Claim \ref{claim:wadgePH} and the fact that $H$ is $F_{\sigma\delta}$ by Corollary \ref{cor:borelrank}\ref{item:2borrank} (or also by \cite[Theorem 2.5(i)]{FKLT26}), we conclude that $H$ is $F_{\sigma\delta}$-complete.

\bigskip

\textsc{Part two: $H$ is $F_\sigma$.} In this second part, suppose that there exists a sequence $(F_m: m \in \omega)$ of compact subsets of $\mathbb{T}$ such that 
$
H=\bigcup_{m\in\omega}F_m.
$ 
Since the identity map \((H,d)\to\mathbb T\) is continuous, each \(F_m\) is closed in \((H,d)\). 
Since $(H,d)$ is Polish, it follows by the Baire category theorem that some $F_m$ has nonempty interior, so that there are \(h\in H\),
and \(R>0\) such that
\[
h+B_d(0,R)\subseteq F_m,
\]
where $B_d(0,R):=\{x \in H: d(x,0)<R\}$. 
Thus 
$
B_d(0,R)\subseteq F_m-h.
$ 
Since \(F_m-h\) is closed in \(\mathbb T\) and contained in \(H\), we get
\begin{equation}\label{eq:Tclosure-ball-H}
\overline{B_d(0,R)}^{\,\mathbb T}\subseteq H.
\end{equation}

\begin{claim}\label{claim:locallycompact}
\((H,d)\) is locally compact.
\end{claim}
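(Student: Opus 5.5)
We will show that a small closed $d$-ball around $0$ is $d$-compact. Set $B:=\{x\in H: d(x,0)\le R/2\}$. Since $d$ is translation invariant and $(H,d)$ is a topological group, it is enough to prove that $B$ is compact in $(H,d)$. Take a sequence $(x_j:j\in\omega)$ in $B$. Passing to a subsequence, we may assume that $x_j\to x$ in $\mathbb T$ by compactness of $\mathbb T$. Each $x_j$ lies in $B_d(0,R)$, so $x\in\overline{B_d(0,R)}^{\,\mathbb T}\subseteq H$ by \eqref{eq:Tclosure-ball-H}. It then remains to show $d(x_j,x)\to 0$. Since $\|x_j-x\|\to0$, this amounts to $p_\varphi(\bm a(x_j-x))=\sup_k p_{\varphi_k}(\bm a(x_j-x))\to0$.

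The main obstacle is to control the tail $\sup_{k\ge N}p_{\varphi_k}$ uniformly in $j$, because the finitely many $k<N$ are harmless. Each $\varphi_k$ has finite support, so $p_{\varphi_k}(\bm a y)$ depends only on finitely many coordinates $a_ny$. Hence $y\mapsto p_{\varphi_k}(\bm a y)$ is controlled by the finitely many continuous maps $y\mapsto a_ny$. It follows that $\max_{k<N}p_{\varphi_k}(\bm a(x_j-x))\to0$ for each fixed $N$, as already used in Claim \ref{claim:nonloccompactGeneralizedDensity}.

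For the tail we use \eqref{eq:Tclosure-ball-H} once more, arguing by contradiction. Suppose that there are $\varepsilon>0$, a further subsequence, and indices $k_j\to\infty$ with $p_{\varphi_{k_j}}(\bm a(x_j-x))\ge\varepsilon$. The supports of the $\varphi_k$ are finite and disjoint, and $x_j\to x$ coordinatewise on any finite set of indices. Thinning out, we can therefore pick $j_0<j_1<\cdots$ so that the blocks $k_{j_i}$ are distinct and increasing. We can also arrange that $x_{j_{i+1}}-x_{j_i}$ is tiny in $\|\cdot\|$ and in $p_{\varphi_k}$ for all $k\le k_{j_i}$. Now consider the partial sums $y_\ell:=x_{j_0}+\sum_{i<\ell}(x_{j_{i+1}}-x_{j_i})$ with alternating sign choices, or more simply consider the points $z_\ell:=x_{j_\ell}$ themselves. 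We aim for a $\mathbb T$-limit point $z$ of elements of $B_d(0,R)$ such that $p_{\varphi_{k_{j_i}}}(\bm a z)\ge\varepsilon/2$ for infinitely many $i$. Such a $z$ is not in $H$, because membership in $H$ means $\lim_k p_{\varphi_k}(\bm a z)=0$, as in the proof of Claim \ref{claim:wadgePH}. This contradicts \eqref{eq:Tclosure-ball-H}.

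The key technical point is to build $z$ so that the large block values persist. We would do this by a diagonal gluing $z=x+\sum_i (w_i)$, where $w_i:=x_{j_{2i+1}}-x_{j_{2i}}$. The $w_i$ are chosen with rapidly decreasing $\mathbb T$-norm, so that partial sums stay in $B_d(0,R)$ by subadditivity of $p_\varphi$. Each $w_i$ is supported essentially on the block $k_{j_{2i}}$ up to errors $\lambda_i$, with $\sum\lambda_i$ small. This mimics the construction of $f(\alpha)$ in Claim \ref{claim:wadgePH}, and the triangle inequality for $p_{\varphi_k}$ then yields the lower bound on block $k_{j_{2i}}$. Once the tail is controlled, $(x_j)$ is $d$-Cauchy, so it $d$-converges to $x$, and $x\in B$ because $B$ is $d$-closed. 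Hence $B$ is a compact neighborhood of $0$, and $(H,d)$ is locally compact.
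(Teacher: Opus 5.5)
Your overall plan matches the paper's. The paper shows that $\overline B_d(0,r)$ with $r<R/4$ is totally bounded; you prove sequential compactness directly, which is equivalent. Both arguments handle finitely many blocks via the finite supports. Both derive a contradiction by gluing block-separated pieces into a point that is a $\mathbb T$-limit of elements of $B_d(0,R)$ but stays large on infinitely many blocks.

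However, the gluing step, which carries the whole argument, fails as you describe it. Your pieces $w_i=x_{j_{2i+1}}-x_{j_{2i}}$ cannot be tiny in $p_{\varphi_k}$ for all $k\le k_{j_{2i}}$ and at the same time large on block $k_{j_{2i}}$. Indeed, $p_{\varphi_{k_{j_{2i}}}}(\bm a(x_{j_{2i}}-x))\ge\varepsilon$, while $p_{\varphi_{k_{j_{2i}}}}(\bm a(x_{j_{2i+1}}-x))\to 0$ as $j_{2i+1}\to\infty$. So the difference is forced to be $\ge\varepsilon-o(1)$ on that block. The pieces should simply be $y_i:=x_{j_i}-x\in H$. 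There is no need for differences, and no need to add $x$, which is irrelevant for membership in $H$ and only enlarges norms. Choose $j_i$ with $\|y_i\|<\xi_i$, $y_i$ small on all blocks $\le M_{i-1}$, and $k_{j_i}>M_{i-1}$. Then, using $y_i\in H$, choose $M_i$ so that $y_i$ is small on all blocks $\ge M_i$. Your alternative $z_\ell:=x_{j_\ell}$ gives nothing, since these points converge in $\mathbb T$ to $x\in H$.

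The second problem is the claim that ``partial sums stay in $B_d(0,R)$ by subadditivity of $p_\varphi$''. This is false: every piece has $p_\varphi$ of order at least $\varepsilon$, so subadditivity of $p_\varphi$ only bounds the $\ell$-th partial sum by roughly $\ell\varepsilon$. The missing idea is the block-wise estimate. Since $p_\varphi=\sup_q p_{\varphi_q}$ and each $q$ lies in at most one interval $(M_{i-1},M_i)$, subadditivity of the single pseudonorm $p_{\varphi_q}$ gives $p_\varphi(\bm a s_L)\le \sup_i p_\varphi(\bm a y_i)+\sum_i\xi_i$.

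For this to place $s_L$ in the open ball $B_d(0,R)$, you need $\sup_i d(y_i,0)$ to stay strictly below $R$ with room to spare. That requires two things:
\begin{enumerate}
\item a bound on $d(x,0)$, which follows from $p_{\varphi_k}(\bm a x_j)\to p_{\varphi_k}(\bm a x)$ for each $k$ (finite supports), giving $d(x,0)\le\liminf_j d(x_j,0)$;
\item a radius $\rho$ with $2\rho<R$.
\end{enumerate}
With your radius $R/2$ you only get $d(y_i,0)\le R$, and your $w_i$ or the extra summand $x$ make it worse. So the partial sums are not known to lie in $B_d(0,R)$, and \eqref{eq:Tclosure-ball-H} cannot be invoked. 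The paper takes $r<R/4$ and obtains $d(s_L,0)\le 2r+2\xi_\infty<R$. With these corrections your argument becomes the paper's.
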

\begin{proof}
Pick \(r \in (0,\nicefrac{R}{4})\).
It is enough to show that 
\[
\overline B_d(0,r):=\{x\in H:d(x,0)\leq r\}
\]
is compact. Of course, $\overline B_d(0,r)\subseteq \overline{B_d(0,R)}^{\,\mathbb T}$. Since \((H,d)\) is Polishable, hence complete, it is sufficient to prove that \(\overline B_d(0,r)\) is totally bounded.

Suppose hereafter, for the sake of contradiction, that \(\overline B_d(0,r)\) is not totally bounded. 
Then there are \(\eta>0\) and a sequence \((u_j:j\in\omega)\) with values in \(\overline B_d(0,r)\) such that $d(u_i,u_j)\geq\eta$ for all distinct $i,j \in \omega$. 
Passing to a subsequence if necessary, we may suppose without loss of generality that \((u_j: j \in \omega)\) is convergent in \(\mathbb T\) to some $u \in \mathbb{T}$. 
Since \(r<R\), it follows by \eqref{eq:Tclosure-ball-H} that \(u\in H\).

\smallskip

We show that \(d(u,0)\leq r\). Indeed, for every \(k\in\omega\), the
pseudometric \(p_{\varphi_k}\) depends only on finitely many coordinates.
Since \(u_j\to u\) in \(\mathbb T\), we have
\[
\forall k \in \omega, \quad \lim_{j\to \infty} p_{\varphi_k}(\bm a u_j)=p_{\varphi_k}(\bm a u). 
\]
Moreover, for each $j \in \omega$ we have \(d(u_j,0)\leq r\), that is, 
$
\|u_j\|+p_\varphi(\bm a u_j)\leq r. 
$ 
Taking the limit in the inequality
\(p_{\varphi_k}(\bm a u_j)\leq r-\|u_j\|\) and recalling that $\varphi_k$ has finite support, we obtain 
$p_{\varphi_k}(\bm a u)\leq r-\|u\|$ 
for all $k \in \omega$.
Since \(p_\varphi=\sup_k p_{\varphi_k}\), this implies that 
$p_\varphi(\bm a u)\leq r-\|u\|$, 
and therefore \(d(u,0)\leq r\).

\smallskip

At this point, define 
$$
\forall j \in \omega, \qquad z_j:=u_j-u.
$$
Then $(z_j: j \in \omega)$ is a sequence in $H$ which is convergent to $0$ in $\mathbb{T}$, and for all $j \in \omega$, we have $d(z_j,0)\leq d(u_j,0)+d(u,0)\leq 2r$. Passing to a tail, we may suppose without loss of generality that
$d(z_j,0)\geq \eta/2$ and $\|z_j\|\le \eta/4$ for all $j \in \omega$. 
Hence
$$
\forall j \in \omega, \qquad 
p_\varphi(\bm a z_j)\ge \eta/4.
$$
Set \(\varepsilon:=\eta/5\). For every \(j\in\omega\), choose
\(q_j\in\omega\) such that 
$
p_{\varphi_{q_j}}(\bm a z_j)>\varepsilon .
$ 
Passing to a subsequence, we may suppose that $\lim_j q_j=\infty$. 
Indeed, for
each fixed \(Q\in\omega\), since \(z_j\to0\) in \(\mathbb T\) and
\(\varphi_0,\ldots,\varphi_Q\) have finite supports, we have
$
\lim_j \max_{q\leq Q}p_{\varphi_q}(\bm a z_j)= 0.
$ 

\smallskip

Next, choose positive real numbers \((\xi_k:k\geq1)\) such that
\[
\xi_\infty:=\sum_{k=1}^{\infty}\xi_k
<
\min\left\{\frac{R-2r}{2},\frac{\varepsilon}{2}\right\}.
\]
We inductively choose a subsequence \((y_k:k\geq1)\) of \((z_j:j\in\omega)\)
and integers \(m_k<M_k\), with \(M_0:=0\), such that
\[
p_{\varphi_{m_k}}(\bm a y_k)>\varepsilon, 
\quad
\|y_k\|<\xi_k,
\quad \text{ and }\quad 
p_{\varphi_q}(\bm a y_k)<\xi_k
\]
for all integers $k\ge 1$ and $q \in \omega\setminus (M_{k-1},M_k)$. 
Indeed, suppose that \(M_{k-1}\) has been chosen. Since \(\lim_j z_j=0\) in
\(\mathbb T\), \(\lim_j q_j=\infty\), and
\(\varphi_0,\ldots,\varphi_{M_{k-1}}\) have finite supports, we can choose a sufficiently large 
\(j\) such that \(q_j>M_{k-1}\), $\|z_j\|<\xi_k$, and $p_{\varphi_q}(\bm a z_j)<\xi_k$ for all $q\leq M_{k-1}$. 
Put \(y_k:=z_j\) and \(m_k:=q_j\). Since \(y_k\in H\), we have
\(p_{\varphi_q}(\bm a y_k)\to0\), again by the fact that the lscsms $(\varphi_k)$ have finite disjoint supports. Hence there is \(M_k>m_k\) such that
$p_{\varphi_q}(\bm a y_k)<\xi_k$ for all $q\ge M_k$. This completes the induction.

\smallskip

Since $\sum_k\|y_k\|\leq\xi_\infty<\infty$, it is possible to define 
\[
x:=\sum_{k=1}^{\infty}y_k, 
\]
where is convergence is meant in $\mathbb{T}$. 
For each \(L\geq1\), write  \(s_L:=\sum_{k=1}^{L}y_k\), so that $x=\lim_L s_L$. We show that 
\[
\forall L\ge 1, \quad 
s_L\in B_d(0,R).
\]
First, observe similarly that $\|s_L\|\leq \sum_{k=1}^{L}\|y_k\|\leq\xi_\infty$. 
Fix \(q\in\omega\). 
If \(q>M_L\), then
\[
p_{\varphi_q}(\bm a s_L)
\leq
\sum_{k=1}^{L}p_{\varphi_q}(\bm a y_k)
\leq
\xi_\infty.
\]
Otherwise, let \(\ell\leq L\) be the least integer such that \(q\leq M_\ell\).
If \(k<\ell\), then \(q>M_k\), while if \(k>\ell\), then
\(q\leq M_\ell\leq M_{k-1}\). Hence all terms except possibly the
\(\ell\)-th one contribute at most \(\sum_{k\neq\ell}\xi_k\). The remaining
term contributes at most \(2r\), since 
$
p_{\varphi_q}(\bm a y_\ell)\leq p_\varphi(\bm a y_\ell)\leq d(y_\ell,0)\leq 2r.
$ 
Taking into account that $q\in \omega$ was arbitrary, this implies that 
$
p_\varphi(\bm a s_L)\leq 2r+\xi_\infty.
$ 
Consequently,
\[
d(s_L,0)
=
\|s_L\|+p_\varphi(\bm a s_L)
\leq
2r+2\xi_\infty
<
R.
\]
Thus \(s_L\in B_d(0,R)\) for every \(L\geq1\). Since \(\lim_L s_L= x\), it follows from \eqref{eq:Tclosure-ball-H} that \(x\in H\). 

\smallskip

We now derive a contradiction. Fix \(\ell\geq1\). Since
\(x=\sum_k y_k\) in \(\mathbb T\), and since \(\varphi_{m_\ell}\) has finite
support, we obtain by the triangle inequality that 
\[
p_{\varphi_{m_\ell}}(\bm a x)
\geq
p_{\varphi_{m_\ell}}(\bm a y_\ell)
-
\sum_{k\neq\ell}p_{\varphi_{m_\ell}}(\bm a y_k).
\]
The first term is \(>\varepsilon\). If \(k<\ell\), then
\(m_\ell>M_{\ell-1}\geq M_k\), while if \(k>\ell\), then
\(m_\ell<M_\ell\leq M_{k-1}\). Hence 
$
\sum_{k\neq\ell}p_{\varphi_{m_\ell}}(\bm a y_k)\leq\xi_\infty. 
$ 
Therefore
\[
\forall \ell\ge 1, \quad
p_{\varphi_{m_\ell}}(\bm a x)>\varepsilon-\xi_\infty>\varepsilon/2.
\]
Since \(m_\ell\to\infty\), this contradicts \(x\in H\). Thus
\(\overline B_d(0,r)\) is totally bounded, hence compact. Therefore
\((H,d)\) is locally compact.
\end{proof}

Let \(H_0\) be the connected component of \(0\) in \((H,d)\). Since the
identity map 
$
\iota:(H,d)\to\mathbb T
$ 
is a continuous injective homomorphism, \(\iota[H_0]\) is a connected subgroup
of \(\mathbb T\). The only connected subgroups of \(\mathbb T\) are
\(\{0\}\) and \(\mathbb T\). Since $H$ is a proper subgroup, we get 
$
\iota[H_0]=\{0\}.
$ 
As \(\iota\) is injective, \(H_0=\{0\}\). Therefore \((H,d)\) is totally
disconnected.

By van Dantzig's theorem, \((H,d)\) has a compact open subgroup \(K\), see \cite[Theorem 7.7]{HewittRossAHA1}. Then
\(\iota[K]\) is a compact subgroup of \(\mathbb T\), hence it is finite or
equal to \(\mathbb T\). Since \(\iota[K]\subseteq H\neq\mathbb T\), the subgroup
\(\iota[K]\) is finite. Since \(\iota\) is injective, \(K\) is finite. As
\((H,d)\) is Hausdorff and \(K\) is a finite open subgroup, there exists an
open set \(O\subseteq H\) such that
\[
O\cap K=\{0\}.
\]
Since \(K\) is open, it follows that \(\{0\}\) is open in \((H,d)\). Thus
\((H,d)\) is discrete. Since \((H,d)\) is separable, we conclude that \(H\) is countable. 
\end{proof}

\medskip

\begin{proof}
[Proof of Theorem \ref{thm:strangecounterexample}]
We start constructing recusrsively an increasing sequence $(\gamma_n: n \in \omega)$ of positive integers: set $\gamma_0:=1$; suppose $\gamma_j$ has already been defined for some $j\in \omega$, pick a prime number $q_j>2^{j+10}\gamma_j$, and  
define 
\[
\gamma_{j+1}:=\gamma_j(1+q_0q_1\cdots q_j).
\]
It follows by construction that $\gamma_j\mid \gamma_{j+1}$ for every $j\in\omega$, $\lim_j \gamma_j=\infty$, and 
\begin{equation}\label{eq:gamma-congruence}
\forall n\ge j, \qquad 
\frac{\gamma_n}{\gamma_j}
=
\prod_{r=j}^{n-1}(1+q_0q_1\cdots q_r)
\equiv 1 \pmod {q_j}.
\end{equation}

At this point, for each $n \in\omega$, set $d_n:=\gamma_{n+1}-\gamma_n$ and define 
\[
\forall x \in \mathbb{T}, \qquad 
V(x):=\sum_{n\in\omega}\|d_nx\|,
\]
Finally, define $\mathsf{H}_\sharp:=\{x \in \mathbb{T}: V(x)<\infty\}$. We claim the subgroup 
\[
H:=
\mathsf{H}_\sharp 
\cap 
\mathsf{H}_{(\gamma_n)}(\mathrm{Fin}).
\]
satisfies the three properties of our claim.  

\medskip

\textbf{$\bm{H}$ is not $\bm{F_{\sigma\delta}}$-complete}. It is enough to show that $H$ is $G_{\delta\sigma}$. To this aim, define $K_m:=\{x \in \mathbb{T}: V(x)\le m\}$ for each $m \in \omega$. Taking into account that 
$$
\forall m \in \omega, \quad K_m=\bigcap_{N \in \omega}\left\{x \in \mathbb{T}: \sum_{n\le N}\|d_nx\|\le m\right\},
$$
it follows that each $K_m$ is closed. 
At the same time, define
\[
R:=
\bigcap_{k\in \omega}
\bigcap_{m\in\omega}
\bigcup_{n\geq m}
\{x\in\mathbb T:\|\gamma_nx\|<2^{-k}\}.
\]
Then $R$ is $G_\delta$, and $x \in R$ if and only if $0$ is a cluster point of
$(\gamma_nx:n\in\omega)$. If $x\in \mathsf{H}_\sharp$, then the sequence
$(\gamma_nx:n\in\omega)$ has finite total variation in $\mathbb T$, hence it
is convergent. Therefore, for $x\in \mathsf{H}_\sharp$, the condition $\gamma_nx\to0$ is
equivalent to $x\in R$. Hence
\[
H=\mathsf{H}_\sharp\cap R=\bigcup_{m \in \omega}(K_m\cap R).
\]
Since each $K_m \cap R$ is $G_\delta$, it follows that $H$ is $G_{\delta\sigma}$. 


\medskip

\textbf{$\bm{H}$ is not $\bm{F_\sigma}$}. 
Define the 
translation-invariant metric
\[
\forall x,y \in H, \quad \rho(x,y):=\|x-y\|+\sum_{n=0}^\infty\|d_n(x-y)\|
\]
Observe that $\rho$ is finite on \(H\), since \(H\) is a subgroup and \(V(x-y)<\infty\).

\begin{claim}\label{claimHPolishablerho}
    $H$ is Polishable, and $\rho$ is compatible with its Polish group topology.
\end{claim}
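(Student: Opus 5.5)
We will mimic the proof of Proposition~\ref{prop:preimagePolishable} and Claim~\ref{claim:Polishgroup}, realizing $(H,\rho)$ as a closed subgroup of a Polish group. Let $\ell_1(\mathbb T)$ denote the group of sequences $\bm u\in\mathbb T^\omega$ with $\sum_n\|u_n\|<\infty$, with the translation-invariant metric $\sigma(\bm u,\bm v):=\sum_n\|u_n-v_n\|$, and let $c_0(\mathrm{Fin})$ carry the sup metric $d_\infty$. Both are Polish groups. Completeness follows by the usual $\ell_1$/$c_0$ arguments: a Cauchy sequence converges coordinatewise, and lower semicontinuity of the norm sum gives convergence in the metric. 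Separability follows from the finitely supported sequences with values in a countable dense $Q\subseteq\mathbb T$. Continuity of addition follows from translation invariance and the triangle inequality. Set $G:=\mathbb T\times\ell_1(\mathbb T)\times c_0(\mathrm{Fin})$ and define the injective homomorphism
$$
T(x):=\bigl(x,(d_nx:n\in\omega),(\gamma_nx:n\in\omega)\bigr),\qquad x\in H,
$$
which is well defined precisely because $x\in\mathsf H_\sharp\cap\mathsf H_{(\gamma_n)}(\mathrm{Fin})$.

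The key step is to show that $T[H]$ is closed in $G$. Suppose $T(x_m)\to(x,\bm u,\bm w)$. Then $x_m\to x$ in $\mathbb T$, so $d_nx_m\to d_nx$ and $\gamma_nx_m\to\gamma_nx$ for each $n$. Convergence in $\ell_1$ and in sup norm implies coordinatewise convergence, so $u_n=d_nx$ and $w_n=\gamma_nx$. Hence $\sum_n\|d_nx\|<\infty$ and $\gamma_nx\to0$, that is, $x\in H$ and $(x,\bm u,\bm w)=T(x)$. Thus $T[H]$ is a closed subgroup of a Polish group, hence a Polish group, and the pulled-back topology $\tau$ on $H$ is a Polish group topology. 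It is generated by the metric
$$
\|x-y\|+\sum_n\|d_n(x-y)\|+\sup_n\|\gamma_n(x-y)\|.
$$

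It remains to compare Borel structures and to drop the sup term. The identity $(H,\tau)\to\mathbb T$ is continuous, so relatively Borel sets are $\tau$-Borel. Conversely, each $\tau$-ball is relatively Borel, because $x\mapsto\sum_n\|d_n(x-y)\|$ and $x\mapsto\sup_n\|\gamma_n(x-y)\|$ are Borel on $\mathbb T$. By separability every $\tau$-open set is a countable union of balls, so $\tau$-Borel sets are relatively Borel. This is the argument of Theorem~\ref{thm:analyticPidealscharacterization}.

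For the compatibility of $\rho$, the main obstacle is showing that the sup term is dominated by $\rho$. Since $\gamma_n=\gamma_0+\sum_{j<n}d_j=1+\sum_{j<n}d_j$, we get $\|\gamma_nz\|\le\|z\|+\sum_{j<n}\|d_jz\|\le\rho(z,0)$ for every $z$. Hence
$$
\rho\le\text{(the metric above)}\le2\rho,
$$
the two metrics are equivalent, and $\rho$ is compatible with $\tau$. By uniqueness of the Polish group topology \cite[Theorem 9.10]{MR1321597}, this is the Polish group topology of $H$. One could even drop the $c_0$ factor from the start: using $\rho$ alone, a $\rho$-limit automatically preserves $\gamma_nx\to0$. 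This is because $\sup_n\|\gamma_n(x_m-x)\|\le\rho(x_m,x)\to0$, a uniform limit of null sequences is null, and the same domination gives the comparison.
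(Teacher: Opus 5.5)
Your proof is correct and follows essentially the paper's strategy: embed $H$ as a closed subgroup of a product Polish group via $x\mapsto(x,(d_nx:n\in\omega),\ldots)$, pull the topology back, and compare Borel structures through balls using separability. The only difference is where the telescoping identity $\gamma_n=1+\sum_{j<n}d_j$ is used. You add a $c_0(\mathrm{Fin})$ factor so that closedness is immediate, and then use the identity to show that the extra sup term is dominated by $\rho$. The paper instead uses it to write the image in $\mathbb T\times\ell^1(\mathbb T)$ directly as $\{(x,\bm u): u_n=d_nx \text{ for all } n,\ x+\sum_n u_n=0\}$, which is closed by continuity of $\bm u\mapsto\sum_n u_n$, so that $\rho$ is exactly the pulled-back metric.
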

\begin{proof}
Endow $\ell^1(\mathbb{T}):=\left\{\bm{u}\in\mathbb T^\omega:
\sum_{n}\|u_n\|<\infty
\right\}$ with the metric 
\[
\forall \bm{u}, \bm{v} \in \ell^1(\mathbb{T}), \quad 
q((u_n),(v_n)):=\sum_{n=0}^{\infty}\|u_n-v_n\|.
\]
Then \(\ell^1(\mathbb T)\) is a Polish group. Define the map $\Phi:H\to\mathbb T\times\ell^1(\mathbb T)$ by $\Phi(x):=\bigl(x,(d_nx:n\in\omega)\bigr)$ for all $x \in H$. 
Observe that \(\rho\) is the metric induced by \(\Phi\) from the product
metric on \(\mathbb T\times\ell^1(\mathbb T)\).

We claim that \(\Phi[H]\) is closed. Indeed, since 
$
\gamma_Nx=x+\sum_{n=0}^{N-1}d_nx
$ for all $N \in \omega$ 
and \(\sum_n\|d_nx\|<\infty\), the condition \(\gamma_Nx\to0\) is equivalent to $
x+\sum_{n=0}^{\infty}d_nx=0$ in $\mathbb T$. 
Hence
\[
\Phi[H]
=
\left\{
(x,\bm{u})\in\mathbb T\times\ell^1(\mathbb T):
u_n=d_nx\text{ for all }n\in\omega
\text{ and }
x+\sum_{n=0}^{\infty}u_n=0
\right\}.
\]
The map 
$
(u_n)\mapsto\sum_{n=0}^{\infty}u_n
$ 
from \(\ell^1(\mathbb T)\) to \(\mathbb T\) is continuous, and the conditions
\(u_n=d_nx\) are closed. Therefore \(\Phi[H]\) is a closed subgroup of the
Polish group \(\mathbb T\times\ell^1(\mathbb T)\). Thus \(\Phi[H]\) is Polish,
and consequently \((H,\rho)\) is a Polish group.

Moreover, the \(\rho\)-topology refines the topology inherited from \(\mathbb T\),
because 
$
\|x-y\|\leq \rho(x,y).
$ 
Finally, the two Borel structures coincide: each \(\rho\)-ball is Borel in the
inherited Borel structure, since the map 
$
x\mapsto
\|x-a\|+\sum_{n}\|d_n(x-a)\|
$ 
is Borel as the pointwise limit of its continuous partial sums. Since
\((H,\rho)\) is separable, every \(\rho\)-open set is a countable union of such
balls. 
\end{proof}

\begin{claim}\label{claim:nonFsigmaClosure}
For every \(\varepsilon>0\), the closure of the \(\rho\)-open ball 
$
B_\rho(0,\varepsilon):=\{x\in H:\rho(x,0)<\varepsilon\}
$ 
in \(\mathbb T\) is not contained in \(H\).
\end{claim}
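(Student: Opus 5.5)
The plan is to exhibit, for every $\varepsilon>0$, a point $w\in\mathbb T\setminus H$ together with a sequence $(h_N:N\in\omega)$ in $B_\rho(0,\varepsilon)$ such that $h_N\to w$ in $\mathbb T$. The congruence \eqref{eq:gamma-congruence} makes the natural candidate
\[
w:=\frac{1}{q_j\gamma_j}+\mathbb Z,
\]
with $j$ chosen large depending on $\varepsilon$.

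\emph{Step 1: $w\notin H$, and $w$ is small.} For $n\ge j$ we have $\gamma_n w=\frac{\gamma_n/\gamma_j}{q_j}$. By \eqref{eq:gamma-congruence}, this equals $\frac1{q_j}$ in $\mathbb T$. Hence $\gamma_nw\to \frac1{q_j}\neq 0$, and so $w\notin\mathsf H_{(\gamma_n)}(\mathrm{Fin})\supseteq H$. For $n\ge j$ we also get $d_nw=\gamma_{n+1}w-\gamma_nw=0$ in $\mathbb T$. For $n<j$ we have $0\le d_nw$ as reals, and $\sum_{n<j}d_n w\le \gamma_jw=\frac1{q_j}$. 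Therefore
\[
\|w\|+V(w)\le \frac{2}{q_j}<2^{-j-8}.
\]

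\emph{Step 2: approximants in $H$.} For $N> j$, let $b_N$ be the integer with $\gamma_Nw=b_N+\frac1{q_j}$ (again by \eqref{eq:gamma-congruence}), and set
\[
h_N:=\frac{b_N}{\gamma_N}+\mathbb Z=w-\frac{1}{q_j\gamma_N}.
\]
Since $\gamma_N\mid\gamma_n$ for all $n\ge N$, we get $\gamma_nh_N=0$ and $d_nh_N=0$ for $n\ge N$. Thus $h_N\in \mathsf H_\sharp\cap\mathsf H_{(\gamma_n)}(\mathrm{Fin})=H$. Clearly $h_N\to w$ in $\mathbb T$. For the size, use subadditivity of $\|\cdot\|$:
\[
\rho(h_N,0)\le \|w\|+V(w)+\frac{1}{q_j\gamma_N}+\sum_{n<N}\frac{d_n}{q_j\gamma_N}\le \frac{2}{q_j}+\frac{2}{q_j}=\frac4{q_j}.
\]
Here we used that $\sum_{n<N}d_n=\gamma_N-\gamma_0<\gamma_N$. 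Choosing $j$ with $4/q_j<\varepsilon$ (possible since $q_j>2^{j+10}$) gives $h_N\in B_\rho(0,\varepsilon)$ while $w\notin H$, which proves the claim.

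\emph{Step 3: deducing that $H$ is not $F_\sigma$.} We argue as in Part two of the proof of Theorem \ref{thm:Fsigmadelcomplete}. Suppose $H=\bigcup_mF_m$ with each $F_m$ compact in $\mathbb T$. The identity $(H,\rho)\to\mathbb T$ is continuous, so each $F_m$ is $\rho$-closed. Claim \ref{claimHPolishablerho} says $(H,\rho)$ is Polish, so the Baire category theorem gives $h+B_\rho(0,R)\subseteq F_m$ for some $h\in H$, some $m$, and some $R>0$. By translation invariance of $\rho$, this yields $\overline{B_\rho(0,R)}^{\,\mathbb T}\subseteq F_m-h\subseteq H$. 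This contradicts the claim with $\varepsilon=R$.

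The only delicate point is Step 1: the choice of the denominator $q_j\gamma_j$ has to interact with \eqref{eq:gamma-congruence} so that $\gamma_nw$ stabilizes at the nonzero value $\frac1{q_j}$ while the variation stays tiny. Once that congruence is used, Step 2 is routine bookkeeping.
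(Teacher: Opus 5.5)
Your proof is correct and follows the paper's strategy. You take a rational point outside $H$ that $\gamma_n$ sends, via the congruence $\gamma_n/\gamma_j\equiv 1\pmod{q_j}$, to a fixed nonzero value; you approximate it in $\mathbb T$ by points annihilated by $\gamma_n$ and $d_n$ for large $n$; and you finish with the same Baire-category deduction. The only difference is a rescaling: your witness $w=1/(q_j\gamma_j)$ and your approximants $h_N$ are exactly the paper's $t_j=1/q_j$ and $h_m$ divided by $\gamma_j$, which yields the cleaner bound $\rho(h_N,0)\le 4/q_j$ instead of $(1+3\gamma_j)/q_j$, so you need only $q_j\to\infty$ rather than $q_j\gg\gamma_j$.
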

\begin{proof}
Fix \(\varepsilon>0\). Since \(q_j>2^{j+10}\gamma_j\), we have
\[
\forall j \in \omega, \quad 
\frac{1+3\gamma_j}{q_j}
\leq
\frac{4\gamma_j}{q_j}
<
\frac{1}{2^{j+8}}.
\]
Hence it is possible to choose \(j\in\omega\) such
that $(1+3\gamma_j)/q_j<\varepsilon$. 
Put \(t_j:=\nicefrac{1}{q_j}\in\mathbb T\). We first observe that
\(t_j\notin H\). Indeed, by \eqref{eq:gamma-congruence}, for every \(n\geq j\)
we have \(\gamma_n\equiv\gamma_j\pmod {q_j}\). Since \(0<\gamma_j<q_j\), it
follows that 
$\gamma_nt_j=\frac{\gamma_j}{q_j}\neq0$ for all $n\ge j$. 
Thus \((\gamma_nt_j:n\in\omega)\) is eventually constant and nonzero, and
therefore \(t_j\notin \mathsf H_{(\gamma_n)}(\mathrm{Fin})\). In particular,
\(t_j\notin H\).

Now fix \(m>j\) and define
\[
h_m:=\frac1{q_j}-\frac{\gamma_j}{q_j\gamma_m}\in\mathbb T.
\]
We claim that \(h_m\in H\). Let \(n\geq m\). Then 
$
\gamma_nh_m
=
\frac{\gamma_n-\gamma_j(\gamma_n/\gamma_m)}{q_j}.
$ 
Again by \eqref{eq:gamma-congruence}, we have
\(\gamma_n\equiv\gamma_j\pmod {q_j}\). Moreover, since \(n\geq m>j\), the
integer \(\gamma_n/\gamma_m\) is congruent to \(1\) modulo \(q_j\). Hence
\[
\gamma_n-\gamma_j(\gamma_n/\gamma_m)\equiv0\pmod {q_j},
\]
and therefore \(\gamma_nh_m=0\) in \(\mathbb T\) for every \(n\geq m\). It
follows that \(\lim_n\gamma_nh_m=0\). Similarly, \(d_nh_m=0\) for every \(n\geq m\),
so that \(V(h_m)<\infty\). Therefore \(h_m\in H\).

To complete the proof of the claim, we estimate \(\rho(h_m,0)\). First,
\begin{equation}\label{eq:hmnorm}
\|h_m\|
\leq
\frac1{q_j}+\frac{\gamma_j}{q_j\gamma_m}
\leq
\frac1{q_j}+\frac{\gamma_j}{q_j}.
\end{equation}
Since \(d_n=\gamma_{n+1}-\gamma_n=\gamma_nq_0q_1\cdots q_n\), the prime
\(q_j\) divides \(d_n\) for every \(n\geq j\). Hence
\[
V\left(\frac1{q_j}\right)
=
\sum_{n<j}\left\|\frac{d_n}{q_j}\right\|
\leq
\frac{\gamma_j-\gamma_0}{q_j}
<
\frac{\gamma_j}{q_j}.
\]
Similarly, since \(\gamma_m\mid\gamma_n\) for every \(n\geq m\), we have
\[
V\left(\frac{\gamma_j}{q_j\gamma_m}\right)
\leq
\frac{\gamma_j}{q_j\gamma_m}\sum_{n<m}d_n
=
\frac{\gamma_j(\gamma_m-\gamma_0)}{q_j\gamma_m}
<
\frac{\gamma_j}{q_j}.
\]
By subadditivity of \(V\), together with \eqref{eq:hmnorm}, we obtain
\[
\rho(h_m,0)
=
\|h_m\|+V(h_m)
<
\frac{1+3\gamma_j}{q_j}
<
\varepsilon.
\]
Thus \(h_m\in B_\rho(0,\varepsilon)\) for
every \(m>j\).

Finally, since \(\lim_m\gamma_m=\infty\), we have 
$
\lim_m h_m=
\lim_mt_j\left(1-\frac{\gamma_j}{\gamma_m}\right)=t_j
$ 
in \(\mathbb T\). This completes the proof since \(t_j\notin H\).
\end{proof}

Now, suppose for the sake of contradiction that \(H\) is \(F_\sigma\) in \(\mathbb T\).
Then there are closed sets \(F_m\subseteq\mathbb T\) such that
\[
H=\bigcup_{m\in\omega}F_m.
\]
Since the \(\rho\)-topology refines the topology
inherited from \(\mathbb T\), each \(F_m\) is closed in the Polish group
\((H,\rho)\). Thanks to Claim \ref{claimHPolishablerho} and the Baire category theorem, there is some $F_m$ with nonempty interior. Hence there are 
\(x\in H\) and \(\varepsilon>0\) such that 
$
x+B_\rho(0,\varepsilon)\subseteq F_m.
$ 
By Claim \ref{claim:nonFsigmaClosure}, it is possible to choose 
$
t\in\overline{B_\rho(0,\varepsilon)}^{\,\mathbb T}\setminus H.
$ 
Then
\[
x+t\in
\overline{x+B_\rho(0,\varepsilon)}^{\,\mathbb T}
\subseteq
F_m
\subseteq H.
\]
Since \(x\in H\) and \(H\) is a subgroup, this implies that 
$
t=(x+t)-x\in H,
$ 
which is the desired contradiction impossible. Therefore \(H\) is not \(F_\sigma\).

\medskip

\textbf{$\bm{H}$ is $\bm{\mathcal{I}_{1/n}}$-characterized}. 
We shall construct $\bm a\in \omega^\omega$ such that
$H=\mathsf H_{\bm a}(\I_{1/n})$. 
Let
\[
J:=\{(0,n):n\in\omega\}\cup\{(1,n,r):n\in\omega,\ r\geq1\}.
\]
For $j\in J$, define integers $c_j$ and positive real numbers $\lambda_j$ by
\[
c_{(0,n)}:=\gamma_n,
\,\,\,\,
c_{(1,n,r)}:=rd_n,
\,\,\,\,
\lambda_{(0,n)}:=1,
\,\,\text{ and }\,\,
\lambda_{(1,n,r)}:=\frac1{r^2}.
\]
By the greedy algorithm, it is also possible to pick 
pairwise disjoint finite subsets $(B_j: j \in J)$ of $\omega$ such that
$$
\forall j \in J, \quad 
\lambda_j
\leq
\sum_{m\in B_j}\frac1{m+1}
\leq
2\lambda_j.
$$

At this point, define $\bm a=(a_m:m\in\omega)$ by $a_m:=c_j$ if $m\in B_j$, and 
$a_m:=0$ if $m\notin\bigcup_{j\in J}B_j$. Fix $x\in\mathbb T$ and
$\varepsilon>0$. By the choice of $B_j$, we have 
\begin{equation}\label{eq:firstequivalence}
    \begin{split}
        \{m\in\omega:\|a_mx\|\geq\varepsilon\}\in\I_{1/n}
&\quad \Longleftrightarrow\quad 
\sum_{\{j\in J:\|c_jx\|\geq\varepsilon\}}\lambda_j<\infty\\
&\quad \Longleftrightarrow\quad 
\sum_{\{n:\|\gamma_nx\|\geq\varepsilon\}}1
+
\sum_{n=0}^{\infty}\sum_{r=1}^{\infty}
\frac1{r^2}\mathbf 1_{\{\|rd_nx\|\geq\varepsilon\}}
<\infty. 
    \end{split}
\end{equation}

Now, for each $\varepsilon>0$ and $t \in \mathbb{T}$, define
\[
F_\varepsilon(t):=
\sum_{r=1}^{\infty}
\frac1{r^2}\mathbf 1_{\{\|rt\|\geq\varepsilon\}}.
\]
For every $\varepsilon>0$, there is $C_\varepsilon<\infty$ such that
\begin{equation}\label{eq:Feps-upper}
\forall t \in \mathbb{T}, \quad 
F_\varepsilon(t)\leq C_\varepsilon\|t\|.
\end{equation}
Indeed, if $\delta:=\|t\|>0$, then $\|rt\|<\varepsilon$ whenever
$r<\varepsilon/\delta$. Hence
$F_\varepsilon(t)\leq\sum_{r\geq\varepsilon/\delta}r^{-2}$, which is
bounded by a constant multiple of $\delta$; the case $\delta=0$ is trivial.

Conversely, for $\eta:=\nicefrac1{16}$, there is $c>0$ such that
\begin{equation}\label{eq:Feta-lower}
\forall t \in \mathbb{T}, \quad 
c\|t\|\leq F_\eta(t).
\end{equation}
Indeed, put $\delta:=\|t\|$. If $\delta\geq\eta$, the term $r=1$ gives the
claim, after decreasing $c$ if necessary. If $0<\delta<\eta$, then for each
integer $r$ with $(8\delta)^{-1}\leq r\leq (4\delta)^{-1}$ we have
$\eta<\nicefrac18\leq r\delta\leq\nicefrac14$, and therefore
$\|rt\|\geq\eta$. Summing $r^{-2}$ over these integers gives a lower bound
which is of the type $c\delta$ for some $c>0$.

It follows from \eqref{eq:Feps-upper} and \eqref{eq:Feta-lower} that, for
every sequence $(t_n:n\in\omega)$ in $\mathbb T$,
\begin{equation}\label{eq:l1-Feps-equivalence}
\sum_{n=0}^{\infty}\|t_n\|<\infty
\quad\Longleftrightarrow\quad
\sum_{n=0}^{\infty}F_\varepsilon(t_n)<\infty
\text{ for every }\varepsilon>0.
\end{equation}
Applying \eqref{eq:l1-Feps-equivalence} to $t_n=d_nx$, we get that
\[
\sum_{n=0}^\infty\|d_nx\|<\infty
\quad \Longleftrightarrow \quad 
\sum_{n=0}^{\infty}\sum_{r=1}^{\infty}
\frac1{r^2}\mathbf 1_{\{\|rd_nx\|\geq\varepsilon\}}<\infty
\text{ for every }\varepsilon>0.
\]
On the other hand,
\[
\lim_{n\to \infty} \gamma_nx=0
\quad \Longleftrightarrow \quad 
\sum_{\{n:\|\gamma_nx\|\geq\varepsilon\}}1<\infty
\text{ for every }\varepsilon>0
\]

Combining these two equivalences with
\eqref{eq:firstequivalence}, we conclude that $x\in\mathsf H_{\bm a}(\I_{1/n})$ if and only if $\lim_n \gamma_nx=0$ and $\sum_n \|d_nx\|<\infty$, i.e., if and only if $x \in H$. 
\end{proof}

\begin{remark}\label{rmk:strangeLQC}
    Thanks to Theorem \ref{thm:analyticPidealscharacterization}, the subgroup $H$ constructed in the proof above is Polishable. We claim that $H$ is also locally quasi-convex with respect to its Polish group topology. 
    Indeed, recall from the proof of Theorem \ref{thm:strangecounterexample} that, setting $d_n:=\gamma_{n+1}-\gamma_n$ for all $n\in\omega$, such topology is generated by the translation-invariant metric
    $$
    \rho(x,y):=\|x-y\|+\sum_{n\in\omega}\|d_n(x-y)\|
    $$
    for all $x,y\in H$, cf. Claim \ref{claimHPolishablerho}. 

    Fix a neighborhood $U$ of $0$ in $(H,\rho)$, and choose $\varepsilon>0$ such that $B_\rho(0,\varepsilon)\subseteq U$. For notational convenience, set $d_{-1}:=1$. Pick also a positive integer $K$ such that 
    $1/(4K)<\varepsilon$, and define
    $$
    A_K:=
    \left\{
    \left|k\sum_{j\in F}\eta_jd_j\right|:
    F\in[\{-1\}\cup\omega]^{<\omega},
    \,\,\eta_j\in\{-1,1\},
    \,\,\text{ and }\,\,1\leq k\leq K
    \right\}.
    $$
    Consider
    $$
    V:=
    \left\{
    x\in H:
    \sup_{a\in A_K}\|ax\|\leq\frac14
    \right\}.
    $$
    
    If $\rho(x,0)<1/(4K)$ and $a=\left|k\sum_{j\in F}\eta_jd_j\right|\in A_K$, then
    $$
    \|ax\|
    \leq
    k\sum_{j\in F}\|d_jx\|
    \leq
    K\rho(x,0)
    <
    \frac14.
    $$
    Hence $B_\rho(0,1/(4K))\subseteq V$, so $V$ is a neighborhood of $0$.

    We claim that $V\subseteq B_\rho(0,\varepsilon)$. Fix $y\in H$ such that $\rho(y,0)\geq\varepsilon$, and, for each $j\in\{-1\}\cup\omega$, choose $t_j\in[-\nicefrac12,\nicefrac12]$ such that $d_jy=t_j+\mathbb Z$. Then
    $$
    \sum_{j\in\{-1\}\cup\omega}|t_j|
    =
    \rho(y,0)
    \geq\varepsilon
    >
    \frac{1}{4K}.
    $$
    Therefore there is a finite set $F\subseteq\{-1\}\cup\omega$ such that
    $$
    \frac{1}{4K}
    <
    s:=\sum_{j\in F}|t_j|
    \leq
    \frac12.
    $$
    Indeed, if some $|t_j|>1/(4K)$, take $F=\{j\}$; otherwise take the first finite partial sum which exceeds $1/(4K)$, which is at most $1/(2K)\leq\nicefrac12$. 
    Now, 
    for each $j\in F$, choose $\eta_j\in\{-1,1\}$ such that $\eta_jt_j=|t_j|$. If $s>\nicefrac14$, set $k:=1$; otherwise, set $k:=\lfloor1/(4s)\rfloor+1$. Since $s>1/(4K)$, we have $1\leq k\leq K$, and in both cases 
    $
    \nicefrac14<ks\leq\nicefrac12.
    $ 
    Hence, setting $a:=\left|k\sum_{j\in F}\eta_jd_j\right|\in A_K$, we obtain
    $$
    \|ay\|
    =
    \left\|k\sum_{j\in F}\eta_jd_jy\right\|
    =
    \|ks\|
    =
    ks
    >
    \frac14.
    $$
    Thus $y\notin V$, proving that $V\subseteq B_\rho(0,\varepsilon)\subseteq U$.

    Finally, $V$ is quasi-convex: if $y\in H\setminus V$, then there exists $a\in A_K$ such that
    $$
    \sup_{x\in V}\|ax\|\leq\nicefrac14<\|ay\|.
    $$
    Therefore $H$ is locally quasi-convex with respect to its Polish group topology. 
\end{remark}

\begin{remark}\label{rmk:strangenotLQCksdjhg}
    There exists an $\I_{1/n}$-characterized subgroup of $\mathbb{T}$ which is $\mathsf{D}(\mathbf{\Pi}^0_2)$-complete and Polishable, but not locally quasi-convex.

    Indeed, it is enough to modify slightly the construction in the proof of Theorem \ref{thm:strangecounterexample}. 
    Set $j_0:=0$ and $\gamma_0:=1$, and put $C:=\sum_{r\in\omega}2^{-r/2}$. Suppose that $\gamma_{j_s}$ has been defined. Pick a prime number $q_s>2\gamma_{j_s}$ such that
    $$
    \frac{1+\gamma_{j_s}}{q_s}
    +(j_s+C)\sqrt{\frac{\gamma_{j_s}}{q_s}}
    <2^{-s}.
    $$
    Set $Q_s:=q_0\cdots q_s$, $R_s:=1+Q_s$, $N_s:=\lfloor R_s^{3/4}\rfloor$, and $j_{s+1}:=j_s+N_s$. For every $j_s\leq n<j_{s+1}$, define $\gamma_{n+1}:=R_s\gamma_n$. Hence
    $$
    \lim_{s\to \infty}\frac{N_s}{R_s}=0,
    \qquad
    \lim_{s\to \infty}\frac{N_s}{\sqrt{R_s}}=\infty,
    \quad \text{ and } \quad 
    \frac{\gamma_n}{\gamma_{j_s}}\equiv1\pmod{q_s}
    \quad\text{for all }n\geq j_s.
    $$
    Put $d_n:=\gamma_{n+1}-\gamma_n$ and define the subgroup
    $$
    H:=
    \left\{
    x\in\mathbb{T}:
    \sum_{n\in\omega}\sqrt{\|d_nx\|}<\infty
    \text{ and }
    \lim_{n\to \infty}\gamma_nx=0
    \right\}.
    $$
    
    As in Claim \ref{claimHPolishablerho}, $H$ is Polishable and its Polish group topology is generated by 
    $$
    \rho(x,y):=
    \|x-y\|+\sum_{n\in\omega}\sqrt{\|d_n(x-y)\|}.
    $$ 
    Moreover, repeating the proof of Theorem \ref{thm:strangecounterexample}, with the weights $r^{-2}$ replaced by $r^{-3/2}$, one obtains an integer sequence $\bm{a}$ such that
    $H=\mathsf{H}_{\bm{a}}(\I_{1/n})$. Indeed, if
    $$
    F_\varepsilon(t):=
    \sum_{r=1}^{\infty}r^{-3/2}
    \mathbf{1}_{\{\|rt\|\geq\varepsilon\}},
    $$
    then, for every $\varepsilon>0$, there are constants $C_\varepsilon,c>0$ such that
    $$
    c\sqrt{\|t\|}
    \leq F_{1/16}(t)
    \quad\text{and}\quad
    F_\varepsilon(t)\leq C_\varepsilon\sqrt{\|t\|}
    $$
    for all $t\in\mathbb{T}$. The same argument also shows that $H$ is $G_{\delta\sigma}$.

   To show that $H$ is not $F_\sigma$, put $t_s:=1/q_s$ for each $s \in \omega$ and, for $m>j_s$,
    $$
    h_{s,m}:=
    \frac1{q_s}-\frac{\gamma_{j_s}}{q_s\gamma_m}.
    $$
    As in Claim \ref{claim:nonFsigmaClosure}, we have $t_s\notin H$, $h_{s,m}\in H$, $\lim_mh_{s,m}=t_s$, and
    $$
    \rho(h_{s,m},0)
    \leq
    \frac{1+\gamma_{j_s}}{q_s}
    +(j_s+C)\sqrt{\frac{\gamma_{j_s}}{q_s}}
    <2^{-s}.
    $$
    The analogue Baire category argument shows that $H$ is not $F_\sigma$. Together with the above observation, Corollary \ref{corollary:lupini} implies that $H$ is $\mathsf{D}(\mathbf{\Pi}^0_2)$-complete.

    It remains to show that $H$ is \emph{not} locally quasi-convex. For $j_s\leq n<j_{s+1}$, put $x_n:=1/\gamma_{n+1}$. Then $x_n\in H$ and, uniformly for $1\leq\ell\leq N_s$,
    $$
    \rho(\ell x_n,0)
    \leq
    \frac{N_s}{R_s}
    +(1+C)\sqrt{\frac{N_s}{R_s}}
    \longrightarrow0 \qquad \text{ as }s\to \infty.
    $$
    Suppose for the sake of contradiction that $H$ is locally quasi-convex. Pick a quasi-convex neighborhood $V$ of $0$ contained in $B_\rho(0,1)$. For all sufficiently large $s$, we have $\ell x_n\in V$ whenever $j_s\leq n<j_{s+1}$ and $1\leq\ell\leq N_s$. Set
    $$
    y_s:=\sum_{n=j_s}^{j_{s+1}-1}(-1)^{n-j_s}x_n.
    $$
    We claim that $y_s\in V$. Otherwise, there exists a continuous character $\phi \in \widehat{H}$ such that $\sup_{x\in V}\|\phi(x)\|\leq\nicefrac14<\|\phi(y_s)\|$. Since $\ell x_n\in V$ for every $1\leq\ell\leq N_s$, we easily obtain $\|\phi(x_n)\|\leq1/(4N_s)$, and hence 
    $$
    \|\phi(y_s)\|
    \leq \sum_{j_s\le n<j_{s+1}}\|\phi(x_n)\|
    \le \frac14,
    $$
    which is a contradiction.

    On the other hand, for every $j_s\leq n<j_{s+1}$, the alternating signs give 
    $
    \|d_ny_s\|\geq 1/{R_s}
    $ 
    for all sufficiently large $s$. Consequently,
    $$
    \rho(y_s,0)
    \geq
    \frac{N_s}{\sqrt{R_s}}
    \longrightarrow\infty 
    \qquad \text{ as }s\to \infty,
    $$
    contradicting $y_s\in V\subseteq B_\rho(0,1)$. Therefore $H$ is not locally quasi-convex.
\end{remark}

\section{Proofs of Results from Section \ref{sec:exampleapplications}}\label{sec:proofexamples}

\begin{proof}
[Proof of Theorem \ref{thm:armacost}]
Set $\bm{a}:=(b^n: n \in \omega)$. Fix $x \in \mathbb{T}$, with its representation as in
\eqref{eq:representationx}, and set for simplicity $d_k:=d_{x,k}$ for all $k \in \omega$. Define
$$
B_{m}:=
\{k \in \omega: \overline{d_{k+1}d_{k+2}\cdots d_{k+m}}
\notin \{0^m,(b-1)^m\}\}
\quad \text{ for all }m\ge 1.
$$
Observe also that
$$
b^n x=\overline{0.d_{n+1}d_{n+2}d_{n+3}\ldots}_{(b)}
\quad \text{ in } \mathbb{T}
$$
for all $n \in \omega$.

\begin{claim}\label{claim:armacost}
$x \in \mathsf{H}_{\bm{a}}(\I)$ if and only if $B_m\in \I$ for all $m\ge 1$.
\end{claim}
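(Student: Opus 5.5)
The plan is to compare, for each threshold, the set where $\|b^nx\|$ is large with the sets $B_m$, using the base-$b$ expansion $b^nx=\overline{0.d_{n+1}d_{n+2}\ldots}_{(b)}$. The key elementary observation is a two-sided estimate. If $n\notin B_m$, the next $m$ digits after position $n$ are either all $0$ or all $b-1$. In the first case $b^nx\in[0,b^{-m}]$. In the second case $b^nx\in[1-b^{-m},1)$. Either way
$$
\|b^nx\|\le b^{-m}.
$$
If instead $n\in B_m$, the block $\overline{d_{n+1}\cdots d_{n+m}}$ is neither $0^m$ nor $(b-1)^m$. Then the real number $\overline{0.d_{n+1}\cdots d_{n+m}\ldots}$ is at least $b^{-m}$. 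It is also at most $1-b^{-m}$, since some digit among the first $m$ is at most $b-2$ and the infinitely many non-$(b-1)$ digits prevent equality issues. Hence
$$
\|b^nx\|\ge b^{-m}
$$
(up to a harmless boundary case handled by the canonical-expansion convention).

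Both directions then follow from these bounds.

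\emph{($\Leftarrow$)} Assume $B_m\in\I$ for all $m$ and fix $\varepsilon>0$. Choose $m$ with $b^{-m}<\varepsilon$. The upper bound gives
$$
\{n:\|b^nx\|\ge\varepsilon\}\subseteq B_m\in\I,
$$
so $x\in\mathsf H_{\bm a}(\I)$.

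\emph{($\Rightarrow$)} Assume $x\in\mathsf H_{\bm a}(\I)$ and fix $m\ge1$. The lower bound gives
$$
B_m\subseteq\{n:\|b^nx\|\ge b^{-m}\}\in\I,
$$
so $B_m\in\I$.

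The only delicate point is the lower bound when the block is, say, $0^{m-1}1$ followed by zeros, or $(b-1)^{m-1}(b-2)$ followed by $(b-1)$'s. The value is then exactly $b^{-m}$ from below, or $1-b^{-m}$, which is excluded by the convention that the expansion is not eventually $b-1$. The non-strict inequality $\ge b^{-m}$ is all that is needed, so this boundary case causes no harm. I expect this boundary bookkeeping to be the only real obstacle. If it becomes awkward, I would replace $b^{-m}$ by $b^{-m-1}$ in the second direction, using $B_m\subseteq B_{m+1}$ and the inclusion $B_m\subseteq\{n:\|b^nx\|\ge b^{-m-1}\}$, which is robust.
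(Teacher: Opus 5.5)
Your proof is correct and follows essentially the same route as the paper. Both use the two-sided estimate: $\|b^nx\|\le b^{-m}$ for $n\notin B_m$, and $\|b^nx\|\ge b^{-m}$ for $n\in B_m$. These give the inclusions $\{n:\|b^nx\|\ge\varepsilon\}\subseteq B_m$ (when $b^{-m}<\varepsilon$) and $B_m\subseteq\{n:\|b^nx\|\ge b^{-m}\}$. The boundary worry and the $b^{-m-1}$ fallback are unnecessary. If some digit at position $j\le m$ of the block is at most $b-2$, then $1-b^nx\ge b^{-j}\ge b^{-m}$, so the non-strict lower bound holds even without the canonical-expansion convention.
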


\begin{proof}
First, suppose that $x \in \mathsf{H}_{\bm{a}}(\I)$, and fix $m\geq 1$.
If $k\in B_m$, then the block 
$
\overline{d_{k+1}\cdots d_{k+m}}
$ 
is neither constantly $0$ nor constantly $b-1$. Hence the point $b^kx$ is at
distance at least $b^{-m}$ from $0$. Thus
$ 
B_m\subseteq \{k\in\omega:\|b^kx\|\geq b^{-m}\}\in \I.
$ 

Conversely, suppose that $B_m\in \I$ for every $m\geq 1$.
Pick $\varepsilon>0$, and choose $m\geq 1$ such that $b^{-m}<\varepsilon$.
If $k\notin B_m$, then the block 
$
\overline{d_{k+1}\cdots d_{k+m}}
$ 
is either constantly $0$ or constantly $b-1$. In both case, we have
$
\|b^kx\|\le b^{-m}<\varepsilon.
$ 
Therefore
$$
\{k\in\omega:\|b^kx\|\geq \varepsilon\}\subseteq B_m \in \I.
$$
As $\varepsilon>0$ was arbitrary, we conclude that
$x\in \mathsf{H}_{\bm{a}}(\I)$. 
\end{proof}

\begin{claim}\label{claim:armacost2}
For every $m\ge 1$, we have
$$
B_m=
\bigcup_{j=1}^m (P_x-j)
\cup
\bigcup_{j=1}^{m-1} (Q_x-j).
$$
\end{claim}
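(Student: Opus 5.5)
This is a purely combinatorial identity about the digit sequence $(d_k)$ of $x$, so I will prove the two inclusions directly. Recall the definitions: $k\in P_x-j$ means $k+j\in P_x$, i.e. $d_{k+j}\in\{1,\ldots,b-2\}$. Likewise $k\in Q_x-j$ means $d_{k+j}\neq d_{k+j+1}$. Also, $k\in B_m$ means the block $\overline{d_{k+1}\cdots d_{k+m}}$ is neither $0^m$ nor $(b-1)^m$.

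For the inclusion $\supseteq$, I check each piece of the union separately.
\begin{itemize}
\item Suppose $k+j\in P_x$ with $1\le j\le m$. Then the digit $d_{k+j}$ lies in the block and is neither $0$ nor $b-1$, so the block is not constant $0$ or constant $b-1$. Hence $k\in B_m$.
\item Suppose $k+j\in Q_x$ with $1\le j\le m-1$. Then both $d_{k+j}$ and $d_{k+j+1}$ lie in the block, since $j+1\le m$, and they differ. So the block is not constant, and again $k\in B_m$.
\end{itemize}

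For the inclusion $\subseteq$, I argue by contrapositive. Suppose $k$ lies in none of the sets on the right. Since no $Q_x-j$ with $1\le j\le m-1$ contains $k$, we get $d_{k+1}=d_{k+2}=\cdots=d_{k+m}$, so the block is constant with value $d_{k+1}$. Since $k+1\notin P_x$, this common value is not in $\{1,\ldots,b-2\}$, so it is $0$ or $b-1$. Hence the block is $0^m$ or $(b-1)^m$, that is, $k\notin B_m$.

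Two edge cases need a remark.
\begin{itemize}
\item When $m=1$ the second union is empty. The first union is then just $P_x-1$, and the identity reduces to saying that the single digit $d_{k+1}$ is not $0$ or $b-1$, which is immediate.
\item When $b=2$ the set $P_x$ is empty, and the argument goes through unchanged.
\end{itemize}

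There is no real obstacle here. The only points needing care are the index ranges: $j\le m$ for $P_x$ and $j\le m-1$ for $Q_x$, chosen so that both compared digits stay inside the block. One also needs the convention that $A-j=\{k\in\omega:k+j\in A\}$, consistent with the definition of translates given before Corollary \ref{cor:armacostconsequence}.
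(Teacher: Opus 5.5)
Your proof is correct and follows essentially the same route as the paper. The paper splits into the same two alternatives (some digit of the block lies in $\{1,\ldots,b-2\}$, or the block is non-constant, i.e.\ has a change $d_{k+j}\neq d_{k+j+1}$ with $1\le j\le m-1$); your two inclusions, including the contrapositive for $\subseteq$ and the index ranges, verify exactly this.
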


\begin{proof}
Fix $k\in\omega$ and $m\ge 1$. The block 
$
\overline{d_{k+1}\cdots d_{k+m}}
$ 
is neither constantly $0$ nor constantly $b-1$ if and only if one of the
following two alternatives holds. 
Either some digit of the block belongs to $\{1,\ldots,b-2\}$, which is
equivalent to saying that $k\in P_x-j$ for some $j\in \{1,\ldots,m\}$. 
Or all digits of the block belong to $\{0,b-1\}$, but the block is not
constant. This is equivalent to saying that there is a change inside the
block, namely 
$
d_{k+j}\neq d_{k+j+1}
$ 
for some $j \in \{1,\ldots,m-1\}$, that is, $k\in Q_x-j$ for some
$j \in \{1,\ldots,m-1\}$.
\end{proof}

Putting together Claim \ref{claim:armacost} and Claim \ref{claim:armacost2}, we conclude that $x \in \mathsf{H}_{\bm{a}}(\I)$ if and only if $P_x-m \in \mathcal{I}$ and $Q_x-m \in \mathcal{I}$ for all $m\ge 1$. 
\end{proof}

\medskip

\begin{proof}
    [Proof of Corollary \ref{cor:armacostconsequence}]
The first part is immediate by Theorem \ref{thm:armacost}. For the second part, if $b=2$, observe that $P_x=\emptyset$ for all $x \in \mathbb{T}$. Hence $x \in \mathsf{H}_{(2^n)}(\I)$ if and only if $Q_x \in \I$. 
\end{proof}

\medskip

\begin{proof}
    [Proof of Theorem \ref{thm:propertiespowers}]
    In this proof, set $\bm{a}:=(b^n: n \in \omega)$.
    
    \ref{prop:1powers} First, suppose that $\mathcal I=\mathrm{Fin}$.
By Corollary \ref{cor:armacostconsequence}, we have that $x\in\mathsf H_{\bm a}(\mathrm{Fin})$ if and only if $P_x\cup Q_x\in\mathrm{Fin}$. 
This means that, eventually, the canonical base $b$-digits of $x$ have no digit
in $\{1,\ldots,b-2\}$ and have no digit changes. Hence the canonical expansion
of $x$ is eventually constantly $0$ or eventually constantly $b-1$. Since the
latter case is impossible, it follows that $\mathsf H_{\bm a}(\mathrm{Fin})
=\mathbb Z[1/b]/\mathbb Z$, which is countable. 

Conversely, suppose that $\mathcal I\neq \mathrm{Fin}$ is translation invariant, and pick an infinite set
$A\in\mathcal I$. 
For every infinite set $T\subseteq A$, define a sequence of digits
$(d_k:k\in\omega)$ by setting $d_0:=0$, requiring each $d_k$ to belong to
$\{0,b-1\}$, and imposing
$$
d_k\neq d_{k+1}
\quad \text{ if and only if } \quad
k\in T.
$$
At this point, define 
\begin{equation}\label{eq:definitionxT}
x_T:=\overline{0.d_1d_2d_3\ldots}_{(b)}.
\end{equation}
Since $T$ is infinite, the resulting expansion is not eventually constant, hence
it is canonical. In addition, 
$P_{x_T}=\emptyset$ and $Q_{x_T}=T \subseteq A \in \I$. Hence by Corollary \ref{cor:armacostconsequence}, we get $x_T\in\mathsf H_{\bm a}(\mathcal I)$. 
Since $T\mapsto x_T$ is injective, we conclude that $\mathsf H_{\bm a}(\mathcal I)$ has cardinality $\mathfrak{c}$. 

    \medskip

    \ref{prop:2powers} 
    First, suppose that \(\mathcal I\) is \(F_\sigma\). 
    Then there is an increasing sequence
\((K_m:m\in\omega)\) of hereditary compact subsets of \(2^\omega\) such that 
$
\mathcal I=\bigcup_{m}K_m.
$ 
Define also the compact space $\Sigma_b:=\{e\in\{0,1,\ldots,b-1\}^\omega:e_0=0\}$ and the continuous map $\pi:\Sigma_b\to\mathbb T$ by 
$$
\forall e \in \Sigma_b, \quad 
\pi(e):=\sum_{k \in \omega}e_k b^{-k}. 
$$
In addition, define also the map $R:\Sigma_b\to \mathcal{P}(\omega)$ by 
$$
\forall e\in\Sigma_b, \quad 
R(e):=
\{k\in\omega:e_k\in\{1,\ldots,b-2\}
\, \text{ or }\,
e_k\neq e_{k+1}\}.
$$
Observe that $R$ is continuous, because membership of a given $k$ in $R(e)$ depends only on $e_k$ and $e_{k+1}$.
\begin{claim}\label{claim:represpowerskjhfg}
    $\mathsf H_{\bm a}(\mathcal I)
=
\pi\bigl[\{e\in\Sigma_b:R(e)\in\mathcal I\}\bigr]$. 
\end{claim}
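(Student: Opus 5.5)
The plan is to reduce the claim to Corollary \ref{cor:armacostconsequence}, which applies because $\mathcal I$ is translation invariant. That corollary gives $\mathsf H_{\bm a}(\mathcal I)=\{x\in\mathbb T: P_x\cup Q_x\in\mathcal I\}$. The key observation is that, for the canonical expansion $e=(d_{x,k}:k\in\omega)$ of $x$ (which has $d_{x,0}=0$ and so lies in $\Sigma_b$), we have by definition
$$
R(e)=\{k: d_{x,k}\in\{1,\ldots,b-2\}\}\cup\{k: d_{x,k}\neq d_{x,k+1}\}=P_x\cup Q_x .
$$
Moreover $\pi(e)=x$ in $\mathbb T$. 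Both inclusions then follow by comparing an arbitrary preimage $e\in\pi^{-1}(x)$ with the canonical one.

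For the inclusion $\subseteq$, I take $x\in\mathsf H_{\bm a}(\mathcal I)$ and let $e$ be its canonical expansion. Then $R(e)=P_x\cup Q_x\in\mathcal I$ and $\pi(e)=x$, so $x$ lies in the image.

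For the inclusion $\supseteq$, I take $e\in\Sigma_b$ with $R(e)\in\mathcal I$ and set $x:=\pi(e)$. There are two cases.
\begin{enumerate}[label={\rm (\alph*)}]
\item If $e$ is not eventually equal to $b-1$, then $e$ is exactly the canonical expansion of $x$. Hence $P_x\cup Q_x=R(e)\in\mathcal I$, and $x\in\mathsf H_{\bm a}(\mathcal I)$ by Corollary \ref{cor:armacostconsequence}.
\item If $e$ is eventually constantly $b-1$, then $x$ is a $b$-adic rational, i.e.\ $x\in\mathbb Z[1/b]/\mathbb Z$. Note that this includes the case $x=0$, for instance when $e=(0,b-1,b-1,\ldots)$. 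The canonical expansion of such an $x$ is eventually $0$. Consequently $P_x\cup Q_x$ is finite and therefore belongs to $\mathcal I$, since $\mathrm{Fin}\subseteq\mathcal I$. Corollary \ref{cor:armacostconsequence} again gives $x\in\mathsf H_{\bm a}(\mathcal I)$. Alternatively, one can check directly that $b^nx=0$ for all large $n$.
\end{enumerate}

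The only delicate point is this non-uniqueness of base-$b$ expansions, which is handled by case (b). In that case the hypothesis $R(e)\in\mathcal I$ is not even needed, because the point $\pi(e)$ already belongs to $\mathsf H_{\bm a}(\mathrm{Fin})\subseteq\mathsf H_{\bm a}(\mathcal I)$.
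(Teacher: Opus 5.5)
Your proof is correct and follows essentially the paper's route: compare an arbitrary preimage $e\in\Sigma_b$ with the canonical expansion of $\pi(e)$, and then invoke Corollary \ref{cor:armacostconsequence}. The paper compresses your two cases into the single observation that $R(e)$ and $P_x\cup Q_x$ always have finite symmetric difference, whereas your case split makes explicit that the hypothesis $R(e)\in\mathcal I$ is not even needed when $e$ ends in $(b-1)^\infty$.
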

\begin{proof}
    If $e\in\Sigma_b$ represents $x:=\pi(e) \in \mathbb{T}$, then $e$ differs from the canonical base $b$-representation of $x$ at most by replacing an eventually constant tail $0^\infty$ with an eventually constant tail $(b-1)^\infty$, or conversely. 
Therefore $R(e)$ and $P_x\cup Q_x$ have finite symmetric difference. This implies that
$$
R(e)\in\mathcal I
\quad \text{ if and only if } \quad
P_x\cup Q_x\in\mathcal I.
$$
The claim follows by Corollary \ref{cor:armacostconsequence}. 
\end{proof}
Using Claim \ref{claim:represpowerskjhfg} and that $\mathcal I=\bigcup_{m}K_m$, we obtain
$$
\mathsf H_{\bm a}(\mathcal I)
=
\pi[R^{-1}[\I]]=
\bigcup_{m\in\omega}\pi\bigl[R^{-1}[K_m]\bigr].
$$
For every $m\in\omega$, the set $R^{-1}[K_m]$ is compact in $\Sigma_b$,
and hence $\pi[R^{-1}[K_m]]$ is compact in $\mathbb T$. Therefore
$\mathsf H_{\bm a}(\mathcal I)$ is an $F_\sigma$ subgroup.

\smallskip

Conversely, suppose that $\mathcal I$ is translation invariant and that
$\mathsf H_{\bm a}(\mathcal I)$ is $F_\sigma$. We proceed as in item \ref{prop:1powers}: for each $T\subseteq A:=\omega$, define $\rho(T):=x_T$ as in \eqref{eq:definitionxT} (analogously, also for finite $T$). In this way, the map $\rho: \mathcal{P}(\omega)\to \mathbb{T}$ is continuous, since each digit $d_k$ depends only on $T\cap \{0,1,\ldots,k-1\}$. 
\begin{claim}\label{claim:reprepowersHa}
    $\rho^{-1}[\mathsf H_{\bm a}(\mathcal I)]=\mathcal I$.
\end{claim}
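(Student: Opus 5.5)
We work with $b$-ary expansions throughout. For $T\subseteq\omega$, the digits of $x_T=\rho(T)$ are defined by $d_0=0$, $d_k\in\{0,b-1\}$, and $d_{k+1}\neq d_k$ if and only if $k\in T$. We want to show that $\rho(T)\in\mathsf H_{\bm a}(\mathcal I)$ if and only if $T\in\mathcal I$. The plan is to compare $T$ with the set $P_{x_T}\cup Q_{x_T}$ from Corollary \ref{cor:armacostconsequence}, using that $\mathcal I$ contains $\mathrm{Fin}$ and is translation invariant.

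First, suppose $T$ is infinite. Then the sequence $(d_k)$ is not eventually constant, so it is the canonical expansion of $x_T$. Since every digit lies in $\{0,b-1\}$, we get $P_{x_T}=\emptyset$, and by construction $Q_{x_T}=T$. Corollary \ref{cor:armacostconsequence} then gives
$$
x_T\in\mathsf H_{\bm a}(\mathcal I)\iff Q_{x_T}=T\in\mathcal I,
$$
as required.

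Next, suppose $T$ is finite, so $T\in\mathcal I$. The digit sequence is eventually constant, equal either to $0$ or to $b-1$. In the first case $x_T\in\mathbb Z[1/b]/\mathbb Z$. In the second case $x_T$ equals a $b$-adic rational, since a tail $(b-1)^\infty$ rounds up. Either way, $b^nx_T=0$ for all large $n$, so $x_T\in\mathsf H_{\bm a}(\mathrm{Fin})\subseteq\mathsf H_{\bm a}(\mathcal I)$, which matches $T\in\mathcal I$. Alternatively, one can invoke the finite symmetric difference observation from the proof of Claim \ref{claim:represpowerskjhfg}.

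Combining both cases yields $\rho^{-1}[\mathsf H_{\bm a}(\mathcal I)]=\mathcal I$. The only point needing care is the non-canonical tail when $T$ is finite, and it is handled by the observation above.
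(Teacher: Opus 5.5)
Your proof is correct and follows the paper's argument. It splits into the case $T$ finite, where $\rho(T)$ is a $b$-adic rational and hence lies in $\mathsf H_{\bm a}(\mathrm{Fin})\subseteq\mathsf H_{\bm a}(\mathcal I)$, and the case $T$ infinite, where the expansion is canonical with $P_{\rho(T)}=\emptyset$ and $Q_{\rho(T)}=T$, so Corollary~\ref{cor:armacostconsequence} (where translation invariance enters) gives the equivalence; your explicit handling of a possible $(b-1)^\infty$ tail in the finite case just spells out the paper's remark that the canonical expansion is then eventually $0$.
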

\begin{proof}
    If $T$ is finite, then $T\in\mathcal I$, and the digit sequence defining
$\rho(T)$ is eventually constant. Hence the canonical base $b$-representation
of $\rho(T)$ is eventually $0$, and so
$\rho(T)\in\mathsf H_{\bm a}(\mathrm{Fin})\subseteq \mathsf H_{\bm a}(\mathcal I)$. 

Now suppose that $T$ is infinite. Then the digit sequence defining $\rho(T)$
has infinitely many changes, hence it is canonical. Moreover,
$
P_{\rho(T)}=\emptyset$ and $Q_{\rho(T)}=T$. It follows by Corollary \ref{cor:armacostconsequence} that $\rho(T)\in\mathsf H_{\bm a}(\mathcal I)$ if and only if $T \in \I$. 
\end{proof}
We conclude by Claim \ref{claim:reprepowersHa} that $\I \le_{\mathrm{W}} \mathsf H_{\bm a}(\mathcal I)$, hence $\I$ is $F_\sigma$.

    \medskip

    \ref{prop:3powers} 
    The \textsc{If} part is clear. Conversely, suppose that $\mathcal I$ and $\mathcal J$ are translation invariant
and that 
$
\mathsf H_{\bm a}(\mathcal I)=\mathsf H_{\bm a}(\mathcal J). 
$ 
Fix an infinite set $T\subseteq \omega$, and define $\rho(T)$ as in the proof of item \ref{prop:2powers}. Using Claim \ref{claim:reprepowersHa}, we obtain 
\begin{displaymath}
    \begin{split}
        T \in \I 
        &\quad \Longleftrightarrow \quad \rho(T) \in \mathsf H_{\bm a}(\mathcal I)\\
        &\quad \Longleftrightarrow \quad \rho(T) \in \mathsf H_{\bm a}(\mathcal J)
        \quad \Longleftrightarrow \quad T \in \J. 
    \end{split}
\end{displaymath}
Taking into account that $\mathrm{Fin}\subseteq \I\cap \J$, we conclude that $\I=\J$. 
\end{proof}

\medskip

\begin{proof}
[Proof of Corollary \ref{cor:consequencepowers22}]
    \ref{item:1corpowers} It follows by the proof of Theorem \ref{thm:propertiespowers}\ref{prop:1powers} that $\mathsf H_{(b^n)}(\mathrm{Fin})
=\mathbb Z[1/b]/\mathbb Z$, which is countably infinite.

    \medskip

    \ref{item:2corpowers} Suppose that $\mathcal{I}\neq \mathrm{Fin}$ is an $F_\sigma$ translation invariant ideal. Then $\mathsf{H}_{(b^n)}(\I)$ is an $F_\sigma$ uncountable subgroup by Theorem \ref{thm:propertiespowers}\ref{prop:1powers}-\ref{prop:2powers}. In addition, setting 
    $$
    x:=\overline{0.0(b-1)0(b-1)\cdots}_{(b)}\in \mathbb{T}
    $$
    we have $Q_x=\omega \notin \I$, so that $x\notin \mathsf{H}_{(b^n)}(\I)$ by Corollary \ref{cor:armacostconsequence}. 
    Hence $\mathsf{H}_{(b^n)}(\I)\neq \mathbb{T}$.

    \medskip

    \ref{item:3corpowers} Suppose that $\I$ is a translation invariant generalized density ideal with $\I\neq \mathrm{Fin}$. 
    
    Thanks to \cite[Proposition 2.3]{MR4404626}, if $\I$ is $F_\sigma$ then $\I$ is a copy on $\omega$ of $\mathrm{Fin}\oplus \mathcal{P}(\omega)$, namely, there exists an infinite $A\subseteq \omega$ such that $\omega\setminus A$ is infinite and $\I=\{S\subseteq \omega: A\cap S\in \mathrm{Fin}\}$. 
    Define the infinite set $S_\star:=S_\star:=\{n\in A:n+1\notin A\}$. Hence $S_\star\notin\I$, whereas $S_\star+1\subseteq\omega\setminus A$, and therefore $S_\star+1\in\I$. This contradicts the translation invariance of $\I$.
    
    If $\I$ is not $F_\sigma$, it follows by Theorem \ref{thm:Fsigmadelcomplete} and Theorem \ref{thm:propertiespowers}\ref{prop:2powers} that the subgroup $\mathsf{H}_{(b^n)}(\I)$ is $F_{\sigma\delta}$-complete. 
\end{proof}

\medskip

\begin{proof}
    [Proof of Lemma \ref{lem:fiborepresent}]
  Pick $t \in [0,1)$ such that $x=t+\mathbb{Z}$. Define 
  \begin{equation}\label{eq:defdeltan}
  \forall n \in \omega, \quad 
  \delta_n:=f_nt-p_n(x)
  \end{equation}
    Then $|\delta_n|\le \nicefrac12$ for all $n \in \omega$. Moreover, using the identity \(f_{n+2}=f_{n+1}+f_n\), we get
\begin{equation}\label{eq:defdeltanjshw}
r_{x,n}=p_{n+2}(x)-p_{n+1}(x)-p_n(x)
      =\delta_n+\delta_{n+1}-\delta_{n+2}.
\end{equation}
Since the left-hand side is an integer and the right-hand side belongs to
\((-\nicefrac{3}{2},\nicefrac{3}{2})\), it follows that \(r_{x,n}\in\{-1,0,1\}\) for every
\(n\in\omega\).

Now, the sequence \((p_n(x):n\in\omega)\) satisfies
\(p_{n+2}(x)=p_{n+1}(x)+p_n(x)+r_{x,n}\). Hence, by induction, for every
\(N\geq 1\), 
$$
p_N(x)=p_1(x)f_N+p_0(x)f_{N-1}
+\sum_{n=0}^{N-2}r_{x,n}f_{N-n-1}.
$$
Dividing by \(f_N\) and recalling that \(p_0(x)=0\), we obtain 
$$
\frac{p_N(x)}{f_N}
=
p_1(x)+\sum_{n=0}^{N-2}r_{x,n}\frac{f_{N-n-1}}{f_N}.
$$
For each fixed \(n\in\omega\), we have
\(\lim_N f_{N-n-1}/f_N=\varphi^{-n-1}\). In addition, there is a constant
\(C>0\) such that 
$
f_{N-n-1}/f_N\leq C\varphi^{-n}
$ 
for all \(N\geq n+2\). 
Since \(\sum_n\varphi^{-n}<\infty\) and
\(r_{x,n}\in\{-1,0,1\}\), it follows that
$$
\lim_{N\to\infty}\frac{p_N(x)}{f_N}
=
p_1(x)+\sum_{n=0}^{\infty}r_{x,n}\varphi^{-n-1}.
$$
On the other hand, since \(p_N(x)=f_Nt-\delta_N\), the sequence
\((\delta_N:N\in\omega)\) is bounded, and \(\lim_N f_N=\infty\), we have 
$ 
\lim_{N}p_N(x)/f_N=t.
$ 
Therefore
$$
t
=
p_1(x)+\sum_{n=0}^{\infty}r_{x,n}\varphi^{-n-1}.
$$
Since \(p_1(x)\in\mathbb Z\), this proves the claim in \(\mathbb T\).
\end{proof}

\medskip

\begin{proof}
   [Proof of Theorem \ref{thm:fibonacci}]
   Fix $x \in \mathbb{T}$ as $t \in [0,1)$ such that $x=t+\mathbb{Z}$. Using the same notation as in the proof of Lemma \ref{lem:fiborepresent}, observe by the identity \eqref{eq:defdeltanjshw} that, if \(n\in R_x\), then at least one of
\(|\delta_n|,|\delta_{n+1}|,|\delta_{n+2}|\) is greater than or equal to
\(\nicefrac13\).

First, suppose that \(x\in \mathsf H_{(f_n)}(\I)\). Then $B:=\{n\in\omega:|\delta_n|\geq\nicefrac13\}\in\I$. By the previous observation and the hypothesis that $\I$ is translation invariant, it follows that 
$
R_x\subseteq B\cup(B-1)\cup(B-2) \in \I.
$ 
Hence \(R_x\in\I\).

Conversely, suppose that \(R_x\in\I\). 
We need to show that \(x\in \mathsf H_{(f_n)}(\I)\). 
To this aim, fix \(\varepsilon>0\) and choose a sufficiently large integer \(M\in\omega\) such that, whenever
\(u_0,\ldots,u_{2M}\in[-\nicefrac12,\nicefrac12]\) and
\(u_{j+2}=u_{j+1}+u_j\) for all \(j<2M-1\), then \(|u_M|<\varepsilon\). 
Note that this is really possible. Indeed, every sequence satisfying the Fibonacci recurrence has the form
\(u_j=\alpha \varphi^j+\beta (-\nicefrac{1}{\varphi})^j\) for some constants $\alpha,\beta \in \mathbb{R}$. 
Since \(u_0\) and \(u_{2M}\) are
bounded by \(\nicefrac12\), it follows that \(|\alpha |\leq C\varphi^{-2M}\) and
\(|\beta|\leq C\) for some constant \(C>0\) independent of \(M\). 
Therefore \(|u_M|\leq 2C\varphi^{-M}\). 

At this point, define the set
$$
R^\sharp:=\bigcup_{k=-M}^M (R_x+k),
$$
which belongs to $\I$ since $R_x \in \I$ and $\I$ is translation invariant by hypothesis. 
It follows that, if \(n\notin R^\sharp\) and \(n\geq M\), then
\(r_{x,k}=0\) for every \(k=n-M,\ldots,n+M-2\). Hence
\[
\delta_{k+2}=\delta_{k+1}+\delta_k
\]
for every \(k=n-M,\ldots,n+M-2\). 
Thus, set \(u_j:=\delta_{n-M+j}\) for \(j=0,\ldots,2M\). Then \(u_j\in[-\tfrac12,\tfrac12]\) and, by the preceding recurrence, \(u_{j+2}=u_{j+1}+u_j\) for every \(j<2M-1\). Hence, by the defining choice of \(M\), we have $|\delta_n|=|u_M|<\varepsilon$. 
It follows that 
$$
\{n\in\omega:|\delta_n|\geq\varepsilon\}
\subseteq R^\sharp \cup\{0,1,\ldots,M-1\} \in \I. 
$$
Since \(\varepsilon>0\) was arbitrary,
we conclude that \(x\in \mathsf H_{(f_n)}(\I)\).
\end{proof}

\medskip

\begin{proof}
    [Proof of Theorem \ref{thm:propertiesfibo}]
    \ref{prop:1fibo} If $\I=\mathrm{Fin}$ then 
    \begin{equation}\label{eq:fibofin}
    \mathsf{H}_{(f_n)}(\I)=\langle \varphi\rangle,
    \end{equation}
cf. \cite[Theorem 1]{MR947645} for a different proof. 
In fact, by Theorem \ref{thm:fibonacci}, we have $\mathsf H_{(f_n)}(\mathrm{Fin})=\{x\in\mathbb T:R_x\in\mathrm{Fin}\}$. 
If \(R_x\) is finite, then $x=\sum_{n\in R_x} r_{x,n}\varphi^{-n-1}$ in $\mathbb T$ by Lemma \ref{lem:fiborepresent}. 
Since \(\varphi^{-1}=\varphi-1\), every \(\varphi^{-n-1}\) belongs to
\(\varphi \mathbb Z+\mathbb Z\). This proves that $\mathsf H_{(f_n)}(\mathrm{Fin})\subseteq \langle\varphi\rangle$. 
Conversely, using the well-known identity  $f_n\varphi=f_{n+1}+(-1)^{n+1}\varphi^{-n}$ for all $n\in \omega$ (which follows e.g. by Binet's formula), 
we get
\[
\forall m\in \omega, \quad 
\limsup_{n\to \infty}\|f_n(m\varphi)\|\leq \limsup_{n\to \infty}|m|\varphi^{-n}=0.
\]
Thus \(m\varphi\in\mathsf H_{(f_n)}(\mathrm{Fin})\) for every \(m\in\mathbb Z\), and so $\langle\varphi\rangle\subseteq\mathsf H_{(f_n)}(\mathrm{Fin})$. Therefore  \eqref{eq:fibofin} holds. 

\begin{claim}\label{claim:fibocoding}
There is \(L\in\omega\) such that, for every \(i \in \{0,1,\ldots,L-1\}\) and every 
nonempty $A\subseteq \omega$ with $\min A\ge L$ and $n\equiv i\bmod{L}$ for all $n \in A$, 
there is \(x_A\in\mathbb T\) such that
$$R_{x_A}=A.$$
\end{claim}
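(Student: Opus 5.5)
The plan is to take $x_A:=\sum_{m\in A}\varphi^{-m-1}+\mathbb Z$ and show directly that its digit sequence $(r_{x_A,n})$ is the indicator function of $A$. The series converges absolutely because $A$ has gaps of length at least $L$. We use the definition $r_{x,n}=p_{n+2}(x)-p_{n+1}(x)-p_n(x)$. If $t$ is replaced by $t+k$ with $k\in\mathbb Z$, each $p_n$ shifts by $kf_n$, and $r_{x,n}$ changes by $k(f_{n+2}-f_{n+1}-f_n)=0$. So we may compute with the real number $t=\sum_{m\in A}\varphi^{-m-1}$ and $p_n:=\lfloor f_nt+\nicefrac12\rfloor$.

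\emph{Single term.} By Binet's formula, with $\psi:=-\varphi^{-1}$, for $n\ge m+1$ we have
$$f_n\varphi^{-m-1}=f_{n-m-1}+e_{m}(n),\qquad e_m(n):=\frac{\psi^{\,n-m-1}-\psi^{\,n}\varphi^{-m-1}}{\sqrt5}.$$
For $n\le m$ we have $0\le f_n\varphi^{-m-1}\le \varphi^{n-m-1}/\sqrt5$, and we set $e_m(n):=f_n\varphi^{-m-1}$. In both regimes $|e_m(n)|\le 1/\sqrt5+O(\varphi^{-(m+1)})$. Moreover $|e_m(n)|$ decays geometrically in $|n-m|$: like $\varphi^{-(n-m)}$ on both sides, up to an additional term $|\psi|^n\varphi^{-m-1}\le\varphi^{-m-1}$, which is small because $m\ge \min A\ge L$. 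Define $P_m(n):=f_{n-m-1}$ for $n\ge m+1$ and $P_m(n):=0$ for $n\le m$. Then $P_m(n+2)-P_m(n+1)-P_m(n)$ equals $1$ if $n=m$ (since $f_1-0-0=1$), and $0$ otherwise. For $n=m+1$ this uses $f_2-f_1-f_0=0$; for $n\ge m+2$ it is the Fibonacci recurrence.

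\emph{Superposition.} For every $n$ we have $f_nt=\sum_{m\in A}P_m(n)+E(n)$ with $E(n):=\sum_{m\in A}e_m(n)$. The key estimate is $|E(n)|<\nicefrac12$ for all $n$, once $L$ is large. Since elements of $A$ are at least $L$ apart, at most one $m\in A$ is within distance $L/2$ of $n$. That term contributes at most $1/\sqrt5+O(\varphi^{-L})\approx 0.447$. Every other term contributes $O(\varphi^{-L/2})$, and these form a geometric series. Choosing $L$ so that the total is below $0.05$ gives $|E(n)|<\nicefrac12$. Consequently $p_n=\sum_{m\in A}P_m(n)$ exactly, because rounding to the nearest integer is unambiguous. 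By linearity, $r_{x_A,n}=\sum_{m\in A}\mathbf 1_{\{n=m\}}=\mathbf 1_A(n)$, that is, $R_{x_A}=A$. The residue condition $n\equiv i \pmod L$ is used only to guarantee the gap condition.

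\emph{Main obstacle.} The hard step is the uniform bound $|E(n)|<\nicefrac12$ near the peak $n=m+1$, where a single term already contributes about $0.447$, so the margin is only about $0.053$. I would make the two-sided geometric bounds on $e_m(n)$ explicit, including the correction term $\psi^n\varphi^{-m-1}$, which is why $\min A\ge L$ is required. I would then fix $L$ (something like $L\ge 20$ should suffice) so that the sum of all remaining contributions stays below this margin.
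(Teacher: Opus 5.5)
Your proposal is correct and follows the paper's argument. The paper also sets $x_A:=\sum_{n\in A}\varphi^{-n-1}$ and uses $q_N:=\sum_{n\in A,\,n\le N-2}f_{N-n-1}$, which is your $\sum_{m\in A}P_m(N)$; it chooses $L$ so that $|f_Nx_A-q_N|<\nicefrac12$ and concludes from $q_{N+2}=q_{N+1}+q_N+\mathbf 1_A(N)$ that $r_{x_A,N}=\mathbf 1_A(N)$. Your observation that the dominant term is about $1/\sqrt5$, so only the remaining contributions need to be small, makes explicit what the paper's sketch leaves implicit. One small slip: for odd $n$ you have $f_n>\varphi^n/\sqrt5$, so your bound for $n\le m$ needs the same $|\psi|^n\varphi^{-m-1}/\sqrt5$ correction you mention afterwards, which is harmless.
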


\begin{proof}
Pick a sufficiently large \(L\in \omega\) such that, whenever a nonempty \(A\subseteq\omega\) has distance at least \(L\) between its distinct points and \(\min A\geq L\), then
\[
\forall N \in \omega, \qquad 
\left|f_N\sum_{n\in A}\varphi^{-n-1}
-
q_N
\right|<\frac12, 
\quad \text{ where }\,\,
q_N:=\sum_{\substack{n\in A\\ n\leq N-2}}f_{N-n-1}.
\]
Note that such an \(L\) exists because, after applying Binet's formula, each summand contributes an error which decays exponentially with its distance from \(N-1\). Since the elements of \(A\) are separated by at least \(L\), successive error terms on either side of \(N-1\) are smaller by a factor at most \(\varphi^{-L}\). Hence the total error is bounded by a geometric series with common ratio \(\varphi^{-L}\), which can be made arbitrarily small by taking \(L\) large. 

Now, fix \(i\in\{0,1,\ldots,L-1\}\) and a nonempty $A\subseteq \omega$ with $\min A\ge L$ and $n\equiv i\bmod{L}$ for all $n \in A$. Thus, we define  
\begin{equation}\label{eq:definitionxA}
x_A:=\sum_{n\in A}\varphi^{-n-1}\quad\text{in }\mathbb T.
\end{equation}
By the choice of \(L\), \(q_N\) is the nearest integer to \(f_Nx_A\), hence \(q_N=p_N(x_A)\). It is immediate from the definition of \(q_N\) that 
$
q_{N+2}=q_{N+1}+q_N+\mathbf 1_A(N)
$ 
for every \(N\in\omega\). It follows that, for all $N \in \omega$, we have 
\[
r_{x_A,N}=p_{N+2}(x_A)-p_{N+1}(x_A)-p_N(x_A)=\mathbf 1_A(N)
\]
Therefore \(R_{x_A}=A\).
\end{proof}

Now, suppose that \(\mathcal I\neq\mathrm{Fin}\). Then there is an infinite set
\(A\in\mathcal I\). Fix \(L\) as in Claim \ref{claim:fibocoding}. Then it is possible to pick $i \in \{0,1,\ldots,L-1\}$ such that 
\[
B:=A\cap\{i+L(k+1):k\in\omega\}
\]
is infinite.  
Thanks to Claim \ref{claim:fibocoding} again, for every nonempty \(C\subseteq B\) there exists \(x_C\in\mathbb T\) (defined as in \eqref{eq:definitionxA}) such that $R_{x_C}=C$. 
Since \(C\subseteq B\subseteq A\in\mathcal I\), we have \(C\in\mathcal I\), and therefore $x_C\in\mathsf H_{(f_n)}(\mathcal I)$  by Theorem \ref{thm:fibonacci}. 
If \(C\neq D\), then \(R_{x_C}=C\neq D=R_{x_D}\). Since \(R_x\) is determined by \(x\), this implies \(x_C\neq x_D\). 
Therefore \(\mathsf H_{(f_n)}(\mathcal I)\) is uncountable. 

\medskip

    \ref{prop:2fibo} 
    First, suppose that \(\mathcal I\) is \(F_\sigma\). 
    Then there is an increasing sequence
\((K_m:m\in\omega)\) of hereditary compact subsets of \(2^\omega\) such that 
$
\mathcal I=\bigcup_{m}K_m.
$
For \(x\in\mathbb T\), define
\[
B_x:=\{n\in\omega:\|f_nx\|>\nicefrac14\}.
\]
\begin{claim}\label{claim:Bxfibo}
$\mathsf H_{(f_n)}(\mathcal I)=\{x\in\mathbb T:B_x\in\mathcal I\}$.
\end{claim}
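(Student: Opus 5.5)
The plan is to prove both inclusions, with Theorem \ref{thm:fibonacci} doing the work for the nontrivial one. Throughout, $\mathcal I$ is translation invariant, as in the statement of Theorem \ref{thm:propertiesfibo}. We keep the notation of the proof of Lemma \ref{lem:fiborepresent}: write $x=t+\mathbb Z$ with $t\in[0,1)$ and $\delta_n=f_nt-p_n(x)$. Since $p_n(x)$ is a nearest integer to $f_nt$, we have $|\delta_n|=\|f_nx\|$ for every $n\in\omega$.

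For the inclusion from left to right, take $x\in\mathsf H_{(f_n)}(\mathcal I)$. Then
$$
B_x\subseteq\{n\in\omega:\|f_nx\|\ge \nicefrac14\}\in\mathcal I,
$$
so $B_x\in\mathcal I$ because ideals are closed under subsets.

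For the converse, take $x$ with $B_x\in\mathcal I$. The idea is to transfer smallness of the $\delta_n$ to vanishing of the digits $r_{x,n}$ via identity \eqref{eq:defdeltanjshw}, namely $r_{x,n}=\delta_n+\delta_{n+1}-\delta_{n+2}$.
\begin{enumerate}[label={\rm (\arabic*)}]
\item Suppose none of $n,n+1,n+2$ lies in $B_x$. Then each of $|\delta_n|,|\delta_{n+1}|,|\delta_{n+2}|$ is at most $\nicefrac14$, so $|r_{x,n}|\le\nicefrac34<1$. Since $r_{x,n}$ is an integer, $r_{x,n}=0$.
\item Consequently
$$
R_x\subseteq B_x\cup(B_x-1)\cup(B_x-2).
$$
The right-hand side belongs to $\mathcal I$ by translation invariance and closure under finite unions.
\item Hence $R_x\in\mathcal I$, and Theorem \ref{thm:fibonacci} gives $x\in\mathsf H_{(f_n)}(\mathcal I)$.
\end{enumerate}

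There is no real obstacle here. The only points to check are the identification $|\delta_n|=\|f_nx\|$ and that translation invariance, which allows shifts by negative $k$, is exactly what makes $B_x-1$ and $B_x-2$ belong to $\mathcal I$. The $F_\sigma$ hypothesis is not used for this claim. Presumably it is used afterwards, together with the continuity-type properties of $x\mapsto B_x$ on compact pieces, to show that $\mathsf H_{(f_n)}(\mathcal I)$ is $F_\sigma$.
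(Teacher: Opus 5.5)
Your proposal is correct and follows the paper's proof: the forward inclusion holds by the definition of $\mathcal I$-convergence, and the converse uses $r_{x,n}=\delta_n+\delta_{n+1}-\delta_{n+2}$ to get $R_x\subseteq B_x\cup(B_x-1)\cup(B_x-2)$, then translation invariance and Theorem~\ref{thm:fibonacci}. You also make explicit two details the paper leaves to the reader, namely $|\delta_n|=\|f_nx\|$ and the bound $|r_{x,n}|\le\nicefrac34<1$, which forces $r_{x,n}=0$.
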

\begin{proof}
If \(x\in\mathsf H_{(f_n)}(\mathcal I)\), then \(B_x\in\mathcal I\) by definition. Vice versa, if \(B_x\in\mathcal I\), then, recalling the definition of $\delta_n$ in \eqref{eq:defdeltan} and using the equality 
$
r_{x,n}=\delta_n+\delta_{n+1}-\delta_{n+2},
$ 
we get $R_x\subseteq B_x\cup(B_x-1)\cup(B_x-2)$. 
Since \(\mathcal I\) is translation invariant, then \(R_x\in\mathcal I\), and Theorem \ref{thm:fibonacci} gives \(x\in\mathsf H_{(f_n)}(\mathcal I)\). 
\end{proof}

\begin{claim}\label{claim:Bxfibo2}
$C_m:=\{x\in\mathbb T:B_x\in K_m\}$ 
is closed for each $m \in \omega$. 
\end{claim}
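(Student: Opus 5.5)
We need to show that $C_m=\{x\in\mathbb T:B_x\in K_m\}$ is closed, where $K_m$ is compact and hereditary (closed under subsets) in $2^\omega$, and $B_x=\{n:\|f_nx\|>\nicefrac14\}$.

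The plan is to show directly that the complement of $C_m$ is open, using that $B_x$ is defined by \emph{strict} inequalities. Each set $\{x:\|f_nx\|>\nicefrac14\}$ is open in $\mathbb T$, since $x\mapsto\|f_nx\|$ is continuous. So membership $n\in B_x$ is an open condition, although $n\notin B_x$ need not be. This asymmetry is exactly what heredity of $K_m$ is designed to absorb.

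Fix $x\notin C_m$, so that $B_x\notin K_m$. Since $K_m$ is closed in $2^\omega$, its complement is open. Hence there is $N\in\omega$ such that every $S\subseteq\omega$ with $S\cap N=B_x\cap N$ lies outside $K_m$.

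Next we use heredity. We claim that every $S$ with $S\cap N\supseteq B_x\cap N$ also lies outside $K_m$. Indeed, suppose such an $S$ were in $K_m$. The set $S':=(B_x\cap N)\cup(S\setminus N)$ is a subset of $S$, so $S'\in K_m$ because $K_m$ is hereditary. But $S'\cap N=B_x\cap N$, which forces $S'\notin K_m$, a contradiction.

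Now set
$$
U:=\bigcap_{n\in B_x\cap N}\{y\in\mathbb T:\|f_ny\|>\nicefrac14\}.
$$
This is a finite intersection of open sets containing $x$, so $U$ is an open neighbourhood of $x$. For every $y\in U$ we have $B_y\cap N\supseteq B_x\cap N$, hence $B_y\notin K_m$ by the previous step. Thus $U\cap C_m=\emptyset$, so the complement of $C_m$ is open and $C_m$ is closed.

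The only subtle point is the combinatorial step passing from agreement on $N$ to containment on $N$ via heredity. Without it, the failure of continuity of $x\mapsto B_x$ at points where $\|f_nx\|=\nicefrac14$ would block the argument. I expect no real obstacle beyond arranging the strict inequality so that it gives open rather than closed conditions.
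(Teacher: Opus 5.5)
Your proof is correct and is essentially the paper's argument: the finite set $B_x\cap N$ (which lies outside $K_m$, since it belongs to your basic neighbourhood of $B_x$) plays the role of the paper's finite $F\subseteq B_x$ with $F\notin K_m$, and heredity together with the openness of the conditions $\|f_ny\|>\nicefrac14$ finishes exactly as you do. Your detour through $S'$ can be shortened, since $S\cap N\supseteq B_x\cap N$ simply means $S\supseteq B_x\cap N$.
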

\begin{proof}
Fix $m \in \omega$ and $x \in \mathbb{T}$. If \(x\notin C_m\), then \(B_x\notin K_m\). Since \(K_m\) is hereditary compact, there is a finite set \(F\subseteq B_x\) such that \(F\notin K_m\). Since the condition 
$
\|f_ny\|>\nicefrac14
$ 
is open in \(y\), for each fixed \(n\), there is a neighbourhood \(U\) of \(x\) such that \(F\subseteq B_y\) for all \(y\in U\). Hence \(B_y\notin K_m\) for all \(y\in U\), and so \(U\cap C_m=\emptyset\). Therefore \(C_m\) is closed.
\end{proof}

Putting together Claim \ref{claim:Bxfibo} and Claim \ref{claim:Bxfibo2}, it follows that $\mathsf H_{(f_n)}(\mathcal I)=\bigcup_{m}C_m$ is \(F_\sigma\). 

\smallskip

Conversely, suppose that \(\mathsf H_{(f_n)}(\mathcal I)\) is an \(F_\sigma\) subgroup. Fix \(L\) as in Claim \ref{claim:fibocoding}. For every $i\in \{0,1,\ldots,L-1\}$, define 
$
e_i(k):=i+L(k+1)
$ 
for all \(k\in\omega\). Also, define the map $\Phi_i:2^\omega\to\mathbb T$ by 
$$
\forall S\subseteq \omega, \quad 
\Phi_i(S):=x_{e_i[S]}=\sum_{k\in S}\varphi^{-e_i(k)-1}.
$$
Taking into account that the above series converges uniformly on $S\subseteq \omega$, each map \(\Phi_i\) is continuous. 
Moreover, by Claim \ref{claim:fibocoding} we have $R_{\Phi_i(S)}=e_i[S]$ for every \(S\subseteq\omega\). 
Hence by Theorem \ref{thm:fibonacci} we obtain that, for all $i \in \{0,1,\ldots,L-1\}$, 
\[
\I_i:=
\{S\subseteq\omega:e_i[S]\in\mathcal I\}
=\Phi_i^{-1}\big[\mathsf H_{(f_n)}(\mathcal I)\big]
\]
is an $F_\sigma$ on $\omega$. Taking into account that $\I=\{S\subseteq \omega: e_i^{-1}[S]\in\mathcal I_i\text{ for every }i<L\}$, we conclude that $\I$ is an $F_\sigma$ ideal as well. 

\medskip

    \ref{prop:3fibo} 
    The \textsc{If} part is clear. Conversely, suppose that 
$
\mathsf H_{(f_n)}(\mathcal I)=\mathsf H_{(f_n)}(\mathcal J).
$ 
Fix \(L\) as in Claim \ref{claim:fibocoding}. For every \(i\in \{0,1,\ldots,L-1\}\), define $e_i(k)$ as in item \ref{prop:2fibo}. Now, fix $i<L$ and $S\subseteq \omega$. Thanks to Claim \ref{claim:fibocoding} and Theorem \ref{thm:fibonacci}, we get 
$e_i[S]\in\mathcal I$ if and only if $x_{e_i[S]}\in\mathsf H_{(f_n)}(\mathcal I)$, and similarly for $\J$. This implies that 
\begin{equation}\label{eq:equivalencedkjh}
e_i[S]\in\mathcal I
\quad \text{ if and only if }\quad 
e_i[S]\in\mathcal J.
\end{equation}
To conclude, pick an arbitrary set $A\subseteq \omega$. 
Taking into account \eqref{eq:equivalencedkjh}, we obtain that 
\begin{displaymath}
    \begin{split}
        A \in \I 
        &\quad\Longleftrightarrow\quad
A\cap e_i[\omega]\in\mathcal I\text{ for every }i<L\\
&\quad\Longleftrightarrow\quad
A\cap e_i[\omega]\in\mathcal J\text{ for every }i<L
\quad\Longleftrightarrow\quad 
A \in \J.
    \end{split}
\end{displaymath}
Therefore $\I=\J$. 
\end{proof}

\medskip

\begin{proof}
[Proof of Corollary \ref{cor:consequencefibo22}]
    \ref{item:1corpowersfibo} It follows by \eqref{eq:fibofin} in the proof of Theorem \ref{thm:propertiesfibo}\ref{prop:1fibo} that $\mathsf{H}_{(f_n)}(\mathrm{Fin})=\langle \varphi\rangle$. Since $\varphi$ is irrational, then $\langle \varphi\rangle$ is countably infinite. 

    \medskip

    \ref{item:2corpowersfibo} Suppose that $\mathcal{I}\neq \mathrm{Fin}$ is an $F_\sigma$ translation invariant ideal. Then $\mathsf{H}_{(f_n)}(\I)$ is an $F_\sigma$ uncountable subgroup by Theorem \ref{thm:propertiesfibo}\ref{prop:1fibo}-\ref{prop:2fibo}.

It remains to prove that \(\mathsf H_{(f_n)}(\mathcal I)\) is proper. 
Fix \(L\) as in Claim \ref{claim:fibocoding} and choose $i \in \{0,1,\ldots,L-1\}$ such that $E_i:=\{i+L(k+1):k\in\omega\}\in \I^+$. By Claim \ref{claim:fibocoding}, there exists
\(x_{E_i}\in\mathbb T\) such that \(R_{x_{E_i}}=E_i\). Hence 
\[
x_{E_i}\notin\mathsf H_{(f_n)}(\mathcal I).
\]
by Theorem \ref{thm:fibonacci}. Therefore \(\mathsf H_{(f_n)}(\mathcal I)\neq\mathbb T\).

\medskip

\ref{item:3corpowersfibo} 
It goes verbatim as in the proof of Corollary \ref{cor:consequencepowers22}\ref{item:3corpowers}, 
replacing Theorem \ref{thm:propertiespowers}\ref{prop:2powers}
with Theorem \ref{thm:propertiesfibo}\ref{prop:2fibo}. 
\end{proof}

\medskip

\begin{proof}
[Proof of Theorem \ref{thm:factorialcharacterization}]
Fix \(x\in\mathbb T\) and define
\[
\forall n\geq1,\qquad
\tau_{x,n}:=
\sum_{k=n}^{\infty}
\frac{q_{x,k}}{(n+1)(n+2)\cdots(k+1)}.
\]
Then
$
n!x=\tau_{x,n}
$ in $\mathbb T$, 
\(0\leq\tau_{x,n}<1\), and
$
\tau_{x,n}
=
(q_{x,n}+\tau_{x,n+1})/(n+1).
$ 
Hence
\[
\forall n\geq1,\qquad
\left\|
n!x-\frac{q_{x,n}}{n}
\right\|
\leq 
\left\|
n!x-\frac{q_{x,n}}{n+1}
\right\|+\left(\frac{q_{x,n}}{n}-\frac{q_{x,n}}{n+1}\right)\le 
\frac2{n+1}.
\]
Since the right hand side converges to $0$, the claim follows by \cite[Corollary 3.4  and Lemma 3.5(i)]{MR3920799}. 
\end{proof}

\medskip

\begin{proof}
    [Proof of Proposition \ref{prop:factnotFsigma}] 
    Suppose for the sake of contradiction that $H:=\mathsf H_{(n!)}(\mathcal I)$ is an \(F_\sigma\) subgroup. 
    Define the continuous map $\Phi:\mathcal P(\omega)\to\mathbb T$ by 
    $$
    \forall S \subseteq \omega, \quad 
    \Phi(S):=\sum_{n\in S\setminus \{0\}}\frac{\lfloor n/2\rfloor}{(n+1)!}.
    $$
    \begin{claim}\label{claim:reductionPhiSHfactorial}
        $\Phi^{-1}[H]=\I$. 
    \end{claim}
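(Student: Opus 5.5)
The plan is to read off the factorial digits of $\Phi(S)$ and then apply Theorem \ref{thm:factorialcharacterization}. Fix $S\subseteq\omega$ and put $x:=\Phi(S)$. By definition,
$$
x=\sum_{n\ge 1}\frac{q_n}{(n+1)!}, \qquad q_n:=\lfloor n/2\rfloor\,\mathbf 1_S(n).
$$
Each $q_n$ lies in $\{0,1,\ldots,n\}$, and $q_n\neq n$ for every $n\ge 1$ (since $\lfloor n/2\rfloor<n$ when $n\ge1$). So this is exactly the canonical shifted factorial expansion of $x$, and by uniqueness $q_{x,n}=q_n$ for all $n\ge 1$. Continuity of $\Phi$ is routine: the tails $\sum_{n>N}n/(n+1)!$ tend to $0$ uniformly in $S$, while each finite partial sum depends only on $S\cap(N+1)$.

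By Theorem \ref{thm:factorialcharacterization}, $x\in H$ if and only if $\mathcal I\text{-}\lim_n q_{x,n}/n=0$ in $\mathbb T$. Now compute $q_{x,n}/n$.
\begin{itemize}
\item If $n\notin S$, then $q_{x,n}/n=0$.
\item If $n\in S$, then $q_{x,n}/n=\lfloor n/2\rfloor/n$, which lies in $[\nicefrac13,\nicefrac12]$ for $n\ge 2$. Hence $\|q_{x,n}/n\|\ge\nicefrac13$.
\item For $n=1$ the value is $0$, which is harmless.
\end{itemize}

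This gives the two directions.
\begin{itemize}
\item If $S\in\I$, then for every $\varepsilon>0$ we have $\{n:\|q_{x,n}/n\|\ge\varepsilon\}\subseteq S\in\I$, so $x\in H$.
\item If $S\notin\I$, then $\{n:\|q_{x,n}/n\|\ge\nicefrac13\}\supseteq S\setminus\{0,1\}$. This set is not in $\I$, because $\I$ contains $\mathrm{Fin}$ and $S\setminus\{0,1\}$ differs from $S$ by a finite set. Hence $x\notin H$.
\end{itemize}
Therefore $\Phi^{-1}[H]=\I$.

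The only point needing care is the identification $q_{x,n}=q_n$, i.e.\ that the chosen digits already form the canonical expansion. This holds because the non-canonical case requires $q_n=n$ for all large $n$, which never happens here.
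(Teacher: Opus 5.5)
Your proof is correct and follows the paper's argument. You identify the factorial digits of $\Phi(S)$ as $\lfloor n/2\rfloor\mathbf{1}_S(n)$, use $\|q_{\Phi(S),n}/n\|\ge\nicefrac13$ for $n\in S\setminus\{0,1\}$, and conclude via Theorem~\ref{thm:factorialcharacterization}; you also spell out why these digits form the canonical expansion, which the paper leaves implicit.
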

    \begin{proof}
    Pick $S\subseteq \omega$. Thanks to Theorem \ref{thm:factorialcharacterization}, if \(S\in\mathcal I\), then \(\Phi(S)\in H\). Conversely, if \(S\notin\mathcal I\), then 
    $\{n\in\omega: \left\|q_{\Phi(S),n}/n\right\|\geq \nicefrac{1}{3}\}$ contains \(S\) modulo a finite set, and hence it does not belong to
\(\mathcal I\). Thus \(\Phi(S)\notin H\). Therefore $S \in \I$ if and only if $\Phi(S) \in H$.
\end{proof}

Since \(H\) is \(F_\sigma\) and \(\Phi\) is continuous, it follows by Claim \ref{claim:reductionPhiSHfactorial} that \(\mathcal I\) is an \(F_\sigma\) ideal. By Mazur's theorem \cite{MR1124539}, there is a lscsm $\varphi:\mathcal P(\omega)\to[0,\infty]$ such that $\mathcal I=
\{A\subseteq\omega:\varphi(A)<\infty\}$. 
Now, we choose pairwise disjoint finite sets \((F_s:s\in\omega)\) such that $\varphi(F_s)>s$ for all $s \in \omega$ (which is possible since $\varphi$ is a lscsm and $\varphi(\omega)=\infty$).
Fix also a bijection $\langle\cdot,\cdot\rangle:\omega^2\to\omega$. Increasing the sets \(F_s\), if necessary, we may suppose without loss of generality that, whenever
\(s=\langle i,j\rangle\) and \(n\in F_s\), then \(n\geq 4(i+3)\). 
It follows by the monotonicity of $\varphi$ that if $X \in \mathrm{Fin}^+$ then $\bigcup_{s \in X}F_s\in \I^+$.

\begin{claim}\label{claim:continuousreduction}
    $\emptyset\otimes\mathrm{Fin}\le_{\mathrm{W}} H$. 
\end{claim}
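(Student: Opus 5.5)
We need a continuous map $\Psi:\mathcal P(\omega^2)\to\mathbb T$ with $\Psi^{-1}[H]=\emptyset\otimes\mathrm{Fin}$, that is, $A\in\emptyset\otimes\mathrm{Fin}$ if and only if $\Psi(A)\in H$. The natural choice codes the rows of $A$ by factorial digits on the blocks $F_s$, with amplitude tuned to the row index:
$$
\Psi(A):=\sum_{(i,j)\in A}\;\sum_{n\in F_{\langle i,j\rangle}}\frac{\lfloor n/(i+3)\rfloor}{(n+1)!}.
$$
The sets $F_s$ are finite, pairwise disjoint, and satisfy $\min F_s\to\infty$. Hence for every $n\geq 1$ the factorial digit of $\Psi(A)$ is $q_{\Psi(A),n}=\lfloor n/(i+3)\rfloor$ if $n\in F_{\langle i,j\rangle}$ with $(i,j)\in A$, and $q_{\Psi(A),n}=0$ otherwise. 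These digits are at most $n/3<n$, so the expansion is canonical: only finitely many digits equal $n$ (none do), and the series defines a legitimate point. Continuity of $\Psi$ follows exactly as for $\Phi$ in Claim \ref{claim:reductionPhiSHfactorial}. The tail contributions $\sum_{n\ge N}n/(n+1)!$ tend to $0$ uniformly in $A$, and the first $N$ digits depend on only finitely many coordinates of $A$.

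By Theorem \ref{thm:factorialcharacterization}, it then suffices to check the following equivalence:
$$
A\in\emptyset\otimes\mathrm{Fin}\iff \mathcal I\text{-}\lim_n \frac{q_{\Psi(A),n}}{n}=0 \text{ in }\mathbb T.
$$
Here the lower bound $n\ge 4(i+3)$ on $F_{\langle i,j\rangle}$ is used. For $n\in F_{\langle i,j\rangle}$ with $(i,j)\in A$, the ratio $\lfloor n/(i+3)\rfloor/n$ lies within $1/n\le 1/(4(i+3))$ of $1/(i+3)$. So it belongs to $[\tfrac{3}{4(i+3)},\tfrac{1}{i+3}]\subseteq(0,\tfrac13]$, and its $\mathbb T$-norm is at least $\tfrac{3}{4(i+3)}$.

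Suppose first that $A\notin\emptyset\otimes\mathrm{Fin}$. Then some row $A_i$ is infinite, so $X:=\{\langle i,j\rangle:j\in A_i\}$ is infinite. For $\varepsilon:=\tfrac{3}{4(i+3)}$ the exceptional set $\{n:\|q_n/n\|\ge\varepsilon\}$ contains $\bigcup_{s\in X}F_s$. This union lies in $\mathcal I^+$ by the remark preceding the claim, so $\Psi(A)\notin H$.

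Conversely, suppose all rows $A_i$ are finite and fix $\varepsilon>0$. Choose $i_0$ with $1/(i_0+3)<\varepsilon$. For $i\ge i_0$ the ratio is at most $1/(i+3)<\varepsilon$, and it is $0$ off the coded blocks. So the exceptional set is contained in the finite union $\bigcup_{i<i_0}\bigcup_{j\in A_i}F_{\langle i,j\rangle}$, together with possibly finitely many small $n$. This set is finite and hence lies in $\mathcal I$, so $\Psi(A)\in H$.

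The main point to verify carefully is that the computed digits are exactly as claimed, in particular that distinct blocks never collide. This is guaranteed by the disjointness of the $F_s$ and by the uniqueness of the canonical factorial expansion. One must also confirm that the amplitude $1/(i+3)$ decays in $i$, which is the mechanism separating $\emptyset\otimes\mathrm{Fin}$ from $\mathcal I$-smallness. Once the claim is established, $H$ being $F_\sigma$ would force $\emptyset\otimes\mathrm{Fin}$ to be $F_\sigma$, contradicting its $F_{\sigma\delta}$-completeness.
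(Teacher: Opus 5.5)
Your proposal is correct and follows the paper's proof essentially verbatim. It uses the same coding by factorial digits $\lfloor n/(i+3)\rfloor$ on the blocks $F_{\langle i,j\rangle}$ with $(i,j)\in A$, the same canonicity and continuity observations, the same use of the fact that $\bigcup_{s\in X}F_s\in\mathcal I^+$ for infinite $X$, and the same two-sided check via Theorem \ref{thm:factorialcharacterization}. Your lower bound $\tfrac{3}{4(i+3)}$ is just a slightly sharper version of the paper's $\tfrac{1}{2(i+3)}$.
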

\begin{proof}
For each \(A\subseteq\omega^2\), define
$$
x_A:=\sum_{n=1}^{\infty}\frac{w_{A,n}}{(n+1)!}\in\mathbb{T},
$$
where
$$
w_{A,n}:=
\begin{cases}
\,\left\lfloor\dfrac{n}{i+3}\right\rfloor
& \text{if }n\in F_{\langle i,j\rangle}\text{ and }(i,j)\in A,\\[1.2em]
\,0
& \text{otherwise}.
\end{cases}
$$
Note that, since \(n\geq4(i+3)\) on \(F_{\langle i,j\rangle}\), we have
$$
\frac{1}{2(i+3)}
\leq
\frac{w_{A,n}}{n}
\leq
\frac{1}{i+3}
$$
for all \(n\in F_{\langle i,j\rangle}\) such that \((i,j)\in A\).
Since the digits \(w_{A,n}\) are always smaller than \(n\), the expansion above
is canonical. Moreover, the map
$
A\mapsto x_A
$
is continuous, because every digit \(w_{A,n}\) depends on at most one
coordinate of \(A\). To complete the proof, it is enough to show that
\begin{equation}\label{eq:EmptyTensorFinReduction}
x_A\in H
\quad\Longleftrightarrow\quad
A\in\emptyset\otimes\mathrm{Fin}.
\end{equation}

First, suppose that \(A\in\emptyset\otimes\mathrm{Fin}\), so that \(A_i\) is
finite for every \(i\in\omega\). Fix \(m\geq3\). If
$
\|w_{A,n}/n\|\geq\nicefrac1m,
$
then \(n\in F_{\langle i,j\rangle}\) for some \(i\leq m-3\) and some
\(j\in A_i\). Hence
$$
\left\{
n\in\omega:
\left\|\frac{w_{A,n}}{n}\right\|\geq\frac1m
\right\}
\subseteq
\bigcup_{i\leq m-3}\,
\bigcup_{j\in A_i}F_{\langle i,j\rangle}.
$$
The right-hand side is a finite union of finite sets, and therefore it belongs
to \(\mathcal I\). Since \(m\geq3\) was arbitrary, Theorem
\ref{thm:factorialcharacterization} gives \(x_A\in H\).

Conversely, suppose that \(A\notin\emptyset\otimes\mathrm{Fin}\). Then there
exists \(i\in\omega\) such that \(A_i\) is infinite. Put \(m:=2(i+3)\). For
every \(j\in A_i\) and every \(n\in F_{\langle i,j\rangle}\), we have
$$
\left\|\frac{w_{A,n}}{n}\right\|
=
\frac{w_{A,n}}{n}
\geq
\frac{1}{2(i+3)}
=
\frac1m.
$$
Consequently,
$$
\bigcup_{j\in A_i}F_{\langle i,j\rangle}
\subseteq
\left\{
n\in\omega:
\left\|\frac{w_{A,n}}{n}\right\|\geq\frac1m
\right\}.
$$
Since \(A_i\) is infinite, the set
$
\{\langle i,j\rangle:j\in A_i\}
$
is infinite. Hence
$$
\bigcup_{j\in A_i}F_{\langle i,j\rangle}\in\mathcal I^+,
$$
and therefore
$$
\left\{
n\in\omega:
\left\|\frac{w_{A,n}}{n}\right\|\geq\frac1m
\right\}
\in\mathcal I^+.
$$
It follows again by Theorem \ref{thm:factorialcharacterization} that
\(x_A\notin H\). This proves \eqref{eq:EmptyTensorFinReduction}.
\end{proof}

Since $\emptyset\otimes\mathrm{Fin}$ is not an $F_\sigma$ ideal, it follows
by Claim \ref{claim:continuousreduction} that $H$ is not $F_\sigma$.
\end{proof}

\medskip

\begin{remark}\label{rmk:factorial}
    Thanks to Claim \ref{claim:reductionPhiSHfactorial}, 
    we have $\I\le_{\mathrm{W}}\mathsf{H}_{(n!)}(\I)$ for all ideals $\I$. In addition, since the map $\Phi$ does not depend on the choice of the ideal, it easily follows that if $\I,\J$ are two ideals on $\omega$, then 
    $$
    \I\subseteq \J \quad \Longleftrightarrow \quad \mathsf{H}_{(n!)}(\I)\subseteq \mathsf{H}_{(n!)}(\J)
    $$ 
    (we omit details). In particular, since there are $2^{\mathfrak{c}}$ ideals, there are $2^{\mathfrak{c}}$ subgroups of the type $\mathsf{H}_{(n!)}(\I)$. 
\end{remark}

\medskip

\begin{proof}
    [Proof of Proposition \ref{prop:noPi02}]
    Recall by Remark \ref{rmk:examplecomplete} that $\mathsf{H}_{(n!)}(\mathrm{Fin})$ is an $F_{\sigma\delta}$-complete subgroup of $\mathbb{T}$, hence 
    $$
    \mathscr{H}(\mathrm{Fin})\setminus \mathbf{\Sigma}^0_3(\mathbb{T})\neq \emptyset. 
    $$
    Now, suppose that $\I$ is a meager ideal. Then  $\mathscr{H}(\mathrm{Fin})\subseteq \mathscr{H}(\I)$ by \cite[Corollary 2.7]{FKLT26}. It follows that  $\mathscr{H}(\I)\setminus \mathbf{\Sigma}^0_3(\mathbb{T})\neq \emptyset$ as well. Hence, suppose hereafter that $\I$ is not meager. In particular, $\I$ is not analytic, cf. \cite[Figure 2]{FKL24}. We conclude by Corollary \ref{cor:pointclassessimple} that $\mathscr{H}(\I)\setminus \mathbf{\Sigma}^1_1(\mathbb{T})\neq \emptyset$. This completes the proof. 
\end{proof}

\medskip

\begin{proof}
[Proof of Corollary \ref{cor:Fsigmadeltacomplete}]
    Thanks to Proposition \ref{prop:noPi02}, there exists $H \in \mathscr{H}(\I)$ which is not $G_{\delta\sigma}$. However, we have $\mathrm{rank}(H)\le \mathrm{rank}(\I)=3$ by Corollary \ref{cor:borelrank}\ref{item:2borrank} hence $H$ is $F_{\sigma\delta}$. The completeness claim follows by \cite[Theorem 22.10 and Exercise 24.20]{MR1321597}. 
\end{proof}

\medskip

\begin{proof}
[Proof of Proposition \ref{prop:RKorder}] 
Fix an almost disjoint family $\{A_\xi:\xi<\mathfrak c\}$ of infinite subsets of $\omega$, and bijections $e_\xi:\omega\to A_\xi$ for all $\xi<\mathfrak c$. Let $\J$ be the ideal on $\omega$ generated by $\mathrm{Fin}$ together with all sets $e_\xi[B]$, where $\xi<\mathfrak c$ and $B\in\I_\xi$. 

First, observe that $\J$ is proper. Indeed, if $\omega\in\J$, then there are a finite set $F\subseteq\omega$, indices $\xi_0,\ldots,\xi_{k-1}<\mathfrak c$, and sets $B_i\in\I_{\xi_i}$ such that
$$
\omega\subseteq F\cup\bigcup_{i< k}e_{\xi_i}[B_i].
$$
Pick an index $\eta<\mathfrak c$ distinct from $\xi_0,\ldots,\xi_{k-1}$. Since the family $\{A_\xi:\xi<\mathfrak c\}$ is almost disjoint, the set
$
A_\eta\cap(F\cup\bigcup_{i< k}A_{\xi_i})
$
is finite, contradicting that $A_\eta$ is infinite.

We claim that, for every $\xi<\mathfrak c$ and every $S\subseteq A_\xi$, we have
\begin{equation}\label{eq:traceJ}
S\in\J
\quad\text{ if and only if }\quad
e_\xi^{-1}[S]\in\I_\xi.
\end{equation}
The \textsc{If} part follows by definition. Conversely, suppose that $S\in\J$. Then there are a finite set $F\subseteq\omega$, indices $\xi_0,\ldots,\xi_{k-1}<\mathfrak c$, and sets $B_i\in\I_{\xi_i}$ such that
$
S\subseteq F\cup\bigcup_{i< k}e_{\xi_i}[B_i].
$
Since $A_\xi\cap A_{\xi_i}$ is finite whenever $\xi_i\neq\xi$, it follows that
$$
e_\xi^{-1}[S]\subseteq e_\xi^{-1}[F\cap A_\xi]
\cup\bigcup_{\substack{i< k\\ \xi_i=\xi}}B_i
\cup E
$$
for some finite set $E\subseteq\omega$. Since $\I_\xi$ is an ideal containing $\mathrm{Fin}$, we conclude that $e_\xi^{-1}[S]\in\I_\xi$, which proves \eqref{eq:traceJ}.

Now, fix $\xi<\mathfrak c$ and a sequence $\bm a\in\mathbb Z^\omega$. Define $\bm b\in\mathbb Z^\omega$ by $b_{e_\xi(n)}:=a_n$ for all $n\in\omega$, and $b_n:=0$ for all $n\notin A_\xi$. Then, for every $x\in\mathbb T$ and every $\varepsilon>0$, we get
$$
\{n\in\omega:\|b_nx\|\geq\varepsilon\}
=
e_\xi[\{n\in\omega:\|a_nx\|\geq\varepsilon\}].
$$
It follows by \eqref{eq:traceJ} that $\mathcal J\text{-}\lim_n b_nx=0$ if and only if $\mathcal I_\xi\text{-}\lim_n a_nx=0$. Therefore
$
\mathsf H_{\bm a}(\I_\xi)=\mathsf H_{\bm b}(\J),
$
and hence $\mathscr H(\I_\xi)\subseteq\mathscr H(\J)$. This completes the proof. 
\end{proof}

\medskip

\begin{proof}
    [Proof of Corollary \ref{cor:analyticconalaytc}] 
    For each $\xi<\mathfrak{c}$, we obtain by \cite[Theorem 2.11]{FKLT26} that there exists an ideal $\I_\xi$ on $\omega$ such that $H_\xi \in \mathscr{H}(\I_\xi)$, 
    cf. also \cite[Theorem 2.1]{MR2227021}. The conclusion follows by Proposition \ref{prop:RKorder} and the fact that there are at most $\mathfrak{c}$ subgroups of $\mathbb{T}$ which are analytic or coanalytic. 
\end{proof}

\medskip

\begin{proof}
    [Proof of Proposition \ref{prop:equalitymathscrHFin}] 
    The \textsc{If} part goes as in the proof of the implication \ref{item:1countablygenerated} $\implies$ \ref{item:2countablygenerated} of Proposition \ref{prop:locallyquasiconvex}. 
    The \textsc{Only If} part follows by Proposition \ref{prop:locallyquasiconvex2}. 
\end{proof}

\medskip

\begin{proof}
    [Proof of Corollary \ref{cor:lastone}] 
    Since $\I$ is meager, we have $\mathscr{H}(\mathrm{Fin})\subseteq \mathscr{H}(\I)$ by \cite[Corollary 2.7]{FKLT26}. The conclusion follows by Proposition \ref{prop:equalitymathscrHFin}.
\end{proof}

\section{Open Questions}\label{sec:openquestions}

In this section, we list our open questions. 
First, we start asking whether the analogue of Theorem \ref{thm:increasingSequenceSameFin} holds for the ideal $\mathcal{Z}$.
\begin{question}\label{q:Zstrictlyincreasing}
    Pick a proper $\mathcal{Z}$-characterized subgroup $H$ of $\mathbb{T}$. Does there exist a strictly increasing sequence of positive integers $\bm{a}$ such that $H=\mathsf{H}_{\bm{a}}(\mathcal{Z})$? Is it possible to characterize the ideals $\I$ on $\omega$ with the analogue property?
\end{question}

Second, it would be interesting to know whether the Borel rank of $\I$ provides not only an upper bound on the Borel rank of each $\mathsf{H}_{\bm{a}}(\I)$, as in Corollary \ref{cor:borelrank}, but also a structural existence condition for all its smaller values. 
\begin{question}\label{q:smallervalues}
    Let $\I$ be a Borel ideal on $\omega$. Is it true that for each countable ordinal $3\le \xi\le \mathrm{rank}(\I)$ there exists $\bm{a} \in \omega^\omega$ such that 
    $
    \mathrm{rank}(\mathsf{H}_{\bm{a}}(\I))=\xi
    $?
\end{question}

Third, looking at the maximal Borel rank of $\I$-characterized subgroups, we might ask whether the analogue of Corollary \ref{cor:Fsigmadeltacomplete} holds for higher finite Borel ranks: 
\begin{question}\label{q:maximallevel}
Let $\I$ be a Borel ideal with finite rank $m:=\mathrm{rank}(\I)$. Does there exist a $\mathbf{\Pi}^0_m$-complete $\I$-characterized subgroup of $\mathbb{T}$?
\end{question}

Fourth, Theorem \ref{thm:Fsigmadelcomplete} shows that generalized density ideals satisfy a particularly strong dichotomy, 
while Theorem \ref{thm:strangecounterexample} shows that the same conclusion fails for arbitrary analytic $P$-ideals. In addition, if $\I\neq \mathrm{Fin}$ is an $F_\sigma$ translation invariant ideal then $\mathsf{H}_{(f_n)}(\I)$ is an uncountable proper $F_\sigma$ subgroup by Corollary \ref{cor:consequencefibo22}\ref{item:2corpowersfibo}. 
Hence it would be interesting to determine exactly where the boundary lies. 
\begin{question}\label{q:dichotomyideals}
    Describe the class of the analytic $P$-ideals $\I$ on $\omega$ such that every proper $\I$-characterized subgroup of $\mathbb{T}$ is either countable or $F_{\sigma\delta}$-complete.
\end{question}

Lastly, taking into account Proposition \ref{prop:locallyquasiconvex2} and Theorem \ref{thm:Fsigmadelcomplete}, we conclude with the following question.
\begin{question}\label{q:important}
     Let $H$ be a proper subgroup of $\mathbb{T}$. If $H$ is characterized then either $H$ is countable or $H$ is $F_{\sigma\delta}$-complete, locally quasi-convex Polishable, and its Polish group topology is UFSS. Does the converse hold?
\end{question}



\bibliographystyle{amsplain}
\bibliography{refs}

\end{document}